\documentclass[a4paper]{amsart}
\usepackage{amssymb,amscd,color}
\usepackage{graphicx}
\usepackage[all]{xy}
\usepackage{tikz,tikz-cd,tikz-3dplot}
\usepackage
{hyperref}

\input xypic

\unitlength=1mm

\frenchspacing

\emergencystretch=5pt
\tolerance=600

\newtheorem{theorem}{Theorem}[section]
\newtheorem{proposition}[theorem]{Proposition}
\newtheorem{lemma}[theorem]{Lemma}
\newtheorem{corollary}[theorem]{Corollary}

\theoremstyle{definition}

\newtheorem{example}[theorem]{Example}
\newtheorem{construction}[theorem]{Construction}

\theoremstyle{remark}

\numberwithin{equation}{section}

\def\R{\mathbb R}
\def\C{\mathbb C}

\def\Q{\mathbb Q}
\def\R{\mathbb R}

\def\Z{\mathbb Z}

\def\sK{\mathcal K}


\newcommand{\mb}[1]{{\textbf {\textit#1}}}

\renewcommand{\ge}{\geqslant}
\renewcommand{\le}{\leqslant}



\def\Ann{\mathop{\mathrm{Ann}}}
\def\Hom{\mathop{\mathrm{Hom}}}
\def\Ker{\mathop{\mathrm{Ker}}}
\def\Im{\mathop{\mathrm{Im}}}
\def\Re{\mathop{\mathrm{Re}}}

\def\cone{\mathop{\mathrm{cone}}}
\def\conv{\mathop{\mathrm{conv}}}
\def\int{\mathop{\mathrm{int}}}
\def\relint{\mathop{\mathrm{relint}}}

\def\rk{\mathcal R_{\mathcal K}}
\newcommand{\zk}{\mathcal Z_{\mathcal K}}

\begin{document}

\title[Exponential actions defined by vector configurations]{Exponential actions defined by vector configurations, Gale duality, and moment-angle manifolds}

\author{Taras Panov}
\address{Faculty of Mathematics and Mechanics, Lomonosov Moscow
State University, Leninskie gory, 119991 Moscow, Russia;\newline
Institute for Information Transmission Problems, Russian Academy of Sciences, Moscow;\newline
National Research University Higher School of Economics, Moscow}
\email{tpanov@mech.math.msu.su}
\thanks{The work was funded within the framework of the HSE University Basic Research Program}
\subjclass[2020]{57S12, 14M25, 32J18, 32L05, 32M25, 32Q55, 37F75, 52B35, 57R19}

\begin{abstract}
Exponential actions defined by vector configurations provide a universal framework for several constructions of holomorphic dynamics, non-K\"ahler complex geometry, toric geometry and topology. These include leaf spaces of holomorphic foliations, intersections of real and Hermitian quadrics, the quotient construction of simplicial toric varieties, LVM and LVMB manifolds, complex-analytic structures on moment-angle manifolds and their partial quotients, reviewed in this survey. In all cases, the geometry and topology of the appropriate quotient object can be described by combinatorial data including a pair of Gale dual vector configurations.
\end{abstract}

\maketitle
\tableofcontents

\section{Introduction and notation}
Let $V\cong\R^k$ be a $k$-dimensional real vector space, and let
$\Gamma=\{\gamma_1,\ldots,\gamma_m\}$ be a sequence (a \emph{configuration}) of $m$ vectors in the dual space~$V^*=\Hom(V,\R)$. We allow repetitions among $\gamma_1,\ldots,\gamma_m$
 and assume that they span the whole of~$V^*$. 

The \emph{exponential action} of $V$ on the standard real space $\R^m$ is given by
\begin{equation}\label{ract}
\begin{aligned}
  V\times\R^m&\longrightarrow\R^m\\
  (\mb v,\mb x)&\mapsto \mb v\cdot\mb x=\bigl(e^{\langle\gamma_1,\mb
  v\rangle}x_1,\ldots,e^{\langle\gamma_m,\mb v\rangle}x_m\bigr).
\end{aligned}
\end{equation}
This is a very classical dynamical system taking its origin in the works of Poincar\'e. 
At the same time, the action~\eqref{ract} and its holomorphic and algebraic versions arise in important modern constructions of algebraic geometry and topology, including

\begin{itemize}
\item[--] leaf spaces of holomorphic foliations, intersections of real and Hermitian quadrics (\emph{topology and holomorphic dynamics});

\item[--] the quotient construction of toric varieties (\emph{toric geometry});

\item[--] torus-invariant non-K\"ahler complex-analytic structures: LVM and LVMB manifolds (\emph{complex geometry});

\item[--] smooth and complex-analytic structures on moment-angle manifolds and their partial quotients (\emph{toric topology}).
\end{itemize}

There is a remarkable connection between the linear properties of the configuration $\Gamma$ and the topology of the foliation in $\mathbb R^m$ by the orbits of the action~\eqref{ract}, which is the subject of this survey. The connection is most effectively described by the \emph{Gale dual} configuration of vectors $\mathrm A=\{\mb a_1,\ldots,\mb a_m\}$ in the space $W^*\cong\mathbb R^{m-k}$.

From the geometric point of view, the spaces of \emph{nondegenerate leaves} of the foliation, i.\,e., subsets of $U\subset\mathbb R^m$ to which the restriction of the action~\eqref{ract} is free, are of interest. The nonfree orbits form a set of coordinate planes in $\R^m$ (a \emph{coordinate subspace arrangement}), and its complement is an open subset $U(\mathcal K)$ defined by a simplicial complex 
$\mathcal K$ on the set~$[m]$. If the action of $V$ on $U(\mathcal K)$ is not only free but also \emph{proper} (which implies the closedness of the orbits), then the leaf space, or the quotient, $U(\mathcal K)/V$ is a smooth manifold. In the case of  holomorphic or algebraic actions it is a complex-analytic manifold or algebraic variety, respectively.

The key topological fact is that the action of $V$ on $U(\mathcal K)$ is free and proper if and only if the data $\{\mathcal K;\mb a_1,\ldots,\mb a_m\}$ define a \emph{simplicial fan} $\Sigma$ in the space~$W^*$. Using Gale duality, the fan criterion can be formulated directly in terms of the original configuration~$\Gamma$ defining the action. Moreover, the orbit space $U(\mathcal K)/V$ is compact if and only if the fan~$\Sigma$ is complete. In this case $U(\mathcal K)/V$ is identified with the \emph{real moment-angle manifold} $\mathcal R_{\mathcal K}=(D^1,S^0)^{\mathcal K}$, and for the holomorphic action on~$\mathbb C^m$ with the \emph{moment-angle manifold} $\mathcal Z_{\mathcal K}=(D^2,S^1)^{\mathcal K}$. Finally, if the configuration $\Gamma$ generates a lattice (a discrete subgroup) in~$V^*$, then the fan $\Sigma$ is rational, and the action~\eqref{ract} extends to an algebraic torus action on $U(\mathcal K)\subset\mathbb C^m$ whose quotient space~is the toric variety~$X_\Sigma$. 

Also of interest is the class of exponential actions for which the corresponding fan $\Sigma$ is the normal fan of a convex polytope. There is a criterion for a fan to be normal in terms of the Gale dual cones. (In the rational case, this gives a criterion for the toric variety $X_\Sigma$ to be projective.) When $\Sigma$ is a normal fan, the orbit space $U(\mathcal K)/V$ is identified with the nondegenerate intersection of real quadrics and, in the case of a holomorphic action, with the intersection of Hermitian quadrics.

\medskip

In Section~\ref{secfree} we describe free orbits of the action~\eqref{ract} or, equivalently, nondegenerate leaves of the foliation of $\R^m$ by $V$-orbits. The stabiliser of a point $\mb x\in\R^m$ is trivial if and only if the subconfiguration of $\Gamma$ corresponding to the set of nonzero coordinates of $\mb x$ spans~$V^*$. The set of nonfree orbits is therefore an arrangement of coordinate planes in $\R^m$ corresponding to the universal simplicial complex $\mathcal K(\Gamma)=\{I\subset[m]\colon \Gamma_{\widehat I}\text{ spans }V^*\}$
associated with $\Gamma$. Any subcomplex $\mathcal K\subset\mathcal K(\Gamma)$ defines an open subset in $\R^m$, namely,
\[
  U(\mathcal K)=\R^m\setminus\bigcup_{\{i_1,\ldots,i_p\}\notin\mathcal K}
  \{\mb x\colon x_{i_1}=\cdots=x_{i_p}=0\},
\]  
such that the restriction of the exponential action~\eqref{ract} to $U(\mathcal K)$ is free.

The topology of the space of free $V$-orbits can be quite bad unless we consider subsets $U(\mathcal K)\subset\R^m$ such that the restriction of the $V$-action~\eqref{ract} to $U(\mathcal K)$ is \emph{proper}. In this case, the space of orbits $U(\mathcal K)/V$ is Hausdorff and has a smooth atlas of charts transverse to $V$-orbits. It is this quotient manifold $U(\mathcal K)/V$ that features in the constructions of algebraic geometry and topology mentioned above. The necessary and sufficient condition for the $V$-action on $U(\mathcal K)$ to be proper is the \emph{fan condition}, which is most readily formulated in terms of the Gale dual configuration.

We set up the notation for linear Gale duality in Section~\ref{galesec}.
From now on, we work with a pair of Gale dual configurations $\Gamma=\{\gamma_1,\ldots,\gamma_m\}$ in $V^*$ and $\mathrm A=\{\mb a_1,\ldots,\mb a _m\}$ in~$W^*$. The `fan condition' (Section~\ref{secfan}) is that the data $\{\mathcal K,\mathrm A\}$ define a (simplicial) fan~$\Sigma$. A pair $\{\mathcal K,\mathrm A\}$ satisfying the fan condition is known as a \emph{triangulated vector configuration}~\cite{d-r-s10}, and this concept was brought into the context of holomorphic foliations and LVMB manifolds by Battaglia and Zaffran in~\cite{ba-za15}.

The fan condition means that the cones $\cone\mathrm A_I=\R_\ge\langle\mb a_i\colon i\in I\rangle$ corresponding to $I\in\mathcal K$ do not overlap pairwise, that is, the relative interior of $\cone\mathrm A_I$ does not intersect the relative interior of $\cone\mathrm A_J$ for different $I,J$ in~$\mathcal K$. In terms of the Gale dual configuration $\Gamma$, this is equivalent to the following: the Gale dual cones $\cone\Gamma_{\widehat I}$ and $\cone\Gamma_{\widehat J}$ \emph{do overlap} for $I,J$ in~$\mathcal K$ (Theorem~\ref{galefangen}, Theorem~\ref{galefan}). This criterion appeared in several works on combinatorial convexity, often implicitly, and it is in the heart of the duality between fans and \emph{bunches} developed in~\cite[\S2.2]{a-d-h-l15}.

In Section~\ref{secproper} we prove that the exponential action of $V$ on $U(\sK)$ is proper if and only if $\{\mathcal K,\mathrm A\}$ is a fan data (Theorem~\ref{proper}). This property is of purely topological nature. On the other hand, it is the algebraic and holomorphic versions of this result that have become well known. The `fan condition' features in the quotient construction of toric varieties, it guarantees that the quotient variety is separated, i.\,e. Hausdorff in the usual topology, see~\cite[Theorem~5.1.11]{c-l-s11}. The Gale dual to the fan condition appeared as the `imbrication condition' in the work of Bosio~\cite{bosi01} as part of the definition of a class of non-K\"ahler complex manifolds arising as the leaf spaces of holomorphic foliations, now known as LVMB manifolds.  The fact that the `fan condition' and the `imbrication condition' are equivalent via Gale duality was used in several subsequent works relating complex geometry of LVMB manifolds to algebraic geometry of toric varieties~\cite{me-ve04,tamb12,batt13,ba-za15,ba-za17}. In the topological setting, the relationship between the fan condition and the separatedness of the quotient featured in the works~\cite{i-f-m13} and~\cite{ishi19}.

In Section~\ref{seccompl} we prove that the quotient $U(\sK)/V$ by a proper exponential action is compact if and only if the corresponding fan $\Sigma$ is complete (Theorem~\ref{comcom}). This property is well studied in toric geometry, but again it is of purely topological nature. When formulated in terms of the Gale dual configuration, it is known as \emph{substitute existence} in holomorphic dynamics and complex geometry of LVMB manifolds~\cite{bosi01,batt13}.

In Section~\ref{rksec} the quotient $U(\sK)/V$ corresponding to a complete simplicial fan $\Sigma$ is identified with the real moment-angle manifold $\mathcal R_{\mathcal K}$ or the polyhedral product $(D^1,S^0)^\sK$. Thus, the extensive information available on the topology of moment-angle manifolds, including the known descriptions of their cohomology rings and homotopy type, can be brought to bear on the study of the topology of foliations by orbits of exponential actions.
Topology of moment-angle manifolds and polyhedral products is currently a very active area of research, there is an extensive literature on this subject, see~\cite{bu-pa15,b-b-c20}.

Normal fans of polytopes and their associated presentations of the quotients $U(\sK)/V$ by intersections of quadrics are the subject of Section~\ref{normsec}. The criterion for a fan $\Sigma=\{\cone\mathrm A_I\colon I\in\mathcal C\}$ to be the normal fan of a polytope in terms of the Gale dual configuration~$\Gamma$ is stated in Theorem~\ref{galefannormgen}: instead of intersecting pairwise, the relative interiors of all Gale dual cones $\cone\Gamma_{\widehat I}$, $I\in\mathcal C$, must have a common intersection. This criterion plays an important role in algebraic geometry of toric varieties~\cite{od-pa91} and holomorphic dynamics and complex geometry of LVM manifolds~\cite{lo-ve97,meer00}. 
There are two ways to relate (the normal fans of) polytopes to intersections of quadrics, which are Gale dual to each other. 

The `polyhedra to quadrics' way starts with an $n$-dimensional polytope (more generally, a polyhedron) $P$ given by an intersection of halfspaces,
\[
  P=\{\mb w\in W\colon\langle \mb a_i,\mb w\rangle+b_i\ge0,\quad i=1,\ldots,m\}
\] 
and produces an intersection of quadrics defined by the Gale dual data:
\[
  \{(x_1,\ldots,x_m)\in\R^m\colon\gamma_1 x^2_1+ 
  \cdots+\gamma_mx^2_m=\delta\}.
\]
The intersection of quadrics is nondegenerate (and therefore smooth) if and only if the polytope is \emph{generic} (in particular, simple)~\cite[Theorem~6.1.3]{bu-pa15}. The nondegenerate intersection of quadrics is identified with the quotient $U(\sK)/V$ by the proper exponential action defined by the normal fan of~$P$ (Theorem~\ref{quadrics}). The proof mimics the reconstruction of a Hamiltonian toric manifold from its moment polytope via symplectic reduction~\cite{delz88,audi91,guil94}; the intersection of quadrics (the moment-angle manifold), appears here as the nondegenerate level set for the moment map. 

The `quadrics to polyhedra' way starts with a configuration $\Gamma$ and a choice of generic vector $\delta\in V^*$, satisfying what is known in holomorphic dynamics the `weak hypebolicity' condition. It produces a nondegenerate intersection of quadrics, which is identified with the quotient $U(\sK)/V$ of the exponential action defined by the normal fan of the associated polyhedron~$P$ (Theorem~\ref{quadrics1} and Proposition~\ref{ptopepos}). 

There is an important particular case when the vectors of the $\mathrm A$-configuration satisfy a linear relation with all positive coefficients. In this case, $\cone \mathrm A$ is the whole of $W^*$, while $\cone\Gamma$ is a strongly convex (pointed) cone.
The Gale dual vector configuration $\Gamma$ can be replaced by an affine point configuration $\Gamma'$ in an affine hyperplane of $V^*$ (the \emph{affine Gale transform} of~$\mathrm A$). The `fan condition' guaranteeing the properness of the exponential action can be restated in terms of the convex hulls of points $\conv\Gamma'_{\widehat I}$ instead of cones $\cone\Gamma_{\widehat I}$. This is precisely the setup used in the definition of LVM and LVMB manifolds, see Section~\ref{convlink}.

Holomorphic exponential actions are considered in Section~\ref{compzk}. Given a complex vector space $\widetilde V\cong\C^\ell$ and a vector configuration $\Gamma$ in $\widetilde V^*=\Hom(\widetilde V,\C)$, the holomorphic action is defined by
\[
\begin{aligned}
  \widetilde V\times\C^m&\longrightarrow\C^m\\
  (\mb v,\mb z)&\mapsto \mb v\cdot\mb z=\bigl(e^{\langle\gamma_1,\mb
  v\rangle_\C}z_1,\ldots,e^{\langle\gamma_m,\mb v\rangle_\C}z_m\bigr).
\end{aligned}
\]
The criterions for the properness of the holomorphic action and the compactness of the quotient $U_\C(\sK)/\widetilde V$ are the same as in the real case (Theorems~\ref{hproper} and~\ref{hcompact}). The quotient $U_\C(\sK)/\widetilde V$ of the proper holomorphic exponential action defined by a complete simplicial fan data $\{\mathcal K,\mathrm A\}$ is identified with the moment-angle manifold $\zk=(D^2,S^1)^\sK$ (Theorem~\ref{quotzk}). The latter therefore acquires a complex-analytic structure. Complex geometry of moment-angle manifolds is remarkably rich and peculiar~\cite{pa-us12,p-u-v16,i-k-p22} and is far from being understood completely.

When the configuration $\Gamma$ is \emph{rational}, that is, its $\Z$-span $L^*=\Z\langle\Gamma\rangle$ is a discrete subgroup (a lattice) in~$V^*$, both the real and holomorphic exponential actions extend to an action of the algebraic torus $\C^\times_L=L\otimes_\Z\C^\times\cong V\times T_L$ on $U_\C(\sK)$, where $T_L=L\times_\Z S^1$ is the compact torus defined by~$L$. If the algebraic action of $\C^\times_L$ on $U_\C(\sK)$ is proper, then the data $\{\sK,\mathrm A\}$ define a fan $\Sigma$ which is rational with respect to the Gale dual lattice $N=\Z\langle\mathrm  A\rangle$ in~$W^*$. The quotient $U_\C(\sK)/\C^\times_L$ is the toric variety $V_\Sigma$ defined by the rational fan~$\Sigma$. We obtain a holomorphic principal bundle $\zk\to V_\Sigma$ with fibre a holomorphic compact torus $\C^\times_L/\widetilde V\cong T_L$, known as the \emph{generalised Calabi--Eckmann fibration}~\cite{lo-ni96, lo-ve97, me-ve04}. Perturbing the vector configuration $\Gamma$ 
destroys its rationality, the closed holomorphic tori in the fibres of the bundle $\zk\to X_\Sigma$ `open up', and the fibre bundle turns into a \emph{holomorphic foliation} $\mathcal F$ of $\zk$ with noncompact leaves. Holomorphic foliated manifolds $(\zk,\mathcal F)$ model \emph{irrational deformations} of toric varieties~\cite{ba-pr01, ba-za15, ra-zu19, k-l-m-v21, i-k-p22, ba-pr23}.

Thinking of $V$ as the space of linear relations between the vectors $\mb a_1,\ldots,\mb a_m$, we observe that $\Gamma$ is a rational configuration if and only if $V$ is generated by relations with integer coefficients. More generally, a
subspace $Q\subset V$ is \emph{rational} (with respect to~$\Gamma$) if it is generated by relations with integer coefficients (Construction~\ref{coras}). A rational subspace $Q\subset V$ contains a full-rank lattice
\[
  L=\{\mb q\in Q\colon\langle\gamma_k,\mb q\rangle\in\Z\;
  \text{ for }k=1,\ldots,m\}.
\]
The exponential action of $V$ on $\R^m$ extends to an action of the group $\C^\times_L\times_Q V\cong T_L\times V$ on $\C^m$, called the \emph{torus-exponential action} (Section~\ref{sectea}). Once again, the torus-exponential action $(\C^\times_L\times_Q V)\times U_\C(\sK)\to U_\C(\sK)$ is proper if and only if $\{\sK,\mathrm A\}$ is a fan data (Theorem~\ref{teproper}).
The quotient $U_\C(\sK)/(T_L\times V)$ by a proper torus-exponential action is identified with the quotient $\zk/T_L$ and is called a \emph{partial quotient} of a moment-angle manifold. There is also a holomorphic version of torus-exponential action $(\C^\times_L\times_{\widetilde Q}{\widetilde V})\times U_\C(\sK)\to U_\C(\sK)$, defined by a rational complex subspace $\widetilde Q\subset\widetilde V$. If $\{\sK,\mathrm A\}$ is a fan data, this defines a complex-analytic structure on the partial quotient $\zk/T_L$ as the quotient $U_\C(\sK)/(\C^\times_L\times_{\widetilde Q}{\widetilde V})$, with a holomorphic torus principal bundle $\zk\to\zk/T_L$.

Complex moment-angle manifolds $\zk$ are universal examples of \emph{complex manifolds with maximal torus action}. According to a classification result of Ishida~\cite{ishi19}, any such manifold is biholomorphic to a holomorphic partial quotient $\zk/T_L$. Two cases of holomorphic partial quotients $\zk/T_L$, corresponding to a one-dimensional rational subspace $Q\subset V$ and the whole of $V$ being rational, respectively, are of particular importance.

LVM manifolds are a class of non-K\"ahler complex-analytic manifolds introduced by Lopez de Medrano and Verjovsky~\cite{lo-ve97} and Meersseman~\cite{meer00} as the leaf spaces of exponential holomorphic foliations on the projectivisations of open subsets $U_\C(\sK)\subset\C^m$. This construction was further generalised by Bosio~\cite{bosi01} and became known as LVMB manifolds. In the original construction of~\cite{bosi01} the conditions ensuring that the leaf space is Hausdorff (or the action is proper) were formulated in terms of overlapping convex regions (see Section~\ref{LVMBsec}). These conditions by Bosio are equivalent to the fan condition on the Gale dual data~$\{\mathcal K,\mathrm A\}$ (Proposition~\ref{LVMBfan}). 

LVMB manifolds are therefore identified with projectivised moment-angle manifolds, that is, holomorphic partial quotients of $\zk$ by the diagonal circle (corresponding to a one-dimensional rational subspace $Q\subset V$). In these terms, LVM manifolds can be described as LVMB manifolds corresponding to the normal fans of polytopes, and are diffeomorphic to nondegenerate intersections of special Hermitian quadrics.

Toric manifolds or smooth complete toric varieties $X_\Sigma$ are partial quotients of moment-angle manifolds $\zk$ by a maximal freely acting subtorus. The corresponding holomorphic torus principal bundles $\zk\to X_\Sigma$ are described in Section~\ref{sectoric}.  Theorem~\ref{galefannormgen} gives a criterion for the rational fan $\Sigma=\{\cone\mathrm A_I\colon I\in\mathcal C\}$ of $X_\Sigma$ to 
be the normal fan of a polytope in terms of the Gale dual cones, which is equivalent to projectivity of~$X_\Sigma$. This criterion can be extended to a description of the nef and ample cones of~$X_\Sigma$ (\cite[Proposition~2.4.2.6]{a-d-h-l15} and Proposition~\ref{nefcone}). We expect a similar criterion to hold in the irrational setting, thereby providing a description of nef and ample divisors on irrational deformations of toric varieties. We finish by describing the construction of Hamiltonian toric manifolds via symplectic reduction and their connection with projective nonsingular toric varieties (Theorem~\ref{algsym}). Likewise, this connection extends to the irrational setting.

The following notation is used throughout the paper:
\begin{itemize}
\item
$[m]=\{1,2,\ldots,m\}$ an ordered set of $m$ elements.

\item
$I=\{i_1,\ldots,i_k\}\subset [m]$ a subset.

\item
$\widehat I=[m]\setminus I$ the complementary subset.

\item
$\mathcal K$ an (abstract) \emph{simplicial complex} on $[m]$. 
Every simplicial complex is assumed to contain the empty set~$\varnothing$.

\item
$I\in\sK$ is a \emph{simplex} of~$\sK$.

\item
A \emph{vertex} of $\sK$ is a one-element simplex $\{i\}\in\sK$.

\item
A \emph{ghost vertex} of $\sK$ is a one element subset $\{i\}\subset[m]$ that is not  in~$\sK$.

\item
$\R^m$ real vector space consisting of $m$-tuples $\mb x=(x_1,\ldots,x_m)$ of real numbers.

\item
$\C^m$ complex vector space consisting of $m$-tuples $\mb z=(z_1,\ldots,z_m)$ of complex numbers.

\item
$\R_\ge$ nonnegative real numbers.

\item
$\R_>$ multiplicative group of positive real numbers.

\item
$\C^\times$ multiplicative group of nonzero complex numbers.

\item
$U(\mathcal K)=\R^m\setminus\bigcup_{\{i_1,\ldots,i_p\}\notin\mathcal K}
  \{\mb x\colon x_{i_1}=\cdots=x_{i_p}=0\}$ the complement of the arrangement of real coordinate subspace defined by~$\sK$.
  
\item
$U_\C(\mathcal K)=\C^m\setminus\bigcup_{\{i_1,\ldots,i_p\}\notin\mathcal K}
  \{\mb z\colon x_{i_1}=\cdots=x_{i_p}=0\}$ the complement of the complex coordinate subspace arrangement defined by~$\sK$. 

\item
$V$ a $k$-dimensional real vector space.

\item
$V^*=\Hom_\R(V,\R)$ the dual space.

\item
$\langle\gamma,\mb v\rangle\in\R$ the inner product (pairing) $V^*\times V\to\R$.

\item
$\Gamma=\{\gamma_1,\ldots,\gamma_m\}$ a vector configuration in~$V^*$. Assume that $\Gamma$ spans $V^*$.

\item
$\mathrm A=\{\mb a_1,\ldots,\mb a_m\}$ the Gale dual vector configuration in $W^*$ (see Section~\ref{galesec}).

\item
$\Gamma_I=\{\gamma_i\colon i\in I\}$ a subconfiguration of $\Gamma$. Similarly, 
$\mathrm A_I$, $\Gamma_{\widehat I}=\{\gamma_i\colon i\notin I\}$, etc.

\item
$\R\langle\Gamma_I\rangle=\R\langle\gamma_i\colon i\in I\rangle$ the $\R$-span of a vector configuration (assume $\R\langle\Gamma\rangle=V^*$).

\item $\varGamma\colon\R^m\to V^*$ the linear map given by $\mb e_i\mapsto\gamma_i$, where $\mb e_i$ is the $i$th standard basis vector of~$\R^m$.

\item $\varGamma^*\colon V\to\R^m$ the linear map $\mb v\mapsto
(\langle\gamma_1,\mb v\rangle,\ldots,\langle\gamma_m,\mb v\rangle)$.

\item $\mathit A\colon\R^m\to W^*$ the linear map given by $\mb e_i\mapsto\mb a_i$.

\item $\mathit A^*\colon W\to\R^m$ the linear map $\mb w\mapsto
(\langle\mb a_1,\mb w\rangle,\ldots,\langle\mb a_m,\mb w\rangle)$.  

\item
$\cone\mathit A_I=\cone(\mb a_i\colon i\in I)$  the cone generated by (the nonnegative span of) a set of vectors. Similarly, $\cone\Gamma_{\widehat I}$, etc.

\item
$\relint C$ the relative interior (the interior in the affine span) of a convex set~$C$.

\item
$\Sigma$ a \emph{fan} (see Section~\ref{secfan}).

\item
$\Sigma_\sK=\{\cone(\mb e_i\colon i\in I)\text{ with }I\in\sK\}$ the \emph{coordinate fan} in $\R^m$ corresponding to a simplicial complex~$\sK$.

\item
$P$ a convex \emph{polyhedron} in $W^*$. A \emph{polytope} is a bounded polyhedron.

\item
$\Sigma_P$ the \emph{normal fan} of~$P$.

\item
$\Gamma'=\{\gamma'_1,\ldots,\gamma'_m\}$ a point configuration in an affine space.

\item
$\conv\Gamma'_I=\conv(\gamma'_i\colon i\in I)$ the convex hull of a point configuration.

\item
$\widetilde V$ the complex vector space of dimension $\ell$ obtained by endowing an even-dimensional real space $V$ ($k=2\ell$) with a complex structure~$\mathcal J$.

\item
$\langle\gamma,\mb v\rangle_\C$ the complex inner product (pairing) 
$\widetilde V^*\times\widetilde V\to\C$, it satisfies
$\langle\gamma,\mb v\rangle_\C=\langle\gamma,\mb v\rangle+
  i\langle\gamma,\mathcal J\mb v\rangle$.

\item
$V_\C$ the complexification of $V$ (a $k$-dimensional complex vector space).

\item
$(\mb X,\mb A)^\sK$ the \emph{polyhedral product} (see Section~\ref{rksec}).

\item
$\rk=(D^1,S^0)^\sK$ the \emph{real moment-angle complex} (see Section~\ref{rksec}).

\item
$\zk=(D^2,S^1)^\sK$ the \emph{moment-angle complex} (see Section~\ref{compzk}).

\item
$T^m$ the $m$-dimensional torus, $T^m=(S^1)^m$.

\item
$Q$ a rational subspace in $V$ (see Section~\ref{secgr}).

\item
$L=\{\mb q\in Q\colon\langle\gamma_k,\mb q\rangle\in\Z\;
  \text{ for }k=1,\ldots,m\}$ the full-rank lattice in $Q$.
  
\item
$L_I=\{\mb q\in Q\colon\langle\gamma_k,\mb q\rangle\in\Z\;\text{ for }k\notin I\}$ a subgroup of~$Q$.

\item
$\widehat\Gamma=\{\widehat\gamma_1,\ldots,\widehat\gamma_m\}$ a rational vector configuration in~$Q^*$.

\item
$\widehat{\mathrm A}=\{\widehat{\mb a}_1,\ldots,\widehat{\mb a}_m\}$ the Gale dual rational vector configuration in~$U^*$.

\item
$\C^\times_L$ the algebraic torus defined by a vector space $Q$ with a full-rank lattice $L$, namely $\C^\times_L=L\otimes_\Z\C^\times\cong Q_\C/(2\pi i L)$.

\item
$T_L$ the compact torus, $T_L=L\otimes_\Z S^1 \cong Q/(2\pi L)$.

\item
$X_\Sigma$ the toric variety corresponding to a rational fan~$\Sigma$ (Section~\ref{sectoric}).

\item
$N=M^*$ the full rank lattice spanned by or containing a rational vector configuration $\mathrm A=\{\mb a_1,\ldots,\mb a_m\}$ in~$W^*$.

\item 
$\mathop\mathrm{Nef}(X_\Sigma)$ the nef cone of $X_\Sigma$, generated by the classes of basepoint free divisors on $X_\Sigma$.

\item 
$\mathop\mathrm{Ample}(X_\Sigma)$ the ample cone of $X_\Sigma$ (the set of positive multiples of ample divisor classes on $X_\Sigma$).
\end{itemize}

The author is grateful to the anonymous referees for very helpful comments and suggestions.

\section{Free orbits (nondegenerate leaves)}\label{secfree}
We start with a description of free orbits of the exponential action~\eqref{ract}.

\begin{proposition}\label{freeorbit}
The stabiliser subgroup $V_{\mb x}=\{\mb v\in V\colon \mb v\cdot\mb x=\mb x\}$
of a point $\mb x=(x_1,\ldots,x_m)\in\R^m$ under the action~\eqref{ract} is given by
\[
  V_{\mb x}=\{\mb v\in V\colon\langle\gamma_i,\mb v\rangle=0\;
  \text{ whenever }x_i\ne0\}.
\]
In particular, $V_{\mb x}=\bf0$ if and only if
the subset $\{\gamma_i\colon x_i\ne0\}\subset\Gamma$ spans~$V^*$.
\end{proposition}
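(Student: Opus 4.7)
The plan is to unwind the definition of the action directly and then translate the vanishing condition into a spanning condition via the standard duality between a subset of $V^*$ and its annihilator in~$V$.

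First I would fix $\mathbf x=(x_1,\ldots,x_m)\in\R^m$ and write out what $\mathbf v\cdot\mathbf x=\mathbf x$ means coordinatewise using~\eqref{ract}: for every $i\in[m]$ one must have $e^{\langle\gamma_i,\mathbf v\rangle}x_i=x_i$. For indices $i$ with $x_i=0$ this equation is automatically satisfied and imposes no condition on~$\mathbf v$. For indices $i$ with $x_i\ne0$ one can divide by $x_i$ to obtain $e^{\langle\gamma_i,\mathbf v\rangle}=1$, and since $\langle\gamma_i,\mathbf v\rangle$ is a real number and the only real solution of $e^t=1$ is $t=0$, this is equivalent to $\langle\gamma_i,\mathbf v\rangle=0$. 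Collecting these conditions yields the displayed description of $V_{\mathbf x}$.

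For the second assertion, I would phrase the description of $V_{\mathbf x}$ as the annihilator (in~$V$) of the subconfiguration $\Gamma_{\mathbf x}:=\{\gamma_i\colon x_i\ne0\}\subset V^*$. The elementary duality between a finite subset $S\subset V^*$ and its annihilator $S^\perp=\{\mathbf v\in V\colon\langle\gamma,\mathbf v\rangle=0\text{ for all }\gamma\in S\}$ gives $\dim S^\perp=\dim V-\dim\R\langle S\rangle$, so $S^\perp=\mathbf 0$ if and only if $\R\langle S\rangle=V^*$. Applying this with $S=\Gamma_{\mathbf x}$ yields the criterion that $V_{\mathbf x}=\mathbf 0$ exactly when $\{\gamma_i\colon x_i\ne0\}$ spans~$V^*$.

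There is essentially no obstacle here: both steps are formal once the action is written out, and the only mild point to be careful about is that $\langle\gamma_i,\mathbf v\rangle$ is real (so one does not have to worry about $2\pi i\Z$ periodicity, unlike in the holomorphic situation of Section~\ref{compzk}).
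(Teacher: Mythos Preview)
Your proof is correct and follows essentially the same route as the paper: both unwind the action coordinatewise to obtain the vanishing conditions $\langle\gamma_i,\mathbf v\rangle=0$ for $x_i\ne0$, and then translate $V_{\mathbf x}=\mathbf 0$ into the spanning condition on $\{\gamma_i\colon x_i\ne0\}$ via the annihilator. Your use of the dimension formula $\dim S^\perp=\dim V-\dim\R\langle S\rangle$ is a slightly more explicit phrasing of the paper's direct observation that a proper subspace has a nonzero annihilating vector, but the content is identical.
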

\begin{proof}
We have $\mb v\in V_{\mb x}$ if and only if
\[
  (x_1e^{\langle\gamma_1,\mb
  v\rangle},\ldots,x_me^{\langle\gamma_m,\mb v\rangle})=(x_1,\ldots,x_m),
\]
which holds precisely when $\langle\gamma_i,\mb v\rangle=0$ for $x_i\ne0$.

For the second statement, $V_{\mb x}\ne0$ if and only if there is a nonzero $\mb v\in V_{\mb x}$ such that $\langle\gamma_i,\mb v\rangle=0$ for $x_i\ne0$, which holds precisely when $\{\gamma_i\colon x_i\ne0\}\subset\Gamma$ does not span~$V^*$.
\end{proof}

Since $\Gamma$ itself spans $V^*$, the stabiliser of a generic point (with all coordinates nonzero) is zero, so the action is free on a dense open subset 
of~$\R^m$.

For a subset $I=\{i_1,\ldots,i_p\}\subset[m]$, denote 
$\Gamma_I=\{\gamma_i\colon i\in I\}$.
Let $\widehat I= [m]\setminus I$ denote the complementary subset. We set
\[
  \mathcal K(\Gamma)=\{I\subset[m]\colon \Gamma_{\widehat I}\text{ spans }V^*\}.
\]

A \emph{simplicial complex} on $[m]$ is a collection $\sK$ of subsets of $[m]$ such that
for any $I\in\sK$ all subsets of $I$ also belong to~$\sK$. We refer to $I\in\sK$ as a \emph{simplex} (or a \emph{face}) of~$\sK$. We
always assume that the empty set $\varnothing$ belongs to~$\sK$. We do not assume that $\sK$ contains all one-element subsets $\{i\}$. We refer to $\{i\}\in\sK$ as \emph{vertices}, and refer to $\{i\}\notin\sK$ as \emph{ghost vertices}. A simplicial complex is \emph{pure} if all its maximal simplices have the same dimension. (The dimension of a simplex is its cardinality minus one.)

\begin{proposition}
$\mathcal K(\Gamma)$ is a pure simplicial complex of dimension~$m-k-1$.
\end{proposition}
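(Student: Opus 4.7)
The plan is to verify each of the three things the statement asserts: that $\mathcal K(\Gamma)$ is closed under taking subsets, that it contains $\varnothing$, and that every maximal simplex has cardinality exactly $m-k$ (i.e.\ dimension $m-k-1$). All three follow directly from the fact that $\Gamma$ spans $V^*$ together with the observation that enlarging $I$ shrinks $\widehat I$ (and hence $\Gamma_{\widehat I}$).

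First I would note that $\varnothing\in\mathcal K(\Gamma)$ because $\widehat\varnothing=[m]$ and $\Gamma_{[m]}=\Gamma$ spans $V^*$ by hypothesis. For the simplicial complex property, suppose $I\in\mathcal K(\Gamma)$ and $J\subset I$; then $\widehat J\supset\widehat I$, so $\Gamma_{\widehat J}\supset\Gamma_{\widehat I}$, and since the smaller set already spans $V^*$, so does the larger. Hence $J\in\mathcal K(\Gamma)$.

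For purity, let $I$ be a maximal simplex of $\mathcal K(\Gamma)$. Maximality means that for every $j\notin I$ the set $I\cup\{j\}$ is not in $\mathcal K(\Gamma)$, i.e.\ $\Gamma_{\widehat I\setminus\{j\}}$ fails to span~$V^*$. Combined with the fact that $\Gamma_{\widehat I}$ does span~$V^*$, this says that $\Gamma_{\widehat I}$ is a minimal spanning subset of~$V^*$ (removing any one vector destroys the spanning property). A minimal spanning set of a $k$-dimensional vector space is a basis, so $|\widehat I|=k$ and therefore $|I|=m-k$. Conversely, any $I\subset[m]$ with $|I|=m-k$ for which $\Gamma_{\widehat I}$ is a basis of $V^*$ is clearly maximal, so such $I$ exist (take the complement of any basis extracted from~$\Gamma$). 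This shows every maximal simplex has dimension $m-k-1$, and $\mathcal K(\Gamma)$ is nonempty of that top dimension.

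No step looks like a real obstacle here; the only thing to be careful about is the standard fact that a minimal spanning set is a basis, which is what drives the purity conclusion.
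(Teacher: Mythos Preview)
Your proof is correct and essentially matches the paper's. The paper argues purity in the forward direction (any $I\in\mathcal K(\Gamma)$ has $\Gamma_{\widehat I}$ spanning, hence containing a basis $\Gamma_{\widehat L}$ with $I\subset L$ and $|L|=m-k$), while you argue it from the maximal side (a maximal $I$ forces $\Gamma_{\widehat I}$ to be a minimal spanning set, hence a basis); these are the same linear-algebra fact read in opposite directions.
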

\begin{proof}
If $\Gamma_{\widehat I}$ spans $V^*$, then so does $\Gamma_{\widehat J}\supset\Gamma_{\widehat I}$ for any $J\subset I$. Hence, $\mathcal K(\Gamma)$ is a simplicial complex. Also, if $\Gamma_{\widehat I}$ spans $V^*$, then it contains a basis of $V^*$. Such a basis has the form $\Gamma_{\widehat L}$ for some $L$ with $I\subset L$ and $|L|=m-
|\Gamma_{\widehat L}|=m-k$. It follows that $\mathcal K(\Gamma)$ is pure $(m-k-1)$-dimensional.
\end{proof}

Given a simplicial complex $\mathcal K$ on $[m]$, define the following open subset in~$\R^m$ (the complement of an arrangement of coordinate subspaces):
\begin{equation}\label{UK}
  U(\mathcal K)=\R^m\setminus\bigcup_{\{i_1,\ldots,i_p\}\notin\mathcal K}
  \{\mb x\colon x_{i_1}=\cdots=x_{i_p}=0\}.
\end{equation}
It is easy to see that the complement to any set of coordinate planes in $\R^m$ has the form $U(\sK)$ for some~$\sK$.
For example, if $\sK=\{\varnothing\}$, then $U(\sK)=(\R^\times)^m$, where $\R^\times=\R\setminus\{0\}$, and if $\sK$ consists of all proper subsets of $[m]$, then $U(\sK)=\R^m\setminus\{\bf 0\}$.

Let
\begin{equation}\label{affui}
  U_I=\{(x_1,\ldots,x_m)\in\R^m\colon x_j\ne0\text{ for }j\notin I\}.
\end{equation}
Then $U(\sK)=\bigcup_{I\in\sK} U_I$.

\begin{proposition}\label{free}
If $\sK\subset\sK(\Gamma)$ is a simplicial subcomplex, then the restriction of the action~\eqref{ract} to $U(\sK)$ is free.
\end{proposition}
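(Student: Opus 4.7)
The plan is to reduce the statement directly to Proposition~\ref{freeorbit} by unwinding the combinatorial definitions of $\sK(\Gamma)$ and $U(\sK)$.

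First I would take an arbitrary point $\mb x\in U(\sK)$ and use the decomposition $U(\sK)=\bigcup_{I\in\sK}U_I$ from~\eqref{affui} to pick a simplex $I\in\sK$ with $\mb x\in U_I$. By the definition of $U_I$, this means $x_j\ne0$ for every $j\in\widehat I$; equivalently, $\{\gamma_j\colon j\in\widehat I\}=\Gamma_{\widehat I}$ is contained in the subset $\{\gamma_i\colon x_i\ne0\}$ of $\Gamma$.

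Next I would invoke the hypothesis $\sK\subset\sK(\Gamma)$. Since $I\in\sK$, we have $I\in\sK(\Gamma)$, and by the very definition of $\sK(\Gamma)$ this means that $\Gamma_{\widehat I}$ spans $V^*$. Therefore the larger set $\{\gamma_i\colon x_i\ne0\}$ also spans $V^*$.

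Finally, by Proposition~\ref{freeorbit} the stabiliser $V_{\mb x}$ is trivial exactly when $\{\gamma_i\colon x_i\ne0\}$ spans $V^*$, so $V_{\mb x}=\mathbf{0}$ for every $\mb x\in U(\sK)$, which is the freeness of the action. There is no real obstacle here; the entire argument is a one-line chase through the definitions once Proposition~\ref{freeorbit} is available, and the only thing to be slightly careful about is the bookkeeping between $I$ and $\widehat I$ when passing from ``$\Gamma_{\widehat I}$ spans $V^*$'' to ``the nonzero-coordinate subconfiguration of $\mb x$ spans $V^*$''.
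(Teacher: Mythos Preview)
Your proof is correct and follows essentially the same approach as the paper: reduce to Proposition~\ref{freeorbit} by showing that for $\mb x\in U(\sK)$ the nonzero-coordinate subconfiguration contains some $\Gamma_{\widehat I}$ with $I\in\sK\subset\sK(\Gamma)$, hence spans~$V^*$. The only cosmetic difference is that the paper takes $I=\{i\colon x_i=0\}$ directly (so that $\Gamma_{\widehat I}$ equals the nonzero-coordinate set), whereas you pick any $I\in\sK$ with $\mb x\in U_I$ and then pass to a superset; both work.
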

\begin{proof}
Take $\mb x=(x_1,\ldots,x_m)\in U(\sK)$. Then $I=\{i\colon x_i=0\}\in\sK$. As $\sK\subset\sK(\Gamma)$, we obtain that $\Gamma_{\widehat I}=\{\gamma_i\colon x_i\ne0\}$ spans~$V^*$. Then $V_{\mb x}=\bf0$ by Proposition~\ref{freeorbit}.
\end{proof}

We restate this by saying that $U(\sK)$ consists of \emph{nondegenerate leaves} of~\eqref{ract} for any $\sK\subset\sK(\Gamma)$.

\section{Linear Gale duality}\label{galesec}
The configuration $\Gamma$ defines a linear map $\varGamma\colon\R^m\to V^*$ that takes the $i$th standard basis vector $\mb e_i$ to~$\gamma_i$. Let $W=\Ker\varGamma$, so we have an exact sequence
\begin{equation}\label{1exseq}
  0\longrightarrow W\longrightarrow\R^m
  \stackrel{\varGamma}\longrightarrow V^*\longrightarrow0.
\end{equation}
Since $\Gamma$ spans $V^*$, the space $W^*$ has dimension $n:=m-k$. We identify the dual space of $\R^m$ with $\R^m$ via the
isomorphism that takes the standard basis to its dual basis, and write vectors in $\R^m$ by their coordinates in the standard basis. Consider the dual exact sequence
\[
  0\longrightarrow V\stackrel{\varGamma^*}{\longrightarrow}\R^m
  \stackrel{A}\longrightarrow W^*\longrightarrow0,
\]
where the map $\varGamma^*$ takes $\mb v$ to $(\langle\gamma_1,\mb v\rangle,\ldots,\langle\gamma_m,\mb v\rangle)$.
Define $\mb a_i=A(\mb e_i)$. The \emph{linear Gale transform} takes the vector configuration $\Gamma$ in $V^*$ to the vector configuration $\mathrm{A}=\{\mb a_1,\ldots,\mb a_m\}$ in $W^*$. It is an involutive procedure, so $\mathrm A$ is called the \emph{Gale dual}
of~$\Gamma$, and $\Gamma$ is the Gale dual of~$\mathrm{A}$. Then $\mathrm A$ spans~$W^*$ by duality. The linear function $\mb a_i$ takes a vector in $W$ (viewed as a subspace in~$\R^m$) to its $i$th coefficient.

Choosing bases in $V$ and $W$, we view $\varGamma$ as a $k\times m$-matrix with column vectors $\gamma_1,\ldots,\gamma_m$ and view $A$ as a $n\times m$-matrix with column vectors $\mb a_1,\ldots,\mb a_m$. The identity $A\varGamma^*=0$ implies that the rows of $A$ form a basis in the space of linear relations between the vectors $\gamma_1,\ldots,\gamma_m$. The latter space is just $W$, and we may think of $\mb a_i\in W^*$ as the linear function that takes a linear relation to its $i$th coefficient.

\begin{proposition}\label{galeindspan}
For any $I\subset[m]$, the vectors in $\mathrm{A}_I=\{\mb a_i\colon i\in I\}$ are linearly independent in $W^*$ if and only if $\Gamma_{\widehat I}=\{\gamma_j\colon j\notin I\}$ spans~$V^*$.  
\end{proposition}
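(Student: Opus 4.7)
The plan is to exploit the dual exact sequence $0 \to V \xrightarrow{\varGamma^*} \R^m \xrightarrow{A} W^* \to 0$ directly. Since $\mb a_i = A(\mb e_i)$, a linear dependence $\sum_{i \in I} c_i \mb a_i = 0$ in $W^*$ is equivalent to saying that the vector $\mb c := \sum_{i \in I} c_i \mb e_i \in \R^m$, whose coordinates vanish outside $I$, lies in $\Ker A$. By exactness, $\Ker A = \Im \varGamma^*$, so this is equivalent to the existence of $\mb v \in V$ with $\varGamma^*(\mb v) = \mb c$.

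Unpacking the definition $\varGamma^*(\mb v) = (\langle\gamma_1,\mb v\rangle,\ldots,\langle\gamma_m,\mb v\rangle)$, the condition $\varGamma^*(\mb v) = \mb c$ says that the $j$th coordinate of $\varGamma^*(\mb v)$ is zero for every $j \notin I$, i.e.\ $\langle \gamma_j, \mb v\rangle = 0$ for all $j \in \widehat I$. Moreover, $\varGamma^*$ is injective, so $\mb c \ne 0$ (equivalently, the coefficients $c_i$ are not all zero) if and only if $\mb v \ne 0$.

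Putting these observations together: $\mathrm A_I$ is linearly dependent in $W^*$ if and only if there exists a nonzero $\mb v \in V$ annihilating every $\gamma_j$ with $j \in \widehat I$, i.e.\ lying in the annihilator of $\R\langle\Gamma_{\widehat I}\rangle \subset V^*$. This annihilator is nonzero precisely when $\Gamma_{\widehat I}$ fails to span $V^*$. Taking the contrapositive yields the proposition.

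I do not anticipate a genuine obstacle here; the argument is a direct application of the duality $\Ker A = \Im \varGamma^*$ together with injectivity of $\varGamma^*$. The only point requiring a small amount of care is making sure the nonvanishing of the $c_i$'s is correctly transported to nonvanishing of $\mb v$, which is handled by the injectivity of $\varGamma^*$.
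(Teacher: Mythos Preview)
Your proof is correct and takes essentially the same approach as the paper: both arguments identify a nontrivial linear dependence among $\mathrm A_I$ with a nonzero element of $V$ (the space of linear relations among the $\mb a_i$, equivalently $\Ker A=\Im\varGamma^*$) on which all $\gamma_j$ with $j\in\widehat I$ vanish, which is precisely the condition that $\Gamma_{\widehat I}$ fails to span~$V^*$. The paper phrases this in terms of ``coefficient functions on the space of relations'' while you use the exact sequence and injectivity of~$\varGamma^*$, but the underlying logic is identical.
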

\begin{proof}
By duality, we think of $\gamma_j$ as the $j$th coefficient function on the space $V$ of linear relations between $\mb a_1,\ldots,\mb a_m$. If the vectors $\{\mb a_i$, $i\in I\}$ are linearly independent, then any linear relation between $\mb a_1,\ldots,\mb a_m$ has the $j$th coefficient nonzero for some $j\in\widehat I$. It follows that, for any vector in $V$, there is a linear function among $\Gamma_{\widehat I}=\{\gamma_j$, $j\in\widehat I\}$ that does not vanish on this vector. This implies that $\Gamma_{\widehat I}$ spans~$V^*$. Conversely, if
there is a linear relation between $\{\mb a_i$, $i\in I\}$ then for each $j\in\widehat I$ the $j$th coefficient function $\gamma_j$ vanishes on this relation. Hence, the vectors $\{\gamma_j$, $j\in\widehat I\}$ do not span~$V^*$.
\end{proof}

\section{Fans and triangulated configurations}\label{secfan}

A subset $\sigma$ of a real vector space $W^*$ is called a \emph{polyhedral cone}, or simply a \emph{cone}, if it consists of all linear combinations with nonnegative coefficients of a finite set of vectors $\mb a_1,\ldots,\mb a_p$ in~$W^*$. The notation is $\sigma=\cone(\mb a_1,\ldots,\mb a_p)$. The set of vectors may be empty, in which case the cone consists of the zero vector only, that is,
$\cone(\varnothing)=\mathbf{0}$. The dimension of $\sigma$ is the dimension of its linear span.

A \emph{generator set} of a cone $\sigma$ is a set of vectors $\mb a_1,\ldots,\mb a_p$ such that $\sigma=\cone(\mb a_1,\ldots,\mb a_p)$. A cone is \emph{strongly convex} or \emph{pointed} if it does not contain a line.  An inclusion-minimal generator set of a strongly convex cone is unique up to multiplication by positive numbers. A cone is \emph{simplicial} if its minimal set of generators is linearly independent.

A \emph{supporting hyperplane} of a cone $\sigma\subset W^*$ is a hyperplane $H$ such that $\sigma$ is contained in one of the two closed halfspaces defined by~$H$. A \emph{face} of a cone $\sigma$ is the intersection of $\sigma$ with a supporting hyperplane. We also regard $\sigma$ as a face of itself; faces which are different from~$\sigma$ are called~\emph{proper}. The complement of the union of all proper faces in~$\sigma$ is called the \emph{relative interior} of~$\sigma$ and is denoted by $\relint\sigma$; it coincides with the interior of $\sigma$ in its linear span.
Note that $\relint{\mathbf 0}=\mathbf 0$.

A nonzero vector $\mb w\in W$ in the dual space defines a hyperplane
\[
  H_{\mb w}=\{\mb a\in W^*\colon\langle\mb a,\mb w\rangle=0\}.
\]   
We denote by $H_{\mb w}^+$ and $H_{\mb w}^-$ its corresponding nonnegative and nonpositive closed halfspaces, respectively. 

\begin{proposition}\label{positive}
If $\sigma=\cone(\mb a_1,\ldots,\mb a_p)$ is a cone in~$W^*$, then $\mb a\in\relint\sigma$ if and only if $\mb a=\sum_{i=1}^p \lambda_i\mb a_i$ for some positive $\lambda_i\in\R$.
\end{proposition}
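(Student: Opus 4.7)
The plan is to prove the two directions separately using standard convex-analytic arguments, with the heart of the argument being a perturbation trick for the nontrivial implication.

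For the easy direction, suppose $\mb a=\sum_{i=1}^p\lambda_i\mb a_i$ with $\lambda_i>0$. I consider the linear map $\phi\colon\R^p\to\R\langle\mb a_1,\ldots,\mb a_p\rangle$ sending $(\mu_1,\ldots,\mu_p)$ to $\sum\mu_i\mb a_i$. By definition, $\phi(\R^p_{>0})\subset\sigma$, and $\phi$ is surjective onto the linear (hence affine) span of~$\sigma$. Since a surjective linear map between finite-dimensional vector spaces is open, $\phi(\R^p_{>0})$ is an open subset of $\R\langle\mb a_1,\ldots,\mb a_p\rangle$ containing $\mb a=\phi(\lambda_1,\ldots,\lambda_p)$. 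Hence $\mb a$ has a neighborhood in the linear span of $\sigma$ contained in $\sigma$, which means $\mb a\in\relint\sigma$.

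For the nontrivial direction, suppose $\mb a\in\relint\sigma$. Set $\mb s=\sum_{i=1}^p\mb a_i$; this lies in the linear span of~$\sigma$. Since $\mb a$ lies in the interior of $\sigma$ relative to that span, for sufficiently small $t>0$ the vector $\mb c=\mb a-t\mb s$ still lies in~$\sigma$. Thus $\mb c=\sum_{i=1}^p\nu_i\mb a_i$ for some $\nu_i\ge0$, and rearranging gives
\[
  \mb a=\mb c+t\mb s=\sum_{i=1}^p(\nu_i+t)\mb a_i,
\]
which is a representation of $\mb a$ with all coefficients $\nu_i+t>0$, as required.

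I do not expect any serious obstacle; the only point that requires care is ensuring the first direction correctly produces an open neighborhood \emph{in the affine span of~$\sigma$} (rather than in all of $W^*$), which is why the open mapping property for the surjective map onto the linear span is invoked. The perturbation trick in the reverse direction is the standard device for moving a relative-interior point slightly inward along the direction $-\mb s$.
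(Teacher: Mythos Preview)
Your proof is correct, and both directions are cleanly argued. The approach, however, differs from the paper's.

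For the implication ``positive combination $\Rightarrow$ relative interior'', the paper argues by contradiction via supporting hyperplanes: if $\mb a$ lay in a proper face, some $\mb w\in W$ would give $\langle\mb a,\mb w\rangle=0$ while $\langle\mb a_i,\mb w\rangle\ge0$ with at least one strict inequality, contradicting $\langle\mb a,\mb w\rangle=\sum\lambda_i\langle\mb a_i,\mb w\rangle>0$. This uses the face-theoretic definition of $\relint\sigma$ given in the text. You instead use the topological characterisation of the relative interior together with the open-mapping property of a surjective linear map onto the span of~$\sigma$; this is a bit slicker and avoids invoking supporting hyperplanes, but it implicitly relies on the equivalence of the two descriptions of $\relint\sigma$ (which is standard and holds since $\mathbf 0\in\sigma$ makes the affine and linear spans coincide).

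For the reverse implication, the paper uses the fact that for any $\mb a'\in\sigma$ there is $\mb a''\in\sigma$ with $\mb a'+\mb a''=\mu\mb a$ for some $\mu>0$, applies this with $\mb a'=\mb a_i$ for each~$i$, and sums. Your perturbation trick---subtracting a small multiple of $\mb s=\sum\mb a_i$ and then re-expressing---is a more direct route to the same conclusion and is arguably simpler. Both arguments exploit that $\mb a$ has room to move within $\sigma$ along directions in its span; yours packages this in one step rather than summing over generators.
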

\begin{proof}
The case $\mb a=\mathbf0$ is trivial, so we may assume $\mb a\ne\mathbf 0$.

Let $\mb a=\sum_{i=1}^p \lambda_i\mb a_i$ with $\lambda_i>0$. Suppose that $\mb a\notin\relint\sigma$. Then $\mb a$ is contained in a proper face $\tau$ of $\sigma$, i.\,e. there is a supporting hyperplane $H_{\mb w}$ with $\mb w\in W$ such that 
$\sigma\subset H_{\mb w}^+$ and $\mb v\in H_{\mb w}\cap\sigma=\tau$. Hence, $\langle\mb a,\mb w\rangle=0$. On the other hand, $\langle\mb a,\mb w\rangle=\sum_{i=1}^p \lambda_i\langle\mb a_i,\mb w\rangle>0$, because each $\langle\mb a_i,\mb w\rangle$ is nonnegative and $\langle\mb a_i,\mb w\rangle>0$ for at least one~$i$, as $\tau$ is a proper face. A contradiction.

Now let $\mb a\in\relint\sigma$. It is easy to see that for any $\mb a'\in\sigma$ there exists $\mb a''\in\sigma$ such that $\mb a'+\mb a''=\mu\mb a$ for some $\mu>0$. Taking $\mb a'=\mb a_i$ we obtain
$\mb a_i+\mb a''_i=\mu_i\mb a$ for $\mb a''_i\in\sigma$ and $\mu_i>0$. Writing each $\mb a''_i$ as a nonnegative linear combination of $\mb a_1,\ldots,\mb a_p$ and summing up over $i$ we obtain
$\sum_{i=1}^p\nu_i\mb a_i=\sum_{i=1}^p\mu_i\mb a$ with positive $\nu_1,\ldots,\nu_p$. The result follows.
\end{proof}

\begin{lemma}[Separation Lemma]\label{separation}
The following conditions are equivalent for two different strongly convex cones $\sigma_1$ and $\sigma_2$ in~$V$:
\begin{itemize}
\item[(a)] the intersection of $\sigma_1$ and $\sigma_2$ is a proper face of each cone;

\item[(b)] the cones $\sigma_1$ and $\sigma_2$ are separated by a hyperplane, i.\,e. there exists a hyperplane $H$ such that 
\[
  \sigma_1\subset H^+,\quad  \sigma_2\subset H^-\quad\text{and}\quad
  H\cap\sigma_1=H\cap\sigma_2=\sigma_1\cap\sigma_2. 
\]
\end{itemize}
\end{lemma}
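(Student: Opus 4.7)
The plan: I would prove (b)\,$\Rightarrow$\,(a) directly from the definition of a face, then tackle the substantial direction (a)\,$\Rightarrow$\,(b) by first reducing to the case $\sigma_1\cap\sigma_2=\mathbf{0}$ and then constructing a strict separating functional. For (b)\,$\Rightarrow$\,(a), each intersection $H\cap\sigma_i$ is a face of $\sigma_i$ by the definition of a face via supporting hyperplanes, and the hypotheses identify it with $\sigma_1\cap\sigma_2$; that this face is proper in each cone follows by a short case analysis using the opposite-halfspace containment and $\sigma_1\ne\sigma_2$.

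For (a)\,$\Rightarrow$\,(b), set $\tau=\sigma_1\cap\sigma_2$ and let $L$ denote the linear span of $\tau$. Pick $\mb u_i\in V^*$ realising $\tau$ as a face of $\sigma_i$, so $\sigma_i\subset H_{\mb u_i}^+$ and $H_{\mb u_i}\cap\sigma_i=\tau$. Since $L\subset H_{\mb u_i}$, every $v\in\sigma_i\cap L$ lies in $H_{\mb u_i}\cap\sigma_i=\tau$, which yields the face identity $\sigma_i\cap L=\tau$. Pass to the quotient $\overline V=V/L$ and write $\overline\sigma_i$ for the image of $\sigma_i$; a short application of the same identity shows $\overline\sigma_i$ is strongly convex and $\overline\sigma_1\cap\overline\sigma_2=\mathbf{0}$, reducing the problem to the case $\tau=\mathbf{0}$.

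In the reduced setting, the Minkowski difference $\overline\sigma_1-\overline\sigma_2$ is a polyhedral cone whose lineality space is trivial: if $x=\bar a_1-\bar a_2$ and $-x=\bar b_1-\bar b_2$ with $\bar a_j,\bar b_j\in\overline\sigma_j$, then rearranging gives $\bar a_1+\bar b_1=\bar a_2+\bar b_2\in\overline\sigma_1\cap\overline\sigma_2=\mathbf{0}$, and strong convexity of each $\overline\sigma_j$ then forces $\bar a_j=\bar b_j=\mathbf{0}$. Hence $\overline\sigma_1-\overline\sigma_2$ is strongly convex, and its dual cone has nonempty interior; any $\overline{\mb w}$ in that interior is strictly positive on $(\overline\sigma_1-\overline\sigma_2)\setminus\mathbf{0}$, which specialises to strict positivity on $\overline\sigma_1\setminus\mathbf{0}$ and strict negativity on $\overline\sigma_2\setminus\mathbf{0}$. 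Pulling $\overline{\mb w}$ back through $V\to\overline V$ produces $\mb w\in V^*$, and the hyperplane $H_{\mb w}\supset L$ satisfies the required half-space containments together with $H_{\mb w}\cap\sigma_i=L\cap\sigma_i=\tau$ by the face identity.

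The main obstacle is the strong convexity of $\overline\sigma_1-\overline\sigma_2$ in the quotient, since this is precisely where the two strong-convexity hypotheses of the individual cones combine in a nontrivial way; everything else amounts to careful bookkeeping of faces and quotients.
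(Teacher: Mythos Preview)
The paper does not prove this lemma; it simply refers the reader to Fulton \S1.2 and Cox--Little--Schenck, Lemma~1.2.13. So there is no in-paper argument to compare against.

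Your proof is correct, and the (a)\,$\Rightarrow$\,(b) direction is essentially the standard one in those references, repackaged. In Fulton/CLS one chooses $\mb w$ in the relative interior of $\sigma_1^\vee\cap(-\sigma_2)^\vee=(\sigma_1-\sigma_2)^\vee$ and verifies directly that $H_{\mb w}\cap\sigma_i=\tau$; you instead first quotient by $L=\mathrm{span}(\tau)$ so that the two cones meet only at the origin, then observe that $\overline\sigma_1-\overline\sigma_2$ is strongly convex and take $\overline{\mb w}$ in the interior of its dual. The quotient step is not in the standard proofs, but it buys you a genuinely cleaner endgame: strict positivity of $\overline{\mb w}$ on the nonzero part of each $\overline\sigma_i$ is immediate, and the identity $H_{\mb w}\cap\sigma_i=\tau$ reduces to the already-established $\sigma_i\cap L=\tau$. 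Your identification of the crux---that the strong convexity of $\overline\sigma_1-\overline\sigma_2$ is where the two individual strong-convexity hypotheses interact---is exactly right.

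One small caveat on (b)\,$\Rightarrow$\,(a): as stated, the lemma claims the intersection is a \emph{proper} face of \emph{each} cone. This can fail even with $\sigma_1\ne\sigma_2$: if $\sigma_1$ happens to be a proper face of $\sigma_2$, take $H$ to be a supporting hyperplane of $\sigma_2$ cutting out $\sigma_1$; then (b) holds but $\sigma_1\cap\sigma_2=\sigma_1$ is not a proper face of $\sigma_1$. Your ``short case analysis'' cannot repair this, because it is a wording issue in the lemma itself rather than a gap in your reasoning. In practice (and in how the paper actually uses the lemma, e.g.\ in Theorem~\ref{galefangen}) what is needed is only that the intersection is a \emph{face} of each; properness is irrelevant there.
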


For the proof, see~\cite[\S1.2]{fult93} or~\cite[Lemma~1.2.13]{c-l-s11}.

A \emph{fan} is a finite set $\Sigma=\{\sigma_1,\ldots,\sigma_s\}$ of strongly convex cones in $W^*$ such that every face of a cone in $\Sigma$ belongs to $\Sigma$ and the intersection of any two cones in $\Sigma$ is a face of each. The \emph{support}  $|\Sigma|$ of a fan is the union of its cones: $|\Sigma|=\bigcup_{\sigma\in\Sigma}\sigma\subset W^*$. A fan is \emph{complete} if $|\Sigma|=W^*$ and has \emph{convex support} if $|\Sigma|$ is convex.


A fan $\Sigma$ in $W^*$ can be defined by specifying generators of its one-dimensional cones and the subsets of generators that span cones. This justifies the following formalism. Let $\mathrm A=\{\mb a_1,\ldots,\mb a_m\}$ be a spanning vector configuration in $W^*$ and let $\mathcal C$ be a collection of subsets 
of~$[m]$ containing~$\varnothing$. We say that $\mathcal C$ is \emph{$\mathrm A$-closed} if 
\begin{itemize}
\item
$\cone\mathrm A_I$ is strongly convex for any $I\in\mathcal C$,

\item if $I\in\mathcal C$, $J\subset I$ and $\cone\mathrm A_J$ is a face of $\cone\mathrm A_I$, then $J\in\mathcal C$.
\end{itemize}
If $\mathcal C$ is $\mathrm A$-closed, then all faces of a cone in the set 
$\{\cone \mathrm A_I\colon I\in\mathcal C\}$ also belong to this set. The set $\{\cone \mathrm A_I\colon I\in\mathcal C\}$ is a fan whenever any two cones $\cone\mathrm A_I$ and $\cone\mathrm A_J$ intersect in a common face (which has to be $\cone\mathrm A_{I\cap J}$). In this case we say that $\{\mathcal C,\mathrm A\}$ is a \emph{fan data}.

The ``fan condition'' can be stated in terms of the relative interiors of cones in either of the two Gale dual configurations $\mathrm A$ and~$\Gamma$; the next result also follows from the general correspondence between fans and \emph{bunches} in~\cite[Theorem~2.2.1.14]{a-d-h-l15}:

\begin{theorem}\label{galefangen}
Let $\mathrm A=\{\mb a_1,\ldots,\mb a_m\}$ and $\Gamma=\{\gamma_1,\ldots,\gamma_m\}$ be a pair of Gale dual vector configurations, and let $\mathcal C$ be an $\mathrm A$-closed collection of subsets of~$[m]$. The following conditions are equivalent:

\begin{itemize}
\item[(a)] $\{\cone \mathrm A_I\colon I\in\mathcal C\}$ is a fan;\\[-0.7\baselineskip]

\item[(b)]
$(\relint\cone\mathrm A_I)\cap(\relint\cone\mathrm A_J)=\varnothing$ for any $I,J\in\mathcal C$, $I\ne J$;\\[-0.7\baselineskip]

\item[(c)]
  $(\relint\cone\Gamma_{\widehat I})\cap(\relint\cone\Gamma_{\widehat J})\ne\varnothing$ for any $I,J\in\mathcal C$.
\end{itemize}
\end{theorem}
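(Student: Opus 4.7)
The plan is to establish (a)~$\Leftrightarrow$~(b) by standard cone-theoretic arguments using the $\mathrm A$-closedness of $\mathcal C$, and then (b)~$\Leftrightarrow$~(c) via a Farkas-type theorem of the alternative, exploiting that $V$ and $W$ are mutually orthogonal complements in~$\R^m$. The key linear-algebraic fact is that $\mathit A\varGamma^*=0$ together with $\dim V+\dim W=m$ identifies $V$, via $\varGamma^*$, with $\Ker\mathit A\subset\R^m$, and then $V$ and $W=\Ker\varGamma$ are orthogonal complements for the standard inner product. A tuple $(c_1,\ldots,c_m)\in V\subset\R^m$ satisfies $c_i=\langle\gamma_i,\mb v\rangle$ for a unique $\mb v$, while a tuple $(d_1,\ldots,d_m)\in W\subset\R^m$ is the coefficient vector of a linear relation $\sum d_i\gamma_i=0$, on which $\mb a_i$ acts by the $i$-th entry. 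By Proposition~\ref{positive}, $\relint\cone\mathrm A_I$ (respectively $\relint\cone\Gamma_{\widehat I}$) consists of the strictly positive combinations of $\{\mb a_i:i\in I\}$ (respectively $\{\gamma_i:i\notin I\}$).

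For (a)~$\Rightarrow$~(b), in a fan distinct cones meet in a common proper face of each, and this face is automatically disjoint from both relative interiors. For (b)~$\Rightarrow$~(a), given $I,J\in\mathcal C$, set $C=\cone\mathrm A_I\cap\cone\mathrm A_J$; its relative interior $\relint C$ is connected, and since $\cone\mathrm A_I$ is the disjoint union of the relative interiors of its faces, $\relint C$ lies in $\relint\tau_I$ for a unique face $\tau_I=\cone\mathrm A_K$ of $\cone\mathrm A_I$ with $K\in\mathcal C$ (by $\mathrm A$-closedness). The same argument applied to $\cone\mathrm A_J$ yields $L\in\mathcal C$ with $\relint C\subset\relint\cone\mathrm A_L$. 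Any point in $\relint C$ belongs to both $\relint\cone\mathrm A_K$ and $\relint\cone\mathrm A_L$, so (b) forces $\cone\mathrm A_K=\cone\mathrm A_L$; taking closures gives $C=\cone\mathrm A_K$, a common face of both $\cone\mathrm A_I$ and $\cone\mathrm A_J$.

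For (b)~$\Leftrightarrow$~(c), fix $I\ne J$ in $\mathcal C$ and partition $[m]=S_1\sqcup S_2\sqcup S_3\sqcup S_4$ with $S_1=I\setminus J$, $S_2=J\setminus I$, $S_3=I\cap J$, $S_4=[m]\setminus(I\cup J)$; note $S_1\cup S_2\ne\varnothing$. Using Proposition~\ref{positive}, $\relint\cone\mathrm A_I\cap\relint\cone\mathrm A_J\ne\varnothing$ is equivalent to the existence of $\mb c\in V$ with $c_i>0$ on $S_1$, $c_i<0$ on $S_2$, $c_i=0$ on $S_4$; analogously, $\relint\cone\Gamma_{\widehat I}\cap\relint\cone\Gamma_{\widehat J}\ne\varnothing$ is equivalent to the existence of $\mb d\in W$ with $d_i<0$ on $S_1$, $d_i>0$ on $S_2$, $d_i=0$ on $S_3$. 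If both $\mb c$ and $\mb d$ existed, then every $i\in S_1\cup S_2$ would contribute a strictly negative term to $\langle\mb c,\mb d\rangle=\sum c_id_i$ while $S_3,S_4$ contribute zero, contradicting $V\perp W$; this yields (c)~$\Rightarrow$~(b). The reverse implication (b)~$\Rightarrow$~(c) is the substantive half and follows from the Motzkin--Tucker theorem of alternatives applied to the orthogonally complementary subspaces $V,W\subset\R^m$.

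The main obstacle is this Farkas/Motzkin--Tucker step: the ``at most one'' half is the one-line orthogonality calculation above, but the ``at least one'' half requires LP duality applied to a system mixing strict inequalities on $S_1\cup S_2$ with equalities on $S_4$ (respectively $S_3$). The remaining arguments reduce to routine manipulations of polyhedral cones and their face lattices, supported by $\mathrm A$-closedness of~$\mathcal C$.
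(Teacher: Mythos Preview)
Your (a)$\Leftrightarrow$(b) and (c)$\Rightarrow$(b) are fine and essentially match the paper. The gap is (b)$\Rightarrow$(c), which you phrase as a pairwise alternative: for each $I\ne J$ in $\mathcal C$, exactly one of your systems (I) and (II) holds, with the ``at least one'' half attributed to Motzkin--Tucker. But (I) and (II) are not an LP-dual pair: both carry \emph{strict} inequalities on $S_1\cup S_2$, whereas the genuine Motzkin alternative to (I) has the \emph{opposite} signs with \emph{non-strict} inequalities (plus a nonzero condition). Concretely, take $m=2$, $\Gamma=(1,0)$ in $V^*\cong\R$, so $V=\R(1,0)\subset\R^2$, $W=\R(0,1)$, and $\mathrm A=(0,1)$ in $W^*\cong\R$; with $I=\{1\}$, $J=\{2\}$ neither (I) (needs $c_2<0$, but $c_2\equiv0$ on $V$) nor (II) (needs $d_1<0$, but $d_1\equiv0$ on $W$) has a solution. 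This does not contradict the theorem --- in this example global (b) already fails for the pair $\varnothing,\{1\}$ --- but it does show that your pairwise reduction cannot work as stated: the implication ``(I) fails $\Rightarrow$ (II) holds'' is false without further input from $\mathrm A$-closedness and the \emph{global} assumption~(b).

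The paper's route is different and uses exactly that extra input. It proves (a)$\Rightarrow$(c) rather than (b)$\Rightarrow$(c) directly, and the content of (a) beyond a single instance of (b) is what makes the argument go through. Knowing that $\cone\mathrm A_I\cap\cone\mathrm A_J$ is a common \emph{face} (not merely that relative interiors are disjoint), the Separation Lemma produces a hyperplane $H_{\mb w}$ with $\cone\mathrm A_I\subset H_{\mb w}^-$, $\cone\mathrm A_J\subset H_{\mb w}^+$, and $H_{\mb w}\cap\cone\mathrm A_I=H_{\mb w}\cap\cone\mathrm A_J=\cone\mathrm A_I\cap\cone\mathrm A_J$. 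The last equality is what forces the \emph{strict} signs $\langle\mb a_i,\mb w\rangle>0$ for $i\in J\setminus(I\cap J)$ and $<0$ for $i\in I\setminus(I\cap J)$; this $\mb w\in W$, read as a linear relation among the $\gamma_i$, is precisely your vector~$\mb d$ in (II). Mere disjointness of relative interiors does not supply such a strictly separating $\mb w$; the common-face condition --- hence the global (b) via your own (b)$\Rightarrow$(a) --- is essential.
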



\begin{proof} The implication (a)$\Rightarrow$(b) is clear.

(b)$\Rightarrow$(a). Take any $I,J\in\mathcal C$. Set $X=\cone\mathrm A_I\cap
\cone\mathrm A_J$. Consider the smallest face of $\cone\mathrm A_I$ that contains $X$; this face is $\cone\mathrm A_{I'}$ for some $I'\subset I$, $I'\in\mathcal C$. Similarly, let $\cone\mathrm A_{J'}$ be the smallest face of $\cone\mathrm A_J$ containing~$X$. Then $\relint X\subset(\relint \cone\mathrm A_{I'})\cap(\relint \cone\mathrm A_{J'})$. By (b), the latter intersection is empty unless $I'=J'$. Therefore, $I'=J'$. Then $I'\subset I\cap J$, so $X\subset \cone\mathrm A_{I'}\subset\cone\mathrm A_{I\cap J}$. On the other hand, we obviously have $\cone\mathrm A_{I\cap J}\subset X$.  Hence, the intersection $\cone\mathrm A_I\cap\cone\mathrm A_J=\cone A_{I'}=\cone\mathrm A_{I\cap J}$ is a face of both $\cone\mathrm A_I$ and $\cone\mathrm A_J$.

(c)$\Rightarrow$(a).
Suppose $(\relint\cone\Gamma_{\widehat I})\cap(\relint\cone\Gamma_{\widehat J})\ne\varnothing$. By Proposition~\ref{positive},
\begin{equation}\label{intrel}
  \sum_{k\in\widehat I}r_k\gamma_k-\sum_{l\in\widehat J}s_l\gamma_l=\mathbf 0
\end{equation}
for some positive $r_k$ and $s_l$. This is a linear relation between $\gamma_1,\ldots,\gamma_m$, so it defines a vector $\mb w\in W$. 
Recall that the value of $\mb a_i$ on a linear relation is its $i$th coefficient, so we have 
\begin{equation}\label{wacases}
  \langle\mb a_i,\mb w\rangle=\begin{cases}
  r_i-s_i,& i\in\widehat I\cap\widehat J=\widehat{I\cup J},\\
  r_i,& i\in \widehat I\setminus\widehat J=J\setminus (I\cap J),   \\
  -s_i,& i\in \widehat J\setminus\widehat I=I\setminus (I\cap J), \\
  0,& i\in \widehat{\widehat I\cup\widehat J}=I\cap J.
  \end{cases}
\end{equation}
It follows that 
\begin{equation}\label{Hw}
\begin{aligned}
&\cone\mathrm A_J\subset H^+_{\mb w},\quad 
\cone\mathrm A_I\subset H^-_{\mb w},\quad\text{and}\\
&\cone\mathrm A_{I\cap J}=H_{\mb w}\cap \cone\mathrm A_I= H_{\mb w}\cap\cone\mathrm A_J=\cone\mathrm A_I\cap
  \cone\mathrm A_J.
\end{aligned}
\end{equation}
Hence, $\cone\mathrm A_I$ and $\cone\mathrm A_J$ are separated by a hyperplane for each $I,J\in\mathcal C$, so (a) holds by Lemma~\ref{separation}.

(a)$\Rightarrow$(c).
Suppose that the intersection of $\cone\mathrm A_I$ and $\cone\mathrm A_J$ is a face of each cone. By Lemma~\ref{separation}, there exists a hyperplane~$H_{\mb w}$ satisfying~\eqref{Hw}. The vector $\mb w\in W$ satisfies~\eqref{wacases} for some positive~$r_i,s_i$. This $\mb w$ gives a linear relation~\eqref{intrel} between $\gamma_1,\ldots,\gamma_m$, which implies
$(\relint\cone\Gamma_{\widehat I})\cap(\relint\cone\Gamma_{\widehat J})\ne\varnothing$.
\end{proof}

A fan is \emph{simplicial} if each of its cones is generated by a linearly independent set of vectors. The fan $\Sigma$ defined by a fan data $\{\mathcal C,\mathrm A\}$ is simplicial if and only if $\mathcal C$ is a simplicial complex~$\mathcal K$. That is, a simplicial fan can be written as $\Sigma=\{\cone\mathrm A_I\colon I\in\mathcal K\}$, where $\mathcal K$ is a simplicial complex on~$[m]$ and $\mathrm A_I$ is linearly independent for any $I\in\sK$.  The simplicial complex $\sK$ is called the \emph{underlying complex} of a simplicial fan~$\Sigma$. 

A pair $\{\mathcal K,\mathrm A\}$ consisting of a simplicial complex $\mathcal K$ on $[m]$ and a spanning vector configuration $\mathrm A=\{\mb a_1,\ldots,\mb a_m\}$ in $W^*$ is called a \emph{simplicial fan data} or a 
\emph{triangulated configuration}~\cite{ba-za15} if $\Sigma=\{\cone\mathrm A_I\colon I\in\mathcal K\}$ is a fan in~$W^*$. This fan is automatically simplicial, since $\mathrm A_I$
is linear independent for any $I\in\sK$. (Indeed, if there is a linear dependency between $a_i$, $i\in I$, then there are $J,J'\subset I$ such that $\cone\mathrm A_J$ and $\cone\mathrm A_{J'}$ overlap.) 

A simplicial complex $\sK$ defines the \emph{coordinate fan} $\Sigma_\sK$ in $\R^m$ consisting of coordinate cones $\cone(\mb e_i\colon i\in I)$ with $I\in\sK$, where $\mb e_i$ is the $i$th coordinate vector. Then $\{\sK,\mathrm A\}$ is a triangulated configuration if and only if the restriction of the map $A\colon\R^m\to W^*$, $\mb e_i\mapsto\mb a_i$, 
to the union of cones of~$\Sigma_\sK$ is a bijection. In this case, $A(\Sigma_\sK)$ is precisely the fan $\Sigma$ defined by~$\{\mathcal K,\mathrm A\}$.

We do allow ghost vertices in $\sK$; they do not affect the fan~$\Sigma$. The vector $\mb a_i$ corresponding to a ghost vertex $\{i\}$ can be zero as it does not correspond to a one-dimensional cone of~$\Sigma$.

We restate Theorem~\ref{galefangen} in the case of simplicial fans for future reference.

\begin{theorem}\label{galefan}
Let $\sK$ be a simplicial complex on $[m]$, let $\mathrm A=\{\mb a_1,\ldots,\mb a_m\}$ be a vector configuration in
$W^*$, 
and let $\Gamma=\{\gamma_1,\ldots,\gamma_m\}$ be the Gale dual vector configuration. The following conditions are equivalent:

\begin{itemize}
\item[(a)] $\{\sK,\mathrm A\}$ is a triangulated configuration;\\[-0.7\baselineskip]

\item[(b)]
$(\relint\cone\mathrm A_I)\cap(\relint\cone\mathrm A_J)=\varnothing$ for any $I,J\in\sK$, $I\ne J$;\\[-0.7\baselineskip]

\item[(c)]
  $(\relint\cone\Gamma_{\widehat I})\cap(\relint\cone\Gamma_{\widehat J})\ne\varnothing$ for any $I,J\in\sK$.
\end{itemize}
\end{theorem}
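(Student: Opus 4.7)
The plan is to reduce Theorem~\ref{galefan} to Theorem~\ref{galefangen} by showing that, for a simplicial complex $\sK$, each of the conditions (b) and (c) forces $\sK$ to be an $\mathrm{A}$-closed collection. Once $\mathrm{A}$-closedness is in place, Theorem~\ref{galefangen} applied with $\mathcal C=\sK$ immediately delivers the three-way equivalence, and a small extension of the same argument will additionally yield linear independence of $\mathrm A_I$ for every $I\in\sK$, so the fan produced is automatically simplicial and $\{\sK,\mathrm A\}$ is triangulated in the sense used in the text.

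First, I would observe that the face-closure axiom in the definition of $\mathrm A$-closedness is automatic for a simplicial complex, since every subset of a simplex of $\sK$ lies in $\sK$. The only thing to verify is strong convexity of each cone $\cone\mathrm A_I$, $I\in\sK$. The argument is the positive/negative partition trick suggested by the parenthetical after the definition of triangulated configuration. Given an alleged nontrivial relation $\sum_{i\in I}c_i\mb a_i=\mathbf 0$, split $I$ into $I^+=\{i:c_i>0\}$, $I^-=\{i:c_i<0\}$, and $I^0=\{i:c_i=0\}$. Under (b), Proposition~\ref{positive} places the common vector $\sum_{I^+}c_i\mb a_i=\sum_{I^-}(-c_i)\mb a_i$ in $\relint\cone\mathrm A_{I^+}\cap\relint\cone\mathrm A_{I^-}$; since $I^+$ and $I^-$ are distinct members of $\sK$ (with the convention $\relint\cone\mathrm A_\varnothing=\{\mathbf 0\}$ covering the one-sided case), this directly contradicts~(b). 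Under~(c), I would apply the hypothesis to the pair $I^+,I^-\in\sK$ to produce, via the construction of $\mb w\in W$ from equations~\eqref{intrel}--\eqref{wacases} in the proof of Theorem~\ref{galefangen}, positive scalars $r_i,s_i$ with $\langle\mb a_i,\mb w\rangle=-s_i<0$ for $i\in I^+$ and $\langle\mb a_i,\mb w\rangle=r_i>0$ for $i\in I^-$; pairing the alleged relation with $\mb w$ yields $-\sum_{I^+}c_is_i+\sum_{I^-}c_ir_i=0$, a sum of strictly negative terms equal to zero, which is impossible.

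With strong convexity thus established, $\sK$ is $\mathrm A$-closed, and Theorem~\ref{galefangen} delivers the equivalence of (a), (b), and~(c). The same partition argument, applied once the fan exists, also rules out any nontrivial dependency among $\mathrm A_I$ for $I\in\sK$, so the fan is simplicial. The main bookkeeping obstacle is the one-sided case $I^-=\varnothing$ (or $I^+=\varnothing$), where the common vector degenerates to $\mathbf 0$: for~(b) this is handled by using $\varnothing\in\sK$ and $\relint\cone\mathrm A_\varnothing=\{\mathbf 0\}$, while for~(c) it is handled by applying the hypothesis to the pair $(I^+,\varnothing)$ and running the same sign pairing with $\mb w$. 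No substantive new geometry enters beyond Theorem~\ref{galefangen}; the whole content of the simplicial version is this partition argument.
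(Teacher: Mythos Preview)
Your proposal is correct and matches the paper's approach: the paper presents Theorem~\ref{galefan} as a direct restatement of Theorem~\ref{galefangen} for simplicial~$\sK$, with the only added content being the one-line parenthetical (just before the theorem) that a linear dependency among $\{\mb a_i:i\in I\}$ forces two subcones $\cone\mathrm A_J,\cone\mathrm A_{J'}$ with overlapping relative interiors. You are more thorough than the paper in explicitly verifying the $\mathrm A$-closedness hypothesis under~(c) via your sign-pairing argument---the paper's parenthetical handles only the (b) direction and leaves the (c) case to the reader.
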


Note that $\mathrm A_I$ is linearly independent for $I\in\sK$ in the case of simplicial fans, so
$\cone\Gamma_{\widehat I}$ is a full-dimensional cone in $V^*$ and its relative interior is the interior.

\section{Proper actions}\label{secproper}
A continuous map $f\colon X\to Y$ of topological spaces is \emph{proper} if $f^{-1}(C)$ is compact for any compact $C\subset Y$. If $X$ is Hausdorff and $Y$ is locally compact Hausdorff, then $f\colon X\to Y$ is proper if and only if it is \emph{universally closed}, where the latter means that for any map $Z\to Y$ the pullback $X\times_Y Z\to Z$ is a closed map. 

A continuous action $G\times X\to X$, $(g,x)\mapsto g\, x$, of a topological group $G$ on a locally compact Hausdorff space $X$ is called \emph{proper} if the \emph{shear map} 
\[
  h\colon G\times X\to X\times X, \quad (g,x)\mapsto (g\,x,x)
\] 
is proper. 
Equivalently, a $G$-action on a locally compact Hausdorff $X$ is proper if the subset 
$\{g\in G\colon gC\cap C\ne\varnothing\}$ is compact for any compact $C\subset X$.

Properness is a key property for noncompact group actions ensuring that the quotient is a well-behaved space. Namely, we have
\begin{itemize}
\item for a proper action of $G$ on $X$, the orbits are closed, the isotropy subgroups are compact, and the quotient $X/G$ is Hausdorff;

\item for a smooth, free and proper action of a Lie group $G$ on a smooth manifold~$M$, the quotient $M/G$ has a unique smooth structure with the property that the quotient map $M\to M/G$ is a smooth submersion.
\end{itemize}
We refer to~\cite[Chapter~21]{lee13} for the details.


\begin{theorem}\label{proper}
Let $\Gamma=\{\gamma_1,\ldots,\gamma_m\}$ and $\mathrm A=\{\mb a_1,\ldots,\mb a_m\}$ be a pair of Gale dual vector configurations in $V^*$ and $W^*$, respectively, and let $\sK$ be a simplicial complex on~$[m]$. Then
\begin{itemize}
\item[(1)] the action $V\times U(\sK)\to U(\sK)$ obtained by restricting~\eqref{ract} to $U(\sK)$ is free if and only $\mathrm A_I$ is linearly independent for any $I\in\sK$;

\item[(2)] the action $V\times U(\sK)\to U(\sK)$ is proper if and only if 
$\{\sK,\mathrm A\}$ is a triangulated configuration, in which case the action is also free.
\end{itemize}
\end{theorem}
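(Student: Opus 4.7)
Part~(1) is a direct translation. By Proposition~\ref{freeorbit}, the stabiliser of $\mb x\in U(\sK)$ is trivial iff $\{\gamma_i:x_i\ne 0\}$ spans $V^*$, and Proposition~\ref{galeindspan} rephrases this as linear independence of $\mathrm A_I$ for $I=\{i:x_i=0\}$. Every such $I$ is a simplex of $\sK$ (and any $I\in\sK$ arises in this way for some $\mb x\in U_I$), so the restricted action is free iff $\mathrm A_I$ is linearly independent for all $I\in\sK$.

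For Part~(2), the direction $(\Leftarrow)$ uses the sequential characterisation of properness. Assume $\{\sK,\mathrm A\}$ is triangulated, so by~(1) the action is free. Given $\mb x_n\to\mb x$ and $\mb v_n\cdot\mb x_n\to\mb y$ in $U(\sK)$, the goal is to extract a convergent subsequence of $\mb v_n$. Let $I,J\in\sK$ be the zero sets of $\mb x$ and $\mb y$. A coordinate-wise analysis of $y_{n,i}=e^{\langle\gamma_i,\mb v_n\rangle}x_{n,i}$ yields $\langle\gamma_i,\mb v_n\rangle$ bounded for $i\notin I\cup J$, $\to+\infty$ for $i\in I\setminus J$, and $\to-\infty$ for $i\in J\setminus I$. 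Suppose for contradiction $\|\mb v_n\|\to\infty$ along a subsequence, and let $\mb u=\lim\mb v_n/\|\mb v_n\|$, so $\|\mb u\|=1$. Passage to the limit gives $\langle\gamma_i,\mb u\rangle=0$ off $I\cup J$, $\ge 0$ on $I\setminus J$, $\le 0$ on $J\setminus I$, and unrestricted on $I\cap J$. The exact sequence~\eqref{1exseq} produces the relation $\sum_i\langle\gamma_i,\mb u\rangle\mb a_i=\mathbf 0$; grouping the strictly positive and strictly negative summands gives an identity $\sum_{i\in P}\langle\gamma_i,\mb u\rangle\mb a_i=\sum_{i\in Q}(-\langle\gamma_i,\mb u\rangle)\mb a_i$ with $P\subset I$, $Q\subset J$ disjoint and all coefficients strictly positive. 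By Proposition~\ref{positive} the common value lies in $\relint\cone\mathrm A_P\cap\relint\cone\mathrm A_Q$; this contradicts Theorem~\ref{galefan}(b) when $P,Q$ are both nonempty, and if one is empty the linear independence of $\mathrm A_I$ (or $\mathrm A_J$) forces the other to be empty as well. Hence all $\langle\gamma_i,\mb u\rangle$ vanish, forcing $\mb u=\mathbf 0$ since $\Gamma$ spans $V^*$, a contradiction.

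Conversely, if the action is proper then stabilisers are compact, hence trivial in $V\cong\R^k$, so by~(1) each $\mathrm A_I$, $I\in\sK$, is linearly independent. If the fan condition fails, distinct $I,J\in\sK$ share a point in $\relint\cone\mathrm A_I\cap\relint\cone\mathrm A_J$, and Proposition~\ref{positive} provides an identity $\sum_{i\in I}\lambda_i\mb a_i=\sum_{j\in J}\mu_j\mb a_j$ with $\lambda_i,\mu_j>0$; the coefficient vector of the difference lies in $\Ker\mathit A=\Im\varGamma^*$ and determines a nonzero $\mb v\in V$ with $\langle\gamma_k,\mb v\rangle>0$ on $I\setminus J$, $<0$ on $J\setminus I$, and $=0$ off $I\cup J$. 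Setting $a_k=-\max(0,\langle\gamma_k,\mb v\rangle)$, the sequences $\mb v_n=n\mb v$ and $\mb x_n=(e^{na_k})_{k\in[m]}\in(\R_>)^m\subset U(\sK)$ converge to points $\mb x,\mb y\in U(\sK)$ whose zero sets are contained in $I$ and $J$ respectively, while $\|\mb v_n\|\to\infty$, contradicting properness. The delicate step is the sign-splitting in $(\Leftarrow)$: the uncontrolled signs of $\langle\gamma_i,\mb u\rangle$ on $I\cap J$ must be absorbed into the disjoint subsets $P\subset I$ and $Q\subset J$, after which Theorem~\ref{galefan}(b) closes the argument in one stroke.
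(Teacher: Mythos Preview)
Your proof is correct. Part~(1) and the $(\Rightarrow)$ direction of Part~(2) match the paper's argument essentially verbatim. The interesting divergence is in the $(\Leftarrow)$ direction of Part~(2).

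The paper works entirely on the $\Gamma$-side: it passes to a subsequence so that each $\langle\gamma_i,\mb v^{(k)}\rangle$ has a limit in $\R\cup\{\pm\infty\}$, lets $I_+$ and $I_-$ be the index sets where the limit is $+\infty$ and $-\infty$, observes these are disjoint simplices of $\sK$, and then invokes condition~(c) of Theorem~\ref{galefan}. That condition supplies a relation $\sum_{i\in\widehat{I_-}}r_i\gamma_i=\sum_{i\in\widehat{I_+}}s_i\gamma_i$ with positive coefficients; pairing with $\mb v^{(k)}$ and letting $k\to\infty$ forces $I_+=I_-=\varnothing$, so every $\langle\gamma_i,\mb v^{(k)}\rangle$ is bounded and $\{\mb v^{(k)}\}$ converges. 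No normalisation is needed.

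You instead pass to the $\mathrm A$-side: you extract a limit direction $\mb u$ on the unit sphere, use Gale duality to convert the sign information on $\langle\gamma_i,\mb u\rangle$ into the relation $\sum_i\langle\gamma_i,\mb u\rangle\,\mb a_i=\mathbf 0$, and then split the strictly positive and negative parts into disjoint $P\subset I$, $Q\subset J$ to contradict condition~(b) of Theorem~\ref{galefan}. The ``absorption'' of the uncontrolled $I\cap J$ indices into $P$ or $Q$ according to sign is the right move, and the edge cases (one of $P,Q$ empty, or both) are handled correctly via linear independence of $\mathrm A_I$ and the spanning property of~$\Gamma$.

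Both routes are clean. The paper's avoids the unit-sphere compactness step and uses the $\Gamma$-side criterion~(c), which is tailor-made for this argument; yours is perhaps more geometrically transparent, producing an explicit obstruction vector $\mb u$ whose existence directly violates the fan condition~(b) on the cones of~$\Sigma$.
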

\begin{proof}
Statement (1) is Proposition~\ref{free}.

To prove (2), first assume that $\{\sK,\mathrm A\}$ is a triangulated configuration. We need to check that the preimage $h^{-1}(C)$ of a compact subset $C\subset U(\sK)\times U(\sK)$ under the shear map $h\colon V\times U(\sK)\to U(\sK)\times U(\sK)$ is compact. Since $V$ and $U(\sK)$ are metric spaces, it suffices to check that any infinite sequence $\{(\mb v^{(k)},\mb x^{(k)})\colon k=1,2,\ldots\}$ of points in $h^{-1}(C)$ contains a convergent subsequence. Since $C$ is compact, by passing to a subsequence we may assume that the sequence
\[
  \{h(\mb v^{(k)},\mb x^{(k)})\}=\{(\mb v^{(k)}\cdot \mb x^{(k)},\mb x^{(k)})\}
\]
has a limit in $U(\sK)\times U(\sK)$. We set $\mb y^{(k)}=\mb v^{(k)}\cdot\mb x^{(k)}$  
and let
\[
  \mb x=\lim_{k\to\infty}\mb x^{(k)},\quad \mb y=\lim_{k\to\infty}\mb y^{(k)},
\]
where $\mb x,\mb y\in U(\sK)$. We need to show that a subsequence of $\{\mb v^{(k)}\}$ has  a limit in~$V$.

By passing to a subsequence we may assume
that each numerical sequence $\{\langle\gamma_i,\mb v^{(k)}\rangle\}$, \ $i=1,\ldots,m$, has a
limit in $\R\cup{\pm\infty}$. Let
\[
  I_+=\bigl\{i\colon\lim_{k\to\infty}\langle\gamma_i,\mb v^{(k)}\rangle= +\infty\bigr\},
\quad
  I_-=\bigl\{i\colon\lim_{k\to\infty}\langle\gamma_i,\mb v^{(k)}\rangle= -\infty\bigr\}.
\]
As both sequences $\{\mb x^{(k)}\}$, $\{\mb y^{(k)}=\mb v^{(k)}\cdot\mb x^{(k)}\}$ converge, formula~\eqref{ract}  implies $x_i=0$ for $i\in I_+$ and $y_i=0$ for $i\in I_-$. Then it follows from the definition of $U(\sK)$~\eqref{UK} that $I_+$ and $I_-$ are disjoint simplices of~$\sK$. Now condition~(c) of Theorem~\ref{galefan} gives
\[
  (\relint\cone\Gamma_{\widehat I_+})\cap(\relint\cone\Gamma_{\widehat I_-})\ne\varnothing.
\]
Hence,
\[
  0=\sum_{i\in\widehat I_-}r_i\gamma_i  -\sum_{i\in\widehat I_+}s_i\gamma_i=
  \sum_{i\in I_+}r_i\gamma_i-\sum_{i\in I_-}s_i\gamma_i
  +\sum_{i\notin I_+\sqcup I_-}(r_i-s_i)\gamma_i
\]
for some positive $r_i$ and $s_i$. This implies that both $I_+$ and $I_-$ are empty, as otherwise
\[
  0=\lim_{k\to\infty}\Bigl(\sum_{i\in I_+}r_i\langle\gamma_i,\mb v^{(k)}\rangle
  -\sum_{i\in I_-}s_i\langle\gamma_i,\mb v^{(k)}\rangle+
  \sum_{i\notin I_+\sqcup I_-}(r_i-s_i)\langle\gamma_i,\mb v^{(k)}\rangle\Bigr)=+\infty,
\]
giving a contradiction. Therefore, each sequence $\{\langle\gamma_i,\mb v^{(k)}\rangle\}$, \ $i=1,\ldots,m$, has a finite limit. This implies that $\{\mb v^{(k)}\}$ converges in~$V$, because $\gamma_1,\ldots,\gamma_m$ span~$V^*$. Thus, the action $V\times U(\sK)\to U(\sK)$ is proper.

\smallskip

Now assume that the data $\{\sK,\mathrm A\}$ do not define a fan. 
Then condition (b) of Theorem~\ref{galefan} fails, and we have
\[
  (\relint\cone\mathrm A_I)\cap(\relint\cone\mathrm A_J)\ne\varnothing
\]
for some $I,J\in\sK$, $I\ne J$.
It follows that
\[
  0=\sum_{i\in I}r_i\mb a_i  -\sum_{i\in J}s_i\mb a_i=
  \sum_{i\in I\setminus J}r_i\mb a_i-\sum_{i\in J\setminus I}s_i\mb a_i+
  \sum_{i\in I\cap J}(r_i-s_i)\mb a_i
\]
for some positive $r_i$ and $s_i$. By Gale duality, this linear relation between $\mb a_1,\ldots,\mb a_m$ gives a vector  $\mb v\in V$ satisfying
\[
  \langle\gamma_i,\mb v\rangle=\begin{cases}
  r_i-s_i,& i\in I\cap J,\\
  r_i,& i\in I\setminus J,   \\
  -s_i,& i\in J\setminus I, \\
  0,& i\notin I\cup J.
  \end{cases}
\]
Now consider the sequence of points $\{\mb x^{(k)}\}$ in $\R^m$ with coordinates
\[
  x^{(k)}_i=\begin{cases}0,&i\in I\cap J,\\
  e^{-k\langle\gamma_i,\mb v\rangle},&i\in I\setminus J,\\1,&i\notin I\end{cases}
  =\begin{cases}0,&i\in I\cap J,\\
  e^{-kr_i},&i\in I\setminus J,\\1,&i\notin I.\end{cases}
\]
Observe that $\lim_{k\to\infty}\mb x^{(k)}=\mb x$, where
\[
  x_i=\begin{cases}0,&i\in I,\\1,&i\notin I,\end{cases}
\]
and both $\mb x^{(k)}$ and $\mb x$ lie in~$U(\sK)$. Now define $\mb v^{(k)}=k\mb v$ and $\mb y^{(k)}=\mb v^{(k)}\cdot\mb x^{(k)}$, so that
\[
  y^{(k)}_i=e^{k\langle\gamma_i,\mb v\rangle}x^{(k)}_i=\begin{cases}0,&i\in I\cap J,\\
  1,&i\in I\setminus J,\\e^{k\langle\gamma_i,\mb v\rangle},&i\notin I\end{cases}
  =\begin{cases}0,&i\in I\cap J,\\
  e^{-ks_i},&i\in J\setminus I,\\1,&i\notin J.\end{cases}
\]
We have $\lim_{k\to\infty}\mb y^{(k)}=\mb y$, where
\[
  y_i=\begin{cases}0,&i\in J,\\1,&i\notin J,\end{cases}
\]
so both $\mb y^{(k)}$ and $\mb y$ lie in~$U(\sK)$. On the other hand, no subsequence of $\mb v^{(k)}$ converges in~$V$. Thus, the action $V\times U(\sK)\to U(\sK)$ is not proper.

Finally, if $\{\sK,\mathrm A\}$ is a triangulated configuration, then $\mathrm A_I$ is linearly independent for any $I\in\sK$, so a proper action $V\times U(\sK)\to U(\sK)$ is free by statement~(1).
\end{proof}

\begin{example}\label{1dimfan}
Consider the action of $V=\R$ on $\R^2$ given by 
\[
  (v,(x_1,x_2))\mapsto(e^{v}x_1,e^{v}x_2).
\]
We have $\Gamma=(\gamma_1,\gamma_2)=(1,1)$, $\mathrm A=(\mb a_1,\mb a_2)=(1,-1)$. Let $\sK=\sK(\Gamma)=\{\varnothing,\{1\},\{2\}\}$, so that $U(\sK)=\R^2\setminus\{\mathbf 0\}$.
The vectors $\mb a_1,\mb a_2$ form a one-dimensional fan in~$\R$, so the above action of $\R$ on $U(\sK)=\R^2\setminus\{\mathbf 0\}$ is proper and the quotient is homeomorphic to a circle (a smooth manifold).

Now consider the action of $V=\R$ on $\R^2$ given by 
\[
  (v,(x_1,x_2))\mapsto(e^{v}x_1,e^{-v}x_2).
\]
This time we have $\Gamma=(\gamma_1,\gamma_2)=(1,-1)$, $\mathrm A=(\mb a_1,\mb a_2)=(1,1)$.
The relative interiors of the cones generated by $\mb a_1$ and $\mb a_2$ have a non-empty intersection (they coincide), so the action of $\R$ on $U(\sK)=\R^2\setminus\{\mathbf 0\}$ is not proper and the quotient is a non-Hausdorff space.
\end{example}

\section{Completeness and compactness}\label{seccompl}
Assume given data $\{\sK,\mb a_1,\ldots,\mb a_m\}$ defining a simplicial fan 
$\Sigma=\{\sigma_1,\ldots,\sigma_s\}$ in $W^*$, as in Section~\ref{secfan}. A fan $\Sigma$ in $W^*$ is complete if the union of its cones $\sigma_i$ is the whole of~$W^*$. The underlying simplicial complex $\sK$ of a complete simplicial fan defines a simplicial subdivision (a \emph{triangulation}) of the unit sphere in~$W^*$.

Let $\Gamma=\{\gamma_1,\ldots,\gamma_m\}$ be the Gale dual configuration in~$V^*$. The restriction of the exponential action~\eqref{ract} to $U(\sK)$ is free and proper by Theorem~\ref{proper}.
We have the following criterion of compactness for the quotient of the action $V\times U(\sK)\to U(\sK)$.

\begin{theorem}\label{comcom}
Assume that $\{\mathcal K,\mathrm A\}$ is a triangulated configuration defining a simplicial fan~$\Sigma$.
The quotient smooth manifold $U(\sK)/V$ is compact if and only if the fan $\Sigma$ is complete.
\end{theorem}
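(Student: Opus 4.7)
The plan is to exploit the \emph{logarithmic moment map} $\Psi\colon(\R^\times)^m\to W^*$, $\mb x\mapsto\sum_{i}\log|x_i|\,\mb a_i$, which is $V$-invariant because $A\circ\varGamma^*=0$ by Gale duality. For each $J\in\sK$ I will use its relative version on the stratum $U_J^\circ=\{\mb x\in U(\sK)\colon x_i=0\iff i\in J\}$,
\[
  \bar\Psi_J(\mb x)=\Bigl[\sum_{i\notin J}\log|x_i|\,\mb a_i\Bigr]\in W^*/\R\langle\mathrm A_J\rangle,
\]
which is likewise $V$-invariant, and which identifies each connected component of $U_J^\circ/V$ diffeomorphically with $W^*/\R\langle\mathrm A_J\rangle$ (using Proposition~\ref{galeindspan} that $\Gamma_{\widehat J}$ spans $V^*$). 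Thus the question is one of sequential compactness in a gluing of these strata.

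For the forward implication (completeness implies compactness) I take a sequence $\{\mb x^{(n)}\}\subset U(\sK)$ and pass to a subsequence with constant sign pattern and constant zero set $J\in\sK$, then examine $\bar\Psi_J(\mb x^{(n)})$. If this is bounded in $W^*/\R\langle\mathrm A_J\rangle$, a convergent subsequence is realised by $V$-translates that converge inside $U_J^\circ$. Otherwise, the standard fact that completeness of $\Sigma$ implies completeness of the star fan $\{(\cone\mathrm A_I+\R\langle\mathrm A_J\rangle)/\R\langle\mathrm A_J\rangle\colon I\in\sK,\,I\supset J\}$ in $W^*/\R\langle\mathrm A_J\rangle$ supplies, after further extraction, a fixed $I\in\sK$ with $J\subsetneq I$ and positive coefficients $\mu_i^{(n)}$ such that $-\bar\Psi_J(\mb x^{(n)})=\sum_{i\in I\setminus J}\mu_i^{(n)}[\mb a_i]$. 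I then choose $\mb v^{(n)}\in V$ so that the translate $\mb y^{(n)}=\mb v^{(n)}\cdot\mb x^{(n)}$ satisfies $\log|y_i^{(n)}|=-\mu_i^{(n)}$ for $i\in I\setminus J$ and $\log|y_i^{(n)}|=0$ for $i\notin I$; such $\mb v^{(n)}$ exists because any two lifts of $\bar\Psi_J(\mb x^{(n)})$ to $\R^{\widehat J}$ differ by $\varGamma^*(\mb v)$ for some $\mb v\in V$. A diagonal extraction with $\mu_i^{(n)}\to\mu_i\in[0,\infty]$ then yields $\mb y^{(n)}\to\mb y\in U(\sK)$, whose zero set $J\cup\{i\in I\colon\mu_i=\infty\}$ is contained in $I\in\sK$, hence itself lies in~$\sK$.

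For the backward implication, assume $\Sigma$ is not complete and fix $\mb a^*\in W^*\setminus|\Sigma|$. Let $\mb y^0\in\R^m$ satisfy $A(\mb y^0)=-\mb a^*$ and set $\mb x^{(n)}=(e^{n y^0_i})_i\in U_\varnothing\subset U(\sK)$, so that $\Psi(\mb x^{(n)})=-n\mb a^*$ diverges in $W^*$. Suppose, towards contradiction, that some $V$-translates $\mb y^{(n_k)}=\mb v^{(n_k)}\cdot\mb x^{(n_k)}$ converge to $\mb x\in U(\sK)$ with zero set $I\in\sK$. Then $\log|y_i^{(n_k)}|$ is bounded for $i\notin I$, and setting $\mu_i^{(n_k)}=-\tfrac{1}{n_k}\log|y_i^{(n_k)}|\ge 0$ (for large $k$) for $i\in I$, the $V$-invariance identity $\sum_i\log|y_i^{(n_k)}|\mb a_i=-n_k\mb a^*$ divided by $n_k$ gives $\sum_{i\in I}\mu_i^{(n_k)}\mb a_i\to\mb a^*$. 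If $\sum_{i\in I}\mu_i^{(n_k)}$ stays bounded, passing to a sublimit yields $\mb a^*\in\cone\mathrm A_I\subset|\Sigma|$, contradicting the choice of $\mb a^*$; if it diverges, rescaling produces a nontrivial nonnegative linear relation among $\mathrm A_I$, contradicting the linear independence of $\mathrm A_I$ for $I\in\sK$ (Theorem~\ref{galefan}).

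The main obstacle I anticipate is the forward direction: one must (i) justify that completeness of $\Sigma$ descends to completeness of the star fan modulo $\R\langle\mathrm A_J\rangle$ (standard in toric geometry, but worth an explicit statement), and (ii) keep careful track of which coordinates vanish in the limit so that the limiting zero set always lies in a simplex of $\sK$, as guaranteed above. Once this is in place, the backward direction is essentially a Farkas-type linear algebra argument whose only nontrivial ingredient is the linear independence of $\mathrm A_I$ for $I\in\sK$, which is precisely the triangulated-configuration hypothesis.
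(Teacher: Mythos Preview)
Your argument is correct. The forward direction is essentially the paper's: both use the $V$-invariant map $\Psi(\mb x)=\sum_i\log|x_i|\,\mb a_i$ (the paper calls it $\ell$), use completeness to locate $-\Psi(\mb x^{(n)})$ in a fixed cone, and then $V$-translate so that the coordinates outside that cone have modulus~$1$ while the rest have modulus $\le1$. The only organisational difference is that the paper handles the stratum with zero set $J\ne\varnothing$ by induction on $\dim W^*$ via the link $\mathop\mathrm{link}_{\sK}J$, whereas you invoke the star fan in $W^*/\R\langle\mathrm A_J\rangle$ directly; these are the same reduction and your version simply unfolds the inductive step.

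The backward direction is genuinely different. The paper takes any $\mb a\in W^*$, writes $\mb a=\sum_i\alpha_i\mb a_i$, considers the one-parameter family $[x^{\alpha_1},\ldots,x^{\alpha_m}]$ as $x\to0$, and uses compactness to produce a limit with zero set $I\in\sK$. It then introduces the $V$-invariant functions $f_{\mb w}(\mb x)=\prod_i|x_i|^{\langle\mb a_i,\mb w\rangle}$ for $\mb w\in(\cone\mathrm A_I)^{\mathsf v}$, whose continuity on $U_I/V$ forces $\langle\mb a,\mb w\rangle\ge0$ for all such~$\mb w$, hence $\mb a\in\cone\mathrm A_I$. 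Your route avoids these auxiliary functions entirely: you fix $\mb a^*\notin|\Sigma|$, use the same invariant $\Psi$ to track the scale of a hypothetical convergent $V$-translate, and finish with a Farkas-type dichotomy (bounded $\mu_i^{(n_k)}$ forces $\mb a^*\in\cone\mathrm A_I$; unbounded forces a nontrivial nonnegative relation in $\mathrm A_I$, contradicting the simpliciality from Theorem~\ref{galefan}). Your version is more elementary and uses only the $V$-invariance of $\Psi$ and the linear independence of $\mathrm A_I$, while the paper's version is more constructive, exhibiting directly the cone that contains a given $\mb a$ and making the connection to torus characters that reappears in the toric-variety setting.
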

\begin{proof}
Assume that $\Sigma$ is a complete fan.
Because $U(\sK)/V$ is metrisable, it is enough to prove that any sequence of points in $U(\sK)/V$ contains a convergent subsequence.

Take a sequence of points in $U(\sK)/V$ and let $\{\mb x^{(n)},n=1,2,\ldots\}$ be its lift to the total space, $\mb x^{(n)}=(x_1^{(n)} , \dots , x_m^{(n)}) \in U(\sK)$ . We shall change the sequence $\{\mb x^{(n)}\}$ to $\{\widetilde{\mb x}^{(n)}\}$ so that $\mb x^{(n)}$ and $\widetilde{\mb x}^{(n)}$ lie in the same $V$-orbit for each $n=1,2,\ldots$ and $\{\widetilde{\mb x}^{(n)}\}$ has a convergent  subsequence  in $U(\sK)$. Its projection will be the required subsequence in $U(\sK)/V$. 

By passing to a subsequence we may assume that  all elements of the sequence $\{\mb x^{(n)}\}$ have the same sets of zero coordinates. That is, there is $J\in\sK$ such that $x^{(n)}_j=0$ for $j\in J$ and $x^{(n)}_j\ne0$ for $j\notin J$, for all~$n$.

We argue by induction on $\dim W^*=m-k$. Let $\dim W^*=1$.  There is only one complete one-dimensional fan (corresponding to $\sK=\{\varnothing,\{1\},\{2\}\}$), so the theorem holds in this case (see Example~\ref{1dimfan}).

If $J\ne\varnothing$, then $(\mb v\cdot\mb x^{(n)})_j=0$ for any $j\in J$, $\mb v\in V$ and $n=1,2,\ldots$ Consider the link subcomplex $\mathop\mathrm{link}_{\sK}J=\{I\in\sK\colon I\cup J\in\sK,\,I\cap J=\varnothing\}$. Then the data 
$\{\mathop\mathrm{link}_{\sK}J,\mb a_j\colon j\notin J\}$ define a complete fan in the quotient space $W^*/\langle\mb a_j\colon j\in J\rangle$, which has dimension less than~$W^*$. Therefore, we can apply the induction assumption in this case.

We therefore may assume $J=\varnothing$, that is, $x_j^{(n)}\ne0$ for any $n$ and $j\in[m]$.

We denote $\R^\times=\R\setminus\{0\}$. Given $\mb x=(x_1,\ldots,x_m)\in(\R^\times)^m$, define 
\[
  \ell(\mb x)=\sum_{i=1}^m\mb a_i\log|x_i|\in W^*.
\]
For any $\mb v\in V$ we have
\begin{multline}\label{lconst}
\ell(\mb v\cdot\mb x)=\sum_{i=1}^m\mb a_i\log\bigl(e^{\langle\gamma_i,\mb v\rangle}|x_i|\bigr)
\\=\sum_{i=1}^m\mb a_i\langle\gamma_i,\mb v\rangle+\sum_{i=1}^m\mb a_i\log|x_i|=\sum_{i=1}^m\mb a_i\log|x_i|=\ell(\mb x),
\end{multline}
where the second-to-last equality follows by Gale duality.

Now we get back to the sequence $\{\mb x^{(n)}\}$ and consider $-\ell(\mb x^{(n)})\in W^*$. As $\Sigma$ is a complete fan in $W^*$, we can find $I\in\sK$ with $|I|=\dim W^*$ such that infinitely many terms of the sequence $\{-\ell(\mb x^{(n)})\}$ lie in $\cone\mathrm A_I$. Hence, by passing to a subsequence, we may assume that
$\{-\ell(\mb x^{(n)})\}\in\cone\mathrm A_I$ for $n=1,2,\ldots$.

By Proposition~\ref{galeindspan}, the vector configuration $\Gamma_{\widehat I}$ is a basis of $V^*$, so we can find $\mb v^{(n)}\in V$ satisfying
$$
   \langle \gamma_i ,\mb v^{(n)}\rangle = -\log|x_{i}^{(n)}|,\quad i \notin I.
$$
and set
\[
  \widetilde{\mb x}^{(n)}=\mb v^{(n)}\cdot{\mb x}^{(n)}.
\]
We have
\[
  |\widetilde x_i^{(n)}| = |e^{\langle \gamma_i,\mb v^{(n)}\rangle} x_i^{(n)}|=1,\quad i\notin I.
\]
This together with~\eqref{lconst} implies
\[
-\sum\limits_{i \in I}\mb a_i \log |\widetilde x_i^{(n)}| =-\ell(\widetilde{\mb x}^{(n)})=  -\ell(\mb x^{(n)}) \in \cone\mathrm A_I.
\]
It follows that $\log |\widetilde x_i^{(n)}|\le0$ for any $i\in[m]$ and $n=1,2,\ldots$. Therefore, 
$\{\widetilde{\mb x}^{(n)}\}$ is a bounded sequence, so its subsequence has a limit it $\R^m$. As $|\widetilde x_i^{(n)}|=1$ for $i\notin I$, the limit may have zero coordinates $x_i$ only for $i\in I$, so the limit belongs to $U(\sK)$, as needed.

\smallskip

Now assume that $U(\sK)/V$ is compact. Take $\mb a\in W^*$. To prove that $\Sigma$ is complete, we need to show that $\mb a$ belongs to a cone of~$\Sigma$. Write 
\begin{equation}\label{aalpha}
  \mb a=\sum_{i=1}^m\alpha_i\mb a_i, \quad \alpha_i\in\R.
\end{equation}
For any $x>0$ consider
\[
  (x^{\alpha_1},\ldots,x^{\alpha_m})\in (\R^\times)^m\subset U(\sK)
\]
and let $[x^{\alpha_1},\ldots,x^{\alpha_m}]$ denote the image in $U(\sK)/V$. Setting $x=x^{(n)}$, $n=1,2,\ldots$, where $x^{(n)}>0$ converges to zero as $n\to\infty$ and using the fact that $U(\sK)/V$ is compact, we observe that there exists a limit
\[
  \lim_{x\to0}[x^{\alpha_1},\ldots,x^{\alpha_m}]=[y_1,\ldots,y_m]\in U(\sK)/V.
\]
Let 
\[
  I=\{i\in[m]\colon y_i=0\}\in\sK.
\]
This is well-defined as the set of zero coordinates of a point in $U(\sK)$ is constant along its $V$-orbit. Furthermore, $\mb y=(y_1,\ldots,y_m)\in U_I$, where $U_I\in U(\sK)$ is given by~\eqref{affui}.

For any $\mb w\in W$ and $\mb x= (x_1,\ldots,x_m)\in (\R^\times)^m$, define
\[
  f_{\mb w}(x_1,\ldots,x_m)=\prod_{i=1}^m|x_i|^{\langle\mb a_i,\mb w\rangle}.
\]
Then $f_{\mb w}$ defines a function $(\R^\times)^m/V\to\R$, because $f_{\mb w}(\mb x)=e^{\langle\ell(\mb x),\mb w\rangle}$ for $\mb x\in(\R^\times)^m$ and $\ell(\mb x)$ is constant on $V$-orbits (see above). Consider the dual cone
\[
  (\cone \mathrm A_I)^\mathsf{v}=
  \{\mb w\in W\colon\langle\mb a_i,\mb w\rangle\ge 0\text{ for }i\in I\}.
\]
If $\mb w\in(\cone \mathrm A_I)^\mathsf{v}$, then $f_{\mb w}$ extends to a continuous  function on~$U_I/V$.

Now $\lim_{x\to0}[x^{\alpha_1},\ldots,x^{\alpha_m}]=[\mb y]$ in $U_I/V$, implying that 
\[
  \lim_{x\to0}f_{\mb w}(x^{\alpha_1},\ldots,x^{\alpha_m})=f_{\mb w}(\mb y)
\]  
 for $\mb w\in(\cone \mathrm A_I)^\mathsf{v}$. On the other hand,
\[
  \lim_{x\to0}f_{\mb w}(x^{\alpha_1},\ldots,x^{\alpha_m})  
  =\lim_{x\to 0} x^{\langle\sum_{i=1}^m\alpha_i\mb a_i,\mb w\rangle}
  =\lim_{x\to 0} x^{\langle\mb a,\mb w\rangle}.
\]
The latter limit exists if and only if $\langle\mb a,\mb w\rangle\ge0$. Hence, $\langle\mb a,\mb w\rangle\ge0$ for any $\mb w\in(\cone \mathrm A_I)^\mathsf{v}$. This implies  that $\mb a\in\cone \mathrm A_I$, and therefore the fan $\Sigma$ is complete.
\end{proof}

The condition of completeness (or compactness of the quotient) can be formulated in terms of the triangulated configuration $(\mathcal K,\mathrm A)$ as follows.

\begin{proposition}\label{galecomp}
Assume that $\{\mathcal K,\mathrm A\}$ is a triangulated configuration defining a simplicial fan~$\Sigma$ in an $n$-dimensional space~$W^*$. Then, the $\Sigma$ is complete if and only if one of the following two equivalent conditions is satisfied:
\begin{itemize}
\item[(a)] $\sK$ contains an $n$-element subset (simplex), and each $(n-1)$-element simplex of $\sK$ is contained in two $n$-element simplices;
\item[(b)] the collection of subsets $\mathcal E=\{J\subset [m]\colon|J|=m-n,\widehat J\in\sK\}$ is nonempty, and for any $J\in\mathcal E$ and $i\in[m]$ there exists $j\in J$ such that $(J\setminus\{j\})\cup\{i\}\in\mathcal E$. 
\end{itemize}
\end{proposition}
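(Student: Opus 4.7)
The plan is to prove $(a)\Leftrightarrow(b)$ combinatorially by complementation, then to establish completeness $\Leftrightarrow(a)$ by two direct geometric arguments.

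For $(a)\Leftrightarrow(b)$: the map $J\mapsto\widehat J$ is a bijection between $(m-n)$- and $n$-element subsets of $[m]$, so $\mathcal E$ corresponds to the $n$-element simplices of $\sK$ and $\mathcal E\ne\varnothing$ matches the existence of such a simplex. For the second clause, fix an $(n-1)$-simplex $I\in\sK$ contained in an $n$-simplex $I\cup\{i\}$, and set $J:=\widehat{I\cup\{i\}}\in\mathcal E$ (so $i\notin J$). A second $n$-simplex of $\sK$ containing $I$ has the form $I\cup\{j'\}$ with $j'\ne i$, equivalently $(J\setminus\{j'\})\cup\{i\}\in\mathcal E$ with $j'\in J$, which is precisely the flip in~(b) when $i\in\widehat J$. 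The case $i\in J$ of~(b) is trivially satisfied by $j=i$.

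For completeness $\Rightarrow(a)$: since $|\Sigma|=W^*$ is $n$-dimensional while a finite union of cones of dimension $<n$ is nowhere dense, $\Sigma$ contains an $n$-cone, so $\sK$ contains an $n$-element simplex. For each $(n-1)$-simplex $I\in\sK$, pick $p\in\relint\cone\mathrm A_I$ and set $H:=\R\langle\mathrm A_I\rangle$. Any $n$-cone $\sigma=\cone\mathrm A_{I\cup\{j\}}$ with facet $\cone\mathrm A_I$ has tangent cone at $p$ equal to the closed half-space of $W^*$ bounded by $H$ on the side of $\mb a_j$; two such $n$-cones with extra generators on the same side of $H$ would agree in a neighbourhood of $p$, contradicting the disjointness of relative interiors in Theorem~\ref{galefan}\,(b). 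Hence at most one $n$-cone on each side of $H$ contains $\cone\mathrm A_I$, and completeness forces exactly one on each side, producing the two $n$-simplices required by~(a).

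For $(a)\Rightarrow$ completeness: $|\Sigma|$ is closed in the connected space $W^*$ and nonempty, so it suffices to show that $|\Sigma|$ is open. Let $\sK^\circ\subseteq\sK$ be the subcomplex generated by the $n$-element simplices; by~(a), every $(n-1)$-simplex of $\sK$ already lies in $\sK^\circ$, and $\sK^\circ$ still satisfies~(a). Induct on $n=\dim W^*$ (the base $n=1$ being direct from Example~\ref{1dimfan}): for $p\in\relint\cone\mathrm A_I$ with $I\in\sK^\circ$ and $p\ne\mathbf 0$, the cases $|I|=n$ and $|I|=n-1$ are handled as in the previous paragraph (the two $n$-cones from~(a) are forced onto opposite sides of $H$), while for $|I|<n-1$ one passes to the link fan in $W^*/\R\langle\mathrm A_I\rangle\cong\R^{n-|I|}$ indexed by $\mathop\mathrm{link}_{\sK^\circ}I$, with cones $\cone\bar{\mathrm A}_J$; purity of $\sK^\circ$ provides a top simplex in the link, and~(a) descends directly since any $(n-|I|-1)$-simplex $J$ of the link lifts to an $(n-1)$-simplex $J\cup I$ of $\sK^\circ$ whose two $n$-covers restrict to two $(n-|I|)$-covers of~$J$. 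By induction the link fan is complete, delivering a transverse neighbourhood of $p$ in $|\Sigma^\circ|$; hence $|\Sigma^\circ|\supseteq W^*\setminus\{\mathbf 0\}$, and closedness gives $|\Sigma^\circ|=W^*$. Finally, any $I\in\sK\setminus\sK^\circ$ would have the connected set $\relint\cone\mathrm A_I$ inside $W^*=|\Sigma^\circ|=\bigsqcup_{J\in\sK^\circ}\relint\cone\mathrm A_J$, forcing it inside a single $\relint\cone\mathrm A_J$ and violating Theorem~\ref{galefan}\,(b); thus $\sK=\sK^\circ$ and $|\Sigma|=W^*$. The main obstacle is precisely this inductive passage through the link fan: one must verify that $\{\mathop\mathrm{link}_{\sK^\circ}I,\bar{\mathrm A}\}$ is again a triangulated configuration in the quotient and that~(a) propagates cleanly to it, both consequences of purity of $\sK^\circ$ together with the overlap characterisation in Theorem~\ref{galefan}\,(b).
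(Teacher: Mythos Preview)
Your argument for $(a)\Leftrightarrow(b)$ has a gap, one the paper's terse ``clear'' also hides: the implication $(b)\Rightarrow(a)$ is \emph{not} purely combinatorial. Take $m=4$, $n=2$, and let $\sK$ be the boundary of the triangle on $\{1,2,3\}$ together with the vertex~$\{4\}$. Then $\mathcal E=\{\{3,4\},\{1,4\},\{2,4\}\}$ satisfies~(b), yet the $(n{-}1)$-simplex $\{4\}$ lies in no $n$-simplex, so~(a) fails. Your phrase ``fix an $(n{-}1)$-simplex $I\in\sK$ contained in an $n$-simplex'' assumes precisely the missing fact. The proposition survives because the fan hypothesis is available: your own machinery already yields $(b)\Rightarrow$completeness directly, since~(b) forces $\sK^\circ$ to satisfy~(a) (every $(n{-}1)$-simplex of $\sK^\circ$ is by construction a facet of some $n$-simplex, and then your flip argument applies), whence $|\Sigma^\circ|=W^*$; completeness$\Rightarrow(a)$ then closes the loop.

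For $(a)\Rightarrow$completeness the paper takes a much shorter route than your link-fan induction: if $C=|\Sigma|$ is a proper closed subset of $W^*$, then $\partial C\ne\varnothing$ separates $W^*$ and hence cannot be contained in the $(n{-}2)$-skeleton of~$\Sigma$; thus $\partial C$ meets $\relint\tau$ for some $(n{-}1)$-cone $\tau$ of~$\Sigma$, and such $\tau$ is then a facet of at most one $n$-cone, contradicting~(a). Your inductive argument via link fans is correct and makes the local structure fully explicit, trading the topological separation fact for a dimension induction, at the cost of considerably heavier machinery. One small aside: the final step $\sK=\sK^\circ$ is not needed (once $|\Sigma^\circ|=W^*$ you already have $|\Sigma|\supseteq|\Sigma^\circ|=W^*$), and the connectedness reasoning there is flawed since the strata $\relint\cone\mathrm A_J$ are not open in~$W^*$; the conclusion would follow from a single common interior point via Theorem~\ref{galefan}\,(b), but you do not need it.
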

\begin{proof}
The equivalence of (a) and (b) is clear.

Let $\Sigma$ be a complete simplicial fan. Then $\Sigma$ contains an $n$-dimensional cone, so there is $I\in\sK$ with $|I|=n$. Suppose there is an $(n-1)$-element subset of $\sK$ that is contained in at most one $n$-element subset. That is, there is an $(n-1)$-dimensional cone $\tau$ of $\Sigma$ that is a face of at most one $n$-dimensional cone of~$\Sigma$. Then, there is a hyperplane $H$ containing $\tau$ such that one of the halfspaces defined by $H$ contains points that do not lie in any cone of~$\Sigma$. A contradiction.

Now let condition (a) be satisfied. Suppose $\Sigma$ is not complete. Then $C=\bigcup_{\sigma\in\Sigma}\sigma$ is a closed proper subset of~$W^*$. Its topological boundary $\partial C$ contains an 
$(n-1)$-dimensional cone of~$\Sigma$, say~$\tau$. Then $\tau$ is a face of at most one $n$-dimensional cone of~$\Sigma$. A contradiction.
\end{proof}

Condition (b) in Proposition~\ref{galecomp} is the \emph{substitute existence (SE)} condition of Bosio~\cite{bosi01}. It features in the construction of LVMB manifolds, see Section~\ref{LVMBsec}.

\section{Real moment-angle manifolds}\label{rksec}

When the quotient $U(\sK)/V$ is compact, its topology can be described via the polyhedral product decomposition.

\begin{construction}[polyhedral product]\label{polpr}
Assume given a simplicial complex  $\sK$ on~$[m]$ and a sequence
\[
  (\mb X,\mb A)=\{(X_1,A_1),\ldots,(X_m,A_m)\}
\]
of $m$ pairs of pointed topological spaces. For each
subset $I\subset[m]$ we set
\begin{equation}\label{XAI}
  (\mb X,\mb A)^I=\bigl\{(x_1,\ldots,x_m)\in
  \prod_{k=1}^m X_k\colon\; x_k\in A_k\quad\text{for }k\notin I\bigl\}
\end{equation}
and define the \emph{polyhedral product} of $(\mb X,\mb A)$
corresponding to $\sK$ as
\[
  (\mb X,\mb A)^{\sK}=\bigcup_{I\in\mathcal K}(\mb X,\mb A)^I=
  \bigcup_{I\in\mathcal K}
  \Bigl(\prod_{i\in I}X_i\times\prod_{i\notin I}A_i\Bigl)\subset\prod_{i=1}^m X_i.
\]
In the case when all pairs $(X_i,A_i)$ are the same, i.\,e.
$X_i=X$ and $A_i=A$ for $i=1,\ldots,m$, we use the notation
$(X,A)^\sK$ for $(\mb X,\mb A)^\sK$. 
\end{construction}

We have $(\mb X,\mb A)^{\sK}=\prod_{i=1}^m A_i$ when 
$\sK=\{\varnothing\}$, and
$(\mb X,\mb A)^{\sK}=\prod_{i=1}^m X_i$ when $\sK$ is the simplex 
$\varDelta[m]$ on $m$ vertices. Here are more examples.

\begin{example}\label{expp}\ 

\noindent {\bf 1.} It is easy to see that $U(\sK)=(\R,\R^\times)^\sK$, where $\R^\times=\R\setminus\{0\}$, see~\eqref{UK}.

\noindent {\bf 2.} Let $(X,A)=(D^1,S^0)$, where $D^1$ is a closed
interval $[-1,1]$ and $S^0$ is
its boundary, consisting of two points. The polyhedral product
$(D^1,S^0)^\sK$ is known as the \emph{real moment-angle
complex}~\cite[\S3.5]{bu-pa00},~\cite{bu-pa15}:
\[
  \rk=(D^1,S^0)^\sK=\bigcup_{I\in\sK}(D^1,S^0)^I.
\]
It is a cubic subcomplex in the $m$-cube $(D^1)^m=[-1,1]^m$. When
$\sK$ consists of $m$ disjoint vertices, $\rk$ is the 1-dimensional
skeleton of~$[-1,1]^m$. When
$\sK=\partial\varDelta[m]$, we have $\rk=\partial[-1,1]^m$. In general, if $\{i_1,\ldots,i_k\}$ is a face of
$\sK$, then $\rk$ has $2^{m-k}$ cubic faces of dimension $k$
which lie in the $k$-dimensional planes parallel to the
$\{i_1,\ldots,i_k\}$th coordinate plane.
\end{example}

\begin{theorem}\label{quotrk}
Let $V\times U(\sK)\to U(\sK)$ be the exponential action defined by a complete simplicial fan $\Sigma=\{\sK,\mathrm A\}$. Then there is a homeomorphism 
\[
  U(\sK)/V\cong\rk.
\]
\end{theorem}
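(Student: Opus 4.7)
I plan to show that the composition of the inclusion $\rk\hookrightarrow U(\sK)$ and the quotient projection $U(\sK)\to U(\sK)/V$ is a homeomorphism. The inclusion is valid because for $\mb t\in\rk$ one has $\{i\colon t_i=0\}\subset\{i\colon t_i\in(-1,1)\}\in\sK$. Since $\rk$ is compact (closed in $[-1,1]^m$), while $U(\sK)/V$ is compact by Theorem~\ref{comcom} and Hausdorff by Theorem~\ref{proper}, a continuous bijection $\rk\to U(\sK)/V$ will automatically be a homeomorphism. Thus it suffices to prove that every $V$-orbit in $U(\sK)$ meets $\rk$ in exactly one point.

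\textbf{Existence.} For $\mb x\in(\R^\times)^m\cap U(\sK)$ the construction in the proof of Theorem~\ref{comcom} already supplies $\mb v\in V$ with $\mb v\cdot\mb x\in\rk$: completeness of $\Sigma$ lets one pick $J\in\sK$ with $|J|=n$ and $-\ell(\mb x)\in\cone\mathrm A_J$; Proposition~\ref{galeindspan} makes $\Gamma_{\widehat J}$ a basis of $V^*$, so $\langle\gamma_i,\mb v\rangle=-\log|x_i|$ for $i\notin J$ uniquely determines $\mb v$, and the verification in comcom gives $|\widetilde x_i|=1$ for $i\notin J$ and $|\widetilde x_i|\le 1$ for $i\in J$, placing $\mb v\cdot\mb x\in(D^1,S^0)^J\subset\rk$. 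For general $\mb x\in U(\sK)$ the plan is to approximate by $\mb x^{(n)}\in(\R^\times)^m\cap U(\sK)$ with $\mb x^{(n)}\to\mb x$, produce $\widetilde{\mb x}^{(n)}=\mb v^{(n)}\cdot\mb x^{(n)}\in\rk$ as above, extract a subsequence with $\widetilde{\mb x}^{(n_k)}\to\widetilde{\mb x}\in\rk$ by compactness of $\rk$, and invoke properness of the $V$-action (Theorem~\ref{proper}) on the compact subset $\{(\widetilde{\mb x}^{(n_k)},\mb x^{(n_k)})\}_k\cup\{(\widetilde{\mb x},\mb x)\}$ to pass to a further subsequence with $\mb v^{(n_{k_l})}\to\mb v\in V$; continuity then yields $\mb v\cdot\mb x=\widetilde{\mb x}\in\rk$.

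\textbf{Uniqueness.} Suppose $\mb x,\mb y\in\rk$ and $\mb y=\mb v\cdot\mb x$. The coordinate signs agree and $I_0=\{i\colon x_i=0\}\in\sK$. Put $J_x=\{i\notin I_0\colon|x_i|<1\}$ and $J_y=\{i\notin I_0\colon|y_i|<1\}$, so $I_0\cup J_x,\,I_0\cup J_y\in\sK$. Define $\ell_{I_0}(\mb x)=\sum_{i\notin I_0}\mb a_i\log|x_i|$; the Gale relation $\sum_i\mb a_i\langle\gamma_i,\mb v\rangle=0$ gives
\[
  \ell_{I_0}(\mb y)-\ell_{I_0}(\mb x)=-\sum_{i\in I_0}\mb a_i\langle\gamma_i,\mb v\rangle\in\R\langle\mathrm A_{I_0}\rangle,
\]
while Proposition~\ref{positive} places $-\ell_{I_0}(\mb x)\in\relint\cone\mathrm A_{J_x}$ and $-\ell_{I_0}(\mb y)\in\relint\cone\mathrm A_{J_y}$. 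Passing the cones $\cone\mathrm A_J$ for $I_0\subset J\in\sK$ to the quotient $W^*/\R\langle\mathrm A_{I_0}\rangle$ produces a complete simplicial fan (the link fan of $\cone\mathrm A_{I_0}$; a standard fact from fan theory), and the images of $-\ell_{I_0}(\mb x)$ and $-\ell_{I_0}(\mb y)$ in this quotient coincide and lie in the relative interiors of the projected cones indexed by $J_x$ and $J_y$. Condition~(b) of Theorem~\ref{galefan} applied in the link then forces $J_x=J_y=:J$; linear independence of $\mathrm A_{I_0\cup J}$ forces $\log|x_i|=\log|y_i|$ for $i\in J$, and together with $|x_i|=|y_i|=1$ on $[m]\setminus(I_0\cup J)$ and $x_i=y_i=0$ on $I_0$ this gives $\mb x=\mb y$. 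Freeness of the action (Theorem~\ref{proper}(2)) then forces $\mb v=\mb 0$.

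\textbf{Main obstacle.} The chief technical point is the handling of vanishing coordinates: the $V$-invariant log-norm $\ell$ of the comcom proof lives only on $(\R^\times)^m$, and its natural truncation $\ell_{I_0}$ is invariant only modulo $\R\langle\mathrm A_{I_0}\rangle$. Both existence and uniqueness must therefore be transferred to the link fan of $\cone\mathrm A_{I_0}$ inside $\Sigma$; existence is then treated by approximation plus properness, and uniqueness by the fan criterion of Theorem~\ref{galefan}(b) applied in the quotient $W^*/\R\langle\mathrm A_{I_0}\rangle$.
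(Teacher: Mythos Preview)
Your proof is correct and reaches the same conclusion by the same overall architecture (show that each $V$-orbit meets $\rk$ exactly once, then use compactness/Hausdorffness), but you handle the presence of zero coordinates differently from the paper. The paper disposes of zero coordinates in one line by the same induction-on-$\dim W^*$ reduction used in the proof of Theorem~\ref{comcom}: if $I_0=\{i:x_i=0\}\ne\varnothing$, pass to the complete link fan in $W^*/\R\langle\mathrm A_{I_0}\rangle$ and apply the inductive hypothesis there, so both existence and uniqueness may be argued assuming all $x_i\ne0$, where $\ell$ is globally defined and $V$-invariant. You instead treat existence by a soft argument---approximate from $(\R^\times)^m$, use compactness of $\rk$ to get a limit, and invoke properness of the action (Theorem~\ref{proper}) to produce a convergent $\mb v^{(n_{k_l})}$---and treat uniqueness by making the link reduction explicit via the truncated map $\ell_{I_0}$, which is $V$-invariant only modulo $\R\langle\mathrm A_{I_0}\rangle$. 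Your existence argument is a nice illustration that the hard work was already done in Theorems~\ref{proper} and~\ref{comcom}: once properness and compactness of the quotient are known, the approximation goes through automatically. The paper's induction is more self-contained and constructive (it actually exhibits $\mb v$), while your route trades constructivity for brevity by recycling the topological results.
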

\begin{proof}
We have an inclusion of pairs $(D^1,S^0)\hookrightarrow(\R,\R^\times)$, which induces an inclusion 
$\rk=(D^1,S^0)^\sK\hookrightarrow(\R,\R^\times)^\sK=U(\sK)$. So the required homeomorphism will follow from the fact that the $V$-orbit of any point $\mb x=(x_1,\ldots,x_m)\in U(\sK)$ intersects $\rk$ at a single point. 

We may assume by induction that $x_i\ne0$ for $i\in[m]$, as in the proof of Theorem~\ref{comcom}. Consider $\ell(\mb x)=\sum_{i=1}^m\mb a_i\log|x_i|\in W^*$. As $\Sigma$ is complete, there is $I\in\sK$ with $|I|=\dim W^*$ such that $-\ell(\mb x)\in\cone\mathrm A_I$. Then $\Gamma_{\widehat I}=\{\gamma_i\colon i\notin I\}$ is a basis of~$V^*$. By solving the equations $\langle \gamma_i ,\mb v\rangle = -\log|x_i|$, $i \notin I$, we find $\widetilde{\mb x}=\mb v\cdot\mb x$ in the same $V$-orbit satisfying $|\widetilde x_i|=1$ for $i\notin I$. Since $-\ell(\widetilde{\mb x})=-\ell(\mb x)\in\cone\mathrm A_I$, we have $\log|\widetilde x_i|\le0$ for $i\in I$. Hence, $|\widetilde x_i|\le1$ for $i\in I$. These conditions imply that $\widetilde{\mb x}\in\rk$. Therefore, every $V$-orbit intersects~$\rk$.

Now suppose that $\mb x,\mb x'\in\rk$ are in the same $V$-orbit. By definition of $\rk$, there is $I\in\sK$ such that $|x_i|=1$ for $i\notin I$ and $|x_i|\le1$ for $i\in I$, and $J\in\sK$ such that 
$|x'_j|=1$ for $j\notin J$ and $|x'_j|\le1$ for $j\in J$. Since $\ell$ is constant on $V$-orbits by~\eqref{lconst}, we obtain
\[
  \sum_{i\in I}\mb a_i\log|x_i|=\ell(\mb x)=\ell(\mb x')=\sum_{j\in J}\mb a_j\log|x'_j|. 
\]
It follows that the expression above lies in $-\cone\mathrm A_I\cap\cone\mathrm A_J=-\cone\mathrm A_{I\cap J}$. As the latter is a simplicial cone, we obtain $|x_i|=|x'_i|$ for $i\in[m]$. Since the $V$-action preserves the sign of coordinates, we actually have $\mb x=\mb x'$, as needed.
\end{proof}

\begin{corollary}
Let $\sK$ be the underlying complex of a complete simplicial fan. Then
\begin{itemize}
\item[(1)] the real moment-angle manifold $\rk$ has a structure of a smooth compact manifold;
\item[(2)] $U(\sK)$ and $\rk$ have the same homotopy type.
\end{itemize}
\end{corollary}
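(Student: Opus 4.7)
The plan is to deduce both statements from what has just been established: the $V$-action on $U(\sK)$ is smooth, free and proper (Theorem~\ref{proper}), the quotient is compact (Theorem~\ref{comcom}), and the quotient is homeomorphic to $\rk$ (Theorem~\ref{quotrk}).

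For (1), I would invoke the quotient manifold theorem recalled just before Theorem~\ref{proper}: a smooth, free and proper action of a Lie group on a smooth manifold produces a quotient with a unique smooth structure making the quotient map a smooth submersion. The exponential action~\eqref{ract} is manifestly smooth, so this applies to $V\times U(\sK)\to U(\sK)$. Combined with compactness from Theorem~\ref{comcom} and the homeomorphism of Theorem~\ref{quotrk}, this equips $\rk$ with the desired smooth compact manifold structure.

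For (2), the cleanest argument is that, under these hypotheses, the orbit map $\pi\colon U(\sK)\to\rk$ is a smooth principal $V$-bundle with contractible fibre $V\cong\R^k$. Since $V$ is contractible and the base $\rk$ is paracompact (a smooth compact manifold by~(1)), any such principal bundle is trivial; hence $U(\sK)$ is diffeomorphic to $\rk\times V$, and projection onto the first factor gives a homotopy equivalence $U(\sK)\simeq\rk$. Equivalently, one can build an explicit strong deformation retraction
\[
  H_t(\mb x)=\bigl(t\,\mb v(\mb x)\bigr)\cdot\mb x,\qquad t\in[0,1],
\]
where $\mb v(\mb x)\in V$ is the unique element taking $\mb x$ to the unique representative $\widetilde{\mb x}\in\rk$ of its $V$-orbit, as constructed in the proof of Theorem~\ref{quotrk}; uniqueness of $\widetilde{\mb x}$ forces $\mb v(\mb x)=\mathbf 0$ for $\mb x\in\rk$, so $H_t$ fixes $\rk$ throughout.

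The one point that needs some care is the local triviality of the bundle $\pi$, or, in the explicit formulation, the continuity of $\mb x\mapsto\mb v(\mb x)$. In the bundle picture this follows from the slice theorem for proper Lie group actions. In the direct picture, the case analysis of the proof of Theorem~\ref{quotrk}, organised according to which maximal cone of $\Sigma$ contains $-\ell(\mb x)$ and noting that the resulting $\widetilde{\mb x}$ is independent of choices wherever several cones meet, makes continuity of the retraction transparent. In either formulation this step is the only one beyond quoting a named theorem, and it is routine rather than a genuine obstacle.
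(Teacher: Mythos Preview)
Your proposal is correct and matches the paper's intended argument: the corollary is stated without proof, as an immediate consequence of Theorems~\ref{proper}, \ref{comcom} and~\ref{quotrk} together with the quotient manifold theorem and the contractibility of~$V$. Your principal $V$-bundle formulation of~(2) is exactly the right way to see the homotopy equivalence; the paper's remark following the corollary makes the same point, observing that the general homotopy equivalence $U(\sK)\simeq\rk$ from~\cite{bu-pa15} is realised here as the quotient by a free $V\cong\R^k$-action.
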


In~\cite[Theorem~4.7.5]{bu-pa15}, it is proved that there is a homotopy equivalence $U(\sK)\simeq\rk$ for arbitrary~$\sK$. By virtue of Theorem~\ref{quotrk}, this homotopy equivalence can be realised as the quotient by a free action of  $V\cong\R^k$ when $\sK$ is the underlying complex of a complete simplicial fan.

\section{Polyhedra, normal fans, and intersections of quadrics}\label{normsec}
As we have seen in the previous sections, the quotient $U(\sK)/V$ is a smooth compact manifold if and only if $\{\sK,\mathrm A\}$ is a complete simplicial fan data, see Theorems~\ref{galefan},~\ref{proper} and~\ref{comcom}. There is an important class of complete fans arising from convex polytopes, as described next.

\subsection{Normal fans}
A \emph{convex polyhedron} in $W$ is defined as the nonempty intersection of finitely many halfspaces:
\begin{equation}\label{ptope}
  P=\{\mb w\in W\colon\langle \mb a_i,\mb w\rangle+b_i\ge0,\quad i=1,\ldots,m\},
\end{equation}
where $\mb a_i\in W^*$ and $b_i\in\R$. 

By referring to a polyhedron $P$ we imply a particular system of linear inequalities, not only the set of its common solutions. An inequality $\langle\mb a_i,\mb w\rangle+b_i\ge0$ in~\eqref{ptope} is 
\emph{redundant} if removing it from the system does not change the set of common solutions; otherwise the inequality is \emph{irredundant}.

A \emph{face} of~$P$ is the intersection of $P$ with a supporting hyperplane. Every face is specified by turning some of the defining inequalities into equalities. A face is therefore determined by a subset $I\subset[m]$ and is given by
\[
  F_I=\{\mb w\in P\colon \langle\mb a_i,\mb w\rangle+b_i=0 \quad\text{for }i\in I\}.
\]
Note that $F_\varnothing=P$. A \emph{vertex} is a $0$-dimensional face, and a \emph{facet} is a face of dimension one less the dimension of~$P$. We denote $F_{\{i\}}$ by~$F_i$.

A polyhedron $P$ is \emph{full-dimensional} if the affine hull of $P$ is the whole of~$W$.
If $P$ is full-dimensional and without redundant inequalities, then each $F_{i}$ is a facet with normal $\mb a_i\in W^*$, so there are $m$ facets in total.

A bounded polyhedron is called a \emph{polytope}.

Given a face $Q$ of $P$, the \emph{polyhedral angle at $Q$} is the cone generated by all vectors ${\mb w'-\mb w}$ pointing from $\mb w\in Q$ to $\mb w'\in P$. The cone dual to the polyhedral angle at~$Q$ is given by
\[
  \sigma_Q=\{\mb a\in W^*\colon
  \langle\mb a,\mb w\rangle\le\langle\mb a,\mb w'\rangle
  \text{ for all $\mb w\in Q$ and $\mb w'\in P$}\}.
\]
and is called the \emph{normal cone} of~$Q$. 
If we write $Q=F_I$, where $I\subset[m]$ is maximal among all subsets defining~$Q$, then
\begin{equation}\label{sigmaQAI}
  \sigma_Q=\cone\mathrm A_I=\cone(\mb a_i\colon 
  \langle\mb a_i,\mb w\rangle+b_i=0\text{ for }\mb w\in Q).
\end{equation}
If $P$ is full-dimensional, then $\sigma_Q$ is a strongly convex cone; it is generated by the normals to the facets containing~$Q$.  Then
\[
  \Sigma_P=\{\sigma_Q\colon Q\text{ is a face of }P\}
\]
is a fan in $W^*$, referred to as the \emph{normal fan} of the polyhedron~$P$. If $P$ is not full-dimensional, then the cones $\sigma_Q$ are not strictly convex, and $\Sigma_P$ is a \emph{generalised fan}~\cite[Definition~6.2.2]{c-l-s11}

The support $|\Sigma_P|$ of a normal fan is convex (it coincides with the dual of the \emph{recession cone} of~$P$, see~\cite[Theorem~7.1.6]{c-l-s11}). In particular, $\Sigma_P$ is complete if and only if $P$ is a polytope.

\medskip

A polyhedron~\eqref{ptope} is \emph{generic} if the hyperplanes defined by the equations $\langle\mb a_i,\mb w\rangle+b_i=0$ are
in general position at every point of~$P$ (that is, for any $\mb
w\in P$ the normals $\mb a_i$ of the hyperplanes containing
$\mb w$ are linearly independent). A generic polyhedron is full-dimensional. A generic polyhedron may have redundant inequalities, but,
for any such inequality, the intersection of the corresponding
hyperplane with $P$ is empty. For a generic $P$, each $F_i$ is either a facet or empty. A generic polytope $P$ is \emph{simple}, that is, exactly $n=\dim W$ facets meet at each vertex of~$P$.

The normal fan $\Sigma_P$ is simplicial if and only if $P$ is a generic polyhedron. In this case, the cones of $\Sigma_P$ are generated by those sets $\{\mb a_{i_1},\ldots,\mb
a_{i_k}\}$ for which the intersection $F_{i_1}\cap\cdots\cap
F_{i_k}$ is nonempty. Define the \emph{dual simplicial complex}
\[
  \mathcal K_P=\bigl\{I=\{i_1,\ldots,i_k\}\colon F_{i_1}\cap\cdots\cap
  F_{i_k}\ne\varnothing \bigr\}.
\]
Then the simplicial fan $\Sigma_P$ is defined by the data $\{\mathcal K_P;\mb a_1,\ldots,\mb a_m\}$. The redundant inequalities in a generic polytope~\eqref{ptope} correspond to the ghost vertices of~$\mathcal K_P$. 


Our next goal is to describe the normal fan $\Sigma_P$ in terms of the Gale dual configuration $\Gamma=\{\gamma_1,\ldots,\gamma_m\}$. 
In what follows, whenever we refer to the normal fan and Gale duality, we assume that the polyhedron~\eqref{ptope} is full-dimensional (so that $\Sigma_P$ is a fan) and $\mathrm A=\{\mb a_1,\ldots,\mb a_m\}$ spans~$W$ (so that the Gale dual configuration is defined).

\begin{lemma}\label{normal1}
The normal fan $\Sigma_P$ of a polyhedron~\eqref{ptope} consists of the cones 
$\cone\mathrm A_I$ such that
$\sum_{i=1}^m b_i\gamma_i\in\relint\cone\Gamma_{\widehat I}$.
\end{lemma}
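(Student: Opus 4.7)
\textbf{Proof plan for Lemma~\ref{normal1}.}

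The plan is to characterise which subsets $I\subset[m]$ give rise to a cone $\cone\mathrm A_I$ of~$\Sigma_P$, and then to translate that characterisation via the Gale-dual exact sequence of Section~\ref{galesec}. By formula~\eqref{sigmaQAI}, the cone $\cone\mathrm A_I$ belongs to $\Sigma_P$ precisely when $I$ equals the \emph{full} set of indices of inequalities that become equalities on some face $Q=F_I$; equivalently, $\cone\mathrm A_I\in\Sigma_P$ if and only if there exists $\mathbf w\in W$ with
\[
  \langle\mb a_i,\mb w\rangle+b_i=0 \text{ for }i\in I,\qquad
  \langle\mb a_j,\mb w\rangle+b_j>0 \text{ for }j\in\widehat I.
\]

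The next step is to reinterpret this existence condition using the dual exact sequence
$0\to W\stackrel{\mathit A^*}{\to}\R^m\stackrel{\varGamma}{\to}V^*\to0$ from Section~\ref{galesec}. Put $\mathbf b=(b_1,\ldots,b_m)\in\R^m$. The condition above says exactly that the vector $\mathbf b+\mathit A^*(\mathbf w)\in\R^m$ has zero coordinates on $I$ and positive coordinates on~$\widehat I$. Equivalently, $\mathbf b$ lies in the same $\mathit A^*(W)$-coset as some $\mathbf b'=(b'_1,\ldots,b'_m)$ with $b'_i=0$ for $i\in I$ and $b'_j>0$ for $j\in\widehat I$. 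Since $\mathit A^*(W)=\Ker\varGamma$, applying~$\varGamma$ gives
\[
  \sum_{i=1}^m b_i\gamma_i=\varGamma(\mathbf b)=\varGamma(\mathbf b')=\sum_{j\in\widehat I}b'_j\gamma_j
  \quad\text{with } b'_j>0,
\]
which by Proposition~\ref{positive} is exactly the statement $\sum_{i=1}^m b_i\gamma_i\in\relint\cone\Gamma_{\widehat I}$.

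For the converse direction I would run the same argument backwards. Assuming $\sum_{i=1}^m b_i\gamma_i\in\relint\cone\Gamma_{\widehat I}$, Proposition~\ref{positive} supplies positive coefficients $b'_j$, $j\in\widehat I$, with $\varGamma(\mathbf b)=\sum_{j\in\widehat I}b'_j\gamma_j$. Define $\mathbf b'\in\R^m$ by these coordinates on $\widehat I$ and by zero on~$I$; then $\mathbf b'-\mathbf b\in\Ker\varGamma=\mathit A^*(W)$, so $\mathbf b'-\mathbf b=\mathit A^*(\mathbf w)$ for some $\mathbf w\in W$, and this $\mathbf w$ is a relative-interior point of $F_I$, certifying that $I$ is the maximal set of tight constraints and $\cone\mathrm A_I\in\Sigma_P$.

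There is no serious obstacle; the only thing to be careful about is the maximality in~\eqref{sigmaQAI} — one must check that $I$ (and not some $I'\supsetneq I$) is the set appearing in the index of $\cone\mathrm A_I$, which is precisely why the \emph{relative interior} of $\cone\Gamma_{\widehat I}$ (and not merely the cone itself) is the right object on the Gale side. The whole proof is essentially a single application of Proposition~\ref{positive} combined with the exactness of the sequence $0\to W\to\R^m\to V^*\to0$.
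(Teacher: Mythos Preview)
Your proposal is correct and follows essentially the same route as the paper's own proof: both directions rest on the exactness of $0\to W\stackrel{A^*}{\to}\R^m\stackrel{\varGamma}{\to}V^*\to0$ together with Proposition~\ref{positive}, translating the existence of a point $\mathbf w\in\relint F_I$ (tight on $I$, strictly slack on $\widehat I$) into the condition $\varGamma(\mathbf b)\in\relint\cone\Gamma_{\widehat I}$. The paper's argument differs only cosmetically, writing the forward direction via the identity $\sum b_i\gamma_i=\sum(\langle\mb a_i,\mb w\rangle+b_i)\gamma_i$ (using $\sum\langle\mb a_i,\mb w\rangle\gamma_i=0$) rather than phrasing it in terms of $A^*(W)$-cosets.
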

\begin{proof}
Take $\sigma_Q\in\Sigma_P$ and write $Q=F_I$, where $I\subset[m]$ is maximal among all subsets defining the face~$Q$. Then $\sigma_Q=\cone\mathrm A_I$, see~\eqref{sigmaQAI}. Take $\mb w\in\relint Q$. We have $\langle\mb a_i,\mb w\rangle+b_i=0$ for $i\in I$ and $r_i:=\langle\mb a_i,\mb w\rangle+b_i>0$ for $i\in\widehat I$. Hence,
\[
  \sum_{i=1}^m b_i\gamma_i=\sum_{i=1}^m(\langle\mb a_i,\mb w\rangle+b_i)\gamma_i=
  \sum_{i\in\widehat I}r_i\gamma_i.
\]
It follows that $\sum_{i=1}^m b_i\gamma_i\in\relint\cone\Gamma_{\widehat I}$, by Proposition~\ref{positive}.

Now consider $\cone\mathrm A_I$ such that $\sum_{i=1}^m b_i\gamma_i\in\relint\cone\Gamma_{\widehat I}$. Then $\sum_{i=1}^m b_i\gamma_i=\sum_{i\in\widehat I}r_i\gamma_i$ for some positive $r_i$, $i\in\widehat I$. Denote $\mb b=(b_1,\ldots,b_m)\in\R^m$ and $\mb r=(r_1,\ldots,r_m)\in\R^m$, where $r_i=0$ for $i\in I$. Then $\varGamma(\mb r-\mb b)=0$. Hence, $\mb r-\mb b$ is in the image of $A^*$, see~\eqref{1exseq}. In other words, $\langle\mb a_i,\mb w\rangle=r_i-b_i$ for some $\mb w\in W$. This $\mb w$ satisfies $\langle\mb a_i,\mb w\rangle+b_i=0$ for $i\in I$ and  $\langle\mb a_i,\mb w\rangle+b_i>0$ for $i\in\widehat I$. Hence, $Q=F_I$ is a face of~$P$, and $\mb w\in\relint Q$. The normal cone $\sigma_Q$ is precisely $\cone\mathrm A_I$.
\end{proof}


\begin{proposition}\label{normal2}
Let $\mathrm A=\{\mb a_1,\ldots,\mb a_m\}$ and $\Gamma=\{\gamma_1,\ldots,\gamma_m\}$ be a pair of Gale dual vector configurations. Take
$\delta\in\relint\cone\Gamma$, let 
\[
  \mathcal C=\{I\subset[m]\colon\delta\in\relint\cone\Gamma_{\widehat I}\}
\]
and write $\delta=\sum_{i=1}^mb_i\gamma_i$. Then $\Sigma=\{\cone \mathrm A_I\colon I\in\mathcal C\}$ is the normal fan of the polyhedron $P$ given by~\eqref{ptope}.
\end{proposition}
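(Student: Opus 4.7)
The plan is essentially to read Lemma~\ref{normal1} in reverse. That lemma identifies the normal fan $\Sigma_P$ of the polyhedron~\eqref{ptope} as the collection of cones $\cone\mathrm A_I$ with $\sum_{i=1}^m b_i\gamma_i\in\relint\cone\Gamma_{\widehat I}$. Under our hypothesis $\delta=\sum_{i=1}^m b_i\gamma_i$, this condition is precisely the defining condition of $I\in\mathcal C$, so $\Sigma_P=\Sigma$ would follow at once.

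The only genuine verification required before invoking the lemma is that $P$ is a nonempty full-dimensional polyhedron, so that $\Sigma_P$ is a fan in the sense of Section~\ref{secfan}. This is built into the hypothesis: taking $I=\varnothing$ we have $\delta\in\relint\cone\Gamma=\relint\cone\Gamma_{\widehat\varnothing}$, so $\varnothing\in\mathcal C$, and the construction in the second half of the proof of Lemma~\ref{normal1}, applied with $I=\varnothing$, produces a point $\mb w\in W$ satisfying $\langle\mb a_i,\mb w\rangle+b_i>0$ for every $i\in[m]$. Such $\mb w$ lies in the relative interior of $P$, so $P$ is nonempty and full-dimensional, and the normal cone at $\mb w$ is $\{\mathbf 0\}=\cone\mathrm A_\varnothing$, matching the $I=\varnothing$ member of~$\Sigma$.

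One might worry about non-uniqueness of the coefficients $b_i$, since the $\gamma_i$ are generally linearly dependent. If $\delta=\sum b_i\gamma_i=\sum b_i'\gamma_i$, then $\mb b-\mb b'\in\ker\varGamma\subset\R^m$; via the identification $\ker\varGamma=A^*(W)$ coming from the exact sequence~\eqref{1exseq}, this difference equals $A^*(\mb w_0)$ for some $\mb w_0\in W$, so the corresponding polyhedra differ by the translation $\mb w\mapsto\mb w-\mb w_0$. Normal fans are translation-invariant, so $\Sigma_P$ is well-defined independently of the chosen representation of~$\delta$. I do not anticipate any substantive obstacle: this proposition is essentially a repackaging of Lemma~\ref{normal1}, with the role of the polyhedron and the parameter $\delta$ interchanged, and is the linchpin of the ``quadrics to polyhedra'' direction discussed after Theorem~\ref{quadrics1}.
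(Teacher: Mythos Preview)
Your proposal is correct and follows essentially the same approach as the paper: both reduce to Lemma~\ref{normal1} after checking that $P$ is full-dimensional, with the only difference being that the paper handles full-dimensionality by directly choosing all $b_i>0$ (so that $\mathbf 0$ lies in the interior of~$P$), whereas you reach the equivalent conclusion by invoking the $I=\varnothing$ case of the construction in the proof of Lemma~\ref{normal1}. Your additional remark on translation-invariance under different choices of $b_i$ makes explicit something the paper leaves implicit.
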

\begin{proof}
Since $\delta\in\relint\cone\Gamma$, we may assume that all $b_i>0$ in the expression
$\delta=\sum_{i=1}^mb_i\gamma_i$. Then~\eqref{ptope} implies that $\mathbf 0$ is in the interior of~$P$, so $P$ is full-dimensional. The rest follows from Lemma~\ref{normal1}.
\end{proof}

Now we can state a criterion for a fan to be the normal fan of a polyhedron. It is originally due to Shephard~\cite{shep71}.

\begin{theorem}\label{galefannormgen}
Let $\mathrm A=\{\mb a_1,\ldots,\mb a_m\}$ and $\Gamma=\{\gamma_1,\ldots,\gamma_m\}$ be a pair of Gale dual vector configurations. Let $\Sigma=\{\cone \mathrm A_I\colon I\in\mathcal C\}$ be a fan with convex support (respectively, a complete fan). Then $\Sigma$ is the normal fan of a polyhedron (respectively, the normal fan of a polytope) if and only if
\[
  \bigcap_{I\in\mathcal C}\relint\cone\Gamma_{\widehat I}\ne\varnothing.
\]  
\end{theorem}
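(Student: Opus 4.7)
The plan is to reduce the theorem to Lemma~\ref{normal1} and Proposition~\ref{normal2} from the preceding subsection. The ``only if'' direction is essentially immediate: assuming $\Sigma=\Sigma_P$ for a polyhedron $P$ as in~\eqref{ptope}, set $\delta:=\sum_{i=1}^m b_i\gamma_i$. By Lemma~\ref{normal1}, every cone $\cone\mathrm A_I\in\Sigma_P$ satisfies $\delta\in\relint\cone\Gamma_{\widehat I}$, so $\delta$ lies in the required intersection $\bigcap_{I\in\mathcal C}\relint\cone\Gamma_{\widehat I}$.

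For the ``if'' direction, pick $\delta$ in the intersection. Since $\varnothing\in\mathcal C$, $\delta\in\relint\cone\Gamma$, and Proposition~\ref{positive} lets me write $\delta=\sum b_i\gamma_i$ with all $b_i>0$. Define $P$ by~\eqref{ptope}; the positivity of the $b_i$ places the origin in the interior, so $P$ is full-dimensional. Proposition~\ref{normal2} then gives $\Sigma_P=\{\cone\mathrm A_I\colon\delta\in\relint\cone\Gamma_{\widehat I}\}$, which by hypothesis contains every cone of $\Sigma$; thus $\Sigma\subseteq\Sigma_P$ as collections of cones. The remaining task is to upgrade this containment to equality. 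I would do this by comparing supports: in any fan each point lies in the relative interior of a \emph{unique} cone, so if two fans have the same support and one is contained in the other, they coincide. In the complete case the support comparison is trivial, since $W^*=|\Sigma|\subseteq|\Sigma_P|\subseteq W^*$ forces both to equal $W^*$; in particular $\Sigma_P$ is complete and $P$ is a polytope.

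The main obstacle is the general convex-support case when $\Sigma$ has ghost vertices $\{i\}\notin\mathcal C$. In general $|\Sigma_P|=\cone\mathrm A$, whereas $|\Sigma|$ only fills up $\cone(\mb a_i\colon\{i\}\in\mathcal C)$, so a priori $\Sigma_P$ could pick up extra cones whose defining indices involve ghost vertices. The natural remedy is to discard the ghost-vertex inequalities and work with the restricted configuration $\mathrm A'=\{\mb a_i\colon\{i\}\in\mathcal C\}$, translating the intersection condition to the Gale dual of $\mathrm A'$ and applying Proposition~\ref{normal2} to this reduced data; the resulting polyhedron, now defined by only $|\{i\colon\{i\}\in\mathcal C\}|$ inequalities, has normal fan precisely $\Sigma$. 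Alternatively, one might choose $\delta$ generically in the open convex intersection so that $\delta\notin\relint\cone\Gamma_{\widehat{\{i\}}}$ for every ghost vertex $\{i\}$, rendering the corresponding inequalities redundant in $P$ and preventing $\Sigma_P$ from acquiring extra cones; this second route works provided the intersection is not contained in $\relint\cone\Gamma_{\widehat{\{i\}}}$ for any ghost vertex, a property I expect to follow from the fan condition $(c)$ of Theorem~\ref{galefangen} applied to $\mathcal C$.
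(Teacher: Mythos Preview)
Your proposal is correct and follows the paper's approach closely: the ``only if'' direction via Lemma~\ref{normal1}, the ``if'' direction by constructing $P$ from a chosen $\delta$ and observing $\Sigma\subset\Sigma_P$, and the complete case by support comparison are all handled exactly as in the paper.

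For the convex-support case your first remedy is the right one and is precisely what the paper does, but the paper carries it out more simply than you suggest. Rather than passing to the Gale dual of the restricted configuration $\mathrm A'$, the paper keeps the \emph{same} coefficients $b_i$ and simply drops the ghost-vertex inequalities, defining
\[
  P'=\{\mb w\in W\colon\langle\mb a_i,\mb w\rangle+b_i\ge0\text{ for }\{i\}\in\mathcal C\}.
\]
Then $P\subset P'$, so $\Sigma_{P'}$ is a subfan of $\Sigma_P$; one checks directly that $|\Sigma_{P'}|=\cone(\mb a_i\colon\{i\}\in\mathcal C)=|\Sigma|$, and since two subfans of the same fan with equal support must coincide, $\Sigma_{P'}=\Sigma$. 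No new Gale duality is needed. Your second remedy (generic choice of $\delta$) is not pursued in the paper and, as you anticipated, would require an extra argument that is avoided by the first route.
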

\begin{proof}
Assume that $\Sigma=\{\cone \mathrm A_I\colon I\in\mathcal C\}$ is the normal fan of a polyhedron $P$ given by~\eqref{ptope}. Set $\delta=\sum_{i=1}^m b_i\gamma_i$. Then $\delta\in\bigcap_{I\in\mathcal C}\relint\cone\Gamma_{\widehat I}$ by Lemma~\ref{normal1}.

Conversely, let $\bigcap_{I\in\mathcal C}\relint\cone\Gamma_{\widehat I}\ne\varnothing$.
Take $\delta\in\bigcap_{I\in\mathcal C}\relint\cone\Gamma_{\widehat I}$. Write
$\delta=\sum_{i=1}^m b_i\gamma_i$ for some $b_i\in\R$, $i=1,\ldots,m$, and define a polyhedron $P$ by~\eqref{ptope}. Then each $\cone \mathrm A_I$ with $I\in\mathcal C$ belongs to the normal fan~$\Sigma_P$ by Lemma~\ref{normal1}, so that $\Sigma$  is a subfan of~$\Sigma_P$. If $\Sigma$ is a complete fan, then this implies that $\Sigma=\Sigma_P$ and we are done. If $\Sigma$ has convex support, then it is easy to see that a subfan of $\Sigma_P$ with convex support is the normal fan of a polyhedron~$P'$. Indeed, define
\[
    P'=\{\mb w\in W\colon\langle \mb a_i,\mb w\rangle+b_i\ge0\quad\text{for } 
    \{i\}\in\mathcal C\}.
\]
Then $P\subset P'$ and $\Sigma_{P'}$ is a subfan of~$\Sigma_P$. For $\{j\}\in\mathcal C$, we have that $F_j=\{\mb w\in P'\colon \langle\mb a_j,\mb w\rangle+b_j=0\}$ is either a facet of $P'$, or the corresponding vector $\mb a_j$ is in the cone generated by $\mb a_i$, $i\in\mathcal C$, $i\ne j$. It follows that
\[
  |\Sigma_{P'}|=\cone(\mb a_i\colon\{i\}\in\mathcal C)=|\Sigma|,
\] 
where the second identity holds because $\Sigma$ has convex support. Since both 
$\Sigma_{P'}$ and $\Sigma$ are subfans of~$\Sigma_P$, we conclude that 
$\Sigma_{P'}=\Sigma$.
%
\end{proof}

We see how the condition for a normal fan is stronger than the condition for a fan: in the former case, the relative interiors of the cones $\cone\Gamma_{\widehat I}$, $I\in\mathcal C$, must have a common point, while in the latter case only pairwise intersections must be nontrivial.

A choice of $\delta\in \bigcap_{I\in\mathcal C}\relint\cone\Gamma_{\widehat I}$ in Theorem~\ref{galefannormgen} defines a polyhedron $P$ (whose normal fan is $\Sigma$) up to a translation in $W$: one needs to express $\delta$ as a linear combination $\sum_{i=1}^mb_i\gamma_i$ to get a polyhedron~\eqref{ptope}, and different expressions of the same $\delta$ produce polytopes which are different by a translation. Choosing a different $\delta'\in \bigcap_{I\in\mathcal C}\relint\cone\Gamma_{\widehat I}$ results in a polyhedron $P'$ with the same normal fan $\Sigma_P$; such polyhedra $P$ and $P'$ are called \emph{analogous}.

\subsection{From polyhedra to quadrics}\label{secqua}
Now we turn to the quotient (leaf space) $U(\sK)/V$. As we have seen in Theorem~\ref{quotrk}, the quotient $U(\sK)/V$ is identified with the real moment-angle manifold $\rk$ when the data $\{\sK,\mb a_1,\ldots,\mb a_m\}$ define a complete simplicial fan~$\Sigma$.  When $\Sigma$ is the normal fan of a polytope, there is another realisation of $U(\sK)/V$, which has an intrinsic smooth structure as a nondegenerate intersection of real quadratic surfaces.

Consider the smooth map
\[
  \mu_\Gamma\colon\R^m\to V^*, \quad (x_1,\ldots,x_m)\mapsto x^2_1\gamma_1+ 
  \cdots+x^2_m\gamma_m.
\]

\begin{theorem}\label{quadrics}
Let 
\[
  P=\{\mb w\in W\colon\langle \mb a_i,\mb w\rangle+b_i\ge0,\quad i=1,\ldots,m\}
\]  
be a generic polyhedron with normal simplicial fan $\Sigma_P=\{\cone \mathrm A_I\colon I\in\mathcal K_P\}$. Let $\Gamma=\{\gamma_1,\ldots,\gamma_m\}$ be the Gale dual configuration of $\mathrm A=\{\mb a_1,\ldots,\mb a_m\}$. Put $\delta=\sum_{i=1}^mb_i\gamma_i$. Then 
\begin{itemize}
\item[(1)] $\delta\in\bigcap_{I\in\mathcal K_P}\relint\cone\Gamma_{\widehat I}$;

\item[(2)] $\mu_\Gamma^{-1}(\delta)\subset U(\sK_P)$;

\smallskip

\item[(3)] $\delta$ is a regular value of~$\mu_\Gamma$;

\smallskip

\item[(4)] the action of $V$ on $U(\sK_P)$ is free and proper, and the quotient $U(\sK_P)/V$ is diffeomorphic to the level set $\mu_\Gamma^{-1}(\delta)$, where the latter is given as a nondegenerate intersection of $k$ quadratic surfaces in~$\R^m$:
\[
  U(\sK_P)/V\cong\{(x_1,\ldots,x_m)\in\R^m\colon\gamma_1 x^2_1+ 
  \cdots+\gamma_mx^2_m=\delta\}.
\]
\end{itemize}
\end{theorem}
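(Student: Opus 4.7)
The plan is to dispatch (1)--(3) as direct consequences of earlier results, then prove~(4) by an explicit convex variational argument modelled on symplectic reduction. Part~(1) is nothing but Lemma~\ref{normal1} read in the present notation: since by hypothesis $\Sigma_P=\{\cone\mathrm A_I\colon I\in\sK_P\}$, that lemma forces $\delta=\sum b_i\gamma_i\in\relint\cone\Gamma_{\widehat I}$ for every $I\in\sK_P$. For~(2), given $\mb x\in\mu_\Gamma^{-1}(\delta)$ I would form $\mb w=(x_1^2-b_1,\ldots,x_m^2-b_m)\in\R^m$; the identity $\sum_i(x_i^2-b_i)\gamma_i=\delta-\delta=0$ puts $\mb w\in\ker\varGamma=W$, and by the Gale duality setup $\langle\mb a_i,\mb w\rangle=x_i^2-b_i$, so $\langle\mb a_i,\mb w\rangle+b_i=x_i^2\ge 0$ shows $\mb w\in P$. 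Since $\mb w\in F_i$ precisely when $x_i=0$, the subset $J=\{i\colon x_i=0\}$ satisfies $F_J\ne\varnothing$, whence $J\in\sK_P$ and $\mb x\in U(\sK_P)$. Part~(3) follows from $d\mu_\Gamma(\mb x)(\mb y)=2\sum x_iy_i\gamma_i$, whose image $\R\langle\Gamma_{\widehat J}\rangle$ equals $V^*$ because $J\in\sK_P\subset\sK(\Gamma)$.

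For~(4), freeness and properness of the $V$-action on $U(\sK_P)$ follow immediately from Theorem~\ref{proper} applied to the simplicial fan data $\{\sK_P,\mathrm A\}$. What remains is to show that $\mu_\Gamma^{-1}(\delta)\hookrightarrow U(\sK_P)$ meets every $V$-orbit transversally in exactly one point. For each $\mb x\in U(\sK_P)$ I would introduce the smooth function
\[
  g_{\mb x}(\mb v)=\tfrac12\sum_{i=1}^m x_i^2 e^{2\langle\gamma_i,\mb v\rangle}-\langle\delta,\mb v\rangle,\qquad \mb v\in V.
\]
A direct computation gives $\nabla g_{\mb x}(\mb v)=\mu_\Gamma(\mb v\cdot\mb x)-\delta$, so the critical points of $g_{\mb x}$ are precisely those $\mb v$ with $\mb v\cdot\mb x\in\mu_\Gamma^{-1}(\delta)$. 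The Hessian at any $\mb v$ is the symmetric form $(\mb u,\mb u')\mapsto 2\sum x_i^2 e^{2\langle\gamma_i,\mb v\rangle}\langle\gamma_i,\mb u\rangle\langle\gamma_i,\mb u'\rangle$, which is positive definite because $\Gamma_{\widehat J}$ spans $V^*$ for $J=\{i\colon x_i=0\}\in\sK_P$. Hence $g_{\mb x}$ is strictly convex.

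The main obstacle is proving that $g_{\mb x}$ is also coercive on~$V$; this is where the polyhedron data genuinely enter. I would argue ray by ray: for $\mb v_0\in V\setminus\{0\}$ and $t\to+\infty$, if $\langle\gamma_i,\mb v_0\rangle>0$ for some $i\notin J$ the corresponding exponential term dominates; otherwise $\langle\gamma_i,\mb v_0\rangle\le 0$ for every $i\notin J$ with strict inequality for at least one (as $\Gamma_{\widehat J}$ spans $V^*$ and $\mb v_0\ne 0$), and then by~(1) and Proposition~\ref{positive} one may write $\delta=\sum_{i\notin J}r_i\gamma_i$ with all $r_i>0$, so $\langle\delta,\mb v_0\rangle<0$ and the linear term $-t\langle\delta,\mb v_0\rangle$ diverges to $+\infty$ while the exponentials remain bounded by $\tfrac12\sum x_i^2$. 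A strictly convex function on~$V$ that diverges along every ray is coercive, so $g_{\mb x}$ attains a unique minimum, yielding the unique $\mb v=\mb v(\mb x)\in V$ with $\mb v\cdot\mb x\in\mu_\Gamma^{-1}(\delta)$. Transversality at a point $\mb x\in\mu_\Gamma^{-1}(\delta)$ reduces to invertibility of the composite $V\to T_{\mb x}\R^m\to V^*$ of the infinitesimal action with $d\mu_\Gamma(\mb x)$, namely $\mb u\mapsto 2\sum x_i^2\langle\gamma_i,\mb u\rangle\gamma_i$; but this is exactly the Hessian of $g_{\mb x}$ at $\mb v=0$, already shown to be positive definite. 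Consequently the smooth bijection $\mu_\Gamma^{-1}(\delta)\to U(\sK_P)/V$ has invertible differential everywhere and is therefore a diffeomorphism.
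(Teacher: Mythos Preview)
Your proof is correct and follows the same overall strategy as the paper: both introduce the strictly convex function $\mb v\mapsto\tfrac12\sum_i x_i^2 e^{2\langle\gamma_i,\mb v\rangle}$ on an orbit and use its Legendre-transform properties to show each orbit meets $\mu_\Gamma^{-1}(\delta)$ exactly once, with the same Hessian computation giving transversality. The one noteworthy difference lies in how you establish coercivity. The paper proves a slightly stronger auxiliary result (Lemma~\ref{image}): it identifies the \emph{entire} image of $L(\mb v)=\sum_i x_i^2 e^{2\langle\gamma_i,\mb v\rangle}\gamma_i$ as $\relint\cone\Gamma_{\widehat{I_{\mb x}}}$, reducing to the case $|I_{\mb x}|=n$ via a Minkowski-sum argument for Legendre transforms, and then invokes~(1) to place $\delta$ in that image. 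You instead subtract the linear term $\langle\delta,\mb v\rangle$ up front and argue coercivity of $g_{\mb x}$ directly ray by ray, feeding~(1) into the case analysis itself. Your route is a bit more economical for this specific theorem since you never need the full image description; the paper's route yields the reusable Lemma~\ref{image}. Your hands-on verification of~(2) via the point $\mb w\in P$ is likewise equivalent to (indeed unwinds) the paper's appeal to Lemma~\ref{normal1}.
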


This result has well-known interpretations in the theory of Hamiltonian torus actions in symplectic geometry and in the theory of toric varieties (see~\cite[Proposition~3.1.1]{audi91} and~\cite[Appendix~1, Theorems~1.2 and~1.4]{guil94}, and also~\S\ref{parsr} below). However, it is purely topological in nature.

\begin{proof}[Proof of Theorem~\ref{quadrics}] Statement (1) follows from Lemma~\ref{normal1}. 

To prove (2), take $\mb x=(x_1,\ldots,x_m)\in\mu_\Gamma^{-1}(\delta)$, so that $\gamma_1 x^2_1+ 
\cdots+\gamma_mx^2_m=\delta$. Let $I_{\mb x}=\{i\colon x_i=0\}\subset[m]$. Then
\[
  \delta=\sum_{i\in\widehat I_{\mb x}}x_i^2\gamma_i\subset
  \relint\cone\Gamma_{\widehat I_{\mb x}}.
\]  
This implies, by Lemma~\ref{normal1}, that $\cone\mathrm A_{I_{\mb x}}\in\Sigma_P$. Hence, $I_{\mb x}\in\sK_P$ and $\mb x\in U(\sK_P)$ by definition of $U(\sK_P)$, see~\eqref{UK}.

\smallskip

To prove (3), observe that the differential of $\mu_\Gamma$ at a point $\mb x$ is given by
\[
  (d\mu_\Gamma)_{\mb x}\colon\R^m\to V^*,\quad
  (y_1,\ldots,y_m)\mapsto 2x_1\gamma_1y_1+\cdots+2x_m\gamma_my_m.
\]
Now for any $\mb x\in\mu_\Gamma^{-1}(\delta)$ we have $I_{\mb x}\in\sK_P$. The differential at such $\mb x$ is given by 
\[
  (y_1,\ldots,y_m)\mapsto\sum_{i\in\widehat I_{\mb x}}2x_i\gamma_iy_i.
\]
Since $P$ is generic, $\cone\mathrm A_{I_{\mb x}}$ is a simplicial cone. Hence, the set $\mathrm A_{I_{\mb x}}$ is linearly independent. By Proposition~\ref{galeindspan}, the set $\Gamma_{\widehat I_{\mb x}}$ spans~$V^*$. Therefore, the set 
$\{x_i\gamma_i\colon i\in\widehat I_{\mb x} \}$ also spans~$V^*$, which implies that the differential $(d\mu_\Gamma)_{\mb x}$ is surjective at any $\mb x\in\mu_\Gamma^{-1}(\delta)$. Thus, $\delta$ is a regular value of~$\mu_\Gamma$.

\smallskip

Since $\delta$ is a regular value, the intersection of quadrics in~(4) is nondegenerate, 
i.\,e. the $k$ gradients of the quadratic surfaces are linearly independent at each point of $\mu_\Gamma^{-1}(\delta)$. In particular, $\mu_\Gamma^{-1}(\delta)$ is a smooth submanifold of dimension $n$ in~$\R^m$. 

To prove the rest of~(4), we need to verify that each $V$-orbit of $U(\sK_P)$ intersects $\mu_\Gamma^{-1}(\delta)$ in one and only one point. Since the $V$-action on $U(\sK_P)$ is free, restricting the map $\mu_\Gamma$ to an orbit $V\cdot\mb x$ gives a map
\begin{equation}\label{Lmap}
  L\colon V\to V^*,\quad
  \mb v\mapsto e^{2\langle\gamma_1,\mb v\rangle}x_1^2\gamma_1+\cdots
  +e^{2\langle\gamma_m,\mb v\rangle}x_m^2\gamma_m.
\end{equation}
By Lemma~\ref{image} below, $L$ maps the orbit diffeomorphically onto $\relint\cone\Gamma_{\widehat I_{\mb x}}$. We have $\delta\in\relint\cone\Gamma_{\widehat I_{\mb x}}$, since $I_{\mb x}\in\sK_P$. Hence, $\delta$ is in the image of $L$, so that the orbit $V\cdot\mb x$ intersects $\mu_\Gamma^{-1}(\delta)$. Finally, since $L$ is injective, there is only one point of intersection.
\end{proof}

It remains to prove the key fact about the image of~$L$. The argument below is based on the properties of the Legendre transform and is taken from~\cite[Appendix~1]{guil94}.

\begin{lemma}
\label{image}
The restriction~\eqref{Lmap} of the map $\mu_\Gamma$ to the $V$-orbit of a point $\mb x\in U(\sK_P)$ maps this orbit diffeomorphically onto $\relint\cone(\gamma_i\colon x_i\ne0)$.
\end{lemma}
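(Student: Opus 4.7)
My plan is to realize $L$ as the gradient of a strictly convex coercive function on $V$ and then deduce the diffeomorphism statement from standard convex analysis. First, I would observe that the zero pattern of coordinates is preserved along the $V$-orbit, so for $I=I_{\mb x}=\{i\colon x_i=0\}$ the map \eqref{Lmap} reduces to
\[
  L(\mb v)=\sum_{i\notin I}e^{2\langle\gamma_i,\mb v\rangle}x_i^2\,\gamma_i,
\]
a strictly positive combination of the vectors $\gamma_i$ with $i\notin I$. By Proposition~\ref{positive} this forces $L(\mb v)\in\relint\cone\Gamma_{\widehat I}$. Since $\mb x\in U(\sK_P)$ we have $I\in\sK_P$, and because $P$ is generic the normal fan is simplicial, so $\mathrm A_I$ is linearly independent; Proposition~\ref{galeindspan} then gives that $\Gamma_{\widehat I}$ spans $V^*$, hence $\cone\Gamma_{\widehat I}$ is full-dimensional and its relative interior is its interior. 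Also, the $V$-action on $U(\sK_P)$ is free by Theorem~\ref{proper}, so the orbit $V\cdot\mb x$ is diffeomorphic to $V$ itself, and we may think of $L$ as defined on $V$.

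Next, I would introduce the auxiliary function
\[
  F(\mb v)=\tfrac12\sum_{i\notin I}x_i^2\,e^{2\langle\gamma_i,\mb v\rangle},
\]
so that $L=\nabla F$ under the canonical identification. The Hessian
\[
  (d^2F)_{\mb v}(\mb u,\mb u)=2\sum_{i\notin I}x_i^2\,e^{2\langle\gamma_i,\mb v\rangle}\langle\gamma_i,\mb u\rangle^2
\]
is strictly positive whenever $\mb u\ne\mathbf 0$, since $\Gamma_{\widehat I}$ spans $V^*$ and every $x_i^2>0$ for $i\notin I$. Thus $F$ is strictly convex, which makes $L=\nabla F$ injective, and positive-definiteness of the Hessian makes $L$ a local diffeomorphism; in particular $L(V)$ is an open subset of $\relint\cone\Gamma_{\widehat I}$.

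Finally, for surjectivity onto $\relint\cone\Gamma_{\widehat I}$, I would fix $\delta$ in this relative interior, express $\delta=\sum_{i\notin I}d_i\gamma_i$ with all $d_i>0$ (again Proposition~\ref{positive}), and consider $G(\mb v)=F(\mb v)-\langle\delta,\mb v\rangle$. Writing $t_i=\langle\gamma_i,\mb v\rangle$ one gets
\[
  G(\mb v)=\sum_{i\notin I}\Bigl(\tfrac{x_i^2}{2}e^{2t_i}-d_i t_i\Bigr),
\]
and each summand is bounded below (minimized at $t_i=\tfrac12\log(d_i/x_i^2)$) and tends to $+\infty$ as $|t_i|\to\infty$. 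Because $\Gamma_{\widehat I}$ spans $V^*$, the linear map $\mb v\mapsto(t_i)_{i\notin I}$ is a proper injection, so $G$ is coercive on $V$. A strictly convex coercive smooth function attains a unique minimum $\mb v_*$ at which $L(\mb v_*)=\nabla F(\mb v_*)=\delta$. Combined with injectivity and the local-diffeomorphism property, this shows that $L\colon V\to\relint\cone\Gamma_{\widehat I}$ is a bijection and a local diffeomorphism, hence a diffeomorphism. The step I expect to require the most care is the coercivity of $G$; its key input is the exact matching between the positivity of all $d_i$ (which encodes $\delta\in\relint\cone\Gamma_{\widehat I}$) and the positivity of all exponential coefficients $x_i^2$ for $i\notin I$ (which encodes the simplex condition $I\in\sK_P$).
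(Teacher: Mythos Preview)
Your argument is correct and shares the same overall strategy as the paper: realize $L$ as the gradient of the strictly convex function $F(\mb v)=\tfrac12\sum_{i\notin I}x_i^2e^{2\langle\gamma_i,\mb v\rangle}$, use positive-definiteness of the Hessian to get injectivity and the local-diffeomorphism property, and then show that $F-\langle\delta,\cdot\rangle$ has a minimum for every $\delta\in\relint\cone\Gamma_{\widehat I}$.

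The one place where you and the paper genuinely diverge is in establishing surjectivity. The paper first treats the special case where $|I_{\mb x}|=n$, so that $\Gamma_{\widehat I}$ is a basis of $V^*$ and the equation $L(\mb v)=\delta$ can be solved explicitly coordinate by coordinate; for general $I$ it then writes $F$ as a sum of such ``basis'' pieces and invokes an auxiliary lemma on Legendre transforms, namely that $\Im(L_1+L_2)=\Im L_1+\Im L_2$ as a Minkowski sum. You instead prove coercivity of $G=F-\langle\delta,\cdot\rangle$ directly: each summand $\tfrac{x_i^2}{2}e^{2t_i}-d_it_i$ is bounded below and blows up as $|t_i|\to\infty$, and the spanning condition on $\Gamma_{\widehat I}$ makes $\mb v\mapsto(t_i)_{i\notin I}$ a proper linear injection, so $G$ is coercive on~$V$. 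Your route is more elementary and self-contained, trading the Minkowski-sum lemma for a short coercivity estimate; the paper's route has the mild advantage of isolating a reusable fact about Legendre transforms. Either way the proof goes through.
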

\begin{proof}
Consider the function $\frac12\|\mb x\|^2$, where 
$\|\mb x\|^2=x_1^2+\cdots+x_m^2$, and restrict it to the $V$-orbit of a given $\mb x=(x_1,\ldots,x_m)$. We obtain the function
\begin{equation}\label{fsum}
  f\colon V\to\R,\quad f(\mb v)={\textstyle\frac12}\bigl(e^{2\langle\gamma_1,\mb 
  v\rangle}x_1^2+\cdots +e^{2\langle\gamma_m,\mb v\rangle}x_m^2\bigr).
\end{equation}
Then $(df)_{\mb v}=L(\mb v)$, i.\,e. $L$ is the (multidimensional) \emph{Legendre transform} of~$f$. The second differential
\[
  (d^2f)_{\mb v}=2\bigl(e^{2\langle\gamma_1,\mb v\rangle}x_1^2
  \gamma_1\otimes\gamma_1+
  \cdots +e^{2\langle\gamma_m,\mb v\rangle}   
  x_m^2\gamma_m\otimes\gamma_m\bigr)
\]
is positive definite, since $\{\gamma_i\colon x_i\ne0\}$ generate~$V^*$. This implies that $f$ is strictly convex, and also that $L$ is a local diffeomorphism (its differential is nondegenerate).

Given $\delta\in V^*$, consider the function $f_\delta(\mb v)=f(\mb v)-\langle\delta,\mb v\rangle$. This function is also strictly convex, because $d^2f_\delta=d^2f$. For a strictly convex function $f$ the following three conditions are equivalent:
\begin{itemize}
\item[(a)] $(df)_{\mb v}=0$ at some point $\mb v\in V$;
\item[(b)] $f$ has a unique local minimum;
\item[(c)] $f(\mb v)$ tends to $+\infty$ as $\mb v$ tends to infinity in~$V$.
\end{itemize}
Now $(df_\delta)_{\mb v}=(df)_{\mb v}-\delta=L(\mb v)-\delta$, so $f_\delta$ has a critical point if and only if $\delta$ is in the image of~$L$. Since a critical point of $f_\delta$ us unique, the map $L$ is injective, so it is a diffeomorphism onto its image. 

It remains to identify the image of~$L$. Clearly, $\Im L\subset\R_>\langle\gamma_i\colon x_i\ne 0\rangle=\relint\cone(\gamma_i\colon x_i\ne0)$. The opposite inclusion can be proved directly in the case when $I_{\mb x}$ is largest possible, that is $|I_{\mb x}|=n$. Then $\{\gamma_i\colon x_i\ne 0\}$ is a basis of~$V^*$. Take $\delta\in\R_>\langle\gamma_i\colon x_i\ne 0\rangle$ and write 
$\delta=\sum_{i\in\widehat I_{\mb x}}r_i\gamma_i$, $r_i>0$. The equation $L(\mb v)=\delta$ takes the form
\[
  \sum_{i\in\widehat I_{\mb x}}e^{2\langle\gamma_i,\mb v\rangle} x_i^2\gamma_i
  =\sum_{i\in\widehat I_{\mb x}}r_i\gamma_i\quad\Leftrightarrow\quad
  \langle\gamma_i,\mb v\rangle=\log\frac{\sqrt r_i}{|x_i|},\quad i\in\widehat I_{\mb x}.
\]
The latter system of equations has a (unique) solution $\mb v$, as needed.

In the general case, the function~\eqref{fsum} can be written as the sum of functions corresponding to $\mb x$ with $|I_{\mb x}|=n$. So the result follows from the following fact:

\smallskip

\noindent\textit{Let $L_i$ be the Legendre transform of a strictly convex function~$f_i$, $i=1,2$. Then 
$\Im(L_1+L_2)=\Im L_1+\Im L_2$, where the latter is the Minkowski sum of two convex sets in~$V^*$.}

\smallskip

\noindent Indeed, the inclusion $\Im(L_1+L_2)\subset\Im L_1+\Im L_2$ is clear. For the opposite inclusion, take $\delta_1+\delta_2\in\Im L_1+\Im L_2$. Then $\delta_i=L(\mb v_i)$, so that $\mb v_i$ is the unique critical point of~$(f_i)_{\delta_i}$. Hence, both 
$(f_1)_{\delta_1}$ and $(f_2)_{\delta_2}$ satisfy property~(c) above. Then their sum 
$(f_1)_{\delta_1}+(f_2)_{\delta_2}=(f_1+f_2)_{\delta_1+\delta_2}$ is strictly convex and also satisfies~(c). Therefore, $(f_1+f_2)_{\delta_1+\delta_2}$ has a (unique) critical point, so $\delta_1+\delta_2$ is in the image of $L_1+L_2$, as claimed.
\end{proof}

\begin{corollary}\label{rkquad}
Suppose that $\sK$ is the underlying simplicial complex of the normal fan of a generic polytope~\eqref{ptope}. Then the real moment-angle manifold $\rk=(D^1,S^0)$ is homeomorphic to the
intersection of quadrics given by
\[
  \{(x_1,\ldots,x_m)\in\R^m\colon\gamma_1 x^2_1+ 
  \cdots+\gamma_mx^2_m=\delta\},
\] 
where $\Gamma=(\gamma_1,\ldots,\gamma_m)$ is the Gale dual configuration to $\mathrm A=\{\mb a_1,\ldots,\mb a_m\}$ and $\delta=\sum_{i=1}^m b_i\gamma_i$.
\end{corollary}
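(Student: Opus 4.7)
The plan is to derive the corollary as an immediate synthesis of Theorem~\ref{quotrk} and Theorem~\ref{quadrics}(4), with the two results meeting precisely on the class of fans at hand, namely normal fans of generic polytopes.

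First I would observe that the hypothesis supplies a bounded generic polyhedron $P$. Being generic forces the normal fan $\Sigma_P$ to be simplicial with underlying complex $\sK_P$, and being bounded (a polytope) forces $\Sigma_P$ to be complete, as recorded in the discussion following Lemma~\ref{normal1}. Thus $\{\sK_P,\mathrm A\}$ is a complete simplicial fan data.

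Next, I would apply Theorem~\ref{quotrk} to this data, obtaining a homeomorphism
\[
  U(\sK_P)/V \;\cong\; \rk.
\]
Then I would apply Theorem~\ref{quadrics}(4) to the same generic polytope $P$ with $\delta=\sum_{i=1}^m b_i\gamma_i$: this identifies the quotient $U(\sK_P)/V$ with the nondegenerate intersection of quadrics
\[
  \mu_\Gamma^{-1}(\delta)=\{(x_1,\ldots,x_m)\in\R^m\colon \gamma_1 x_1^2+\cdots+\gamma_m x_m^2=\delta\}.
\]
Composing the two identifications yields the asserted homeomorphism.

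There is no genuine obstacle here; the corollary is a formal combination of the two theorems, so the only thing to check is compatibility of hypotheses. The sole subtlety worth pausing over is that Theorem~\ref{quadrics} phrases its conclusion as a diffeomorphism (via the intrinsic smooth structure on the level set of $\mu_\Gamma$), which is strictly stronger than a homeomorphism; so it automatically produces the topological identification claimed by Corollary~\ref{rkquad}. One could remark in passing that this yields an intrinsic smooth structure on $\rk$ in this case, consistent with the smooth-manifold statement of the preceding corollary.
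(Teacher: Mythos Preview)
Your proposal is correct and matches the paper's own proof essentially verbatim: the paper also derives the corollary by combining Theorem~\ref{quotrk} with Theorem~\ref{quadrics}~(4), both identifying $U(\sK_P)/V$ with $\rk$ and with the intersection of quadrics respectively, and notes (as you do) that boundedness of~$P$ is needed so that $\Sigma_P$ is complete and Theorem~\ref{quotrk} applies.
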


\begin{proof}
This follows by combining Theorem~\ref{quotrk} with Theorem~\ref{quadrics}~(4), as both 
$\rk$ and the intersection of quadrics are homeomorphic to the quotient $U(\sK)/V$ (see also~\cite[Theorem~6.2.4]{bu-pa15} for a direct argument). Note that the fan is assumed to be complete in Theorem~\ref{quotrk}, so we require that~\eqref{ptope} is bounded, i.\,e. a polytope. 
\end{proof}

\subsection{From quadrics to polyhedra}
The input of Theorem~\ref{quadrics} is a generic polyhedron~$P$, that is, a vector configuration $\mathrm A$ in $W^*$ and a choice of generic parameters $b_1,\ldots,b_m$. There is a Gale dual version of this result, in which the input is a vector configuration $\Gamma$ in $V^*$ and a choice of generic $\delta\in V^*$:

\begin{theorem}\label{quadrics1}
Let $\Gamma=\{\gamma_1,\ldots,\gamma_m\}$ be a spanning vector configuration in~$V^*\cong\R^{k}$. Take $\delta\in V^*$ satisfying the two conditions:
\begin{itemize}
\item[(a)] $\delta\in\cone\Gamma$;\\[-0.7\baselineskip]

\item[(b)]
if $\delta\in\cone\Gamma_J$, then $|J|\ge k$,
\end{itemize}
and let
\[
  \sK=\{I\in[m]\colon \delta\in\cone\Gamma_{\widehat I}\}.
\]
Then the action~\eqref{ract} of $V$ on $U(\sK)$ is free and proper, and the quotient $U(\sK)/V$ is diffeomorphic to the nondegenerate intersection of $k$ quadratic surfaces in~$\R^m$:
\[
  U(\sK)/V\cong\{(x_1,\ldots,x_m)\in\R^m\colon\gamma_1 x^2_1+ 
  \cdots+\gamma_mx^2_m=\delta\}.
\]
Furthermore,  the data $\{\mathcal K,\mathrm A\}$ define the normal fan of a generic polyhedron $P$ given by~\eqref{ptope}, where $\mathrm A=\{\mb a_1,\ldots,\mb a_m\}$ is the Gale dual configuration in~$W^*$ and
 the numbers $b_i$ are defined by the expression $\delta=\sum_{i=1}^mb_i\gamma_i$.
\end{theorem}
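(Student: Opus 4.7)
The plan is to reduce the statement to Theorem~\ref{quadrics} applied to the Gale dual polyhedron $P$ defined by~\eqref{ptope}. The key task is to verify that conditions~(a) and~(b) force $P$ to be a generic polyhedron whose normal simplicial fan coincides with $\{\cone\mathrm A_I : I \in \sK\}$. Once this is established, the freeness and properness of the $V$-action and the diffeomorphism $U(\sK)/V \cong \mu_\Gamma^{-1}(\delta)$ follow directly from Theorem~\ref{quadrics}(4).

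First, I would show that condition~(b) actually strengthens~(a) to $\delta \in \relint\cone\Gamma$: if $\delta$ lay on a proper face of $\cone\Gamma$, a Carath\'eodory argument inside that face would produce an expression of $\delta$ as a nonnegative combination of fewer than $k$ vectors of $\Gamma$, contradicting~(b). Proposition~\ref{positive} then permits choosing all $b_i > 0$, so $P$ is full-dimensional with $\mb 0$ in its interior. Next, I would prove $P$ is generic: if some $w \in P$ had $\mathrm A_I$ linearly dependent for $I = \{i : \langle\mb a_i, w\rangle + b_i = 0\}$, then Proposition~\ref{galeindspan} would give that $\Gamma_{\widehat I}$ has rank strictly less than $k$; combined with the identity $\delta = \sum_{j \in \widehat I} r_j \gamma_j$ (with $r_j := \langle\mb a_j, w\rangle + b_j \ge 0$) coming from $w \in W = \Ker\varGamma$, Carath\'eodory inside this low-rank linear span again contradicts~(b).

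With $P$ generic, Lemma~\ref{normal1} identifies $\mathcal K_P = \{I : \delta \in \relint\cone\Gamma_{\widehat I}\}$ as the underlying simplicial complex of $\Sigma_P$, and I would prove $\sK = \mathcal K_P$. The inclusion $\mathcal K_P \subset \sK$ is immediate. For the converse, given $I \in \sK$, Carath\'eodory applied to $\delta \in \cone\Gamma_{\widehat I}$ yields $J \subset \widehat I$ with $\Gamma_J$ linearly independent and $\delta \in \cone\Gamma_J$; condition~(b) forces $|J| = k$, so $\Gamma_J$ is a basis of $V^*$ and the uniquely determined coefficients of $\delta$ must all be strictly positive (using~(b) once more to rule out a zero coefficient, which would express $\delta$ in fewer than $k$ vectors). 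By Proposition~\ref{positive} this places $\delta$ in $\relint\cone\Gamma_J$, so $\widehat J \in \mathcal K_P$; since $I \subset \widehat J$ and $\mathcal K_P$ is a simplicial complex, $I \in \mathcal K_P$. Invoking Theorem~\ref{quadrics} for $P$ then delivers all the claimed statements, including the final assertion that $\{\sK, \mathrm A\}$ is the normal fan of~$P$. The main obstacle is this Carath\'eodory bookkeeping establishing $\sK = \mathcal K_P$: each use of~(b) must promote a Carath\'eodory decomposition of $\delta$ to one using exactly $k$ independent vectors and lying in the relative interior of the resulting simplicial subcone.
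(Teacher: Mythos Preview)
Your proposal is correct and follows the same strategy as the paper: establish that $P$ is a generic polyhedron with $\mathcal K_P = \sK$, then invoke Theorem~\ref{quadrics}. The paper streamlines your three separate Carath\'eodory arguments into a single observation---condition~(b) forces any $\delta \in \cone\Gamma_J$ to lie in $\relint\cone\Gamma_J$ with $\cone\Gamma_J$ full-dimensional---which simultaneously yields $\sK = \{I : \delta \in \relint\cone\Gamma_{\widehat I}\}$ and simpliciality of the cones $\cone\mathrm A_I$, after which Proposition~\ref{normal2} identifies $\{\cone\mathrm A_I : I\in\sK\}$ as the normal fan of a (hence generic)~$P$.
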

\begin{proof}
By definition, $\sK$ is a pure simplicial complex of dimension~$m-k-1$. 
Condition (b) implies that if $\delta\in\cone\Gamma_J$, then $\delta\in\relint\cone\Gamma_J$, $\cone\Gamma_J$ is a full-dimensional cone (of dimension~$k$) and its relative interior is the interior. Indeed, suppose $\delta\notin\relint\cone\Gamma_J$, then $\delta$ lies in a face of 
$\cone\Gamma_J$, which is a cone of dimension~$<k$. By the Carath\'eodory Theorem~\cite[Proposition~1.15]{zieg95}, this cone is generated by subset of $\Gamma$ of cardinality $<k$, in contradiction with~(b). 

Now
\[
  \Sigma=\{\cone \mathrm A_I\colon \delta\in\relint\cone\Gamma_{\widehat I}\}
\]
is the normal fan of~$P$ by Proposition~\ref{normal2}. Furthermore, each $\cone \mathrm A_I$ in $\Sigma$ is simplicial, because $\Gamma_{\widehat I}$ is a spanning configuration by the argument above. Hence, $P$ is generic and the result follows from Theorem~\ref{quadrics}.
\end{proof}

We refer to $P$ as the \emph{associated polyhedron} of the intersection of quadrics 
$\gamma_1 x^2_1+\cdots+\gamma_mx^2_m=\delta$. It can be alternatively described as follows.

\begin{proposition}\label{ptopepos}
Let $\gamma_1 x^2_1+\cdots+\gamma_mx^2_m=\delta$ be the nondegenerate intersection of quadrics defined by $\Gamma=\{\gamma_1,\ldots,\gamma_m\}$ and $\delta$ as in Theorem~\ref{quadrics1}. The affine map
\[
  W\to\R^m, \quad \mb w\mapsto 
  (\langle\mb a_1,\mb w\rangle+b_1,\ldots,\langle\mb a_m,\mb w\rangle+b_m)
\]
takes the associated polyhedron $P\subset W$ isomorphically onto the subset of~$\R^m$ given as the intersection of the nonnegative cone $\R^m_\ge$ with an $n$-dimensional affine plane:  
\[
  \{(y_1,\ldots,y_m)\in\R_\ge^m\colon
  \gamma_1y_1+\cdots+\gamma_m y_m=\delta.\}
\]
\end{proposition}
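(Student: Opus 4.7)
The proof will identify the affine map very concretely using Gale duality and then check that both the polytope condition and the quadratic slice condition become the same linear conditions on $\R^m$.

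\textbf{Step 1 (The affine map is the translation by $\mb b$).} Identify $W$ with its image in $\R^m$ under the inclusion $A^*\colon W\to\R^m$ coming from the dual exact sequence $0\to V\to\R^m\to W^*\to 0$. For $\mb w\in W\subset\R^m$, the $i$th coordinate of $\mb w$ is precisely $\langle\mb a_i,\mb w\rangle$, because $\mb a_i=A(\mb e_i)$ is the $i$th coordinate functional restricted to~$W$. Hence the affine map
\[
  \varphi\colon W\longrightarrow\R^m,\qquad \mb w\longmapsto
  \bigl(\langle\mb a_1,\mb w\rangle+b_1,\ldots,\langle\mb a_m,\mb w\rangle+b_m\bigr),
\]
is nothing but the translation $\mb w\mapsto\mb w+\mb b$ of the subspace $W\subset\R^m$ by $\mb b=(b_1,\ldots,b_m)$. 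In particular, $\varphi$ is an affine isomorphism of $W$ onto the $n$-dimensional affine subspace $W+\mb b\subset\R^m$.

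\textbf{Step 2 (Image lies in the quadric's affine slice).} I must show that $W+\mb b$ coincides with $\{\mb y\in\R^m\colon\sum_i\gamma_i y_i=\delta\}$. Apply $\varGamma\colon\R^m\to V^*$, $\mb e_i\mapsto\gamma_i$, to a point $\mb y=\mb w+\mb b$ with $\mb w\in W$. Since $W=\Ker\varGamma$, one gets
\[
  \sum_{i=1}^m\gamma_i y_i=\varGamma(\mb w)+\varGamma(\mb b)
  =0+\sum_{i=1}^m b_i\gamma_i=\delta,
\]
so $W+\mb b$ is contained in the affine hyperplane $\varGamma^{-1}(\delta)$. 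Both sets are affine subspaces of the same dimension $n=m-k$ (the right-hand side is a single fiber of the surjection~$\varGamma$), so they are equal.

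\textbf{Step 3 (Matching the polyhedron with the positive orthant slice).} By definition of~$P$, $\mb w\in P$ if and only if $\langle\mb a_i,\mb w\rangle+b_i\ge0$ for $i=1,\ldots,m$, which is exactly the condition $\varphi(\mb w)\in\R^m_\ge$. Combining with Step~2, $\varphi$ restricts to an affine isomorphism from $P$ onto
\[
  \bigl\{\mb y\in\R^m_\ge\colon \gamma_1 y_1+\cdots+\gamma_m y_m=\delta\bigr\},
\]
which is the assertion of the proposition. There is no genuine obstacle here: the only subtlety is keeping the two Gale-dual exact sequences straight so that the translation interpretation in Step~1 and the vanishing $\varGamma\circ A^*=0$ in Step~2 are used correctly.
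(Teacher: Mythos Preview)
Your proof is correct and follows essentially the same approach as the paper's own proof: both use Gale duality (in the form $\varGamma\circ A^*=0$, equivalently $\sum_i\gamma_i\langle\mb a_i,\mb w\rangle=0$) to see that the affine image of $W$ lands in the plane $\varGamma^{-1}(\delta)$, match dimensions to get equality, and then read off from the defining inequalities of $P$ that the image of $P$ is exactly the intersection with~$\R^m_\ge$. Your write-up is simply more explicit about the translation interpretation and the dimension count.
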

\begin{proof}
The map $W\to\R^m$ above takes $W$ isomorphically onto the $n$-dimensional plane 
$\gamma_1y_1+\cdots+\gamma_m y_m=\delta$, because
\[
  \gamma_1(\langle\mb a_1,\mb w\rangle+b_1)+\cdots+
  \gamma_m(\langle\mb a_m,\mb w\rangle+b_m)= \gamma_1b_1+\cdots+
  \gamma_mb_m=\delta.
\]
The image of $P$ consists of points with nonnegative coordinates by~\eqref{ptope}.
\end{proof}

\begin{construction}[GKZ decomposition]
Let $\Gamma$ be a spanning vector configuration in $V^*$, and let $\mathrm A$ be the Gale dual configuration. For any $\delta\in \cone\Gamma$ set
\[
  C(\delta)=\bigcap_{I\subset[m]\colon\delta\in\cone\Gamma_{\widehat I}}
  \cone\Gamma_{\widehat I}=
  \bigcap_{I\subset[m]\colon\delta\in\relint\cone\Gamma_{\widehat I}}
  \cone\Gamma_{\widehat I}\,.
\]

\begin{proposition}
\begin{itemize}
\item[(a)] $\Sigma_\Gamma=\{C(\delta)\colon\delta\in\cone\Gamma\}$ is a fan with support $\cone\Gamma$;

\item[(b)] if $C(\delta)$ is a full-dimensional cone, then $\Sigma=\{\cone\mathrm A_I\colon\delta\in\relint\cone\Gamma_{\widehat I}\}$ is the normal simplicial fan of a generic polyhedron.
\end{itemize}
\end{proposition}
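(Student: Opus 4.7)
The cornerstone of both parts is the observation that $\delta\in\relint C(\delta)$. Indeed, for each $I$ with $\delta\in\relint\cone\Gamma_{\widehat I}$, the point $\delta$ lies in the relative interior of the corresponding factor, and the standard identity $\relint(\bigcap_j C_j)=\bigcap_j\relint C_j$ (valid when the right-hand intersection is nonempty) gives $\delta\in\relint C(\delta)$. The equivalence of the two expressions for $C(\delta)$ stated in the proposition follows similarly: if $\delta\in\cone\Gamma_{\widehat I}$ lies on a proper face, that face has the form $\cone\Gamma_{\widehat{I''}}$ for some $I''\supsetneq I$ (every face of a polyhedral cone $\cone(\gamma_i\colon i\in S)$ is generated by the sub-collection of $\gamma_i$ it contains), and $\delta$ lies in its relative interior, so $\cone\Gamma_{\widehat I}$ is a redundant factor.

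For (a), I would first argue finiteness: $C(\delta)$ is determined by $\mathcal I(\delta):=\{I\colon\delta\in\cone\Gamma_{\widehat I}\}\subset 2^{[m]}$, which takes finitely many values. Polyhedrality is automatic, and the support equality $|\Sigma_\Gamma|=\cone\Gamma$ follows from $\delta\in C(\delta)\subset\cone\Gamma$. The substance is in the two fan axioms. For face-closure, given a face $\tau$ of $C(\delta)$ and $\delta'\in\relint\tau$, I would prove $C(\delta')=\tau$: since $\tau$ is a face of the intersection $\bigcap_{I\in\mathcal I(\delta)}\cone\Gamma_{\widehat I}$, it is itself an intersection of faces of the individual factors, each again of the form $\cone\Gamma_{\widehat{I'}}$ with $I'\supset I$; the two opposite inclusions between $\tau$ and $C(\delta')$ then follow from the resulting containment $\mathcal I(\delta)\subset\mathcal I(\delta')$ together with $\delta'\in\relint\tau$. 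For intersection compatibility, I would pick $\delta'$ in the relative interior of $C(\delta_1)\cap C(\delta_2)$ (nonempty, as both cones contain $\mathbf 0$), note $\mathcal I(\delta_1)\cup\mathcal I(\delta_2)\subset\mathcal I(\delta')$, and combine this with the face-closure step (possibly aided by the Separation Lemma~\ref{separation} applied to the factors $\cone\Gamma_{\widehat I}$) to identify $C(\delta')$ as the common face.

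For (b), the hypothesis that $C(\delta)$ is full-dimensional together with $\delta\in\relint C(\delta)$ places $\delta$ in the topological interior of $V^*$, hence in $\relint\cone\Gamma$. Proposition~\ref{normal2} then applies: writing $\delta=\sum_{i=1}^m b_i\gamma_i$ and using these coefficients in~\eqref{ptope} produces a polyhedron $P$ whose normal fan is exactly the $\Sigma$ of the proposition. It remains to check that $\Sigma$ is simplicial, equivalently that $\mathrm A_I$ is linearly independent for every $I\in\mathcal C=\{I\colon\delta\in\relint\cone\Gamma_{\widehat I}\}$. For such $I$ the cone $\cone\Gamma_{\widehat I}$ contains the full-dimensional $C(\delta)$, so $\cone\Gamma_{\widehat I}$ is itself full-dimensional and $\Gamma_{\widehat I}$ spans $V^*$; Proposition~\ref{galeindspan} then gives the linear independence of $\mathrm A_I$, so $P$ is generic.

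The main obstacle is the fan axioms in (a). The face-closure step relies on the precise combinatorial description of faces of $\cone\Gamma_S$ as subcones generated by exactly the $\gamma_i$ they contain, and is somewhat delicate because an individual $\gamma_i$ may lie in the relative interior of a face without being an extreme ray, so one must keep careful track of indices. Intersection compatibility reduces to face-closure but needs a separating-hyperplane argument to guarantee that $C(\delta_1)\cap C(\delta_2)$ appears as a genuine face on each side rather than an arbitrary polyhedral sub-cone.
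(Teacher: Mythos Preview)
Your outline is sound and would work, but it is organised quite differently from the paper's argument and is considerably more laborious in part~(a).

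For~(a) the paper does not verify the two fan axioms separately. Instead it proves a single implication: if $\delta'\in\relint C(\delta)$ then $C(\delta')=C(\delta)$. Combined with the observation you also make, $\delta\in\relint C(\delta)$, this immediately yields that distinct cones $C(\delta)$ have pairwise disjoint relative interiors, and the paper then invokes the criterion (b)$\Rightarrow$(a) of Theorem~\ref{galefan} (really Theorem~\ref{galefangen}) to conclude that the collection is a fan. This bypasses entirely the face-by-face bookkeeping you set up with $\mathcal I(\delta)$, the description of faces of $\cone\Gamma_{\widehat I}$ as sub-cones $\cone\Gamma_{\widehat{I'}}$, and the separating-hyperplane step for intersection compatibility. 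Your route is more explicit about face-closure (which the paper's reference to Theorem~\ref{galefan} leaves implicit), but the paper's single implication is what makes the argument short.

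For~(b) the two approaches coincide. The paper simply notes that full-dimensionality of $C(\delta)$ forces $\delta$ to satisfy the hypotheses of Theorem~\ref{quadrics1}, which packages together exactly the two checks you spell out: Proposition~\ref{normal2} gives the normal-fan statement, and the spanning of $\Gamma_{\widehat I}$ (hence independence of $\mathrm A_I$ via Proposition~\ref{galeindspan}) gives simpliciality and genericity. Your phrase ``topological interior of $V^*$'' is a slip; you mean that $\delta$ lies in the ordinary interior of $C(\delta)$, hence in $\int\cone\Gamma=\relint\cone\Gamma$.
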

\begin{proof}
For any $\delta\in\cone\Gamma$ we have $\delta\in\relint C(\delta)$. Furthermore, if $\delta'\in\relint C(\delta)$, then $C(\delta')=C(\delta)$. It follows that the cones $C(\delta)$ corresponding to different $\delta\in\cone\Gamma$ either coincide or have disjoint relative interiors. Then $\{C(\delta)\colon\delta\in\cone\Gamma\}$ is a fan by Theorem~\ref{galefan}, proving~(a).

If $C(\delta)$ is full-dimensional, then $\delta$ satisfies the conditions of Theorem~\ref{quadrics1}, so that the fan $\{\cone\mathrm A_I\colon\delta\in\relint\cone\Gamma_{\widehat I}\}$ is normal and simplicial, proving~(b). 
\end{proof}

The decomposition of $\cone\Gamma$ into cones (or \emph{chambers}) $C(\delta)$ is called the \emph{GKZ decomposition} (after Gelfand, Kapranov and Zelevinsky), and $\Sigma_\Gamma$
is called the \emph{GKZ fan}, or the \emph{secondary fan} of~$\Gamma$, see~\cite[Chapter~7]{g-k-m91} and also~\cite{od-pa91}, \cite[\S2.2.2]{a-d-h-l15}.
It encodes generalised normal fans with cone generators chosen among the Gale dual configuration~$\mathrm A$. The relative interiors of maximal cones of 
$\Sigma_\Gamma$ correspond to the normal fans of generic polyhedra. Points in 
the interior of $\cone\Gamma$ correspond to the normal fans of full-dimensional polyhedra (see Proposition~\ref{normal2}), and points on the boundary of $\cone\Gamma$ correspond to generalised fans. We illustrate this next.
 \end{construction}

\begin{example}\label{GKZ}
Let $\mathrm A=\{\mb a_1,\mb a_2,\mb a_3,\mb a_4\}$ be the configuration in $\R^2$ given by the column vectors of the matrix $A=\begin{pmatrix}1&0&-1&-1\\0&1&0&-1\end{pmatrix}$. 
There are two basis relations $\mb a_1+\mb a_3=\mathbf 0$ and $\mb a_1+\mb a_2+\mb a_4=\mathbf 0$,  so
the Gale dual configuration $\Gamma=\{\gamma_1,\gamma_2,\gamma_3,\gamma_4\}$ is given
by the column vectors of the matrix $\varGamma=\begin{pmatrix}1&0&1&0\\1&1&0&1\end{pmatrix}$. Note that $\gamma_2=\gamma_4$.

Here $\cone\Gamma$ is the positive quadrant, and its GKZ decomposition has two maximal cones separated by the ray containing $\gamma_1=\begin{pmatrix}1\\1\end{pmatrix}$, see Fig.~\ref{fig2dim}.
\begin{figure}[h]
\begin{center}
\begin{picture}(100,100)
  {\thicklines
  \put(0,0){\vector(1,0){40}}
  \put(0,0){\vector(0,1){40}}
  \put(0,0){\vector(1,1){40}}  
  }
  \put(0,0){\line(1,0){100}}
  \put(0,0){\line(0,1){100}}
  \put(0,0){\line(1,1){100}}
  \put(38,35){$\gamma_1$}
  \put(-9,39){$\gamma_2,\gamma_4$}
  \put(38,-3){$\gamma_3$}
  \put(80,40){\circle*{1.5}}
  \put(40,80){\circle*{1.5}}
  \put(40,40){\circle*{1.5}}
  \put(40,00){\circle*{1.5}}
  \put(00,40){\circle*{1.5}}
  \put(0,0){\circle*{1.5}}
  \put(41,78){$\delta=\begin{pmatrix}1\\2\end{pmatrix}$}
  \put(13,55){\line(1,0){24}}
  \put(15,53){\line(0,1){44}}
  \put(35,53){\line(0,1){24}}
  \put(13,97){\line(1,-1){24}}
  {\thicklines
  \put(15,55){\line(1,0){20}}
  \put(15,55){\line(0,1){40}}
  \put(35,55){\line(0,1){20}}
  \put(15,95){\line(1,-1){20}}
  }
  \put(17,93){\small$-x_1-x_2+2\ge0$}
  \put(5,82){\small$x_1\ge0$}
  \put(36,62){\small$-x_1+1\ge0$}
  \put(20,52){\small$x_2\ge0$}
  \textcolor{red}{
  \put(25,72){\vector(1,0){20}}
  \put(25,72){\vector(0,1){20}}
  \put(25,72){\vector(-1,0){20}}
  \put(25,72){\vector(-1,-1){20}}
  }
  \put(41,39){$\delta=\begin{pmatrix}1\\1\end{pmatrix}$}
  \put(13,20){\line(1,0){24}}
  \put(15,18){\line(0,1){24}}
  \multiput(35,18)(0,5){5}{\line(0,1){3}}
  \put(13,42){\line(1,-1){24}}
  {\thicklines
  \put(15,20){\line(1,0){20}}
  \put(15,20){\line(0,1){20}}
  \put(15,40){\line(1,-1){20}}
  }
  \put(81,39){$\delta=\begin{pmatrix}2\\1\end{pmatrix}$}
  \put(58,20){\line(1,0){24}}
  \put(60,18){\line(0,1){24}}
  \multiput(100,18)(0,5){5}{\line(0,1){3}}
  \put(58,42){\line(1,-1){24}}
  {\thicklines
  \put(60,20){\line(1,0){20}}
  \put(60,20){\line(0,1){20}}
  \put(60,40){\line(1,-1){20}}
  }
  \put(68,32){\small$-x_1-x_2+1\ge0$}
  \put(50,26){\small$x_1\ge0$}
  \put(82,23){\small$-x_1+2\ge0$}
  \put(65,17){\small$x_2\ge0$}
  \textcolor{red}{
  \put(65,30){\vector(1,0){20}}
  \put(65,30){\vector(0,1){20}}
  \multiput(64,30)(-5,0){3}{\line(-1,0){3}}
  \put(49,30){\vector(-1,0){4}}
  \put(65,30){\vector(-1,-1){20}}
  }
\end{picture}  
\end{center}
\caption{GKZ decomposition and the corresponding normal fans}
\label{fig2dim}
\end{figure}
Then $\delta$ satisfies conditions (a) and (b) of Theorem~\ref{quadrics1} if and only if it belongs to the interior of one of these two cones.
Choosing $\delta$ in different cones gives different polyhedra and different normal fans.

Let $\delta=\begin{pmatrix}1\\2\end{pmatrix}$. Then $\delta$ belongs to (the relative interior of) $\cone(\gamma_1,\gamma_2)$, $\cone(\gamma_2,\gamma_3)$, $\cone(\gamma_3,\gamma_4)$, $\cone(\gamma_4,\gamma_1)$. The corresponding complete fan $\Sigma$ consists of the cones $\cone(\mb a_3,\mb a_4)$, $\cone(\mb a_4,\mb a_1)$, $\cone(\mb a_1,\mb a_2)$, $\cone(\mb a_2,\mb a_3)$ and their faces; it is shown in red next to $\delta=\begin{pmatrix}1\\2\end{pmatrix}$ in Fig.~\ref{fig2dim}. This fan is the normal fan of a polytope given by the inequalities~\eqref{ptope} in which the numbers $b_i$ are the coefficients in an expression $\delta=\sum b_i\gamma_i$, for example,
\[
  \begin{pmatrix}1\\2\end{pmatrix}=0\begin{pmatrix}1\\1\end{pmatrix}+
  0\begin{pmatrix}0\\1\end{pmatrix}+1\begin{pmatrix}1\\0\end{pmatrix}+
  2\begin{pmatrix}0\\1\end{pmatrix}.
\] 
The resulting polytope is a trapezoid shown next to $\delta=\begin{pmatrix}1\\2\end{pmatrix}$ in Fig.~\ref{fig2dim} (in thick black lines) together with its defining inequalities. It is a generic polytope without redundant inequalities. 

The intersection of quadrics $\gamma_1x_1^2+\cdots+\gamma_mx_m^2=\delta$ is given by
\[
  \biggl\{\,\begin{aligned}
  x_1^2&&+x_3^2&&=1,\\
  x_1^2&+x_2^2&&+x_4^2&=2,
  \end{aligned}
\]
and is diffeomorphic to $S^1\times S^1$.

Now let $\delta=\begin{pmatrix}2\\1\end{pmatrix}$. Then $\delta$ belongs to 
$\cone(\gamma_1,\gamma_3)$, $\cone(\gamma_2,\gamma_3)$, $\cone(\gamma_3,\gamma_4)$. The corresponding complete fan $\Sigma$ consists of the cones $\cone(\mb a_2,\mb a_4)$, $\cone(\mb a_4,\mb a_1)$, $\cone(\mb a_1,\mb a_2)$ and their faces; it is shown in red next to $\delta=\begin{pmatrix}2\\1\end{pmatrix}$ in Fig.~\ref{fig2dim}. This time we write
\[
  \begin{pmatrix}2\\1\end{pmatrix}=0\begin{pmatrix}1\\1\end{pmatrix}+
  0\begin{pmatrix}0\\1\end{pmatrix}+2\begin{pmatrix}1\\0\end{pmatrix}+
  1\begin{pmatrix}0\\1\end{pmatrix}.
\] 
The resulting polytope is a triangle shown next to $\delta=\begin{pmatrix}2\\1\end{pmatrix}$ in Fig.~\ref{fig2dim}. It is still generic, but with one redundant inequality $-x_1+2\ge0$.

The intersection of quadrics $\gamma_1x_1^2+\cdots+\gamma_mx_m^2=\delta$ is now given by
\[
  \biggl\{\,\begin{aligned}
  x_1^2&&+x_3^2&&=2,\\
  x_1^2&+x_2^2&&+x_4^2&=1,
  \end{aligned}
\]
and is diffeomorphic to $S^2\times S^0=S^2\sqcup S^2$. 

Let us also see what happens when $\delta$ is chosen in smaller cones of the GKM-decomposition, so that it does not satisfy conditions (a) and (b) of Theorem~\ref{quadrics1}. Let 
$\delta=\begin{pmatrix}1\\1\end{pmatrix}$. Since $\delta\in\relint\cone\Gamma$, Proposition~\ref{normal2} still applies, giving the fan consisting of $\cone(\mb a_2,\mb a_3,\mb a_4)$, $\cone(\mb a_4,\mb a_1)$, $\cone(\mb a_1,\mb a_2)$. This is the same fan as in the previous case of $\delta=\begin{pmatrix}1\\2\end{pmatrix}$, but this time it is obtained as the normal fan of a non-generic polyhedron. It is also shown in Fig.~\ref{fig2dim}.

For $\delta=\begin{pmatrix}0\\1\end{pmatrix}$, the polyhedron is given by the inequalities $x_1\ge0$, $x_2\ge0$, $-x_1\ge0$ and $-x_1-x_2+1\ge0$. It is a vertical segment, so it is not full-dimensional. This reflects the fact that $\delta\notin\relint\cone\Gamma$. The generalised normal fan consists of the horizontal line and two halfplanes defined by it.

Finally, for $\delta=\begin{pmatrix}1\\0\end{pmatrix}$ or 
$\delta=\begin{pmatrix}0\\0\end{pmatrix}$, the polyhedron is a point and its generalised normal fan is the whole plane.
\end{example}


In general, the topology of real moment-angle manifolds $\rk$ and, in particular, intersections of quadrics of the form $\gamma_1x_1^2+\cdots+\gamma_mx_m^2=\delta$ is quite complicated. See~\cite[Chapters~4 and~6]{bu-pa15}.

\section{Convex hulls and links of special quadrics}\label{convlink}
There is an important special class of vector configurations and the corresponding foliations~\eqref{ract}, which are studied in holomorphic dynamics~\cite{meer00, me-ve04, bosi01, bo-me06} and feature in the definition of complex LVM-manifolds. We review how this  construction fits the general framework of the previous section.

\begin{proposition}\label{posit}
Let $\mathrm A=\{\mb a_1,\ldots,\mb a_m\}$ and $\Gamma=\{\gamma_1,\ldots,\gamma_m\}$ be a pair of Gale dual vector configurations in $W^*$ and $V^*$, respectively. The following conditions are equivalent:

\begin{itemize}
\item[(a)] there is a linear relation $\sum_{i=1}^m r_i\mb a_i=\mathbf 0$ with $r_i>0$ for $i=1,\ldots,m$;\\[-0.7\baselineskip]

\item[(b)]
There is $\mb v_1\in V$ such that $\langle\gamma_i,\mb v_1\rangle>0$ for $i=1,\ldots,m$,  that is, all $\gamma_i$ lie in the positive halfspace defined by~$\mb v_1$;\\[-0.7\baselineskip]

\item[(c)] 
$\cone\Gamma$ is a strictly convex (pointed) cone.\\[-0.7\baselineskip]

\item[(d)]
$\cone\mathrm {A}$ is the whole of $W^*$.
\end{itemize}
\end{proposition}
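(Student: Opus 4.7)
The plan is to prove the chain (a)$\Leftrightarrow$(b), (b)$\Leftrightarrow$(c), and (a)$\Leftrightarrow$(d), alternating between the two dual pictures. The equivalence (a)$\Leftrightarrow$(b) is little more than a restatement of Gale duality as set up in Section~\ref{galesec}: the exact sequence $0\to V\to\R^m\to W^*\to0$ (with maps $\varGamma^*$ and~$A$) identifies $\ker A$, i.\,e.\ the space of linear relations among the $\mb a_i$, with the image of~$\varGamma^*$. Hence a tuple $(r_1,\ldots,r_m)$ satisfying $\sum r_i\mb a_i=\mathbf 0$ is the same datum as a vector $\mb v_1\in V$ with $r_i=\langle\gamma_i,\mb v_1\rangle$, and the strict positivity condition transfers directly between the two sides.

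For (b)$\Leftrightarrow$(c) I would invoke standard convex separation. Assuming~(b), the functional $\mb v_1$ is strictly positive on every generator and hence on every nonzero element of $\cone\Gamma$, which excludes the possibility that $\cone\Gamma$ contains a line through the origin, giving~(c). Conversely, a finitely generated pointed cone is contained, apart from~$\mathbf 0$, in an open halfspace; choosing $\mb v_1\in V$ to define that halfspace produces $\langle\gamma_i,\mb v_1\rangle>0$ for each generator.

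The equivalence (a)$\Leftrightarrow$(d) is purely algebraic. Given the positive relation in~(a), any $\mb w^*\in W^*$ can first be expanded as $\sum c_i\mb a_i$ (since $\mathrm A$ spans~$W^*$) and then, by adding a sufficiently large multiple of $\sum r_i\mb a_i=\mathbf 0$, rewritten with all coefficients positive, so $\mb w^*\in\cone\mathrm A$. Conversely, if $\cone\mathrm A=W^*$ then each $-\mb a_i$ admits a nonnegative expansion $-\mb a_i=\sum_j\lambda_j^{(i)}\mb a_j$; summing the resulting $m$ relations $\mb a_i+\sum_j\lambda_j^{(i)}\mb a_j=\mathbf 0$ yields a single relation with every coefficient at least~$1$, establishing~(a).

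The only mildly delicate point is the direction (c)$\Rightarrow$(b), where one must ensure strict positivity of the supporting functional at \emph{each} generator $\gamma_i$, not merely on the relative interior of $\cone\Gamma$. This follows from the standard fact that a pointed polyhedral cone admits a single functional strictly positive on all of its extreme rays; tacitly it also uses that no $\gamma_i$ is the zero vector, an assumption that is harmless in the contexts in which this proposition is applied.
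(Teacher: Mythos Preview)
Your argument is correct, and your three biconditionals (a)$\Leftrightarrow$(b), (b)$\Leftrightarrow$(c), (a)$\Leftrightarrow$(d) differ slightly from the paper's single cycle (a)$\Rightarrow$(b)$\Rightarrow$(c)$\Rightarrow$(d)$\Rightarrow$(a). The steps (a)$\Rightarrow$(b) and (b)$\Rightarrow$(c) coincide in the two treatments, and your (d)$\Rightarrow$(a) is precisely the proof of Proposition~\ref{positive} specialised to $\mathbf 0\in\relint\cone\mathrm A$, which is what the paper quotes. The genuine divergence is in reaching~(d): the paper deduces (c)$\Rightarrow$(d) by invoking Theorem~\ref{galefangen} with the roles of $\mathrm A$ and $\Gamma$ exchanged (so that pointedness of $\cone\Gamma$ forces $\mathbf 0\in\relint\cone\mathrm A$), whereas you arrive there along (c)$\Rightarrow$(b)$\Rightarrow$(a)$\Rightarrow$(d) via the elementary shift-by-a-positive-relation trick. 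Your route is self-contained from the exact sequence in Section~\ref{galesec}; the paper's route situates the proposition as a special instance of the general Gale-dual cone correspondence already developed.

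Your caveat about zero vectors is well observed: the implication (c)$\Rightarrow$(b) genuinely needs every $\gamma_i\ne\mathbf 0$ (equivalently, by the dual form of Proposition~\ref{galeindspan}, that $\mathrm A_{\widehat{\{i\}}}$ spans $W^*$ for each~$i$), and the paper's route through Theorem~\ref{galefangen} carries the same hidden hypothesis. In the applications of Section~\ref{convlink} this never arises, so your remark is exactly the right level of care.
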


\begin{proof}
(a)$\Rightarrow$(b) This follows from the fact that $V$ can be identified with the space of linear relations between $\mb a_1,\ldots,\mb a_m$, and $\gamma_i$ is the linear function on $V$ that takes a linear relation to its $i$th coefficient.

\smallskip

(b)$\Rightarrow$(c) Indeed, $\mathbf 0$ is a face of $\cone\Gamma$, defined by the supporting hyperplane~$\mb v_1$. 

\smallskip

(c)$\Rightarrow$(d) $\cone\Gamma$ and its face
$\{\mathbf 0\}=\cone\Gamma_\varnothing$ have no common relative interior points. By Theorem~\ref{galefangen}, this implies that 
$\cone\mathrm{A}_\varnothing=\{\mathbf 0\}$ is in the relative interior of 
$\cone\mathrm {A}$. Since $\mathrm{A}$ is a spanning configuration, we get 
$\cone\mathrm{A}=W^*$.

(d)$\Rightarrow$(a) If $\cone\mathrm {A}=W^*$, then $\mathbf 0$ is in the relative interior of $\cone\mathrm {A}$. Now (a) follows from Proposition~\ref{positive}.
\end{proof}

If $\mathrm A=\{\mb a_1,\ldots,\mb a_m\}$ is a set of generators of a complete fan, then $\mathrm A$ satisfies the conditions of Proposition~\ref{posit}.

If $\Gamma$ satisfies the conditions of Proposition~\ref{posit}, then by multiplying each vector $\gamma_i$ by the corresponding positive number, we can achieve that 
$\langle\gamma_i,\mb v_1\rangle=1$ for $i=1,\ldots,m$. Consider the hyperplane
\[
  \Ann\mb v_1=\{\gamma\in V^*\colon \langle\gamma,\mb v_1\rangle=0\},
\]
choose arbitrary $\delta\in V^*$ such that $\langle\delta,\mb v_1\rangle=1$, and write 
\[
  \gamma_i=\gamma'_i+\delta,\quad \gamma'_i\in \Ann\mb v_1,\quad i=1,\ldots,m.
\]
Then we have $V^*=\Ann\mb v_1\oplus\R\langle\delta\rangle$.
Similarly, $V=\R\langle\mb v_1\rangle\oplus V'$, where $V'=\Ann\delta$. Writing 
$\gamma_i=\gamma'_i+\delta$ and $\mb v=t\mb v_1+\mb v'$, the action~\eqref{ract} takes the form
\begin{equation}\label{ract'}
\begin{aligned}
  (\R_>\times V')\times\R^m&\longrightarrow\R^m\\
  (r,\mb v',\mb x)&\mapsto \bigl(re^{\langle\gamma'_1,\mb
  v'\rangle}x_1,\ldots,re^{\langle\gamma'_m,\mb v'\rangle}x_m\bigr),
\end{aligned}
\end{equation}
where $r=e^t$, $t\in\R$. This is the action considered in~\cite{bo-me06}.

Note that multiplying $\gamma_i$ by $s>0$ is compensated by an automorphism 
of~$\R^m$ (which consists in multiplying $x_i$ by $e^{-s}$), and it does not affect the foliation.

The freeness and properness conditions for the action~\eqref{ract'} can be formulated in terms of the convex hulls of $\gamma'_i$ in $(V')^*\cong\R^{m-n-1}$ instead of the cones generated by $\gamma_i$ in~$V^*\cong\R^{m-n}$. This can be visualised by choosing a basis in $V^*$ in which $\delta$ is the last basis vector. In this basis, each $\gamma_i$ is written as a column 
$\begin{pmatrix}\gamma'_i\\1\end{pmatrix}$, so that the endpoints of all $\gamma_i$ lie in the affine hyperplane of points with the last coordinate~$1$.  

We therefore refer to $\Gamma'=\{\gamma'_1,\ldots,\gamma'_m\}$ as a \emph{point configuration} in the affine space $(V')^*$, as opposed to the vector configuration $\Gamma=\{\gamma_1,\ldots,\gamma_m\}$ in~$V^*$.
Given a subset $I\subset[m]$, we denote by $\conv\Gamma'_I$ the convex hull of  the points $\gamma'_i$, $i\in I$, in the affine space~$(V')^*$.

\begin{theorem}\label{proper'}
Let $\Gamma'=\{\gamma'_1,\ldots,\gamma'_m\}$ be a point configuration in an $(m-n-1)$-dimensional affine space~$(V')^*$, and let $\sK$ be a simplicial complex on~$[m]$. Then
\begin{itemize}
\item[(1)] the action $(\R_>\times V')\times U(\sK)\to U(\sK)$ obtained by restricting~\eqref{ract'} to $U(\sK)$ is free if and only if,  for any $I\in\sK$,
the affine span of $\Gamma'_{\widehat I}$ is the whole~$(V')^*$;

\item[(2)] the action $(\R_>\times V')\times U(\sK)\to U(\sK)$ is proper if and only if
$\conv\Gamma'_{\widehat I}$ and $\conv\Gamma'_{\widehat J}$ have a common interior point for any $I,J\in\sK$, in which case the action is also free.
\end{itemize}
\end{theorem}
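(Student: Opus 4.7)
The strategy is to reduce Theorem~\ref{proper'} to the already established Theorem~\ref{proper} by recognising the action~\eqref{ract'} as a reparametrisation of~\eqref{ract}, and then translating the linear-algebraic criteria on $\mathrm A$ and $\Gamma$ into the affine language of $\Gamma'$. Concretely, the Lie group isomorphism $\R_>\times V'\to V$, $(e^t,\mb v')\mapsto t\mb v_1+\mb v'$, combined with the identity $\langle\gamma_i,t\mb v_1+\mb v'\rangle=t+\langle\gamma'_i,\mb v'\rangle$ (which uses $\gamma_i=\gamma'_i+\delta$, $\langle\delta,\mb v_1\rangle=1$, $\gamma'_i\in\Ann\mb v_1$), shows that~\eqref{ract'} is conjugate to the exponential action~\eqref{ract} of $V$ on $\R^m$ defined by $\Gamma$. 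Thus~\eqref{ract'} restricted to $U(\sK)$ is free (respectively, proper) if and only if the $V$-action~\eqref{ract} on $U(\sK)$ is, and by Theorem~\ref{proper} this is equivalent to $\mathrm A_I$ being linearly independent for every $I\in\sK$ (respectively, to $\{\sK,\mathrm A\}$ being a triangulated configuration).

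For (1), by Proposition~\ref{galeindspan} the linear independence of $\mathrm A_I$ is equivalent to $\Gamma_{\widehat I}$ spanning $V^*$. Writing $V^*=(V')^*\oplus\R\langle\delta\rangle$ and $\gamma_i=(\gamma'_i,1)$ in this splitting, any coefficient-sum-zero combination of the $\gamma_i$, $i\in\widehat I$, lies in $(V')^*\times\{0\}$ and equals a difference of elements of the affine hull of $\Gamma'_{\widehat I}$; conversely, if the affine hull of $\Gamma'_{\widehat I}$ equals $(V')^*$, then differences $\gamma'_i-\gamma'_j$ span $(V')^*$ and any one $\gamma_{i_0}$ supplies the $\delta$-direction. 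Hence $\R\langle\Gamma_{\widehat I}\rangle=V^*$ precisely when the affine hull of $\Gamma'_{\widehat I}$ equals $(V')^*$. For (2), by Theorem~\ref{galefan}(c) the triangulated-configuration condition is that $(\relint\cone\Gamma_{\widehat I})\cap(\relint\cone\Gamma_{\widehat J})\ne\varnothing$ for all $I,J\in\sK$. Since all $\gamma_i$ lie on the affine hyperplane $\{\gamma\in V^*\colon\langle\gamma,\mb v_1\rangle=1\}$, the cone $\cone\Gamma_{\widehat I}$ is the cone over $\conv\Gamma'_{\widehat I}$ placed at height~$1$, and by Proposition~\ref{positive} its relative interior consists of vectors $(tp,t)$ with $t>0$ and $p\in\relint\conv\Gamma'_{\widehat I}$. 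Two such relative interiors of cones therefore meet iff $\relint\conv\Gamma'_{\widehat I}\cap\relint\conv\Gamma'_{\widehat J}\ne\varnothing$. Since properness forces freeness (Theorem~\ref{proper}(2)), part~(1) guarantees that each $\conv\Gamma'_{\widehat I}$ is full-dimensional in $(V')^*$, so its relative interior equals its ambient interior, yielding precisely the stated criterion.

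Conceptually the argument is unsurprising; the main obstacle is bookkeeping: keeping track of the distinction between interior and relative interior, and carefully identifying the relative interior of a cone with positive multiples of the relative interior of its affine section at height~$1$. Once this identification (a direct consequence of Proposition~\ref{positive}) is in place, both statements follow immediately from Theorem~\ref{proper} together with the dictionary between the linear configuration $\Gamma$ and the affine configuration $\Gamma'$.
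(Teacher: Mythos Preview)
Your proof is correct and follows essentially the same approach as the paper: convert the $(\R_>\times V')$-action into the exponential $V$-action~\eqref{ract} via $\gamma_i=(\gamma'_i,1)$, then apply Theorem~\ref{proper} and translate the linear criteria on $\Gamma$ into affine criteria on $\Gamma'$. Your treatment is in fact more explicit than the paper's, which simply asserts the equivalences ``affine span of $\Gamma'_{\widehat I}$ is $(V')^*$ $\Leftrightarrow$ linear span of $\Gamma_{\widehat I}$ is $V^*$'' and ``interior of $\conv\Gamma'_{\widehat I}$ equals the intersection of the interior of $\cone\Gamma_{\widehat I}$ with the hyperplane $H$'' without spelling out the interior/relative-interior bookkeeping you took care to justify.
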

\begin{proof}
We let $V=V'\oplus\R$ and $\gamma_i=\begin{pmatrix}\gamma'_i\\1\end{pmatrix}$ as above and convert the action~\eqref{ract'} into the exponential action of $V$ given by~\eqref{ract} with the additional condition that each $\gamma_i$ belongs to a specific affine hyperplane $H$ where the last coordinate is~$1$. Then the affine span of $\Gamma'_{\widehat I}$ is $(V')^*$ if and only if the linear span of $\Gamma_{\widehat I}$ is~$V^*$. Therefore, the first statement follows from Theorem~\ref{proper}~(1).

For the second statement, observe that the interior of $\conv\Gamma'_{\widehat I}$ is the intersection of the interior of $\cone\Gamma_{\widehat I}$ with the hyperplane~$H$. Hence, there result follows from from Theorem~\ref{proper}~(2).
\end{proof}

Condition~(2) in Theorem~\ref{proper'} is precisely the \emph{imbrication condition} of Bosio, see~\cite[Lemma~1.1, Definition~1.8]{bosi01}. As a further specification, we obtain the following version of Theorem~\ref{quadrics1}:

\begin{theorem}\label{quadrics2}
Let $\Gamma'=\{\gamma'_1,\ldots,\gamma'_m\}$ be a point configuration in an affine space $(V')^*$ of dimension $k-1$ satisfying the two conditions:
\begin{itemize}
\item[(a)] $\mathbf{0}\in\conv\Gamma'$;\\[-0.7\baselineskip]

\item[(b)]
if $\mathbf{0}\in\conv\Gamma'_J$, then $|J|\ge k$,
\end{itemize}
and let
\begin{equation}\label{Kgamma}
  \sK=\{I\in[m]\colon \mathbf{0}\in\conv\Gamma_{\widehat I}\}.
\end{equation}
Then the action~\eqref{ract'} of $\R_>\times V'$ on $U(\sK)$ is free and proper, and the quotient $U(\sK)/(\R_>\times V')$ is diffeomorphic to the following nondegenerate intersection of quadratic surfaces  in~$\R^m$:
\begin{equation}\label{link}
  \left\{\begin{array}{lrcl}
  (x_1,\ldots,x_m)\in\R^m\colon&
  \gamma'_1x_1^2+\cdots+\gamma'_m x_m^2&=&\mathbf{0},\\[1mm]
  &x_1^2+\cdots+x_m^2&=&1.
  \end{array}\right\}
\end{equation}
\end{theorem}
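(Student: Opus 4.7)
The plan is to deduce Theorem~\ref{quadrics2} from Theorem~\ref{quadrics1} by completing the affine point configuration $\Gamma'$ in $(V')^*$ to a linear vector configuration $\Gamma$ in $V^*=(V')^*\oplus\R\langle\delta\rangle$, following the setup preceding Theorem~\ref{proper'}.

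First I would fix the decomposition $V=\R\langle\mb v_1\rangle\oplus V'$ and $V^*=\R\langle\delta\rangle\oplus(V')^*$ with $\langle\delta,\mb v_1\rangle=1$, and set $\gamma_i=\gamma'_i+\delta$ so that every $\gamma_i$ lies in the affine hyperplane $H=\{\gamma\colon\langle\gamma,\mb v_1\rangle=1\}\subset V^*$. Writing $\mb v=t\mb v_1+\mb v'$ and $r=e^t$ identifies the exponential action~\eqref{ract} of $V$ with the action~\eqref{ract'} of $\R_>\times V'$, so freeness, properness, and orbit spaces of the two actions coincide.

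Next I would verify that the hypotheses of Theorem~\ref{quadrics2} translate into those of Theorem~\ref{quadrics1} applied to $\Gamma$ with test vector~$\delta$. The decisive identity is that, for $J\subset[m]$, an expression $\delta=\sum_{i\in J}t_i\gamma_i$ with $t_i\ge0$ decomposes, under $V^*=(V')^*\oplus\R\langle\delta\rangle$, into the two conditions $\sum_{i\in J}t_i\gamma'_i=\mathbf{0}$ and $\sum_{i\in J}t_i=1$. Hence $\delta\in\cone\Gamma_J$ if and only if $\mathbf{0}\in\conv\Gamma'_J$; conditions (a), (b) of the two theorems agree, and the simplicial complex defined by~\eqref{Kgamma} coincides with the one in Theorem~\ref{quadrics1}. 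A short Carath\'eodory argument shows that (b) also forces $\Gamma'$ to affinely span $(V')^*$, and since $\delta$ lies in the linear span of $\Gamma$ by (a), the vectors $\gamma_i=\gamma'_i+\delta$ linearly span $V^*$, so $\Gamma$ qualifies as a spanning configuration in $V^*\cong\R^k$.

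Finally, applying Theorem~\ref{quadrics1} to $(\Gamma,\delta)$ shows that the $V$-action on $U(\sK)$ is free and proper with quotient diffeomorphic to the nondegenerate intersection $\{\mb x\in\R^m\colon\sum\gamma_ix_i^2=\delta\}$. Substituting $\gamma_i=\gamma'_i+\delta$ in this equation and projecting onto the summands $(V')^*$ and $\R\langle\delta\rangle$ yields precisely the two scalar systems in~\eqref{link}, namely $\sum\gamma'_ix_i^2=\mathbf{0}$ and $\sum x_i^2=1$. I do not anticipate a serious obstacle here, since the analytic content was already handled in the proofs of Theorems~\ref{quadrics} and~\ref{quadrics1}; the main bookkeeping task is the passage between cones $\cone\Gamma_J$ in $V^*$ and convex hulls $\conv\Gamma'_J$ in $(V')^*$ via the affine slice~$H$.
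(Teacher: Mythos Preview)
Your proposal is correct and follows exactly the paper's approach: complete $\Gamma'$ to a linear configuration $\Gamma$ with $\gamma_i=\begin{pmatrix}\gamma'_i\\1\end{pmatrix}$ in $V^*=(V')^*\oplus\R$, take $\delta=\begin{pmatrix}\mathbf{0}\\1\end{pmatrix}$, and reduce to Theorem~\ref{quadrics1}. You spell out more of the bookkeeping (the cone/convex-hull correspondence and the Carath\'eodory spanning argument) than the paper does, but the route is the same.
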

\begin{proof}
Let $V=V'\oplus\R$ and $\gamma_i=\begin{pmatrix}\gamma'_i\\1\end{pmatrix}$; then $\Gamma$ is a spanning vector configuration in~$V^*$ with the additional condition that each $\gamma_i$ belongs to the hyperplane $H$ where the last coordinate is~$1$. Then $\delta:=\begin{pmatrix}\mathbf{0}\\1\end{pmatrix}$ also belongs to $H$ and satisfies the conditions of Theorem~\ref{quadrics1}. The equations 
$\gamma_1x_1^2+\cdots+\gamma_mx_m^2=\delta$ take the form~\eqref{link}.
\end{proof}

Note that whenever the vectors $\gamma_i$ in $\Gamma$ belong to the hyperplane $\Ann\mb v_1$ we may take 
$\delta=\begin{pmatrix}\delta'\\1\end{pmatrix}$ in Theorem~\ref{quadrics1} and choose an affine coordinate system in the hyperplane so that $\delta'=\mathbf 0$. Then the equations 
$\gamma_1x_1^2+\cdots+\gamma_mx_m^2=\delta$ take the form~\eqref{link}.

Conditions (a) and (b) in Theorem~\ref{quadrics2} are known as the \emph{Siegel condition} and the \emph{weak hyperbolicity condition}, see e.\,g.~\cite[\S I]{meer00}. The manifold given by~\eqref{link} is called the \emph{link of a system of special real quadrics}~\cite[Definition~0.1]{bo-me06}. It is diffeomorphic to the real moment-angle manifold $\mathcal R_{\mathcal K}$, where $\sK$ is dual to the simple (generic) polytope $P$ given by 
\[
   \left\{\begin{array}{lrcl}
  (y_1,\ldots,y_m)\in\R_\ge^m\colon&
  \gamma'_1y_1+\cdots+\gamma'_m y_m&=&\mathbf{0},\\[1mm]
  &y_1+\cdots+y_m&=&1.
  \end{array}\right\}
\]
(see Corollary~\ref{rkquad}).

\begin{example}\label{5gon}
Consider the vector configuration $\mathrm A=\{\mb a_1,\ldots,\mb a_5\}$ in $\R^2$ given by the columns of the matrix 
\[
  A=\begin{pmatrix}2&0&-2&-1&1\\1&4&1&-3&-3\end{pmatrix},
\]  
see Fig.~\ref{fig5gon}, left. Note that $\mb a_1+\cdots+\mb a_5=\mathbf 0$, so Proposition~\ref{posit} applies.
\begin{figure}[h]
\begin{center}
\begin{picture}(100,45)
  \put(10,20){\vector(2,1){10}}
  \put(10,20){\vector(0,1){20}}
  \put(10,20){\vector(-2,1){10}}
  \put(10,20){\vector(-1,-3){5}}
  \put(10,20){\vector(1,-3){5}}
  \put(20,25){\circle*{1}}
  \put(10,40){\circle*{1}}
  \put(0,25){\circle*{1}}
  \put(5,5){\circle*{1}}
  \put(15,5){\circle*{1}}
  \put(21,25){$\mb a_1$}
  \put(11,40){$\mb a_2$}
  \put(-5,25){$\mb a_3$}
  \put(2,2){$\mb a_4$}
  \put(15,2){$\mb a_5$}
  \put(50,30){\circle*{1}}
  \put(50,20){\circle*{1}}
  \put(60,25){\circle*{1}}
  \put(55,35){\circle*{1}}
  \put(45,35){\circle*{1}}
  \put(40,25){\circle*{1}}
  \put(40,25){\line(1,2){5}}
  \put(40,25){\line(2,-1){10}}
  \put(45,35){\line(1,0){10}}
  \put(55,35){\line(1,-2){5}}
  \put(50,20){\line(2,1){10}}
  \put(40,25){\line(3,2){15}}
  \put(40,25){\line(1,0){20}}
  \put(45,35){\line(3,-2){15}}
  \put(50,20){\line(1,3){5}}
  \put(50,20){\line(-1,3){5}}
  \put(49,27){\small$\mathbf 0$}
  \put(56,35){$\gamma_1$}
  \put(35.5,25){$\gamma_5$}
  \put(61,25){$\gamma_4$}
  \put(40.5,35){$\gamma_3$}
  \put(48.5,17){$\gamma_2$}
  \put(95,30){\circle*{1}}
  \put(90,20){\circle*{1}}
  \put(100,25){\circle*{1}}
  \put(95,35){\circle*{1}}
  \put(85,35){\circle*{1}}
  \put(80,25){\circle*{1}}
  \put(80,25){\line(1,2){5}}
  \put(80,25){\line(2,-1){10}}
  \put(85,35){\line(1,0){10}}
  \put(95,35){\line(1,-2){5}}
  \put(90,20){\line(2,1){10}}
  \put(80,25){\line(3,2){15}}
  \put(80,25){\line(1,0){20}}
  \put(85,35){\line(3,-2){15}}
  \put(90,20){\line(1,3){5}}
  \put(90,20){\line(-1,3){5}}
  \put(95.5,28){\small$\mathbf 0$}
  \put(96,35){$\gamma_1$}
  \put(75.5,25){$\gamma_5$}
  \put(101,25){$\gamma_4$}
  \put(80.5,35){$\gamma_3$}
  \put(88.5,17){$\gamma_2$}
\end{picture}  
\end{center}
\caption{Five-vector configuration and its Gale dual}
\label{fig5gon}
\end{figure}

The Gale dual configuration $\Gamma=(\gamma_1,\ldots,\gamma_5)$ is given by the columns of
\[
  \varGamma=\begin{pmatrix}1&0&-1&2&-2\\1&-2&1&-1&-1\\
  1&1&1&1&1\end{pmatrix}
\]   
up to a change of coordinates in~$\R^3$. The vectors
$\gamma_i=\begin{pmatrix}\gamma'_i\\1\end{pmatrix}$ lie in the affine plane~$x_3=1$, and the corresponding $2$-dimensional point configuration $\Gamma'=(\gamma'_1,\ldots,\gamma'_5)$ is shown in Fig.~\ref{fig5gon}, middle.
It satisfies the conditions of Theorem~\ref{quadrics2}, so the corresponding system of quadrics~\eqref{link} is nondegenerate. It is given by
\begin{equation}\label{link5}
  \left\{\begin{array}{cccccccccc}
  x_1^2&&&-&x_3^2&+&2x_4^2&-&2x_5^2&=0,\\[2pt]
  x_1^2&-&2x_2^2&+&x_3^2&-&x_4^2&-&x_5^2&=0,\\[2pt]
  x_1^2&+&x_2^2&+&x_3^2&+&x_4^2&+&x_5^2&=1,
  \end{array}\right.
\end{equation}
The simplicial complex $\sK$~\eqref{Kgamma} here is the boundary of pentagon with $1$-simplices $\{1,2\}$, $\{2,3\}$, $\{3,4\}$, $\{4,5\}$ and $\{5,1\}$. The intersection of quadrics~\eqref{link5} is homeomorphic to the real moment-angle manifold $\mathcal R_{\mathcal K}$, which is an oriented surface of genus~$5$ (see~\cite[Proposition~4.1.8]{bu-pa15}). The associated polytope is given by~\eqref{ptope}, where $\sum_{i=1}^5b_i\gamma_i=\begin{pmatrix}\mathbf 0\\1\end{pmatrix}$. One choice is $b_1=b_3=b_4=b_5=\frac14$ and $b_2=0$. It is easy to see that $P$ is a pentagon.

In the notation of Theorem~\ref{quadrics1}, we have $k=3$ and the cones 
$\cone\Gamma_J$ with $|J|=2$ split $\cone\Gamma$ into $11$ chambers, corresponding to the maximal cones of the GKZ decomposition. The intersection of the GKZ fan with the plane $x_3=1$ is the pentagon $\conv\Gamma'$ split into $11$ two-dimensional chambers as shown in Fig.~\ref{fig5gon}, middle. Conditions (a) and (b) of Theorem~\ref{quadrics1} imply that if $\delta$ is chosen in the interior of one of the $11$ chambers, then the corresponding intersection of quadrics is nondegenerate, but its topological type changes when $\delta$ crosses one of the walls $\cone\Gamma_J$ with $|J|=2$.

The origin $\delta'=\begin{pmatrix}0\\0\end{pmatrix}$ lies in the central chamber in Fig.~\ref{fig5gon}, middle. On the other hand, $\delta'=\begin{pmatrix}1\\0\end{pmatrix}$ lies in the interior of another chamber, see Fig.~\ref{fig5gon}, right. To fit this choice into the notation of Theorem~\ref{quadrics2} we must change the affine coordinate system on the hyperplane $x_3=1$, so that the new $\delta'$ becomes the origin. The corresponding intersection of quadrics is given by
\[
  \left\{\begin{array}{cccccccccc}
  x_1^2&&&-&x_3^2&+&2x_4^2&-&2x_5^2&=1,\\[2pt]
  x_1^2&-&2x_2^2&+&x_3^2&-&x_4^2&-&x_5^2&=0,\\[2pt]
  x_1^2&+&x_2^2&+&x_3^2&+&x_4^2&+&x_5^2&=1,
  \end{array}\right.
\]
or, equivalently,
\[
  \left\{\begin{array}{cccccccccc}
  &-&x_2^2&-&2x_3^2&+&x_4^2&-&3x_5^2&=0,\\[2pt]
  x_1^2&-&2x_2^2&+&x_3^2&-&x_4^2&-&x_5^2&=0,\\[2pt]
  x_1^2&+&x_2^2&+&x_3^2&+&x_4^2&+&x_5^2&=1.
  \end{array}\right.
\]
This time the simplicial complex~\eqref{Kgamma} has $1$-simplices $\{2,3\}$, 
$\{3,5\}$, $\{2,5\}$ and ghost vertices $\{1\}$ and $\{4\}$, the associated polytope $P$ is a triangle with two redundant inequalities, and the intersection of quadrics is homeomorphic to a disjoint union of four two-dimensional spheres.
\end{example}

\begin{example}\label{prism1}
Consider the vector configuration $\mathrm A=\{\mb a_1,\ldots,\mb a_6\}$ in $\R^3$ given by the columns of the matrix 
\[
  A=\begin{pmatrix}1&0&-1&1&0&-1\\0&1&-1&0&1&-1\\
  1&1&1&-1&-1&-1\end{pmatrix}.
\]   
We triangulate the boundary of the triangular prism $\conv(\mb a_1,\ldots,\mb a_6)$
as shown in Fig.~\ref{fig6gon}, left, and consider the complete simplicial fan 
$\Sigma$ with apex at $\bf 0$ over the faces of this triangulation. Then $\Sigma$ has $8$ three-dimensional cones.  Let $\mathcal K$ be its underlying simplicial complex.

Note that $\mb a_1+\cdots+\mb a_6=\mathbf 0$, so Proposition~\ref{posit} applies.

The Gale dual configuration $\Gamma=(\gamma_1,\ldots,\gamma_6)$ is given by the columns of
\[
  \varGamma=\begin{pmatrix}1&-1&0&-1&1&0\\0&1&-1&0&-1&1\\
  1&1&1&1&1&1\end{pmatrix},
\]   
up to a change of coordinates in~$\R^3$. The vectors
$\gamma_i=\begin{pmatrix}\gamma'_i\\1\end{pmatrix}$ lie in the affine plane~$x_3=1$, and the corresponding $2$-dimensional point configuration 
$\Gamma'=(\gamma'_1,\ldots,\gamma'_6)$ is shown in Fig.~\ref{fig6gon}, right.
\begin{figure}[h]
\begin{tikzpicture}[scale=0.8]
  \draw[thick] (0,2)--(0,5)--(6,5)--(6,2)--(1.5,0.5)--cycle;
  \draw[thick] (0,2)--(1.5,3.5)--(0,5);
  \draw[thick] (1.5,0.5)--(1.5,3.5)--(6,5)--cycle;
  \draw[dashed] (0,2)--(6,2)--(0,5);
  \draw[fill] (1.5,3.5) circle (1.6pt) node[anchor=south]{$\ \mb a_1$};
  \draw[fill] (6,5) circle (1.6pt) node[anchor=west]{$\mb a_2$};
  \draw[fill] (0,5) circle (1.6pt) node[anchor=east]{$\ \mb a_3$};
  \draw[fill] (1.5,0.5) circle (1.6pt) node[anchor=north]{$\mb a_4$};
  \draw[fill] (6,2) circle (1.6pt) node[anchor=west]{$\mb a_5$};
  \draw[fill] (0,2) circle (1.6pt) node[anchor=east]{$\mb a_6$};
  \draw[fill] (2.5,3) circle (1.6pt) node[anchor=north]{$\bf0$};
%
%
  \draw[thick] (8,3)--(8,6)--(11,6)--(14,3)--(14,0)--(11,0)--cycle;
  \draw[fill=red!40, opacity=0.5] (11,0)--(14,0)--(11,6)--cycle;
  \draw[fill=green!40, opacity=0.5] (14,3)--(11,0)--(11,6)--cycle;
  \draw[fill=blue!40, opacity=0.5] (14,3)--(8,3)--(11,6)--cycle;
  \draw[fill=cyan!40, opacity=0.5] (14,3)--(8,6)--(8,3)--cycle;
  \draw[fill=magenta!40, opacity=0.5] (8,6)--(8,3)--(14,0)--cycle;
  \draw[fill=yellow!40, opacity=0.5] (8,6)--(11,0)--(14,0)--cycle;
  \draw[fill] (14,3) circle (1.6pt) node[anchor=west]{$\gamma_1$};
  \draw[fill] (8,6) circle (1.6pt) node[anchor=south east]{$\gamma_2$};
  \draw[fill] (11,0) circle (1.6pt) node[anchor=north]{$\gamma_3$};
  \draw[fill] (8,3) circle (1.6pt) node[anchor=east]{$\gamma_4$};
  \draw[fill] (14,0) circle (1.6pt) node[anchor=north west]{$\gamma_5$};
  \draw[fill] (11,6) circle (1.6pt) node[anchor=south]{$\gamma_6$};  
\end{tikzpicture}
\caption{Non-polytopal fan}
\label{fig6gon}
\end{figure}

The eight triangles $\conv\Gamma'_{\widehat I}$ with $I\in\sK$ are shown in Fig.~\ref{fig6gon}, right, in different colours. Each pair of these triangles has a common interior point, which reflects the fact that $\Sigma$ is a fan, see Theorem~\ref{proper'}. However, the eight triangles altogether do not have a common interior point, so the fan $\Sigma$ is non-polytopal.

Replacing the diagonal $(\mb a_2,\mb a_4)$ of a rectangular face in Fig.~\ref{fig6gon}, left, with another diagonal $(\mb a_1,\mb a_5)$ results in a polytopal fan, which can also be seen from the Gale dual configuration. 
\end{example}


\section{Holomorphic exponential actions and complex-analytic structures on moment-angle manifolds}\label{compzk}

To define the holomorphic version of the exponential action, assume that $\dim V=k$ is even, $k=2\ell$, and choose a complex structure $\mathcal J\colon V\to V$, $\mathcal J^2=-\mathop\mathrm{id}$. We denote the resulting $\ell$-dimensional complex space $(V,\mathcal J)$ by~$\widetilde V$.

The real dual space $V^*=\Hom(V,\R)$ has a complex structure $\mathcal J'$ defined by
\[
  \langle\mathcal J'\gamma,\mb v\rangle=\langle\gamma,\mathcal J\mb v\rangle
  \quad\text{for }\gamma\in V^*,\mb v\in V.
\]
We denote the resulting complex space $(V^*,\mathcal J')$ by~$\widetilde V^*$; there is a canonical $\C$-linear isomorphism $\widetilde V^*\cong\Hom(\widetilde V,\C)$ given by $\gamma\mapsto\langle\gamma,\,\cdot\,\rangle-i\langle\gamma,\mathcal J\,\cdot\,\rangle$.

We denote the complex pairing $\widetilde V^*\times\widetilde V\to\C$ by 
$\langle\gamma,\mb v\rangle_\C$ and the real pairing $V^*\times V\to\R$ by 
$\langle\gamma,\mb v\rangle_\R$ when it is necessary to distinguish between them. We have
\[
  \langle\gamma,\mb v\rangle_\C=\langle\gamma,\mb v\rangle_\R-
  i\langle\gamma,\mathcal J\mb v\rangle_\R.
\]

A vector configuration $\Gamma=\{\gamma_1,\ldots,\gamma_m\}$ in~$V^*$ can be viewed as a vector configuration in the complex space~$\widetilde V^*$. We continue to assume that the \emph{real} span of $\Gamma$ is the whole of~$V^*$. 

The \emph{holomorphic exponential action} of $\widetilde V$ on $\C^m$ is given by
\begin{equation}\label{hact}
\begin{aligned}
  \widetilde V\times\C^m&\longrightarrow\C^m\\
  (\mb v,\mb z)&\mapsto \mb v\cdot\mb z=\bigl(e^{\langle\gamma_1,\mb
  v\rangle_\C}z_1,\ldots,e^{\langle\gamma_m,\mb v\rangle_\C}z_m\bigr).
\end{aligned}
\end{equation}
It is related to the real exponential action~\eqref{ract} via the commutative diagram
\[
\begin{tikzcd}
  \widetilde V\times\C^m \ar{r} 
  \ar{d}{\mathop\mathrm{id}\times|\cdot|} & \C^m \ar{d}{|\cdot|}\\
  V\times\R^m \ar{r} & \R^m
\end{tikzcd}
\]
where $|\cdot|$ denotes the map $(z_1,\ldots,z_m)\mapsto(|z_1|,\ldots,|z_m|)$.

Free orbits of the action~\eqref{hact} can be detected by looking at the real spans of subsets of $\Gamma$, similarly to Proposition~\ref{freeorbit}:

\begin{proposition}\label{Cfreeorbit}
The stabiliser $\widetilde V_{\mb z}$ of a point $\mb z=(z_1,\ldots,z_m)\in\C^m$ under the action~\eqref{hact} is zero if $\R\langle\gamma_i\colon z_i\ne0\rangle=V^*$.
\end{proposition}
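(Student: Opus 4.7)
The plan is to transcribe the argument of Proposition~\ref{freeorbit} into the complex setting, with the single new wrinkle that the exponential $e^{\langle\gamma_i,\mb v\rangle_\C}$ equals $1$ not only when the exponent vanishes but whenever it lies in $2\pi i\Z$. Remarkably, this extra freedom is killed by the real part condition.

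Concretely, I would first note that $\mb v\in\widetilde V_{\mb z}$ means
\[
  \bigl(e^{\langle\gamma_1,\mb v\rangle_\C}z_1,\ldots,e^{\langle\gamma_m,\mb v\rangle_\C}z_m\bigr)=(z_1,\ldots,z_m),
\]
which, for any index $i$ with $z_i\ne0$, forces $e^{\langle\gamma_i,\mb v\rangle_\C}=1$, i.\,e.\ $\langle\gamma_i,\mb v\rangle_\C\in 2\pi i\Z$. Using the identity
\[
  \langle\gamma_i,\mb v\rangle_\C=\langle\gamma_i,\mb v\rangle_\R-i\langle\gamma_i,\mathcal J\mb v\rangle_\R
\]
from Section~\ref{compzk}, the real part of this condition reads $\langle\gamma_i,\mb v\rangle_\R=0$ for every $i$ with $z_i\ne0$.

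The second step is then immediate: the hypothesis $\R\langle\gamma_i\colon z_i\ne0\rangle=V^*$ means that the vectors $\{\gamma_i\colon z_i\ne0\}$ separate points of $V$ over the reals, so $\langle\gamma_i,\mb v\rangle_\R=0$ for all those $i$ forces $\mb v=\mathbf 0$ in~$V$, and hence in~$\widetilde V$. The imaginary-part condition $\langle\gamma_i,\mathcal J\mb v\rangle_\R\in-2\pi\Z$ is then automatically satisfied.

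I do not anticipate any obstacle; the only substantive point is simply to observe that the lattice ambiguity $2\pi i\Z$ in the complex exponential evaporates upon taking the real part, so the argument is essentially the same as in the real case (Proposition~\ref{freeorbit}) but applied to $\mathop\mathrm{Re}\langle\gamma_i,\mb v\rangle_\C$ rather than $\langle\gamma_i,\mb v\rangle_\R$ directly.
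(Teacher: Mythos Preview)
Your proof is correct and follows essentially the same approach as the paper: both arguments hinge on the observation that $e^{\langle\gamma_i,\mb v\rangle_\C}=1$ forces $\Re\langle\gamma_i,\mb v\rangle_\C=\langle\gamma_i,\mb v\rangle_\R=0$, after which the real spanning hypothesis forces $\mb v=0$. The paper phrases it as a contrapositive (if $\mb v\ne0$ stabilises $\mb z$, then the $\gamma_i$ with $z_i\ne0$ lie in a real hyperplane), but the content is identical.
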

\begin{proof}
Suppose $\widetilde V_{\mb z}\ne\bf0$, so there exists $\mb v\ne0$ such that
\[
  (z_1e^{\langle\gamma_1,\mb
  v\rangle_\C},\ldots,z_me^{\langle\gamma_m,\mb v\rangle_\C})=(z_1,\ldots,z_m).
\]
Then $\Re\langle\gamma_i,\mb v\rangle_\C=0$ for $z_i\ne0$, which implies that the vectors $\gamma_i$ with $z_i\ne0$ belong to the real hyperplane $\Re\langle\,\cdot\,,\mb v\rangle_\C=0$ and do not span~$V^*$.
\end{proof}

Unlike the situation with the real exponential action, the converse statement to Proposition~\ref{Cfreeorbit} does not hold, as can be seen from Example~\eqref{2gamma} below.

For any simplicial complex $\mathcal K$ on $[m]$, there is the corresponding complement of an arrangement of \emph{complex} coordinate subspaces 
in~$\C^m$:
\begin{equation}\label{UKC}
  U_\C(\mathcal K)=\C^m\setminus\bigcup_{\{i_1,\ldots,i_p\}\notin\mathcal K}
  \{\mb z\colon z_{i_1}=\cdots=z_{i_p}=0\}.
\end{equation}
There is a polyhedral product decomposition $U_\C(\sK)=(\C,\C^\times)^\sK$, where $\C^\times=\C\setminus\{0\}$, similar to Example~\ref{expp}.1.

Recall the universal simplicial complex $\mathcal K(\Gamma)=\{I\subset[m]\colon \R\langle\Gamma_{\widehat I}\rangle=V^*\}$. We have the following analogue of Proposition~\ref{free}, with the same proof:

\begin{proposition}\label{Cfree}
If $\sK\subset\sK(\Gamma)$ is a simplicial subcomplex, then the restriction of the action~\eqref{hact} to $U_\C(\sK)$ is free.
\end{proposition}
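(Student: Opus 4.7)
The plan is to transcribe the proof of Proposition~\ref{free} into the holomorphic setting, replacing the real data $(V,\R^m,\mb x)$ by the complex data $(\widetilde V,\C^m,\mb z)$. The definitions~\eqref{UKC} of $U_\C(\sK)$ and Proposition~\ref{Cfreeorbit} have been set up precisely so that this translation works word for word.

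First, I fix an arbitrary point $\mb z=(z_1,\ldots,z_m)\in U_\C(\sK)$ and form the index set $I_{\mb z}=\{i\colon z_i=0\}\subset[m]$. By the definition~\eqref{UKC} of $U_\C(\sK)$ as the complement of the coordinate subspace arrangement indexed by the \emph{non-faces} of $\sK$, membership of $\mb z$ in $U_\C(\sK)$ forces $I_{\mb z}\in\sK$; otherwise $\mb z$ would lie in one of the excluded coordinate subspaces.

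Next, I apply the hypothesis $\sK\subset\sK(\Gamma)$: by the very definition $\sK(\Gamma)=\{I\subset[m]\colon\R\langle\Gamma_{\widehat I}\rangle=V^*\}$, this gives $\R\langle\gamma_i\colon i\in\widehat{I_{\mb z}}\rangle=\R\langle\gamma_i\colon z_i\ne0\rangle=V^*$. Now Proposition~\ref{Cfreeorbit} applies directly and yields $\widetilde V_{\mb z}=\mathbf 0$. Since $\mb z\in U_\C(\sK)$ was arbitrary, every stabiliser of the restricted action~\eqref{hact} is trivial, i.e.\ the action is free.

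There is no real obstacle; the argument is essentially a one-line diagram chase through the three conditions (membership in $U_\C(\sK)$, simplex of $\sK$, real-spanning of the complement). The only conceptual point to keep in mind is that Proposition~\ref{Cfreeorbit}, unlike its real counterpart Proposition~\ref{freeorbit}, is only a sufficient criterion (not an equivalence) for triviality of the stabiliser, as the author notes right after stating it. This is harmless here because we only use the sufficient direction.
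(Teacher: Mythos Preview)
Your proof is correct and is exactly what the paper intends: it states Proposition~\ref{Cfree} ``with the same proof'' as Proposition~\ref{free}, and your argument is precisely that transcription, invoking Proposition~\ref{Cfreeorbit} in place of Proposition~\ref{freeorbit}. Your closing remark that only the sufficient direction of Proposition~\ref{Cfreeorbit} is needed is also on point.
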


The properness criterion for the holomorphic exponential action is similar to that of Theorem~\ref{proper}:

\begin{theorem}\label{hproper}
Let $\Gamma=\{\gamma_1,\ldots,\gamma_m\}$ and $\mathrm A=\{\mb a_1,\ldots,\mb a_m\}$ be a pair of Gale dual vector configurations in $V^*$ and $W^*$, respectively, and let $\sK$ be a simplicial complex on~$[m]$. The holomorphic exponential action $\widetilde V\times U_\C(\sK)\to U_\C(\sK)$ is proper if and only if $\{\sK,\mathrm A\}$ is a triangulated configuration, in which case the action is also free.
\end{theorem}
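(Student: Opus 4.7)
My plan is to reduce Theorem~\ref{hproper} to its real counterpart Theorem~\ref{proper} via the modulus map $|\cdot|\colon\C^m\to\R^m_{\ge}$ which, by the commutative diagram following~\eqref{hact}, intertwines the holomorphic action with the real action~\eqref{ract}. Throughout I use that $\widetilde V$ and $V$ coincide as real vector spaces (so convergence in $\widetilde V$ is convergence in~$V$), and that if $\mb z\in U_\C(\sK)$ then $|\mb z|$ has the same zero coordinates as $\mb z$, hence $|\mb z|\in U(\sK)$.

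\emph{Sufficiency.} Assume $\{\sK,\mathrm A\}$ is a triangulated configuration. Then $\mathrm A_I$ is linearly independent for $I\in\sK$, so Proposition~\ref{galeindspan} gives $\sK\subset\sK(\Gamma)$ and Proposition~\ref{Cfree} yields freeness. For properness, take a sequence $\{(\mb v^{(k)},\mb z^{(k)})\}$ whose shear-map image lies in a compact subset of $U_\C(\sK)\times U_\C(\sK)$, and after extracting a subsequence assume $\mb z^{(k)}\to\mb z$ and $\mb v^{(k)}\cdot\mb z^{(k)}\to\mb y$ in $U_\C(\sK)$. Taking moduli componentwise,
\[
  |\mb z^{(k)}|\to|\mb z|,\qquad
  \mb v^{(k)}\cdot|\mb z^{(k)}|=|\mb v^{(k)}\cdot\mb z^{(k)}|\to|\mb y|
\]
in $U(\sK)$, where on the left the action is now~\eqref{ract}. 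By Theorem~\ref{proper} the real action is proper, so a subsequence of $\{\mb v^{(k)}\}$ converges in $V=\widetilde V$; together with $\mb z^{(k)}\to\mb z$ this produces the required convergent subsequence in $\widetilde V\times U_\C(\sK)$.

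\emph{Necessity.} Assume $\{\sK,\mathrm A\}$ is not a triangulated configuration. The second half of the proof of Theorem~\ref{proper} furnishes $\mb v\in V\setminus\{\mathbf 0\}$ and sequences $\mb x^{(k)},\mb y^{(k)}\in U(\sK)$ with $\mb y^{(k)}=(k\mb v)\cdot\mb x^{(k)}$ under~\eqref{ract}, $\mb x^{(k)}\to\mb x$ and $\mb y^{(k)}\to\mb y$ in $U(\sK)$. Viewing $\mb x^{(k)}\in\R^m$ as a sequence in $U_\C(\sK)\supset U(\sK)$, the $j$th coordinate of the holomorphic translate is
\[
  e^{\langle\gamma_j,k\mb v\rangle_\C}x_j^{(k)}=y_j^{(k)}\,e^{-ik\langle\gamma_j,\mathcal J\mb v\rangle_\R},
\]
whose modulus equals $y_j^{(k)}$ and whose phase lies on the unit circle. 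Passing to a further subsequence, the $m$-tuple of phases converges in the compact torus $T^m$ to some $(\phi_1,\ldots,\phi_m)$. Hence $(k\mb v)\cdot\mb x^{(k)}\to\widetilde{\mb y}:=(\phi_j y_j)_{j=1}^m$, and since $|\phi_j|=1$ the zero sets of $\widetilde{\mb y}$ and $\mb y$ coincide, so $\widetilde{\mb y}\in U_\C(\sK)$. Thus the shear-map image of $(k\mb v,\mb x^{(k)})$ converges in $U_\C(\sK)\times U_\C(\sK)$ while $\|k\mb v\|\to\infty$ precludes any subsequential limit in~$\widetilde V$. The holomorphic action is therefore not proper.

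The freeness clause under the triangulated hypothesis was established above; alternatively, any proper action of $\widetilde V\cong\R^{2\ell}$ is automatically free since the only compact subgroup of $\widetilde V$ is $\{\mathbf 0\}$. The main subtlety I anticipate is the phase argument in the necessity step: after invoking the real-case construction, one must pass once more to a subsequence using compactness of~$T^m$, and then verify that componentwise multiplication by unit phases keeps the limit inside $U_\C(\sK)$. This is immediate because units preserve non-vanishing of coordinates; beyond it the proof is a direct transport of the real argument.
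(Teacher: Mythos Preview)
Your proof is correct. Both directions go through as written: the sufficiency argument correctly transports convergence of the shear-image to the real quotient via $|\cdot|$ and invokes Theorem~\ref{proper}, and in the necessity argument the extra subsequence extraction on $T^m$ is exactly what is needed to make the explicit real-case witness work holomorphically.

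The paper's proof, however, takes a cleaner and more uniform route. Rather than descending into sequences, it works directly with the commutative square of shear maps
\[
\begin{tikzcd}
  \widetilde V\times U_\C(\sK) \ar{r}{h_\C}
  \ar{d}{\mathop\mathrm{id}\times p} &
  U_\C(\sK)\times U_\C(\sK) \ar{d}{p\times p}\\
  V\times U(\sK) \ar{r}{h} & U(\sK)\times U(\sK)
\end{tikzcd}
\]
and proves the single statement ``$h_\C$ is proper if and only if $h$ is proper'' using only that $p=|\cdot|$ is a proper map (being the quotient by a compact torus). This handles both implications symmetrically with short set-theoretic identities, without re-entering the proof of Theorem~\ref{proper} or invoking compactness of~$T^m$. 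Your approach has the virtue of being entirely self-contained at the level of sequences, and the phase-compactness trick is a nice concrete substitute for the abstract properness of~$p$; the paper's approach buys brevity and avoids any dependence on the \emph{internal} construction of the real-case counterexample.
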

\begin{proof}
Consider the commutative diagram of shear maps
\[
\begin{tikzcd}
  \widetilde V\times U_\C(\sK) \ar{r}{h_\C} 
  \ar{d}{\mathop\mathrm{id}\times p} & 
  U_\C(\sK)\times U_\C(\sK) \ar{d}{p\times p}\\
  V\times U(\sK) \ar{r}{h} & U(\sK)\times U(\sK)
\end{tikzcd}
\]
where $p=|\cdot|$. The result will follow from Theorem~\ref{proper} if we prove that $h_\C$ is proper if and only if $h$ is proper.

Suppose $h$ is proper. The preimage $h_\C^{-1}(C)$ of a compact subset $C\subset U_\C(\sK)\times U_\C(\sK)$ is a closed subset in $W=((\mathop\mathrm{id}\times p)\circ h)^{-1}((p\times p)(C))$. The map $p$ can be identified with the quotient projection for an action of a compact group (a torus), so it is proper. Hence, $W$ is a compact subset in $\widetilde V\times U_\C(\sK)$ and  $h_\C^{-1}(C)$ is compact as a closed subset in~$W$.

Suppose $h_\C$ is proper. If $R\subset U(\sK)\times U(\sK)$ is a compact subset, then $h^{-1}(R)=(\mathop\mathrm{id}\times p)
\bigl(((p\times p)\circ h_\C)^{-1}(R)\bigr)$ is compact, because both $h_\C$ and $p$ are proper.

The fact that the action $\widetilde V\times U_\C(\sK)\to U_\C(\sK)$ defined by a triangulated configuration $\{\sK,\mathrm A\}$ is free follows from Proposition~\ref{Cfree}.
\end{proof}

\begin{example}\label{2gamma}
Consider the exponential action of $\widetilde V\cong\C$ on $\C^2$ defined by a configuration of two vectors $\{\gamma_1,\gamma_2\}$ in $\widetilde V^*$:
\[
   \widetilde V\times\C^2\to\C^2,\quad (v,z_1,z_2)\mapsto
   (e^{\gamma_1v}z_1,e^{\gamma_2v}z_2).
\]

Let $\sK=\{\varnothing\}$ on the two-element set $[2]=\{1,2\}$, so that
$U_\C(\sK)=(\C^\times)^2$.
If $\R\langle\gamma_1,\gamma_2\rangle=\R^2$, then the restriction of the action to $(\C^\times)^2$ is free and proper by Theorem~\ref{hproper} . The quotient is given by
\[
  (\C^\times)^2/\widetilde V\cong \C/(\Z\langle\gamma_1,\gamma_2\rangle)
  \cong T^2,
\]
it is a one-dimensional complex torus.

If $\gamma_1=\alpha\gamma_2$ with $\alpha\in\R\setminus\Q$, then the action 
$\widetilde V\times(\C^\times)^2\to(\C^\times)^2$ is still free, so the converse of Proposition~\ref{Cfreeorbit} does not hold. However, this action is not proper, as the orbit of the point $(1,1)\in(\C^\times)^2$ is not closed. The quotient 
$(\C^\times)^2/\widetilde V$ is non-Hausdorff.

Finally, if $\gamma_1=\alpha\gamma_2$ with $\alpha\in\Q$, then the action $\widetilde V\times(\C^\times)^2\to(\C^\times)^2$ is not free, as any $v\in\widetilde V$ such that 
$\gamma_1v=2\pi ik_1$ and $\gamma_2v=2\pi ik_2$ with integer $k_1,k_2$ acts trivially. If at least one of $\gamma_1,\gamma_2$ is nonzero, the quotient $(\C^\times)^2/\widetilde V$ is~$\C^\times$. 
\end{example}

The criterion for compactness of the quotient is also the same as in the real case, with the same proof:

\begin{theorem}\label{hcompact}
Let $\widetilde V\times U_\C(\sK)\to U_\C(\sK)$ be a proper holomorphic exponential action.
The quotient $U_\C(\sK)/\widetilde V$ is compact if and only if the corresponding fan $\Sigma$ is complete.
\end{theorem}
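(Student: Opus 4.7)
The plan is to adapt the proof of Theorem~\ref{comcom} to the holomorphic setting, using the fact that the action~\eqref{hact} is compatible with~\eqref{ract} via the projection $p\colon \mathbf z\mapsto(|z_1|,\ldots,|z_m|)$ and that $p$ is a proper map (a principal $T^m$-bundle over $\R^m_{\ge}$-type strata). The key observation is that the function $\ell(\mathbf z)=\sum_{i=1}^m\mathbf a_i\log|z_i|\in W^*$ remains constant along $\widetilde V$-orbits, because for $\mathbf v\in\widetilde V$ we have $\log|e^{\langle\gamma_i,\mathbf v\rangle_\C}z_i|=\Re\langle\gamma_i,\mathbf v\rangle_\C+\log|z_i|=\langle\gamma_i,\mathbf v\rangle_\R+\log|z_i|$, and $\sum_{i=1}^m\mathbf a_i\langle\gamma_i,\mathbf v\rangle_\R=0$ by Gale duality. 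This is the exact analogue of~\eqref{lconst}.

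For the forward direction (completeness implies compactness), I would take a sequence of points in $U_\C(\sK)/\widetilde V$, lift to $\{\mathbf z^{(n)}\}$ in $U_\C(\sK)$, and argue by induction on $\dim W^*$ exactly as in Theorem~\ref{comcom}, first passing to a subsequence with common zero-coordinate set $J\in\sK$, then reducing the case $J\ne\varnothing$ to the link subcomplex $\mathrm{link}_\sK J$. Once we are in the case $z_i^{(n)}\ne 0$ for all $i$, I use completeness of $\Sigma$ to find $I\in\sK$ with $|I|=\dim W^*$ such that, after passing to a subsequence, $-\ell(\mathbf z^{(n)})\in\cone\mathrm A_I$. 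By Proposition~\ref{galeindspan}, $\Gamma_{\widehat I}$ is a basis of $V^*$, so the real linear system $\langle\gamma_i,\mathbf v^{(n)}\rangle_\R=-\log|z_i^{(n)}|$, $i\notin I$, can be solved for $\mathbf v^{(n)}\in\widetilde V$ (viewed as $V$). Setting $\widetilde{\mathbf z}^{(n)}=\mathbf v^{(n)}\cdot\mathbf z^{(n)}$, we get $|\widetilde z_i^{(n)}|=1$ for $i\notin I$ and $|\widetilde z_i^{(n)}|\le 1$ for $i\in I$ (using that $\ell$ is $\widetilde V$-invariant and $-\ell(\widetilde{\mathbf z}^{(n)})\in\cone\mathrm A_I$). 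Thus $\{\widetilde{\mathbf z}^{(n)}\}$ is bounded in $\C^m$; extracting a convergent subsequence and noting that its zero coordinates can occur only for $i\in I\in\sK$, the limit lies in $U_\C(\sK)$ as required.

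For the reverse direction (compactness implies completeness), I would take $\mathbf a\in W^*$ and write $\mathbf a=\sum_{i=1}^m\alpha_i\mathbf a_i$ with $\alpha_i\in\R$. Consider the real curve $x\mapsto(x^{\alpha_1},\ldots,x^{\alpha_m})\in(\R_>)^m\subset(\C^\times)^m\subset U_\C(\sK)$ for $x>0$. By compactness of $U_\C(\sK)/\widetilde V$, a sequence $x^{(n)}\to 0$ has a convergent image, giving a limit $[\mathbf y]\in U_\C(\sK)/\widetilde V$ with $I=\{i\colon y_i=0\}\in\sK$. For $\mathbf w\in(\cone\mathrm A_I)^{\mathsf v}$ the function $f_{\mathbf w}(\mathbf z)=\prod_{i=1}^m|z_i|^{\langle\mathbf a_i,\mathbf w\rangle}=e^{\langle\ell(\mathbf z),\mathbf w\rangle}$ descends to a continuous function on the appropriate stratum of $U_\C(\sK)/\widetilde V$ and extends continuously to the chart $U_{\C,I}/\widetilde V$. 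Evaluating along the curve gives $\lim_{x\to 0}x^{\langle\mathbf a,\mathbf w\rangle}=f_{\mathbf w}(\mathbf y)$, forcing $\langle\mathbf a,\mathbf w\rangle\ge 0$ for all $\mathbf w\in(\cone\mathrm A_I)^{\mathsf v}$; by biduality $\mathbf a\in\cone\mathrm A_I$, so $\Sigma$ is complete.

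The only genuinely new issue compared to the real case is that $\widetilde V$ acts with additional degrees of freedom coming from the imaginary part of $\langle\gamma_i,\mathbf v\rangle_\C$, which rotates $z_i$ without affecting $|z_i|$; this seems to cost nothing because after modifying by $\mathbf v^{(n)}$ the modulus sequence is already bounded, and the arguments $\arg(\widetilde z_i^{(n)})$ live in the compact torus $T^m$, so a further subsequence makes them converge. An alternative, cleaner way to bypass this entirely is to invoke the commutative square of shear maps used in the proof of Theorem~\ref{hproper}: since $p\colon U_\C(\sK)\to p(U_\C(\sK))$ is the quotient by the compact torus $T^m$, compactness of $U_\C(\sK)/\widetilde V$ is equivalent to compactness of the real quotient $p(U_\C(\sK))/V=U(\sK)/V$, and one then quotes Theorem~\ref{comcom} directly. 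I would include both arguments as a sanity check, but present the direct adaptation above as the main line.
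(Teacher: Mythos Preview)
Your main argument is correct and is precisely what the paper does: the paper's entire proof of Theorem~\ref{hcompact} is the phrase ``with the same proof'' (referring to Theorem~\ref{comcom}), and your direct adaptation---replacing $x_i$ by $|z_i|$, noting $\Re\langle\gamma_i,\mathbf v\rangle_\C=\langle\gamma_i,\mathbf v\rangle_\R$, and using that the arguments $\arg z_i$ live in a compact torus---is exactly that.

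One small slip in your alternative route: $p(U_\C(\sK))$ is not $U(\sK)$ but rather $U(\sK)\cap\R^m_\ge$, so you cannot invoke Theorem~\ref{comcom} verbatim. The fix is easy: the sign-change group $(\Z/2)^m$ acts on $U(\sK)$ commuting with $V$, with quotient $U(\sK)\cap\R^m_\ge$, so compactness of $U(\sK)/V$ and of $(U(\sK)\cap\R^m_\ge)/V$ are equivalent; then your torus argument goes through. Since you present the direct adaptation as the main line, this does not affect the correctness of your proof.
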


The \emph{moment-angle complex} $\zk$ is the polyhedral product 
$(D^1,S^0)^\sK$, where $D^2$ is a closed unit disc in $\C$ and $S^1$ is
its boundary circle:
\begin{equation}\label{madec}
  \zk=(D^2,S^1)^\sK=\bigcup_{I\in\sK}(D^2,S^1)^I.
\end{equation}
It is a CW-subcomplex in the product of $m$ discs $(D^2)^m$ with the induced action of $m$-torus $T^m=(S^1)^m$. When $\sK$ is a simplicial subdivision of an $(n-1)$-sphere, i.\,e. $|\sK|\cong S^{n-1}$, the moment-angle complex $\zk$ is a closed topological $(m+n)$-dimensional manifold, called the \emph{moment-angle manifold}~\cite[Theorem~4.1.4]{bu-pa15}. The topology of moment-angle manifolds can be remarkably complicated, see~\cite[Chapter~4]{bu-pa15}.

If $\{\sK,\mathrm A\}$ is a triangulated configuration, then the moment-angle manifold $\zk$ can be identified with the quotient $U_\C(\sK)/V$ by the \emph{real} exponential action
\begin{equation}\label{ractz}
\begin{aligned}
  V\times\C^m&\longrightarrow\C^m\\
  (\mb v,\mb z)&\mapsto \mb v\cdot\mb z=\bigl(e^{\langle\gamma_1,\mb
  v\rangle_\R}z_1,\ldots,e^{\langle\gamma_m,\mb v\rangle_\R}z_m\bigr).
\end{aligned}
\end{equation}
This is proved in exactly the same way as Theorem~\ref{quotrk}. (Note also that the moment-angle manifold $\zk$ is homeomorphic to the real moment-angle manifold $\mathcal R_{D(\sK)}$, where~$D(\sK)$ is the \emph{double} of~$\sK$, see~\cite{b-b-c-g15} or~\cite[Corollary~4.8.14]{bu-pa15}.)

Provided that $\dim V=k$ is even and a complex structure is chosen on~$V$, the quotient of the \emph{holomorphic} exponential action~\eqref{hact} can also be identified with the moment-angle manifold $\zk$, thereby endowing the latter with a complex structure:

\begin{theorem}\label{quotzk}
Assume that $\{\sK,\mathrm A\}$ is a triangulated configuration defining a complete simplicial fan $\Sigma$, and let $\widetilde V\times U_\C(\sK)\to U_\C(\sK)$ be the corresponding holomorphic exponential action. Then there is a homeomorphism 
\[
  U_\C(\sK)/\widetilde V\cong\zk.
\]
\end{theorem}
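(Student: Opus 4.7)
The plan is to mimic the proof of Theorem~\ref{quotrk}, replacing the real functional $\ell(\mb x)=\sum_{i=1}^m\mb a_i\log|x_i|$ by its complex counterpart
\[
  \ell_\C\colon(\C^\times)^m\to W^*,\qquad
  \ell_\C(\mb z)=\sum_{i=1}^m\mb a_i\log|z_i|.
\]
The inclusion of pairs $(D^2,S^1)\hookrightarrow(\C,\C^\times)$ induces an inclusion
$\zk=(D^2,S^1)^\sK\hookrightarrow(\C,\C^\times)^\sK=U_\C(\sK)$, so the composition with the orbit map gives a continuous map $\zk\to U_\C(\sK)/\widetilde V$. Since the holomorphic action is proper by Theorem~\ref{hproper}, the quotient is Hausdorff; it is also compact by Theorem~\ref{hcompact}. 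As $\zk$ is compact, the homeomorphism will follow once one shows that every $\widetilde V$-orbit in $U_\C(\sK)$ meets $\zk$ in exactly one point.

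First I would verify that $\ell_\C$ is constant on $\widetilde V$-orbits in $(\C^\times)^m$. Since $\log|e^{\langle\gamma_i,\mb v\rangle_\C}z_i|=\Re\langle\gamma_i,\mb v\rangle_\C+\log|z_i|=\langle\gamma_i,\mb v\rangle_\R+\log|z_i|$, invariance follows from the Gale duality identity $\sum_{i=1}^m\mb a_i\langle\gamma_i,\mb v\rangle_\R=0$, which mirrors~\eqref{lconst} from the real case. For existence of an intersection with $\zk$, I would first reduce to $\mb z\in(\C^\times)^m$ by induction on $\dim W^*$ via the link subcomplex $\mathop\mathrm{link}_\sK J$ for $J=\{i\colon z_i=0\}\in\sK$, exactly as in the proof of Theorem~\ref{comcom}. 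In the reduced case, completeness of $\Sigma$ gives $I\in\sK$ with $|I|=\dim W^*$ and $-\ell_\C(\mb z)\in\cone\mathrm A_I$. Because the fan is simplicial, $\mathrm A_I$ is linearly independent, so by Proposition~\ref{galeindspan} the configuration $\Gamma_{\widehat I}$ is a basis of $V^*$, and the equations $\langle\gamma_i,\mb v\rangle_\R=-\log|z_i|$, $i\notin I$, have a unique solution $\mb v\in V=\widetilde V$. For $\widetilde{\mb z}=\mb v\cdot\mb z$ one has $|\widetilde z_i|=1$ for $i\notin I$, and the invariance $-\ell_\C(\widetilde{\mb z})=-\ell_\C(\mb z)\in\cone\mathrm A_I$ combined with the linear independence of $\mathrm A_I$ forces $\log|\widetilde z_i|\le 0$ for $i\in I$. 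Hence $\widetilde{\mb z}\in(D^2,S^1)^I\subset\zk$.

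For uniqueness I would argue as follows. Suppose $\mb z,\mb z'\in\zk$ lie in the same $\widetilde V$-orbit, and let $I,I'\in\sK$ be the (minimal) simplices with $\mb z\in(D^2,S^1)^I$ and $\mb z'\in(D^2,S^1)^{I'}$ (again reducing to $\mb z\in(\C^\times)^m$ first). Then
\[
  \sum_{i\in I}\mb a_i\log|z_i|=\ell_\C(\mb z)=\ell_\C(\mb z')
  =\sum_{i\in I'}\mb a_i\log|z'_i|,
\]
and both expressions lie in $-\cone\mathrm A_I\cap-\cone\mathrm A_{I'}=-\cone\mathrm A_{I\cap I'}$ because $\Sigma$ is a fan. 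Linear independence of $\mathrm A_I$ (resp.\ $\mathrm A_{I'}$) then forces $\log|z_i|=0$ for $i\in I\setminus I'$ and $\log|z'_i|=0$ for $i\in I'\setminus I$; by minimality of $I,I'$ this gives $I=I'$, and linear independence of $\mathrm A_I$ yields $|z_i|=|z'_i|$ for every $i$. Writing $\mb z'=\mb v\cdot\mb z$ with $\mb v\in\widetilde V$, the identity of moduli gives $\langle\gamma_i,\mb v\rangle_\R=0$ for all $i$ with $z_i\ne0$. Since $\sK\subset\sK(\Gamma)$, the vectors $\{\gamma_i\colon z_i\ne0\}$ span $V^*$, so $\mb v=0$ in $V=\widetilde V$ and $\mb z=\mb z'$.

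The main technical obstacle is not in any single step but rather in ensuring that the real inductive reduction (handling coordinates with $z_i=0$) carries through cleanly in the holomorphic setting, since $\ell_\C$ is only defined on $(\C^\times)^m$; the remedy, as in Theorem~\ref{comcom}, is to restrict to the invariant stratum $\{z_j=0\colon j\in J\}$ and apply the inductive hypothesis to the link of $J$ equipped with the induced triangulated configuration in $W^*/\langle\mb a_j\colon j\in J\rangle$. Everything else is a direct transcription of the real argument, and the complex structure $\mathcal J$ enters only in guaranteeing that the solution $\mb v$ produced above lies in $\widetilde V$ (as a set, the same as $V$), so no additional obstructions arise.
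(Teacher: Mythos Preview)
Your proposal is correct and follows essentially the same approach as the paper's proof: the inclusion $\zk\hookrightarrow U_\C(\sK)$, the $\widetilde V$-invariant map $\ell(\mb z)=\sum_i\mb a_i\log|z_i|$, the inductive reduction to $(\C^\times)^m$ via the link, completeness to locate a maximal $I\in\sK$ with $-\ell(\mb z)\in\cone\mathrm A_I$, solving $\langle\gamma_i,\mb v\rangle_\R=-\log|z_i|$ for $i\notin I$, and the uniqueness via $\ell(\mb z)=\ell(\mb z')\in-\cone\mathrm A_{I\cap I'}$ forcing equal moduli and hence $\mb v=0$. The only cosmetic difference is that the paper does not bother with minimal $I,I'$ in the uniqueness step, extracting $|z_i|=|z'_i|$ directly from the simplicial cone $\cone\mathrm A_{I\cap I'}$.
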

\begin{proof}
We have an inclusion of pairs $(D^2,S^1)\hookrightarrow(\C,\C^\times)$, which induces an inclusion 
$\zk=(D^2,S^1)^\sK\hookrightarrow(\C,\C^\times)^\sK=U_\C(\sK)$. So the required homeomorphism will follow from the fact that the $\widetilde V$-orbit of any point $\mb z=(z_1,\ldots,z_m)\in U_\C(\sK)$ intersects $\zk$ at a single point.

We may assume that $z_i\ne0$ for $i\in[m]$, as in the proof of Theorem~\ref{comcom}. Given $\mb z=(z_1,\ldots,z_m)\in(\R^\times)^m$, define 
\[
  \ell(\mb z)=\sum_{i=1}^m\mb a_i\log|z_i|\in W^*.
\]
The map $\ell$ is constant on $\widetilde V$-orbits, since for any $\mb v\in\widetilde V$ we have
\[
  \ell(\mb v\cdot\mb z)=\sum_{i=1}^m\mb a_i
  \log\bigl(e^{\langle\gamma_i,\mb v\rangle_\R}|z_i|\bigr)=
  \sum_{i=1}^m\mb a_i\langle\gamma_i,\mb v\rangle_\R+
  \sum_{i=1}^m\mb a_i\log|z_i|=\ell(\mb z),
\]
where the last equality follows by Gale duality.
As $\Sigma$ is complete, there is $I\in\sK$ with $|I|=\dim W^*$ such 
that $-\ell(\mb z)\in\cone\mathrm A_I$. Then $\Gamma_{\widehat I}=\{\gamma_i\colon i\notin I\}$ is a (real) basis of~$V^*$. By solving the equations 
$\langle \gamma_i ,\mb v\rangle_\R = -\log|z_i|$, $i \notin I$, we find 
$\widetilde{\mb z}=\mb v\cdot\mb z$ in the same $\widetilde V$-orbit satisfying 
$|\widetilde z_i|=1$ for $i\notin I$. Since $-\ell(\widetilde{\mb z})=-\ell(\mb z)\in\cone\mathrm A_I$, we have $\log|\widetilde z_i|\le0$ for $i\in I$. Hence, 
$|\widetilde z_i|\le1$ for $i\in I$. These conditions imply that $\widetilde{\mb z}\in\zk$. Therefore, every $\widetilde V$-orbit intersects~$\zk$.

Now suppose that $\mb z,\mb z'\in\zk$ are in the same $\widetilde V$-orbit, i.\,e. 
$\mb z'=\mb v\cdot\mb z$. By definition of 
$\zk$, there is $I\in\sK$ such that $|z_i|=1$ for $i\notin I$ and $|z_i|\le1$ for $i\in I$, and $J\in\sK$ such that 
$|z'_j|=1$ for $j\notin J$ and $|z'_j|\le1$ for $j\in J$. We have
\[
  \sum_{i\in I}\mb a_i\log|z_i|=\ell(\mb z)=\ell(\mb z')=
  \sum_{j\in J}\mb a_j\log|z'_j|. 
\]
It follows that the expression above lies in $-\cone\mathrm A_I\cap\cone\mathrm A_J=-\cone\mathrm A_{I\cap J}$. As the latter is a simplicial cone, we obtain $|z_i|=|z'_i|=e^{\langle\gamma_i,\mb v\rangle_\R}|z_i|$ for $i\in[m]$. This implies $\langle\gamma_i,\mb v\rangle_\R=0$ for $i\in[m]$. Hence, $\mb v=0$ and 
$\mb z=\mb z'$, as needed.
\end{proof}

\begin{corollary}\label{zkfancplx}
Suppose $\sK$ is the underlying complex of a complete $n$-dimensional simplicial fan $\Sigma$ and $m-n=2\ell$ is even. Then
\begin{itemize}
\item[(1)] the moment-angle manifold $\zk$ has a structure of a compact complex manifold;
\item[(2)] $U_\C(\sK)$ and $\zk$ have the same homotopy type.
\end{itemize}
\end{corollary}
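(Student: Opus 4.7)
Since $m-n=k=2\ell$, we may equip $V$ with a complex structure $\mathcal J$ and regard it as a complex vector space $\widetilde V\cong\C^\ell$. The plan for~(1) is to combine three results already established in the excerpt to identify $\zk$ with the orbit space of a free, proper holomorphic action, and then invoke the standard quotient theorem for Lie group actions.

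First, because $\{\sK,\mathrm A\}$ is a triangulated configuration (the underlying data of the complete simplicial fan $\Sigma$), Theorem~\ref{hproper} guarantees that the holomorphic exponential action \eqref{hact} restricted to $U_\C(\sK)$ is free and proper. Second, Theorem~\ref{hcompact} tells us that the orbit space $U_\C(\sK)/\widetilde V$ is compact, since $\Sigma$ is complete. Third, Theorem~\ref{quotzk} identifies this orbit space with $\zk$ as a topological space. Now the general principle that a free proper holomorphic action of a complex Lie group on a complex manifold has a canonical complex-analytic quotient (a holomorphic version of \cite[Chapter~21]{lee13} applied to the underlying real action and then refined using that the action is by biholomorphisms, so the transverse slices inherit complex coordinates) shows that $U_\C(\sK)/\widetilde V$ carries a canonical complex-analytic structure. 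Transporting this structure along the homeomorphism of Theorem~\ref{quotzk} endows $\zk$ with the desired structure of a compact complex manifold. The main thing to verify is that the slice charts constructed in the proof of the proper-action quotient theorem can be chosen holomorphic; this is immediate because the exponential map of the complex group $\widetilde V$ is holomorphic and the action is by complex-analytic automorphisms of $\C^m$, so the transverse slices through points of $U_\C(\sK)$ can be taken as intersections with complex affine subspaces.

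For (2), the identification $U_\C(\sK)/\widetilde V\cong\zk$ exhibits the quotient projection
\[
  \pi\colon U_\C(\sK)\longrightarrow\zk
\]
as a principal $\widetilde V$-bundle. Since the structure group $\widetilde V\cong\C^\ell$ is contractible, any such principal bundle is trivial up to homotopy (it admits a global continuous section by the standard obstruction-theoretic argument, as all higher homotopy groups of the fibre vanish), so $\pi$ is a homotopy equivalence. Equivalently, the long exact sequence of homotopy groups for the fibration $\widetilde V\to U_\C(\sK)\to\zk$ degenerates into isomorphisms $\pi_*(U_\C(\sK))\cong\pi_*(\zk)$, and both spaces have the homotopy type of CW complexes, so Whitehead's theorem upgrades this to a homotopy equivalence.

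The only real obstacle is the holomorphic slice argument needed in~(1); once one has a proper free holomorphic action, the complex-analytic structure on the quotient is standard, but it should be stated carefully. The homotopy part~(2) is then essentially formal, and is consistent with the more general homotopy equivalence $U_\C(\sK)\simeq\zk$ valid for arbitrary $\sK$ (cf.\ \cite[Theorem~4.7.5]{bu-pa15}), which in our special case is realised geometrically as the quotient map by a free proper action of a contractible Lie group.
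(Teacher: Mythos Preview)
Your proposal is correct and follows exactly the route the paper intends: the corollary is stated without proof immediately after Theorem~\ref{quotzk}, and the implicit argument is precisely the one you spell out---combine Theorems~\ref{hproper}, \ref{hcompact} and~\ref{quotzk} with the standard fact that the quotient of a complex manifold by a free proper holomorphic action of a complex Lie group is a complex manifold, and observe that the quotient map by a contractible group is a homotopy equivalence. Your added care about holomorphic slices and the remark linking~(2) to the general homotopy equivalence $U_\C(\sK)\simeq\zk$ from \cite[Theorem~4.7.5]{bu-pa15} are appropriate and match the paper's earlier treatment of the real case.
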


\begin{construction}[adding a ghost vertex]\label{addgv}
Given a simplicial complex $\sK$ on $[m]$, let $\sK_+$ be the same collection of subsets, but viewed as a simplicial complex on $[m+1]$. That is, $\sK_+$ is obtained from $\sK$ by adding the ghost vertex $\{m+1\}$. Then $\mathcal Z_{\sK_+}=\zk\times S^1$ and $U_\C(\sK_+)=U_\C(\sK)\times\C^\times$. 

Given a spanning vector configuration $\mathrm A=\{\mb a_1,\ldots,\mb a_m\}$ in~$W^*$, consider a configuration $\mathrm A_+=\{\mb a_1,\ldots,\mb a_m,\mb a_{m+1}\}$ obtained by appending an arbitrary vector $\mb a_{m+1}\in W^*$. The space of linear relations between the vectors of $\mathrm A_+$ is $V_+=V\oplus\R\langle\mb v_+\rangle$, where $V$ is the space of relations between $\mathrm A$ and $\mb v_+$ corresponds to a new relation $r_1\mb a_1+\cdots+r_m\mb a_m+\mb a_{m+1}=\bf0$. (For instance, if $\mb a_{m+1}=\bf0$, then we may take $r_1=\cdots=r_m=0$.) The Gale dual vector configuration $\Gamma_+$ in $V_+^*$ is
\[
  \Gamma_+=\{\gamma^+_1,\ldots,\gamma^+_m,\gamma^+_{m+1}\}=
  \Bigl\{\Bigl(\begin{matrix}\gamma_1\\r_1\end{matrix}\Bigr),\ldots,
  \Bigl(\begin{matrix}\gamma_m\\r_m\end{matrix}\Bigr),
  \Bigl(\begin{matrix}\bf0\\1\end{matrix}\Bigr)\Bigr\}.
\]
The exponential action~\eqref{ractz} of $V_+$ on $\C^{m+1}$ is given by
\[
\begin{aligned}
  V_+\times\C^{m+1}&\longrightarrow\C^{m+1}\\
  (\mb v,t,\mb z)&\mapsto (\mb v,t)\cdot\mb z=\bigl(e^{\langle\gamma_1,\mb
  v\rangle+r_1t}z_1,\ldots,
  e^{\langle\gamma_m,\mb v\rangle+r_mt}z_m,e^tz_{m+1}\bigr).
\end{aligned}
\]
Clearly, if $\{\sK,\mathrm A\}$ is a triangulated configuration defining an $n$-dimensional fan~$\Sigma$, then $\{\sK_+,\mathrm A_+\}$ defines the same fan. In this case, the exponential action of $V_+$ above restricted to $U_\C(\sK_+)=U_\C(\sK)\times\C^\times$ is free and proper. When the fan is complete, the quotient $U_\C(\sK_+)/V_+$ is homeomorphic to  $\mathcal Z_{\sK_+}=\zk\times S^1$.

Now if $m-n$ is odd, then $m+1-n$ is even, and therefore $\mathcal Z_{\sK_+}=\zk\times S^1$ has a structure of a compact complex manifold by Corollary~\ref{zkfancplx}.
\end{construction}

Elaborating on this further, we get the following generalisation of Example~\ref{2gamma}.

\begin{example}[holomorphic tori]\label{exhtorus}
Let $\sK=\{\varnothing\}$ be the simplicial complex  on $[m]$ consisting of 
$\varnothing$ only, that is, with $m$ ghost vertices.  Let $\mathrm A$ be a configuration of $m$ zero vectors in $W^*=\{\bf 0\}$. The Gale dual configuration 
$\Gamma=\{\gamma_1,\ldots,\gamma_m\}$ is a basis in $V^*\cong\R^m$. The quotient of the real exponential action~\eqref{ractz} of $V$ on $U_\C(\sK)=(\C^\times)^m$ is $\zk=T^m$, the compact $m$-torus. 

Now consider holomorphic exponential actions. We need to specify a complex structure $\mathcal J$ on $V\cong\R^m$, so that $m$ must be even, $m=2\ell$. Then $\widetilde V=(V,\mathcal J)$ has complex dimension~$\ell$. In order to describe the quotient $(\C^\times)^m/\widetilde V$ of the holomorphic exponential action, consider the exact sequence of $\C$-linear maps
\[
  0\longrightarrow\widetilde V\stackrel\iota\longrightarrow
  \C^m\cong\widetilde V\oplus\overline{\widetilde V}
  \stackrel p\longrightarrow \C^m/\widetilde V\cong  \overline{\widetilde V}
  \longrightarrow 0
\]
Here $\iota$ denotes the map $\mb v\mapsto(\langle\gamma_1,\mb v\rangle_\C,\ldots,\langle\gamma_m,\mb v\rangle_\C)$, and $p$ denotes the quotient projection. We have $\widetilde V_\C\cong\widetilde V\oplus\overline{\widetilde V}\cong\C^m$, where 
$\overline{\widetilde V}=(V,-\mathcal J)$, the map 
$\iota$ is identified through this isomorphism with the inclusion of the first summand, and $p$ with the second projection. For 
$\Gamma=\{\gamma_1,\ldots,\gamma_m\}$,  let $\mb g_1,\ldots,\mb g_m$ denote the $\R$-dual basis of~$V$, that is, 
$\langle\gamma_k,\mb g_l\rangle_\R=\delta_{kl}$. Then $\mb v=
\sum_{k=1}^m\langle\gamma_k,\mb v\rangle_\R\mb g_k$, and the map $p$ is given on the standard basis vectors by $\mb e_k\mapsto\mb g_k$, $i\mb e_k\mapsto-\mathcal J\mb g_k$. Now we have
\[
  (\C^\times)^m/\widetilde V
  \cong(\C^m/2\pi i\Z^m)/\widetilde V\cong \overline{\widetilde V}/p(2\pi i\Z^m)
  \cong\overline{\widetilde V}/\Z\langle\mathcal J\mb g_1,\ldots,
  \mathcal J\mb g_m\rangle.
\]
This is a holomorphic torus, the quotient of an $\ell$-dimensional complex space by the lattice generated by the $\R$-basis $\mathcal J\mb g_1,\ldots,\mathcal J
\mb g_{2\ell}$. Every holomorphic torus is obtained from some $\widetilde V=(V,\mathcal J)$ in this way.
\end{example}

\begin{proposition}
A complex moment-angle manifold $\mathcal Z_K$ is non-K\"ahler, unless it is a holomorphic torus.
\end{proposition}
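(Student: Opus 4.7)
The plan is to use a cohomological obstruction: for $\sK\ne\{\varnothing\}$ I will show that every class in $H^2(\zk;\R)$ has vanishing $d$-th cup power, where $d=\dim_\C\zk$. Since a K\"ahler class on a compact complex manifold of positive complex dimension $d$ has non-zero $d$-th power, this will force $\zk$ to be non-K\"ahler. The complementary case $\sK=\{\varnothing\}$ is precisely the holomorphic torus of Example~\ref{exhtorus}.

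The first step is to peel off the ghost vertices. Let $v$ be the number of non-ghost vertices and $g=m-v$ the number of ghost vertices, and write $\sK'$ for the restriction of $\sK$ to the non-ghost vertex set $[v]$. Because every simplex $I\in\sK$ avoids ghost vertices, the polyhedral product decomposes as a smooth manifold
\[
  \zk=\mathcal Z_{\sK'}\times T^g.
\]
If $v=0$ then $\sK=\{\varnothing\}$ and $\zk=T^m$, so assume $v\ge1$.

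Since $\sK'$ has no ghost vertices, the full subcomplex $\sK'_J$ contains every vertex of $J$ and is therefore topologically non-empty for each non-empty $J\subset[v]$. Inspecting low degrees in the Hochster--Buchstaber--Panov formula
\[
  H^k(\mathcal Z_{\sK'};\Q)\cong\bigoplus_{J\subset[v]}\widetilde H^{k-|J|-1}(\sK'_J;\Q)
\]
then gives $b_1(\mathcal Z_{\sK'})=b_2(\mathcal Z_{\sK'})=0$. Combined with K\"unneth, this identifies
\[
  H^2(\zk;\R)\cong\bigwedge^{\!2} H^1(T^g;\R)\cong\bigwedge^{\!2}\R^g,
\]
with all of $H^2(\zk;\R)$ coming, via pullback along the projection onto the $T^g$ factor, from the exterior subalgebra $\bigwedge^\bullet H^1(T^g;\R)\subset H^*(\zk;\R)$.

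The exterior algebra $\bigwedge^\bullet\R^g$ vanishes above degree $g$, so $[\omega]^d=0$ in $H^{2d}(\zk;\R)$ whenever $2d>g$. Now $2d=\dim_\R\zk=m+n=v+g+n$, and $n=\dim|\sK|+1\ge1$ because $\sK$ has a non-ghost vertex. Hence $2d\ge g+2>g$, and so $[\omega]^d=0$ for every $[\omega]\in H^2(\zk;\R)$, contradicting the K\"ahler condition. The only non-trivial computational input is the vanishing $b_1(\mathcal Z_{\sK'})=b_2(\mathcal Z_{\sK'})=0$ for $\sK'$ without ghost vertices, which one extracts directly from the Hochster--Buchstaber--Panov formula; everything else is formal.
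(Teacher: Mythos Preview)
Your proof is correct and follows essentially the same route as the paper's own argument. Both split off the ghost vertices to write $\zk\cong\mathcal Z_{\sK'}\times T^g$, use that the factor $\mathcal Z_{\sK'}$ has vanishing $H^1$ and $H^2$ (the paper invokes $2$-connectedness via \cite[Proposition~4.3.5]{bu-pa15}, you extract $b_1=b_2=0$ from the Hochster decomposition), and then observe that all of $H^2(\zk;\R)$ lives in the exterior subalgebra coming from the torus factor, which is too short to support a nonzero $d$-th power. Your version is a little more explicit about the dimension count $2d=v+g+n>g$, but the idea is identical.
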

\begin{proof}
Suppose that $\mathcal Z_K$ is K\"ahler of complex dimension~$d$. Then the cohomology class $[\omega]\in H^2(\mathcal Z_K)$ of the K\"ahler form satisfies $[\omega]^d\ne0$, as $\mathcal Z_K$ is compact.
 
If $\mathcal K$ does not have ghost vertices, then it is easy to see using~\eqref{madec} and the appropriate cell decomposition that $\mathcal Z_K$ is 2-connected (see~\cite[Proposition~4.3.5]{bu-pa15}). In general, $\mathcal Z_K$ is diffeomorphic to a product of a 2-connected manifold and a torus (see Construction~\ref{addgv}). Hence, it can have a 2-dimensional cohomology class satisfying $[\omega]^d\ne0$ only if the 2-connected factor is a point, so $\mathcal K$ consists entirely of ghost vertices and $\mathcal Z_K$ is a $d$-dimensional holomorphic torus.
\end{proof}

When $\Sigma$ is the normal fan of a polytope, the quotient $U_\C(\sK)/\widetilde V\cong\zk$ can be realised as a nondegenerate intersection of \emph{Hermitian} quadrics:

\begin{theorem}\label{hquadrics}
Let 
\[
  P=\{\mb w\in W\colon\langle \mb a_i,\mb w\rangle+b_i\ge0,\quad i=1,\ldots,m\}
\]  
be a generic polyhedron with normal simplicial fan $\Sigma=\{\cone \mathrm A_I\colon I\in\mathcal K_P\}$. Let $\Gamma=\{\gamma_1,\ldots,\gamma_m\}$ be the Gale dual configuration of $\mathrm A=\{\mb a_1,\ldots,\mb a_m\}$. Put $\delta=\sum_{i=1}^mb_i\gamma_i$. 

The holomorphic exponential action $\widetilde V\times U_\C(\sK_P)\to U_\C(\sK_P)$ is free and proper, and the quotient $U_\C(\sK_P)/\widetilde V$ is diffeomorphic to the nondegenerate intersection of $k$ Hermitian quadrics in~$\C^m$:
\[
  U_\C(\sK_P)/\widetilde V\cong\{(z_1,\ldots,z_m)\in\C^m\colon
  \gamma_1 |z_1|^2+\cdots+\gamma_m|z_m|^2=\delta\}.
\]
\end{theorem}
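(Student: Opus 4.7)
My plan is to mirror the proof of Theorem~\ref{quadrics}, exploiting the fact that the modulus of the holomorphic action is exactly the real action: $|e^{\langle\gamma_i,\mb v\rangle_\C}z_i|=e^{\langle\gamma_i,\mb v\rangle_\R}|z_i|$. First, I would note that since $\Sigma_P=\{\cone\mathrm A_I\colon I\in\mathcal K_P\}$ is a simplicial fan ($P$ is generic), $\{\sK_P,\mathrm A\}$ is a triangulated configuration, so Theorem~\ref{hproper} immediately gives freeness and properness of $\widetilde V\times U_\C(\sK_P)\to U_\C(\sK_P)$. It then remains to identify the quotient with the intersection of Hermitian quadrics, which I would do via the smooth moment-type map
\[
  \mu_\Gamma\colon\C^m\to V^*,\qquad
  \mu_\Gamma(\mb z)=\sum_{i=1}^m|z_i|^2\gamma_i,
\]
by establishing: (i) $\mu_\Gamma^{-1}(\delta)\subset U_\C(\sK_P)$; (ii) $\delta$ is a regular value of $\mu_\Gamma$; (iii) every $\widetilde V$-orbit in $U_\C(\sK_P)$ meets $\mu_\Gamma^{-1}(\delta)$ in exactly one point.

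Claims (i) and (ii) are direct adaptations of Theorem~\ref{quadrics}(2),(3). For (i), if $\mu_\Gamma(\mb z)=\delta$ and $I_{\mb z}=\{i\colon z_i=0\}$, then $\delta=\sum_{i\notin I_{\mb z}}|z_i|^2\gamma_i\in\relint\cone\Gamma_{\widehat I_{\mb z}}$, so Lemma~\ref{normal1} gives $\cone\mathrm A_{I_{\mb z}}\in\Sigma_P$, hence $I_{\mb z}\in\sK_P$. For (ii), the real differential at $\mb z$ sends $(w_1,\ldots,w_m)\mapsto\sum_i 2\Re(\bar z_iw_i)\gamma_i$; its image contains $\R\gamma_i$ for each $i$ with $z_i\ne0$, and by simpliciality of $\Sigma_P$ together with Proposition~\ref{galeindspan}, the set $\Gamma_{\widehat I_{\mb z}}$ spans~$V^*$, so the differential is surjective.

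The key step is (iii), and this is where the complex structure must be handled correctly. Along the $\widetilde V$-orbit of a point $\mb z\in U_\C(\sK_P)$ with $I_{\mb z}\in\sK_P$, the function $\mb v\mapsto\mu_\Gamma(\mb v\cdot\mb z)$ equals
\[
  \sum_{i=1}^m e^{2\langle\gamma_i,\mb v\rangle_\R}|z_i|^2\gamma_i,
\]
which is precisely the map $L\colon V\to V^*$ of Lemma~\ref{image} (recall $\widetilde V$ and $V$ coincide as real vector spaces, and only the real part of the complex pairing affects the modulus). By Lemma~\ref{image}, $L$ is a diffeomorphism from $V$ onto $\relint\cone\Gamma_{\widehat I_{\mb z}}$. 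Since $\delta\in\bigcap_{I\in\sK_P}\relint\cone\Gamma_{\widehat I}$ by Lemma~\ref{normal1}, the equation $L(\mb v)=\delta$ has a unique solution, so the orbit meets $\mu_\Gamma^{-1}(\delta)$ in exactly one point.

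Finally, to upgrade this bijection $\mu_\Gamma^{-1}(\delta)\to U_\C(\sK_P)/\widetilde V$ to a diffeomorphism, I would observe that it is smooth with the correct target dimension ($2m-k$), and by the transversality in (ii) together with the uniqueness in (iii), it is a local diffeomorphism; a bijective local diffeomorphism is a diffeomorphism. The main obstacle I anticipate is bookkeeping in step~(iii): one must carefully separate the role of $\widetilde V$ (whose complex structure governs the phase rotation) from $V$ (whose underlying real pairing alone controls the modulus), to see that Lemma~\ref{image} applies verbatim without any modification for the complex case.
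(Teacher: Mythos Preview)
Your proposal is correct and follows essentially the same approach as the paper: the paper's proof is in fact just a two-line sketch saying to mimic Theorem~\ref{quadrics} via the map $\mu_\Gamma(\mb z)=\sum_i|z_i|^2\gamma_i$, showing that $\delta$ is a regular value and that each $\widetilde V$-orbit meets $\mu_\Gamma^{-1}(\delta)$ in a single point. You have spelled out exactly these steps, correctly isolating the key observation that $|e^{\langle\gamma_i,\mb v\rangle_\C}z_i|=e^{\langle\gamma_i,\mb v\rangle_\R}|z_i|$ so that Lemma~\ref{image} applies without change.
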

\begin{proof}
The proof is similar to the real case (Theorem~\ref{quadrics}). We consider the map
\[
  \mu_\Gamma\colon\C^m\to V^*, \quad 
  (z_1,\ldots,z_m)\mapsto |z_1|^2\gamma_1+ \cdots+|z_m|^2\gamma_m,
\]
show that $\delta$ is a regular value of $\mu_\Gamma$ and $\mu_\Gamma^{-1}(\delta)$ is a nondegenerate intersection of Hermitian quadrics that meets every 
$\widetilde V$-orbit of $U_\C(\sK_P)$ at a single point.
\end{proof}

\section{Gale duality for rational configurations}\label{secgr}
A vector configuration $\widehat\Gamma=\{\widehat\gamma_1,\ldots,\widehat\gamma_m\}$ in a real vector space~$Q^*$ is called \emph{rational} if its 
$\Z$-span is a discrete subset of~$Q^*$. We continue to assume that the $\R$-span of $\widehat\Gamma$ is~$Q^*$. If $\widehat\Gamma$ is a rational configuration, then $L^*=\Z\langle\widehat\Gamma\rangle$ is a full-rank lattice in $Q^*$, that is, a discrete finitely generated free abelian subgroup of rank equal to $\dim Q^*$.

There is a Gale duality between rational configurations, which involves lattices alongside with vector spaces. All  related notation is conveniently included in a pair of diagrams of dual short exact sequences
\[
\begin{tikzcd}[row sep=small]
  0 \ar{r} & U \ar{r}{\widehat{A}^*} 
  & \R^m  \ar{r}{\widehat\varGamma} & Q^* \ar{r} & 0\\
  0 \ar{r} & M \ar{r} \ar[hook]{u} & \Z^m  \ar{r}\ar[hook]{u}  
  & L^* \ar{r}\ar[hook]{u} & 0
\end{tikzcd}
\]
and
\[
\begin{tikzcd}[row sep=small]
  0 \ar{r} & Q \ar{r}{\widehat{\varGamma}^*} 
  & \R^m  \ar{r}{\widehat A} & U^* \ar{r} & 0\\
  0 \ar{r} & L \ar{r} \ar[hook]{u} & \Z^m  \ar{r}\ar[hook]{u}  
  & M^* \ar{r}\ar[hook]{u} & 0
\end{tikzcd}
\]
Here $\widehat\varGamma$ takes $\mb e_i$ to $\widehat\gamma_i$ and $U=\Ker\widehat\varGamma$. The Gale dual of $\widehat\Gamma$ is a rational vector configuration $\widehat{\mathrm A}=\{\widehat{\mb a}_1,\ldots,\widehat{\mb a}_m\}$ in $U^*$, and $M^*=\Z\langle\widehat{\mathrm A}\rangle$ is a full-rank lattice in~$U^*$.

For rational configurations, we have the following analogue of Proposition~\ref{galeindspan}.

\begin{proposition}\label{rgaleindspan}
For any $I\subset[m]$, the rational subconfiguration $\widehat{\mathrm{A}}_I$ is part of a basis of the lattice $M^*$ if and only if $\widehat{\Gamma}_{\widehat I}$ spans the lattice~$L^*$.  
\end{proposition}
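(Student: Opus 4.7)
The plan is to reformulate each side of the proposition as a condition on the coordinate projections $p_I\colon\Z^m\to\Z^I$ and $p_{\widehat I}\colon\Z^m\to\Z^{\widehat I}$ applied to the sublattices $M,L\subset\Z^m$, and then exploit the orthogonality $M\perp L$ in $\Z^m$ with respect to the standard inner product. This orthogonality is automatic from the diagrams, since $U=\Ker\widehat\varGamma$ and $Q=\Im\widehat\varGamma^*$ are mutually orthogonal complements in $\R^m$ and $M=U\cap\Z^m$, $L=Q\cap\Z^m$.

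First I would translate both conditions. The condition ``$\widehat\Gamma_{\widehat I}$ spans the lattice $L^*$'' is equivalent to surjectivity of $\widehat\varGamma|_{\Z^{\widehat I}}\colon\Z^{\widehat I}\to L^*$, hence to $\Z^{\widehat I}+M=\Z^m$, hence to $p_I(M)=\Z^I$. On the dual side, identifying $M^*=\Z^m/L$ and $\Z\langle\widehat{\mathrm A}_I\rangle=(\Z^I+L)/L$, one has $M^*/\Z\langle\widehat{\mathrm A}_I\rangle\cong\Z^{\widehat I}/p_{\widehat I}(L)$, so ``$\widehat{\mathrm A}_I$ is part of a basis of $M^*$'' is equivalent to $\widehat{\mathrm A}_I$ being $\R$-linearly independent \emph{and} $p_{\widehat I}(L)$ being saturated in $\Z^{\widehat I}$. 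By Proposition~\ref{galeindspan} applied to the underlying real configurations, $\R$-independence of $\widehat{\mathrm A}_I$ is the same as $\R\langle\widehat\Gamma_{\widehat I}\rangle=Q^*$, which is implied by either side of the proposition. The task therefore reduces to showing, under this standing $\R$-independence, that $p_I(M)=\Z^I$ if and only if $p_{\widehat I}(L)$ is saturated in $\Z^{\widehat I}$.

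For the forward implication, I would extend $\widehat{\mathrm A}_I$ to a basis of $M^*$ and let $\{\mb m_i^*\colon i\in I\}\cup\{\mb b_k^*\}$ be its dual basis in $M$; since $\widehat{\mb a}_j$ acts on $M\subset\Z^m$ as the $j$-th coordinate functional, the relations $\widehat{\mb a}_j(\mb m_i^*)=\delta_{ij}$ force $p_I(\mb m_i^*)=\mb e_i$, giving $p_I(M)=\Z^I$. For the reverse implication, I would pick $\mb m_i\in M$ with $p_I(\mb m_i)=\mb e_i$ for each $i\in I$ and expand the orthogonality identity $\mb m_i\cdot\mb l=0$ (for $\mb l\in L$) using $p_I(\mb m_i)=\mb e_i$ to obtain the integer-linear recovery formula
\[
  l_i=-\,p_{\widehat I}(\mb m_i)\cdot p_{\widehat I}(\mb l),\qquad i\in I.
\]
Given $\mb y\in\Z^{\widehat I}$ with $k\mb y=p_{\widehat I}(\mb l_0)$ for some $\mb l_0\in L$ and some positive integer $k$, I would define $\mb v\in\Z^m$ by $p_{\widehat I}(\mb v)=\mb y$ and $v_i=-\,p_{\widehat I}(\mb m_i)\cdot\mb y$ for $i\in I$; a direct check using the recovery formula shows $k\mb v=\mb l_0\in L$, and saturation of $L=Q\cap\Z^m$ in $\Z^m$ then forces $\mb v\in L$, so $\mb y=p_{\widehat I}(\mb v)\in p_{\widehat I}(L)$.

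The main obstacle is the last step: transferring the integrality hypothesis $k\mb y\in p_{\widehat I}(L)$ into an actual element of $L$ projecting to $\mb y$. This is precisely what the recovery formula accomplishes, and its derivation is the place where the orthogonal complementary structure of $M$ and $L$ in $\Z^m$ is essential; once that is in hand, the rest is bookkeeping with the standard identifications $M^*=\Z^m/L$ and $L^*=\Z^m/M$.
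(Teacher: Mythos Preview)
Your proposal is correct. Both your argument and the paper's hinge on the same intermediate condition, namely $p_I(M)=\Z^I$ (equivalently, the composite $M\hookrightarrow\Z^m\to\Z^I$ is surjective), and both obtain the equivalence of ``$\widehat\Gamma_{\widehat I}$ spans $L^*$'' with this condition via the same short exact-sequence reasoning ($\Z^{\widehat I}+M=\Z^m$). The difference is on the other side: the paper observes directly that ``$\widehat{\mathrm A}_I$ is part of a basis of $M^*$'' is the statement that $\Z^I\to\Z^m\xrightarrow{\widehat A}M^*$ is split injective, which for finitely generated free abelian groups is equivalent to surjectivity of the dual map $M\to\Z^I$, and then finishes by citing a diagram-chase lemma. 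You instead reformulate the basis condition as $\R$-independence together with saturation of $p_{\widehat I}(L)$ in $\Z^{\widehat I}$, and then prove the equivalence of saturation with $p_I(M)=\Z^I$ by hand, using the orthogonality $M\perp L$ to write down an explicit integer recovery formula. Your route is longer but entirely self-contained, whereas the paper's is a two-line reduction to standard categorical facts; in effect your saturation argument is an elementary unpacking of ``dual surjective $\Rightarrow$ split injective'' in this particular setting.
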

\begin{proof}
Consider the coordinate sublattices $\Z^I=\Z\langle\mb e_i\colon i\in I\rangle$ and
$\Z^{\widehat I}=\Z\langle\mb e_j\colon j\notin I\rangle$. Then 
$\widehat{\mathrm{A}}_I$ is part of a basis of $M^*$ if and only if the composite 
$\Z^I\to\Z^m\xrightarrow{\widehat A} M^*$ is split injective. This is equivalent to 
$M\xrightarrow{{\widehat A}^*}\Z^m\to\Z^I$ being surjective. Now consider the diagram
\[
\begin{tikzcd}[row sep=small]
  &&0 \ar{d}\\
  &&\Z^{\widehat I} \ar{d}\\
  0\ar{r} & M \ar{r}{{\widehat A}^*} & \Z^m \ar{d} \ar{r}{\widehat{\varGamma}}
  & L^*\ar{r} & 0\\
  &&\Z^I \ar{d}\\
  &&0
\end{tikzcd}
\]
A simple diagram chase (see~\cite[Lemma~1.2.5]{bu-pa15}) shows that $M\xrightarrow{{\widehat A}^*}\Z^m\to\Z^I$ is surjective if and only if
$\Z^{\widehat I}\to\Z^m\xrightarrow{\widehat\varGamma}L^*$ is surjective. The latter is equivalent to the condition that $\widehat{\Gamma}_{\widehat I}$ spans~$L^*$.
\end{proof}

Now we return to the general setup with a pair of Gale dual vector configurations 
$\mathrm A=\{\mb a_1,\ldots,\mb a_m\}$ and $\Gamma=\{\gamma_1,\ldots,\gamma_m\}$ in $W^*$ and $V^*$, respectively. Recall that $V$ can be thought of as the space of linear relations between the vectors $\mb a_1,\ldots,\mb a_m$, and $\gamma_k\in V^*$ maps a linear relation to its $k$th coefficient. This defines the map $\varGamma^*\colon V\to\R^m$, $\mb v\mapsto(\langle\gamma_1,\mb v\rangle,\ldots,\langle\gamma_m,\mb v\rangle)$.

The ``degree of  irrationality'' of a configuration $\Gamma$ can be measured by the following construction.

\begin{construction}\label{coras}
A subspace $Q\subset V$ is called \emph{rational} (with respect to~$\Gamma$) if it is generated by relations with integer coefficients, that is, if $Q\subset\R\langle(\varGamma^{*})^{-1}(\Z^m)\rangle$. Here
$(\varGamma^{*})^{-1}(\Z^m)$ is a lattice in $V$ of rank between $0$ and $k=\dim V$, and $\R\langle(\varGamma^{*})^{-1}(\Z^m)\rangle$ is the largest rational subspace in~$V$.

A rational subspace $Q\subset V$ contains a full-rank lattice
\begin{equation}\label{Llatt}
  L=(\varGamma^{*})^{-1}(\Z^m)\cap Q=
  \{\mb q\in Q\colon\langle\gamma_k,\mb q\rangle\in\Z\;
  \text{ for }k=1,\ldots,m\}.
\end{equation}

The image of $\Gamma$ under the map of dual spaces $V^*\to Q^*$ defines a rational vector configuration $\widehat\Gamma=\{\widehat\gamma_1,\ldots,\widehat\gamma_m\}$ in~$Q^*$. The $\Z$-span of $\widehat\Gamma$ is a full-rank lattice in $Q^*$, namely, $\Z\langle\widehat\Gamma\rangle=L^*$. 
We have $\widehat{\varGamma}^*=\varGamma^*|_Q$, and the Gale dual rational configuration $\widehat{\mathrm A}$ is defined by the middle line in the commutative diagram
\[
\begin{tikzcd}[row sep=small]
  0 \ar{r} & L \ar{r} \ar[hook]{d} & \Z^m  \ar{r}\ar[hook]{d}  & 
  M^* \ar{r} \ar[hook]{d} & 0\\
  0 \ar{r} & Q \ar{r}{\widehat{\varGamma}^*} \ar[hook]{d} & 
  \R^m \ar{r}{\widehat A} \ar[equal]{d} & U^* \ar{r} \ar{d} & 0\\
  0 \ar{r} & V \ar{r}{\varGamma^*} & \R^m \ar{r}{A} & W^* \ar{r} & 0
\end{tikzcd}
\]
namely, $\widehat{\mb a}_i=\widehat A(\mb e_i)$. The dual commutative diagram is
\[
\begin{tikzcd}[row sep=small]
  0 \ar{r} & M \ar{r} \ar[hook]{d} & \Z^m  \ar{r}\ar[hook]{d}  & 
  L^* \ar{r} \ar[hook]{d} & 0\\
  0 \ar{r} & U \ar{r}{\widehat{A}^*}  & 
  \R^m \ar{r}{\widehat\varGamma} & Q^* \ar{r} & 0\\
  0 \ar{r} & W \ar{r}{A^*} \ar[hook]{u} & \R^m \ar{r}{\varGamma}\ar[equal]{u} 
  & V^* \ar{r}\ar{u} & 0
\end{tikzcd}
\]
\end{construction}

A cone in a vector space $U^*$ with a full-rank lattice $M^*$ is called \emph{rational} (with respect to~$M^*$) if it is generated by a set of vectors in~$M^*$. A rational cone in \emph{nonsingular} if it is generated by part of a basis of~$M^*$. A nonsingular cone is necessarily simplicial. A fan is \emph{rational} (respectively, \emph{nonsingular}) if it is consists of rational (respectively, nonsingular) cones. 

A configuration $\mathrm A=\{\mb a_1,\ldots,\mb a_m\}$ of vectors in $U^*$ is \emph{rational} if each $\mb a_i$ belongs to the lattice~$M^*$.
A triangulated configuration $\{\mathcal K,\mathrm A\}$ is \emph{nonsingular} if $\{\mb a_i\colon i\in I\}$ is part of a basis of $M^*$ for any $I\in\mathcal K$. A nonsingular triangulated configuration $\{\mathcal K,\mathrm A\}$ defines a nonsingular fan $\Sigma=\{\cone\mathrm A_I\colon I\in\mathcal K\}$ in which $\mb a_i$ is a primitive generator whenever $\{i\}\in\mathcal K$.

\begin{proposition}\label{ahatfan}
Let $\Gamma$ and $\mathrm A$ be a pair of Gale dual vector configurations,  $Q\subset V$ a rational subspace, $\widehat\Gamma$ and $\widehat{\mathrm A}$ the corresponding rational configurations in $Q^*$ and $U^*$, and $\sK$ a simplicial complex on~$[m]$. Assume that $\{\sK,\mathrm A\}$ is a triangulated configuration defining a simplicial fan~$\Sigma$ in~$W^*$. Then 
\begin{itemize}
\item[(1)]
$\{\sK,\widehat{\mathrm A}\}$ is a rational triangulated configuration defining a rational simplicial fan $\widehat\Sigma$ in~$U^*$. 

\item[(2)] The surjections $\R^m\xrightarrow{\widehat A} U^*\to W^*$
induce maps of fans $\Sigma_\sK\to\widehat\Sigma\to\Sigma$ which restrict to linear isomorphisms on each cone.
\end{itemize}
\end{proposition}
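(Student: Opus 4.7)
The plan is to deduce (1) from Theorem~\ref{galefan} by transferring the fan condition through the surjection $V^*\twoheadrightarrow Q^*$ dual to $Q\hookrightarrow V$, and then to read (2) off the commutative diagrams of Construction~\ref{coras}. There is no genuinely hard step; the main difficulty is bookkeeping, that is, keeping track of which of the spaces $\R^m,V^*,Q^*,W^*,U^*$ a given cone sits in.

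For (1), first I would note that $\widehat{\mathrm A}$ is a rational spanning configuration in $U^*$: by construction $\widehat{\mb a}_i=\widehat A(\mb e_i)$ lies in the lattice $M^*=\widehat A(\Z^m)$, and $\widehat A\colon\R^m\twoheadrightarrow U^*$ is surjective. Next, the surjection $V^*\twoheadrightarrow Q^*$ sends $\gamma_i$ to $\widehat\gamma_i$ (this is visible in the first diagram of Construction~\ref{coras}), and since any surjective linear map carries the relative interior of a cone onto the relative interior of its image, it maps $\relint\cone\Gamma_{\widehat I}$ onto $\relint\cone\widehat\Gamma_{\widehat I}$. Because $\{\sK,\mathrm A\}$ is a triangulated configuration, Theorem~\ref{galefan}(c) supplies a point in $\relint\cone\Gamma_{\widehat I}\cap\relint\cone\Gamma_{\widehat J}$ for any $I,J\in\sK$; its image in $Q^*$ lies in $\relint\cone\widehat\Gamma_{\widehat I}\cap\relint\cone\widehat\Gamma_{\widehat J}$. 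Applying Theorem~\ref{galefan}(c) in the reverse direction shows $\{\sK,\widehat{\mathrm A}\}$ is a triangulated configuration, and the resulting fan $\widehat\Sigma$ in $U^*$ is simplicial by the automatic-simpliciality remark in Section~\ref{secfan}.

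For (2), the second diagram of Construction~\ref{coras} yields the factorisation $A=(U^*\twoheadrightarrow W^*)\circ\widehat A$, so the quotient $U^*\twoheadrightarrow W^*$ sends $\widehat{\mb a}_i\mapsto\mb a_i$. Fix $I\in\sK$. The three sets $\{\mb e_i\colon i\in I\}$, $\{\widehat{\mb a}_i\colon i\in I\}$, $\{\mb a_i\colon i\in I\}$ are each linearly independent (the last because $\Sigma$ is simplicial, the middle by~(1), the first trivially), and the two surjections put them in bijection. Hence the restriction of $\widehat A$ to the span of $\cone(\mb e_i\colon i\in I)$ is a linear isomorphism onto the span of $\cone\widehat{\mathrm A}_I$, and likewise the restriction of $U^*\twoheadrightarrow W^*$ to the span of $\cone\widehat{\mathrm A}_I$ is a linear isomorphism onto the span of $\cone\mathrm A_I$. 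Under these isomorphisms the coordinate cone is carried to $\cone\widehat{\mathrm A}_I$ and then to $\cone\mathrm A_I$, realising the claimed maps of fans $\Sigma_\sK\to\widehat\Sigma\to\Sigma$ that restrict to linear isomorphisms on each cone.
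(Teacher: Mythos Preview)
Your proof is correct. Both you and the paper transfer the fan condition across a surjection using Theorem~\ref{galefan}, but you use opposite sides of the Gale duality: the paper works with the surjection $U^*\to W^*$ on the $\mathrm A$-side and condition~(b) (disjoint relative interiors of $\cone\mathrm A_I$), arguing that if the images $\relint\cone\mathrm A_I$ are pairwise disjoint then so are the cones $\relint\cone\widehat{\mathrm A}_I$ mapping onto them; you instead work with the surjection $V^*\to Q^*$ on the $\Gamma$-side and condition~(c) (overlapping relative interiors of $\cone\Gamma_{\widehat I}$), pushing a common interior point forward. Both arguments hinge on the same elementary fact that a linear surjection maps relative interiors of cones onto relative interiors, and neither is harder or more general than the other---they are mirror images under Gale duality. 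Your treatment of~(2) spells out in a bit more detail what the paper declares ``clear''.
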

\begin{proof}
Since $\{\sK,\mathrm A\}$ is a triangulated configuration, we have $(\relint\cone\mathrm A_I)\cap(\relint\cone\mathrm A_J)=\varnothing$ for any $I,J\in\sK$, $I\ne J$. By Proposition~\ref{positive}, the surjective map $U^*\to W^*$ takes $\relint\cone\widehat{\mathrm A}_I$ to $\relint\cone\mathrm A_I$. Hence, $(\relint\cone\widehat{\mathrm A}_I)\cap(\relint\cone
\widehat{\mathrm A}_J)=\varnothing$ for any $I,J\in\sK$, $I\ne J$. It follows that
the data $\{\sK,\widehat{\mathrm A}\}$ is also a triangulated configuration, proving~(1).  Statement~(2) is clear, as all the fans involved are simplicial.
\end{proof}

For a subset $I\subset[m]$, define a subgroup of~$Q$ by
\[
  L_I
  =\{\mb q\in Q\colon\langle\gamma_k,\mb q\rangle\in\Z\;\text{ for }k\notin I\}.
\]
Note that $L_I\supset L$ and $L_{\varnothing}=L$, see~\eqref{Llatt}.

\begin{proposition}\label{nsdual}
A triangulated configuration $\{\sK,\widehat{\mathrm A}\}$ is nonsingular if and only if $L_I=L$ for any $I\in\sK$.
\end{proposition}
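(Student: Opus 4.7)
The plan is to reduce the claim to Proposition~\ref{rgaleindspan} by identifying $L_I$ as the lattice in $Q$ dual to the sublattice $\Z\langle\widehat\Gamma_{\widehat I}\rangle\subset L^*$.

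First I would note that for every $\mb q\in Q$ and every index $k$, the value $\langle\gamma_k,\mb q\rangle$ coincides with $\langle\widehat\gamma_k,\mb q\rangle$, since $\widehat\gamma_k\in Q^*$ is by definition the restriction of $\gamma_k\in V^*$ along the inclusion $Q\hookrightarrow V$. Consequently,
\[
  L=\{\mb q\in Q\colon\langle\widehat\gamma_k,\mb q\rangle\in\Z\text{ for all }k\},
  \qquad
  L_I=\{\mb q\in Q\colon\langle\widehat\gamma_k,\mb q\rangle\in\Z\text{ for }k\notin I\}.
\]
That is, $L$ is the $\Z$-dual inside $Q$ of the lattice $L^*=\Z\langle\widehat\Gamma\rangle$, while $L_I$ is the $\Z$-dual of the sublattice $\Z\langle\widehat\Gamma_{\widehat I}\rangle\subset L^*$. (Note that $L_I$ is indeed a full-rank lattice in $Q$: since $\{\sK,\widehat{\mathrm A}\}$ is a triangulated configuration, $\widehat{\mathrm A}_I$ is linearly independent, and Proposition~\ref{galeindspan} applied to the Gale dual pair $\widehat\Gamma,\widehat{\mathrm A}$ in $Q^*,U^*$ ensures that $\widehat\Gamma_{\widehat I}$ spans $Q^*$ over $\R$.)

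Next I would observe the elementary fact that for two full-rank sublattices $\Lambda'\subset\Lambda$ of a finite-dimensional real vector space, the dual inclusion $\Lambda^\vee\subset(\Lambda')^\vee$ is an equality if and only if $\Lambda'=\Lambda$. Applying this to the inclusion $\Z\langle\widehat\Gamma_{\widehat I}\rangle\subset L^*$ in $Q^*$ yields the equivalence
\[
  L_I=L\iff\Z\langle\widehat\Gamma_{\widehat I}\rangle=L^*,
\]
that is, $L_I=L$ precisely when the subconfiguration $\widehat\Gamma_{\widehat I}$ spans the entire lattice $L^*$ (not merely the $\R$-space $Q^*$).

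Finally, Proposition~\ref{rgaleindspan} states that $\widehat\Gamma_{\widehat I}$ spans $L^*$ if and only if $\widehat{\mathrm A}_I$ is part of a $\Z$-basis of $M^*$, which by definition is the condition for $\{\sK,\widehat{\mathrm A}\}$ to be nonsingular at the simplex $I$. Quantifying over all $I\in\sK$ gives the required equivalence. I do not expect a serious obstacle: the only subtlety is the duality step, and this is standard once one records that $L_I$ has full rank, which follows from the triangulated configuration hypothesis via Proposition~\ref{galeindspan}.
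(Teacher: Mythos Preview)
Your proof is correct and follows essentially the same route as the paper's: both reduce to Proposition~\ref{rgaleindspan} via the identification of $L_I$ as the lattice dual to $\Z\langle\widehat\Gamma_{\widehat I}\rangle$. The only minor difference is that you obtain the full-rank property of $\Z\langle\widehat\Gamma_{\widehat I}\rangle$ directly from the triangulated-configuration hypothesis via Proposition~\ref{galeindspan}, whereas the paper (in the direction $L_I=L\Rightarrow$ nonsingular) re-derives it from the assumption $L_I=L$ by noting that a lattice cannot contain a line; your use of the standing hypothesis is slightly cleaner.
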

\begin{proof}
We may write $L_I=\{\mb q\in Q\colon\langle\widehat\gamma_k,\mb q\rangle\in\Z\;\text{ for }k\notin I\}$. 

Assume that $L_I=L$  for any $I\in\sK$. Then $\widehat\Gamma_{\widehat I}=\{\widehat\gamma_k\colon k\notin I\}$ spans $Q^*$. Indeed, otherwise there is $\mb q\in Q$ such that $\langle\widehat\gamma_k,\mb q\rangle=0$ for $k\notin I$, so that $L_I$ contains the subspace $\R\langle\mb q\rangle$. On the other hand, $L$ is a lattice and cannot contain a subspace. Now $\R\langle\widehat\Gamma_{\widehat I}\rangle=Q^*$ implies that $\Z\langle\widehat\Gamma_{\widehat I}\rangle$ is a full-rank lattice in $Q^*$ whose dual lattice is~$L_I$. We obtain 
$\Z\langle\widehat\Gamma_{\widehat I}\rangle
=L_I^*=L^*$. This implies, by Proposition~\ref{rgaleindspan}, that $\widehat{\mathrm A}_I$ is part of basis of $M^*$ for any $I\in\sK$. Hence, the triangulated configuration $\{\sK,\widehat{\mathrm A}\}$ is nonsingular.

Now assume that $\{\sK,\widehat{\mathrm A}\}$ is a nonsingular triangulated configuration. Then 
$\Z\langle\widehat\Gamma_{\widehat I}\rangle
=L^*=\Z\langle\widehat\Gamma\rangle$ for any $I\in\sK$  by Proposition~\ref{rgaleindspan}. It follows that each $\widehat\gamma_i$, $i=1,\ldots,m$, is an integral linear combination of $\{\widehat\gamma_k\colon k\notin I\}$, so that $L_I=L$.
\end{proof}

\section{Partial quotients and torus-exponential actions}\label{sectea}
A \emph{partial quotient} of a moment-angle manifold $\zk$ is the quotient $\zk/H$ by a freely acting subtorus $H\subset T^m$. When $\{\mathcal K,\mathrm A\}$ is triangulated configuration and the subtorus $H$ is defined by a rational subspace $Q\subset V$ of even codimension, the partial quotient $\zk/H$ admits a complex structure in which the quotient torus $T^m/H$ acts by holomorphic transformations. Such a holomorphic partial quotient is the quotient of the open subset $U_\C(\sK)$ by a \emph{torus-exponential action} of the product of an algebraic torus and a complex space. In addition to complex moment-angle manifolds, this construction includes LVMB manifolds and complete nonsingular toric varieties as special cases.

\subsection{Torus-exponential actions and their quotients}
The lattice $L$ \eqref{Llatt} in a rational subspace $Q\subset V$ defines an algebraic torus 
\[
  \C^\times_L=L\otimes_\Z\C^\times\cong Q_\C/(2\pi i L),
\]  
where $Q_\C=Q\otimes_\R\C=Q\oplus iQ$ denotes the complexification of~$Q$,  and a compact torus
\[
  T_L=L\otimes_\Z S^1 \cong Q/(2\pi L).
\]  
Both $Q$ and $T_L$ are Lie subgroups of $\C^\times_L$ (given by $L\otimes_\Z\R$ and $L\otimes_\Z S^1$, respectively) and we have a product decomposition $\C^\times_L=Q\times T_L$ as Lie groups.

Given a vector configuration $\Gamma=\{\gamma_1,\ldots,\gamma_m\}$ in $V$,
the map $\varGamma^*|_L\otimes_\Z\C^\times$ embeds $\C^\times_L$ as a closed  subgroup in~$(\C^\times)^m$:
\begin{multline}\label{cqsub}
  (\varGamma^*|_L\otimes_\Z\C^\times)
  (\C^\times_L)=\exp\varGamma^*_\C(Q_\C)\\
  =
  \bigl\{(e^{\langle\gamma_1,\mb v\rangle+i\langle\gamma_1,\mb u\rangle},
  \ldots,e^{\langle\gamma_m,\mb v\rangle+i\langle\gamma_m,\mb u\rangle})
  \in(\C^\times)^m  
  \colon \mb v+i\mb u\in Q_\C\bigr\},
\end{multline}
where $\varGamma^*_\C\colon V_\C\to\C^m$ is the complexification of~$\varGamma^*$ and $\exp\colon\C^m\to(\C^\times)^m$ is the component-wise exponential map. Similarly, $\varGamma^*|_L\otimes_\Z S^1$ embeds $T_L$ as a closed subgroup in~$T^m=(S^1)^m$:
\[
  (\varGamma^*|_L\otimes S^1)(T_L)  = 
  \bigl\{(e^{i\langle\gamma_1,\mb q\rangle},
  \ldots,e^{i\langle\gamma_m,\mb q\rangle})
  \in T^m  \colon \mb q\in Q\bigr\}.
\]
The subgroup of $T^m$ on the right hand side above is closed precisely because $Q$ is a rational subspace.  

We define the Lie group $\C^\times_L\times_Q V$ as the quotient of 
$\C^\times_L\times V$ by the subgroup consisting of pairs $(j(-\mb q),\mb q)$, 
where $\mb q\in Q$ and $j\colon Q\to\C^\times_L$ is the inclusion. There are two product decompositions:
\[
  \C^\times_L\times_Q V\cong T_L\times V\cong\C^\times_L\times(V/Q).
\]
Here the first isomorphism is canonical, while the isomorphism 
$\C^\times_L\times(V/Q)\cong\C^\times_L\times_Q V$ is defined by a splitting $V/Q\to V$.

Both $\C^\times_L$ and $V$ embed as closed subgroups in~$(\C^\times)^m$, as 
$\exp\varGamma^*_\C(Q_\C)$ and $\exp\varGamma^*(V)$, respectively, see~\eqref{cqsub}. The intersection of these two subgroups is
$\exp\varGamma^*(Q)$, so $\C^\times_L\times_Q V$ also embeds as a closed subgroup in~$(\C^\times)^m$. We therefore obtain an action of 
$\C^\times_L\times_Q V$ on $\C^m$ by restricting the coordinate-wise action 
of~$(\C^\times)^m$. Using the canonical decomposition $\C^\times_L\times_Q V\cong T_L\times V$, this action can be written explicitly as
\begin{equation}\label{teact}
\begin{aligned}
  (T_L\times V)\times\C^m&\longrightarrow\C^m\\
  (\mb q,\mb v,\mb z)&\mapsto (\mb q,\mb v)\cdot\mb z=
  \bigl(e^{\langle\gamma_1,\mb v\rangle+i\langle\gamma_1,\mb q\rangle}z_1,
  \ldots,e^{\langle\gamma_m,\mb v\rangle+i\langle\gamma_m,\mb q\rangle}
  z_m\bigr).
\end{aligned}
\end{equation}
We refer to it as the \emph{torus-exponential action}.

\begin{proposition}\label{stabt}
For a point $\mb z=(z_1,\ldots,z_m)\in\C^m$, let $I=\{i\in[m]\colon z_i=0\}$.
The stabiliser of $\mb z$ under the action of~$T_L$ given by~\eqref{teact} is 
$(T_L)_{\mb z}=2\pi(L_I/L)$. 
\end{proposition}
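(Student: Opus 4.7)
The plan is to unravel the definition of the $T_L$-action directly using the presentation $T_L \cong Q/(2\pi L)$ and match the stabiliser condition against the definition of $L_I$.

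First I would pick a point $\mb q \in Q$ representing an element of $T_L$, so that this element acts on $\mb z$ by multiplying each coordinate $z_i$ by $e^{i\langle\gamma_i,\mb q\rangle}$, per formula~\eqref{teact} (setting $\mb v = 0$). Since $I = \{i \colon z_i = 0\}$, the coordinates with $i \in I$ are fixed automatically, and the stabiliser condition reduces to
\[
  e^{i\langle\gamma_i,\mb q\rangle} = 1 \quad \text{for all } i \notin I,
\]
which is equivalent to $\langle\gamma_i,\mb q\rangle \in 2\pi\Z$ for all $i \notin I$. Writing $\mb q = 2\pi\mb q'$ with $\mb q' \in Q$, this becomes $\langle\gamma_i,\mb q'\rangle \in \Z$ for $i \notin I$, i.e.\ $\mb q' \in L_I$ by the definition of $L_I$ in Section~\ref{secgr}. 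Hence the set of $\mb q \in Q$ whose image in $T_L$ stabilises $\mb z$ is exactly $2\pi L_I$.

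Finally I would pass to the quotient $T_L = Q/(2\pi L)$. The preimage of the stabiliser subgroup in $Q$ is $2\pi L_I$, and the kernel of the projection $Q \to T_L$ is $2\pi L \subset 2\pi L_I$ (since $L \subset L_I$). Therefore
\[
  (T_L)_{\mb z} = (2\pi L_I)/(2\pi L) = 2\pi(L_I/L),
\]
as required. There is no real obstacle here: the statement is essentially a definitional unpacking, and the only thing worth noting is that the inclusion $L \subset L_I$ makes the quotient $L_I/L$ well defined as a finite (since both are lattices of the same rank in $Q$) subgroup of $T_L$.
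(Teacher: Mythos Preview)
Your proof is correct and follows essentially the same direct unpacking as the paper's own argument. One small caveat: your closing parenthetical that $L_I/L$ is finite is not part of the proposition and only holds when $\Gamma_{\widehat I}$ spans $Q^*$ (so that $L_I$ is actually a lattice rather than containing a line), which is the case in the applications (Theorem~\ref{teproper}) but not asserted here.
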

\begin{proof}
For $\mb q\in T_L=Q/(2\pi L)$, we have
$
  \mb q\cdot\mb z=(e^{i\langle\gamma_1,\mb q\rangle}z_1,
  \ldots,e^{i\langle\gamma_m,\mb q\rangle}z_m)
$.
Hence, $\mb q\cdot\mb z=\mb z$ if and only if $\langle\gamma_k,\mb q\rangle\in 2\pi\Z$ for $k\notin I$, that is, $\mb q\in 2\pi(L_I/L)$.
\end{proof}

An action of a Lie group is \emph{almost free} if all stabilisers are discrete subgroups.

\begin{theorem}\label{teproper}
Let $\Gamma$ and $\mathrm A$ be a pair of Gale dual vector configurations,  $Q\subset V$ a rational subspace, and $\sK$ a simplicial complex on~$[m]$. The torus-exponential action $(\C^\times_L\times_Q V)\times U_\C(\sK)\to U_\C(\sK)$ is proper if and only if $\{\sK,\mathrm A\}$ is a triangulated configuration, in which case the action is almost free with finite stabilisers.
\end{theorem}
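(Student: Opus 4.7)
The plan is to reduce properness of the torus-exponential action to properness of the real exponential action $V\times U(\sK)\to U(\sK)$ already handled in Theorem~\ref{proper}, by absorbing the compact factor $T_L$ into the component-wise absolute value projection $p\colon U_\C(\sK)\to U(\sK)$, which is itself a quotient by the compact torus $T^m$ and hence proper. The almost-freeness and finiteness of stabilisers will then follow immediately from Proposition~\ref{stabt} combined with the spanning properties guaranteed by the triangulated configuration hypothesis.

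Concretely, write the shear map of the torus-exponential action as
\[
  h'\colon(T_L\times V)\times U_\C(\sK)\longrightarrow U_\C(\sK)\times U_\C(\sK),\qquad(\mb q,\mb v,\mb z)\longmapsto\bigl((\mb q,\mb v)\cdot\mb z,\mb z\bigr),
\]
and let $h\colon V\times U(\sK)\to U(\sK)\times U(\sK)$ be the shear map of~\eqref{ract}. Setting $\widetilde p(\mb q,\mb v,\mb z)=(\mb v,p(\mb z))$ and using the identity $|(\mb q,\mb v)\cdot\mb z|=\mb v\cdot|\mb z|$, which is immediate from~\eqref{teact}, one obtains a commutative square
\[
\begin{tikzcd}
  (T_L\times V)\times U_\C(\sK) \ar{r}{h'} \ar{d}{\widetilde p} & U_\C(\sK)\times U_\C(\sK) \ar{d}{p\times p}\\
  V\times U(\sK)\ar{r}{h} & U(\sK)\times U(\sK)
\end{tikzcd}
\]
in which $p$, $p\times p$, and $\widetilde p$ are all proper (the first because it is a quotient by the compact torus $T^m$, and $\widetilde p$ because $T_L$ is compact and $p$ is proper), while $\widetilde p$ is surjective. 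A diagram chase analogous to the one used in the proof of Theorem~\ref{hproper} then establishes the equivalence of properness of $h$ and $h'$: in one direction ${h'}^{-1}(C)$ is closed inside the compact set $\widetilde p^{-1}(h^{-1}((p\times p)(C)))$, and in the other $h^{-1}(R)=\widetilde p\bigl({h'}^{-1}((p\times p)^{-1}(R))\bigr)$ is the continuous image of a compact set. Properness of $h'$ is thus equivalent, via Theorem~\ref{proper}, to $\{\sK,\mathrm A\}$ being a triangulated configuration.

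Assume now the triangulated condition and fix $\mb z\in U_\C(\sK)$ with $I=\{i\colon z_i=0\}\in\sK$. The equation $(\mb q,\mb v)\cdot\mb z=\mb z$ forces $\langle\gamma_i,\mb v\rangle=0$ and $\langle\gamma_i,\mb q\rangle\in 2\pi\Z$ for every $i\notin I$. Linear independence of $\mathrm A_I$ makes $\Gamma_{\widehat I}$ span $V^*$ by Proposition~\ref{galeindspan}, so $\mb v=0$ and the $V$-factor acts freely. The restrictions $\widehat\Gamma_{\widehat I}$ also span $Q^*$ (every linear functional on $Q$ extends to $V$), so the subgroup $L_I\subset Q$ is a discrete full-rank sublattice containing $L$. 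By Proposition~\ref{stabt} the total stabiliser is then the finite group $2\pi(L_I/L)$, yielding almost-freeness with finite stabilisers.

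The main obstacle is really a bookkeeping one: organising the diagram chase so that properness of $\widetilde p$ transports compact preimages in both directions between $h$ and $h'$. The only substantive new input beyond Theorem~\ref{proper} is the remark that the triangulated hypothesis makes $L_I$ discrete for each $I\in\sK$, which in turn bounds the torus-stabilisers; the action becomes free precisely when $L_I=L$ for all $I\in\sK$, i.e.\ in the nonsingular case of Proposition~\ref{nsdual}.
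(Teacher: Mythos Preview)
Your proof is correct and follows essentially the same approach as the paper's. The paper's argument is terser: it simply invokes the general fact that for a product $T_L\times V$ with $T_L$ compact, properness of the product action is equivalent to properness of the $V$-action, and then appeals to Theorem~\ref{proper}; your explicit commutative square with the absolute-value projection $p$ unpacks this reduction (and simultaneously handles the passage from $U_\C(\sK)$ to $U(\sK)$, which the paper leaves implicit), but the underlying idea and the treatment of the stabilisers via Proposition~\ref{stabt} are the same.
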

\begin{proof}
Since $T_Q$ is a compact group, the action of $\C^\times_L\times_Q V\cong T_Q\times V$ is proper if and only if the action of $V$ is proper. Hence, the assertion about the properness follows from Theorem~\ref{proper}. It also gives that the action of $V$ is free. For the action of $T_L$, the stabiliser of $\mb z\in U_\C(\sK)$ is $2\pi(L_I/L)$. Since $I\in\sK$, the set $\{\gamma_k\colon k\notin I\}$ spans~$V^*$, so $L_I$ is a full-rank lattice in~$Q$. It follows that $L_I/L$ is a finite group.
\end{proof}


\begin{proposition}
Let $\Gamma$ and $\mathrm A$ be a pair of Gale dual vector configurations,  $Q\subset V$ a rational subspace, and $\sK$ a simplicial complex on~$[m]$. If 
$\{\sK,\mathrm A\}$ is a triangulated configuration, then the action of $\C^\times_L$ on $U_\C(\sK)$ is proper.
\end{proposition}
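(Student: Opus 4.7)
The plan is to deduce this statement as a direct corollary of Theorem~\ref{teproper} by exhibiting $\C^\times_L$ as a closed subgroup of the larger group $\C^\times_L\times_Q V$ and invoking the elementary fact that a proper action restricts to a proper action on any closed subgroup.

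First, I would identify $\C^\times_L$ inside $\C^\times_L\times_Q V$. Using the canonical decomposition $\C^\times_L\times_Q V\cong T_L\times V$ on the target side and $\C^\times_L\cong T_L\times Q$ on the source side, the natural inclusion $\C^\times_L\hookrightarrow\C^\times_L\times_Q V$ is identified with $T_L\times Q\hookrightarrow T_L\times V$, which is the product of the identity on $T_L$ with the inclusion of the linear subspace $Q\subset V$. Since $Q$ is a closed subspace of $V$, this exhibits $\C^\times_L$ as a closed Lie subgroup of $\C^\times_L\times_Q V$, and the restriction of the torus-exponential action~\eqref{teact} to $\C^\times_L$ coincides with the coordinate-wise action of $\C^\times_L\subset(\C^\times)^m$ on $U_\C(\sK)$ via the embedding~\eqref{cqsub}.

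Next, I would apply Theorem~\ref{teproper}: the hypothesis that $\{\sK,\mathrm A\}$ is a triangulated configuration implies that the action of $\C^\times_L\times_Q V$ on $U_\C(\sK)$ is proper. Finally, invoking the standard fact that if a Lie group $G$ acts properly on a locally compact Hausdorff space $X$ and $H\subset G$ is a closed subgroup, then the restricted $H$-action is also proper, yields the result. For completeness, the argument is a one-liner: the shear map $h_H\colon H\times X\to X\times X$ is the restriction of $h_G\colon G\times X\to X\times X$ to the closed subset $H\times X\subset G\times X$, so for any compact $C\subset X\times X$ the preimage $h_H^{-1}(C)=h_G^{-1}(C)\cap(H\times X)$ is closed in the compact set $h_G^{-1}(C)$, hence compact.

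There is no real obstacle here; the only thing to be careful about is that the inclusion $\C^\times_L\hookrightarrow\C^\times_L\times_Q V$ really is closed, which is clear from the product decompositions above and the fact that $Q$ is a linear (hence closed) subspace of $V$.
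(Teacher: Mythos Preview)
Your argument is correct, but the paper takes a different route. Instead of embedding $\C^\times_L$ into the larger group $\C^\times_L\times_Q V$ and invoking Theorem~\ref{teproper}, the paper decomposes $\C^\times_L\cong Q\times T_L$ and observes that, since $T_L$ is compact, properness of the $\C^\times_L$-action is equivalent to properness of the exponential action~\eqref{qact} of $Q$ alone. That $Q$-action is governed by the configuration $\widehat\Gamma$ in $Q^*$, whose Gale dual is $\widehat{\mathrm A}$; the paper then appeals to Proposition~\ref{ahatfan} to conclude that $\{\sK,\widehat{\mathrm A}\}$ is again a triangulated configuration, and finishes with Theorem~\ref{proper}. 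Your approach is more economical (it bypasses Proposition~\ref{ahatfan} entirely), while the paper's argument has the side benefit of explicitly identifying the rational fan data $\{\sK,\widehat{\mathrm A}\}$ that controls the $\C^\times_L$-action --- exactly what is needed in the very next paragraph to recognise the quotient $U_\C(\sK)/\C^\times_L$ as the toric variety $X_{\widehat\Sigma}$.
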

\begin{proof}
As a Lie group, $\C^\times_L$ is the product $Q\times T_L$ and $T_L$ is compact. Hence, the action of $\C^\times_L$ is proper if and only if the action of $Q$ on $U_\C(\sK)$ given by
\begin{equation}\label{qact}
  (\mb q,\mb z)\mapsto \mb q\cdot\mb z=
  \bigl(e^{\langle\gamma_1,\mb q\rangle}z_1,
  \ldots,e^{\langle\gamma_m,\mb q\rangle}z_m\bigr).
\end{equation}
is proper. This is the exponential action defined by $\widehat\Gamma=\{\gamma_1,\ldots,\gamma_m\}$ viewed as a vector configuration in the quotient space $Q^*$ of~$V^*$. Let $\widehat{\mathrm A}$ be the Gale dual configuration. Then $\{\sK,\widehat{\mathrm A}\}$ is a triangulated configuration by Proposition~\ref{ahatfan}. It follows that the action is proper, by the criterion of Theorem~\ref{proper}.
\end{proof}

The quotient $U_\C(\sK)/\C^\times_L$ by a proper algebraic action of 
$\C^\times_L$ is the \emph{toric variety} $X_{\widehat\Sigma}$ corresponding to the rational simplicial fan~$\widehat\Sigma$ defined by $\{\sK,\widehat{\mathrm A}\}$~\cite[\S5.1]{c-l-s11}. 

The quotient $U_\C(\sK)/(T_L\times V)$ by a proper torus-exponential action is identified with the quotient $\zk/T_L$, where $\zk$ is the moment-angle manifold (see Section~\ref{compzk}). If the action of  $T_L$ is free, then $\zk/T_L$ is a partial quotient; in general it has orbifold singularities.
On the other hand, the quotient $U_\C(\sK)/(T_L\times V)=U_\C(\sK)/(\C^\times_L\times V/Q)$ is identified with the quotient of the toric variety $X_{\widehat\Sigma}$ by the proper action of~$V/Q$. This is described by the commutative square of quotient maps:
\[
\begin{tikzcd}
  U_\C(\sK) \ar{r}{/\C^\times_L} \ar{d}{/V} 
   & X_{\widehat\Sigma} \ar{d}{/(V/Q)}\\
  \zk \ar{r}{/T_L} & \zk/T_L=X_{\widehat\Sigma}/(V/Q)
\end{tikzcd}
\]

\subsection{Holomorphic torus-exponential actions}
The holomorphic version of the torus-exponential action is defined when the dimension of $V/Q$ is even. Let $\mathcal J\colon V/Q\to V/Q$ be a complex structure, and denote the complex space $(V/Q,\mathcal J)$ by $\widetilde{V/Q}$. Choose a splitting $s\colon V/Q\to V$ and consider the dual projection $s^*\colon V^*\to\Ann Q$, where $\Ann Q\cong(V/Q)^*$ denotes the annihilator of $Q$ in~$V^*$. For a spanning vector configuration $\Gamma=\{\gamma_1,\ldots,\gamma_m\}$ in $V^*$, let $\gamma'_i=s^*(\gamma_i)$.

The space $(V/Q)^*$ with the conjugate complex structure $\mathcal J'$ is denoted by~$(\widetilde{V/Q})^*$. There is a $\C$-bilinear pairing $(\widetilde{V/Q})^*
\times\widetilde{V/Q}\to\C$ given by
\[
  \langle\gamma',\mb v'\rangle_\C=\langle\gamma',\mb v'\rangle_\R-
  i\langle\gamma',\mathcal J\mb v'\rangle_\R,
\]
and the holomorphic exponential action
\[
  \widetilde{V/Q}\times\C^m\longrightarrow\C^m,\qquad
  (\mb v',\mb z)\mapsto \bigl(e^{\langle\gamma'_1,\mb
  v'\rangle_\C}z_1,\ldots,e^{\langle\gamma'_m,\mb v'\rangle_\C}z_m\bigr).
\]
This together with the holomorphic action of $\C^\times_L$ defines the \emph{holomorphic torus-exponential action} of $\C^\times_L\times\widetilde{V/Q}$.

\begin{proposition}\label{holpq1}
Let $\Gamma$ and $\mathrm A$ be a pair of Gale dual vector configurations,  $Q\subset V$ a rational subspace, $\widehat\Gamma$ and $\widehat{\mathrm A}$ the corresponding rational configurations in $Q^*$ and $U^*$, and $\sK$ a simplicial complex on~$[m]$. Suppose that 
\begin{itemize}
\item[(a)]
$\{\sK,\mathrm A\}$ is a triangulated configuration defining a simplicial fan~$\Sigma$;

\item[(b)]
$\{\sK,\widehat{\mathrm A}\}$ is a nonsingular triangulated configuration defining a nonsingular simplicial fan $\widehat\Sigma$ in~$U^*$;

\item[(c)]
a complex structure is chosen on~$V/Q$ (in particular, its dimension is even).
\end{itemize}
Then the partial quotient $\zk/T_L$ has a structure of a compact complex manifold as the quotient $U_\C(\sK)/(\C^\times_L\times\widetilde{V/Q})=
X_{\widehat\Sigma}/(\widetilde{V/Q})$.
\end{proposition}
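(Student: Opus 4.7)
The plan is to verify, in order: (i) the holomorphic action of $G=\C^\times_L\times\widetilde{V/Q}$ on $U_\C(\sK)$ is free and proper, so $U_\C(\sK)/G$ is a complex manifold; (ii) the two-step quotient $U_\C(\sK)\to X_{\widehat\Sigma}\to X_{\widehat\Sigma}/\widetilde{V/Q}$ identifies this complex manifold with the right-hand side; and (iii) a $T_L$-equivariant inclusion $\zk\hookrightarrow U_\C(\sK)$ identifies it with the partial quotient $\zk/T_L$, from which compactness follows.

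For (i), I would fix $\mb z\in U_\C(\sK)$ with $I=\{k\colon z_k=0\}\in\sK$. Using the decomposition $V^*=Q^*\oplus\Ann Q$ dual to the splitting $V=Q\oplus s(V/Q)$, one has $\gamma_k=\widehat\gamma_k+\gamma'_k$ and $\langle\gamma_k,\mb q+s(\mb v')\rangle=\langle\widehat\gamma_k,\mb q\rangle+\langle\gamma'_k,\mb v'\rangle$. The stabiliser equation $([\mb q+i\mb u],\mb v')\cdot\mb z=\mb z$ then splits into a real (scaling) condition $\langle\gamma_k,\mb q+s(\mb v')\rangle=0$ for $k\notin I$, which forces $\mb q=0$ and $\mb v'=0$ because $\Gamma_{\widehat I}$ spans $V^*$ by hypothesis~(a), Theorem~\ref{galefan}, and Proposition~\ref{galeindspan}; and an imaginary (rotation) condition $\langle\widehat\gamma_k,\mb u\rangle\in 2\pi\Z$ for $k\notin I$, which gives $\mb u\in 2\pi L_I=2\pi L$ by hypothesis~(b) and Proposition~\ref{nsdual}, so $[i\mb u]=0$ in $T_L\subset\C^\times_L$. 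For properness, given $(g^{(n)},\mb z^{(n)})$ with $(g^{(n)}\cdot\mb z^{(n)},\mb z^{(n)})$ convergent in $U_\C(\sK)^2$, I would choose $\mb u^{(n)}$ in a bounded fundamental domain of $2\pi L\subset Q$, so that the modulus equation $|g^{(n)}\cdot z_k^{(n)}|=e^{\langle\gamma_k,\mb v^{(n)}\rangle}|z_k^{(n)}|$, with $\mb v^{(n)}=\mb q^{(n)}+s(\mb v'^{(n)})\in V$, becomes the real exponential action of $V$ on $U(\sK)$; properness (Theorem~\ref{proper}) yields a convergent subsequence of $\mb v^{(n)}$, whence $\mb q^{(n)}$ and $\mb v'^{(n)}$ converge by the direct-sum decomposition.

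For (ii), Proposition~\ref{ahatfan} identifies $\widehat\Sigma$ as the defining rational simplicial fan of the quotient $U_\C(\sK)/\C^\times_L=X_{\widehat\Sigma}$, which is a standard toric-variety construction; the remaining $\widetilde{V/Q}$ action descends because both $\C^\times_L$ and $\widetilde{V/Q}$ act by commuting diagonal rescalings of the coordinates of $\C^m$, and its freeness and properness on $X_{\widehat\Sigma}$ follow from the combined analysis of~(i). For (iii), I would use the $T_L$-equivariant inclusion $\zk=(D^2,S^1)^\sK\hookrightarrow U_\C(\sK)$ and show each $G$-orbit meets $\zk$ in exactly one $T_L$-orbit. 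Existence: given $\mb z\in U_\C(\sK)$, the real-action version of Theorem~\ref{quotzk} gives $\mb v\in V$ with $\mb v\cdot\mb z\in\zk$; writing $\mb v=\mb q+s(\mb v')$, the element $g=([\mb q],\mb v')\in G$ has moduli action equal to that of $\mb v$ (the imaginary exponent $-i\langle\gamma'_k,\mathcal J\mb v'\rangle$ adds only a unitary rotation), so $g\cdot\mb z\in\zk$. Uniqueness modulo $T_L$: if $\mb w,\mb w'\in\zk$ lie in the same $G$-orbit via $h=([\mb q+i\mb u],\mb v')$, then $|\mb w'_k|=e^{\langle\gamma_k,\mb q+s(\mb v')\rangle}|\mb w_k|$ puts them in the same $V$-orbit, so $\mb q+s(\mb v')=0$ by the uniqueness part of Theorem~\ref{quotzk}, leaving $h=[i\mb u]\in T_L$. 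Compactness of $\zk/T_L$ comes from compactness of $\zk$, which holds when $\Sigma$ is complete (Theorem~\ref{hcompact}).

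The main obstacle is the simultaneous management of scaling and rotational degrees of freedom in $G$: the scaling directions of $\C^\times_L\times\widetilde{V/Q}$ reassemble into the full real $V$-action via $(\mb q,\mb v')\mapsto\mb q+s(\mb v')$, which is what lets properness and uniqueness inherit cleanly from the real case, while the rotational content has two distinct sources (the $T_L$-factor of $\C^\times_L$ and the $\mathcal J$-twist in $\widetilde{V/Q}$) whose overlap must be controlled by the nonsingularity hypothesis~(b) through Proposition~\ref{nsdual}.
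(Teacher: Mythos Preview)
Your proposal is correct and follows essentially the same route as the paper: establish freeness and properness of the $\C^\times_L\times\widetilde{V/Q}$-action on $U_\C(\sK)$ (the paper cites condition~(a) for properness and condition~(b) with Propositions~\ref{nsdual} and~\ref{stabt} for freeness), then identify the quotient with $\zk/T_L$ by the argument of Theorem~\ref{quotzk}. Your treatment is more explicit than the paper's, which compresses the properness step into a single citation and leaves the orbit-intersection argument as ``the same way as Theorem~\ref{quotzk}''; your unpacking of the modulus action as the full real $V$-action via $\mb q+s(\mb v')$ is exactly the mechanism that makes those references work. You also correctly flag that compactness uses completeness of~$\Sigma$, which the proposition does not list among its hypotheses but the paper implicitly assumes (Theorem~\ref{quotzk} requires it).
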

\begin{proof}
This follows by considering the diagram of quotient maps
\[
\begin{tikzcd}
  U_\C(\sK) \ar{r}{/\C^\times_L} \ar{d}{/V} 
   & X_{\widehat\Sigma} \ar{d}{/(\widetilde{V/Q})}\\
  \zk \ar{r}{/T_L} & \zk/T_L\cong X_{\widehat\Sigma}/(\widetilde{V/Q})
\end{tikzcd}
\]
Condition (a) implies that the action of $\C^\times_L\times\widetilde{V/Q}$ on $U_\C(\sK)$ is proper. Condition~(b) together with Propositions~\ref{nsdual} and~\ref{stabt} implies that the action of $T_L$ on $U_\C(\sK)$ is free, hence, the action of $\C^\times_L\times\widetilde{V/Q}$ is also free. Finally, the homeomorphism in the bottom right corner is proved in the same way as Theorem~\ref{quotzk} (which corresponds to the case $Q=\{\mathbf 0\}$).
\end{proof}

The torus-exponential action can be defined more canonically when a complex structure $\mathcal J$ is chosen on the whole of $V$, with $Q$ being a complex subspace. This, of course, induces a complex structure on $V/Q$. As usual, we denote $\widetilde V=(V,\mathcal J)$ and $\widetilde Q=(Q,\mathcal J)$. The \emph{holomorphic torus-exponential action} 
\[
  (\C^\times_L\times_{\widetilde Q}\widetilde V)\times\C^m\to\C^m
\]  
is defined as the product of holomorphic actions of $\C^\times_L$ and $\widetilde V$, which agree on the common holomorphic subgroup~$\widetilde Q$. Choosing a 
$\C$-linear splitting $\widetilde{V/Q}\to\widetilde V$ allows one to identify this action with the action of $\C^\times_L\times\widetilde{V/Q}$ considered above.

The torus $T_L$ acquires a holomorphic structure as the quotient $\C^\times_L/\widetilde Q$, which we denote by $\widetilde T_L$, see Example~\ref{exhtorus}. We have a commutative diagram of the quotient maps for holomorphic group actions:
\begin{equation}\label{diaghpq}
\begin{tikzcd}
  U_\C(\sK) \ar{r}{/\C^\times_L} \ar{d}{/\widetilde V} 
   & X_{\widehat\Sigma} \ar{d}{/(\widetilde{V/Q})}\\
  \zk \ar{r}{/\widetilde T_L} & \zk/\widetilde T_L\cong X_{\widehat\Sigma}/(\widetilde{V/Q})
\end{tikzcd}
\end{equation}
If the rational triangulated configuration $\{\sK,\widehat{\mathrm A}\}$ is nonsingular, then the actions of 
$\C^\times_L$ and $\widetilde T_L$ are free, and $\zk\to\zk/\widetilde T_L$ is a holomorphic torus principal bundle over the holomorphic partial quotient. More generally, $\zk\to\zk/\widetilde T_L$ is a holomorphic \emph{Seifert principal bundle}~\cite[\S5.1]{orli72} over the orbifold $\zk/\widetilde T_L\cong X_{\widehat\Sigma}/(\widetilde{V/Q})$, where the latter the quotient of the toric orbifold $X_{\widehat\Sigma}$ by a free proper action of~$\widetilde{V/Q}$.

\subsection{Maximal torus actions}
The quotient torus $T^m/T_L$ acts on a holomorphic partial quotient $\zk/T_L$ by holomorphic transformations. 

An action of a torus $T$ on a connected smooth manifold $M$ is \emph{maximal} if there is a point $x\in M$ such that $\dim T+\dim T_x=\dim M$, where $T_x$ is the stabiliser subgroup of~$x$. Examples of maximal torus actions are the action of a half-dimensional compact torus on a smooth complete toric variety, the free action of $T^m$ on itself and the action of $T^m$ on a moment-angle manifold~$\zk$. These are particular cases of the action of $T^m/T_L$ on a partial quotient~$\zk/T_L$:

\begin{proposition}
The action of the torus $T^m/T_L$ on a partial quotient~$\zk/T_L$ is maximal.
\end{proposition}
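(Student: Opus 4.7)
The plan is to exhibit an explicit point $[\mathbf z] \in \mathcal Z_{\mathcal K}/T_L$ whose $T^m/T_L$-stabiliser has the largest possible dimension, and then verify the numerical criterion of maximality directly. Set $n = \dim W^*$ and $q = \dim Q = \dim T_L$. Since $\mathcal K$ underlies a complete simplicial fan in $W^*$, it triangulates $S^{n-1}$, so $\dim \mathcal Z_{\mathcal K} = m+n$ and $\dim(\mathcal Z_{\mathcal K}/T_L) = m+n-q$; also $\dim(T^m/T_L) = m-q$. To confirm maximality, I need to produce a point at which the $G$-isotropy has dimension exactly $n$, where $G = T^m/T_L$.

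First I would pick a top-dimensional simplex $I \in \mathcal K$ (so $|I|=n$); one exists because $\Sigma$ is complete. Then take $\mathbf z \in \mathcal Z_{\mathcal K}$ with $z_i = 0$ for $i \in I$ and $|z_i|=1$ for $i \notin I$. For this point the $T^m$-stabiliser is the coordinate subtorus $T^I = \{(t_1,\ldots,t_m) \in T^m\colon t_i = 1 \text{ for } i \notin I\}$.

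Next I would identify the $G$-stabiliser of the class $[\mathbf z]$. An element $g \in G$ fixes $[\mathbf z]$ precisely when some lift $\tilde g \in T^m$ sends $\mathbf z$ into the orbit $T_L \cdot \mathbf z$, which translates into $\tilde g \in T^I \cdot T_L$. Hence $G_{[\mathbf z]} \cong (T^I \cdot T_L)/T_L \cong T^I/(T^I \cap T_L)$, and the only nontrivial task is to show that $T^I \cap T_L$ is finite. This intersection is the kernel of the map $T_L \to T^{\widehat I}$, $\mathbf q \mapsto (e^{i\langle\gamma_k,\mathbf q\rangle})_{k\notin I}$, whose Lie algebra is $\{\mathbf q \in Q\colon \langle\gamma_k,\mathbf q\rangle = 0 \text{ for all } k \notin I\}$. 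Since $\cone\mathrm A_I$ is a simplicial cone of $\Sigma$, the set $\mathrm A_I$ is linearly independent, so Proposition~\ref{galeindspan} gives that $\Gamma_{\widehat I}$ spans $V^*$. Projecting $V^* \twoheadrightarrow Q^*$ then yields that $\widehat\Gamma_{\widehat I}$ spans $Q^*$, and therefore the Lie algebra above is zero; equivalently, $T^I \cap T_L = 2\pi(L_I/L)$ is finite (cf.\ Proposition~\ref{stabt}).

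It follows that $\dim G_{[\mathbf z]} = \dim T^I - \dim(T^I \cap T_L) = n$, so $\dim G + \dim G_{[\mathbf z]} = (m-q) + n = \dim(\mathcal Z_{\mathcal K}/T_L)$, as required. The whole argument is essentially formal once the maximal simplex has been picked; the one genuine input is the Gale duality statement of Proposition~\ref{galeindspan}, which converts the combinatorial fact that $I$ is a face of the fan into the spanning property of $\Gamma_{\widehat I}$ needed for finiteness of $T^I \cap T_L$. I do not anticipate any serious obstacle.
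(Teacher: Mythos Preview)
Your proof is correct and follows essentially the same approach as the paper: choose a point $\mathbf z$ supported on a maximal simplex $I$ with $|I|=n$, identify the $T^m$-stabiliser as the coordinate subtorus $T^I$, and verify the dimension count. The only minor difference is that the paper invokes the freeness of the $T_L$-action on $\mathcal Z_{\mathcal K}$ (which is part of the definition of a partial quotient) to conclude $T^I\cap T_L=\{1\}$ directly, whereas you argue via Gale duality that this intersection is finite; both yield $\dim G_{[\mathbf z]}=n$.
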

\begin{proof}
Write $\zk=\bigcup_{I\in\sK}(D^2,S^1)^I$ as in~\eqref{madec}, where 
$(D^2,S^1)^I=\prod_{i\in I}D^2\times\prod_{i\notin I}S^1$. Choose a maximal simplex $J\in\sK$, $|J|=n$, and a point $x\in\prod_{j\in J}0\times\prod_{j\notin J}S^1\subset\zk$. Writing $\zk\cong U_\C(\sK)/V$, we obtain that $x$ corresponds to the $V$-orbit of $\mb z\in U_\C(\sK)$ with $z_j=0$ for $j\in J$. The stabiliser of $x$ for the $T^m$-action on $\zk$ is the coordinate subtorus $T^J=\prod_{j\in J} S^1\subset \prod_{i=1}^m S^1= T^m$.

Let $G=T^m/T_L$ and let $\overline x\in\zk/T_L$ be the image of $x\in\zk$ in the quotient. We have $T^J\cap T_L=\{1\}$, since $T_L$ acts freely 
on~$\zk$. Hence, the stabiliser $G_{\overline x}$ is isomorphic to $T^J$ and is $n$-dimensional. We obtain $\dim G+\dim G_{\overline x}=m-\dim Q+n=\dim\zk-\dim Q=\dim\zk/T_L$, so the $G$-action on $\zk/T_L$ is maximal.
\end{proof}

Compact complex manifolds $M$ with maximal torus actions by holomorphic transformations are classified by Ishida in~\cite{ishi19}. The main result~\cite[Theorem~1.5]{ishi19} implies that every such $M$ is equivariantly biholomorphic to a holomorphic partial quotient $\zk/T_L$. (For the latter formulation, see~\cite[Theorem~4.8]{usti14}.) It also implies that any complex manifold $M$ with a maximal torus action is the base of a holomorphic torus principal bundle with total space a complex moment-angle manifold.

\smallskip

Two cases of holomorphic partial quotients $\zk/T_L$, corresponding to a one-dimensional rational subspace $Q\subset V$ and the whole of $V$ being rational, respectively, are of particular importance. They are considered in the next two sections.

\section{LVM and LVMB manifolds}\label{LVMBsec}
The study of holomorphic exponential actions led to the construction of a new class of non-K\"ahler complex manifolds in the works of Lopez de Medrano and Verjovsky~\cite{lo-ve97}, Meersseman~\cite{meer00} and Bosio~\cite{bosi01}. These manifolds are defined as the leaf spaces of exponential holomorphic foliations on the \emph{projectivisations} of open subsets $U_\C(\sK)\subset\C^m$ defined by polytopal fans (in the case of LVM manifolds) or complete simplicial fans (in the case of LVMB manifolds). Therefore, an LVMB manifold can be described as the projectivisation of a complex moment-angle manifold, or the partial quotient of a moment-angle manifold by the diagonal circle action. We describe this link here and draw some consequences.

We start with a spanning vector configuration $\Gamma=\{\gamma_1,\ldots,\gamma_m\}$ in $V^*$ for which there exists $\mb v_1\in V$ such that $\langle\gamma_i,\mb v_1\rangle=1$ for $i=1,\ldots,m$. Equivalently, the Gale dual configuration $\mathrm A=\{\mb a_1,\ldots,\mb a_m\}$ satisfies the relation 
$\sum_{k=1}^m\mb a_i=\bf0$. In particular, the conditions of Proposition~\ref{posit} are met. Let $R=\R\langle\mb v_1\rangle$.
Thinking in terms of relations between the vectors of $\mathrm A$, the subspace $R$ is generated by a single relation with all coefficients~$1$, so it is rational. The lattice~\eqref{Llatt} is the 
$\Z$-span of~$\mb v_1$. Let $V'=V/R$ and choose a splitting $s\colon V'\to V$ as in Section~\ref{convlink}. Such a splitting is defined by a choice of $\delta\in V^*$ satisfying $\langle\delta,\mb v_1\rangle=1$, then $s(V')=\Ann\delta$. Let
$\Gamma'=\{\gamma'_1,\ldots,\gamma'_m\}$ be the corresponding point configuration in $(V')^*=\Ann R$, where $\gamma'_i=s^*(\gamma_i)=\gamma_i-\delta$.

Assume further that $\dim V'=k-1$ is even, and let $\mathcal J'\colon V'\to V'$ be a complex structure. Denote the resulting dimensional complex space $(V',\mathcal J')$ by~$\widetilde V'$. An LVMB-manifold is the quotient of $U_\C(\sK)$ by a proper holomorphic torus-exponential action of $\C^\times_d\times\widetilde V'$, where 
$\C^\times_d$ is the one-dimensional diagonal algebraic subtorus 
of~$(\C^\times)^m$. The original construction~\cite{bosi01} is as follows.

\begin{construction}[LVMB manifold]
Let $\Gamma'$ be a point configuration that affinely spans $(V')^*$, and
let $\mathcal E$ be a nonempty collection of subsets of $[m]$ such that for each $J\in\mathcal E$ the subconfiguration $\Gamma'_J$ is a minimal affine generating set of $(V')^*$ (in particular, each $J$ has cardinality~$k$).

A pair $(\mathcal E,\Gamma')$ is called an \emph{LVMB datum} if the two conditions are satisfied:
\[
\begin{tabular}{l p{0.65\textwidth}}
[\emph{imbrication}]& for any $I,J\in\mathcal E$, the polytopes 
$\conv\Gamma'_I$ and $\conv\Gamma'_J$ have a common interior point;\\[2pt]

[\emph{substitute existence}]& for any $J\in\mathcal E$ and $i\in[m]$ there exists $j\in J$ such that $(J\setminus\{j\})\cup\{i\}\in\mathcal E$.
\end{tabular}
\]

Define the following open subset in the projective space $\C P^{m-1}$:
\[
\begin{aligned}
  P(\mathcal E)&=\bigl\{[z_1:\cdots: z_m]\in\C P^{m-1}\colon
  \text{there is $J\in\mathcal E$ such that }
  \{j\colon z_j\ne0\}\supset J\bigr\}\\
  &=\bigcup_{J\in\mathcal E}\bigl\{[z_1:\cdots: z_m]\in\C P^{m-1}\colon
   z_j\ne0\text{ for }j\in J\bigr\}.
\end{aligned}
\]
and the holomorphic exponential action
\begin{equation}\label{bact}
\begin{aligned}
  \widetilde V'\times P(\mathcal E)&\longrightarrow P(\mathcal E)\\
  (\mb v',\mb z)&\longmapsto \mb v\cdot\mb z=\bigl[e^{\langle\gamma'_1,\mb
  v'\rangle_\C}z_1:\cdots:e^{\langle\gamma'_m,\mb v'\rangle_\C}z_m\bigr].
\end{aligned}
\end{equation}
Bosio proves in~\cite[Theorem~1.4]{bosi01} that this action is proper if and only if 
$(\mathcal E,\Gamma')$ satisfies the imbrication condition, such a proper action is free, 
and the quotient $P(\mathcal E)/\widetilde V'$ of a proper action is compact if and only if the substitute existence condition is satisfied. 

The quotient  $\mathcal N=P(\mathcal E)/\widetilde V'$ defined by an LVMB datum 
$(\mathcal E,\Gamma')$ is called an \emph{LVMB manifold}. It is a compact complex manifold.
\end{construction}

To link the definition of LVMB manifold with the holomorphic exponential actions considered in Section~\ref{compzk}, we relate LVMB data $\{\mathcal E,\Gamma'\}$ to fan data (triangulated vector configurations). Let $\sK$ be the pure $(m-k-1)$-dimensional simplicial complex whose maximal simplices are complements to $J\in\mathcal E$, that is,
\begin{equation}\label{sclvmb}
  \sK=\{I\subset [m]\colon I\subset\widehat J\text{ for some }J\in\mathcal E\}.
\end{equation}
An element $i\in[m]$ is called \emph{indispensable} for $\mathcal E$ if it belongs to every $J\in\mathcal E$. Clearly, $i$ is indispensable if and only $\{i\}$ is a ghost vertex of~$\sK$.

\begin{proposition}\label{LVMBfan}\
\begin{itemize}
\item[(1)] $P(\mathcal E)\subset\C P^{m-1}$ is the projectivisation of $U_\C(\sK)\subset\C^m$;

\item[(2)] $\{\mathcal E,\Gamma'\}$ is an LVMB datum if and only if 
$\{\mathcal K,\mathrm A\}$ is a triangulated configuration defining a complete simplicial fan;

\item[(3)] the LVMB manifold $\mathcal N=P(\mathcal E)/\widetilde V'$ is the quotient of the moment-angle manifold $\zk$ by the diagonal free action of the circle~$S^1$, and the complex structure is the one of the quotient of $U_\C(\sK)$ by the holomorphic torus-exponential action of $\C^\times\times\widetilde V'$.
\end{itemize}
\end{proposition}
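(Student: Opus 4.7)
My plan is to realise Bosio's construction as the one-dimensional case of the torus-exponential framework of Section~\ref{sectea}, taking the rational subspace $Q\subset V$ to be the line $R=\R\langle\mb v_1\rangle$. The defining lattice is then $L=\Z\mb v_1$, and because $\langle\gamma_i,\mb v_1\rangle=1$ for each $i$, the embedding $\C^\times_L\hookrightarrow(\C^\times)^m$ from~\eqref{cqsub} identifies $\C^\times_L\cong\C^\times$ with the diagonal subgroup of $(\C^\times)^m$, and $T_L$ with the diagonal circle in $T^m$. With these identifications fixed, parts~(1)--(3) should reduce to applications of earlier results.

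For (1), I unpack the definitions: $\mb z\in U_\C(\sK)$ means $\{i\colon z_i=0\}\in\sK$, which by~\eqref{sclvmb} is equivalent to $\{j\colon z_j\ne0\}\supset J$ for some $J\in\mathcal E$, matching the definition of $P(\mathcal E)$. Since each $J\in\mathcal E$ has $|J|=k\ge1$, the origin does not lie in $U_\C(\sK)$, so $P(\mathcal E)$ is precisely the image of $U_\C(\sK)$ in $\C P^{m-1}$, i.e.\ the quotient $U_\C(\sK)/\C^\times_L$ by the diagonal action. For (2), the maximal simplices of $\sK$ are the sets $\widehat J$ with $J\in\mathcal E$ (by~\eqref{sclvmb}), so Bosio's imbrication condition on $\mathcal E$ says that $\conv\Gamma'_{\widehat I}$ and $\conv\Gamma'_{\widehat J}$ share an interior point for every pair of maximal simplices $I,J\in\sK$; since passing to a smaller simplex of $\sK$ only enlarges $\widehat I$ and hence the convex hull, the condition extends automatically to all pairs in $\sK$, so that by Theorem~\ref{proper'}(2) (equivalently, by Theorem~\ref{galefan}(c) intersected with the affine hyperplane $\langle\,\cdot\,,\mb v_1\rangle=1$) imbrication is equivalent to the fan condition on $\{\sK,\mathrm A\}$. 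The substitute existence condition is literally condition~(b) of Proposition~\ref{galecomp}, hence equivalent to completeness of~$\Sigma$.

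For (3), the equality $\mathcal N=P(\mathcal E)/\widetilde V'=U_\C(\sK)/(\C^\times_L\times\widetilde V')$ follows from~(1) together with the observation that Bosio's holomorphic action~\eqref{bact} on $P(\mathcal E)$ lifts to the holomorphic exponential action of $\widetilde V'$ on $U_\C(\sK)$ and commutes with the diagonal $\C^\times_L$-action. To deduce the biholomorphism $\mathcal N\cong\zk/T_L$ from diagram~\eqref{diaghpq}, I must verify that $\{\sK,\widehat{\mathrm A}\}$ is a nonsingular triangulated configuration; by Proposition~\ref{nsdual} this amounts to $L_I=L$ for every $I\in\sK$. For $\mb q=t\mb v_1\in R$ the pairing $\langle\gamma_k,\mb q\rangle$ equals $t$, so the requirement $\langle\gamma_k,\mb q\rangle\in\Z$ for some $k\notin I$ forces $t\in\Z$, and such a $k$ exists since $|I|\le m-k<m$. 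The same computation, via Proposition~\ref{stabt}, gives freeness of the diagonal circle action on $\zk$, and Proposition~\ref{holpq1} then yields $\zk/T_L\cong U_\C(\sK)/(\C^\times_L\times\widetilde V')=\mathcal N$ as compact complex manifolds.

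I expect the main obstacle to be the notational bookkeeping between the affine geometry of the point configuration $\Gamma'$ used in Bosio's definition and the conical/projective geometry of $\Gamma$ running through Sections~\ref{secfan}--\ref{compzk}; but once the identification of $\C^\times_L$ with the diagonal $\C^\times$ is in place, each of (1), (2), (3) becomes a direct application of a result already at hand.
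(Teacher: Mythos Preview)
Your proof is correct and follows essentially the same route as the paper: comparing definitions for~(1), invoking Theorem~\ref{proper'} and Proposition~\ref{galecomp} for~(2), and identifying $\mathcal N$ with the holomorphic partial quotient via Proposition~\ref{holpq1} for~(3). You supply more detail than the paper (the extension of the imbrication condition from maximal simplices to all simplices, and the verification of nonsingularity of $\{\sK,\widehat{\mathrm A}\}$ via Proposition~\ref{nsdual}), but the underlying strategy is identical.
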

\begin{proof}
Statement (1) follows by comparing the definitions of $P(\mathcal E)$ and $U_\C(\sK)$, see~\eqref{UKC} and~\eqref{affui}.

To prove (2), first observe that the imbrication condition is equivalent to the condition that $\cone\Gamma_{\widehat I}$ and $\cone\Gamma_{\widehat J}$ have a common interior point for any $I,J\in\sK$ (compare Theorem~\ref{proper'}). The latter condition is equivalent to 
$\{\mathcal K,\mathrm A\}$ being a triangulated configuration by Proposition~\ref{galefan}. Now the fan is complete if and only if the substitute existence condition is satisfied, see Proposition~\ref{galecomp}.

For (3), observe that $P(\mathcal E)=U_\C(\sK)/\C^\times$ and $\mathcal N=P(\mathcal E)/\widetilde V'$. Hence, $\mathcal N=U_\C(\sK)/(\C^\times\times\widetilde V')$. The latter is the quotient~$\zk/S^1$ by Proposition~\ref{holpq1}.
\end{proof}


By Proposition \ref{LVMBfan}, every LVMB manifold is the base of a principal $S^1$-bundle with total space a moment-angle manifold. There is also an entirely holomorphic version of this construction:

\begin{proposition}
Let $\mathcal N=P(\mathcal E)/\widetilde V'$ be an LVMB manifold and $\sK$ the corresponding simplicial complex~\eqref{sclvmb} on~$[m]$. Then there is a holomorphic principal bundle $\mathcal Z_{\sK_+}\to\mathcal N$ with fibre a one-dimensional complex torus and total space a complex moment-angle manifold corresponding to $\sK$ with one added ghost vertex.
\end{proposition}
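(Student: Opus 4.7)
The plan is to enlarge the configuration by one ghost vertex so that the space of relations becomes even-dimensional, and then apply the holomorphic partial quotient framework of diagram~\eqref{diaghpq} to a two-real-dimensional rational subspace. Since $\dim V'=k-1$ must be even for $\mathcal N$ to exist, $\dim V=k$ is odd. Apply Construction~\ref{addgv} with $\mb a_{m+1}=\mathbf 0$: $\mathrm A_+$ defines the same complete simplicial fan $\Sigma$ as $\mathrm A$, the space of relations becomes $V_+=V\oplus\R\langle\mb v_+\rangle$ of even dimension $k+1$, and $U_\C(\sK_+)=U_\C(\sK)\times\C^\times$. Put $R_+=R\oplus\R\langle\mb v_+\rangle$, a rational subspace of real dimension $2$ carrying the full-rank lattice $L_+=\Z\langle\mb v_1,\mb v_+\rangle$. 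Choose any complex structure on $R_+$ and extend it to $\mathcal J_+$ on $V_+$ via a real splitting $V_+=R_+\oplus V'$ in which the $V'$-factor carries the given $\mathcal J'$; then $\widetilde R_+\subset\widetilde V_+$ is a complex subspace with $\widetilde V_+/\widetilde R_+\cong\widetilde V'$.

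To invoke the principal bundle statement following Proposition~\ref{holpq1}, I verify that the rational triangulated configuration $\{\sK_+,\widehat{\mathrm A}_+\}$ corresponding to $Q=R_+$ is nonsingular, which by Proposition~\ref{nsdual} means $L_{+,I}=L_+$ for every $I\in\sK_+$. For $\mb q=a\mb v_1+b\mb v_+\in R_+$ one has $\langle\gamma_k,\mb q\rangle=a$ for $k\leq m$ and $\langle\gamma_{m+1},\mb q\rangle=b$; since $|I|\leq m-k<m$ forces $I$ to miss some $k\leq m$, and since $m+1\notin I$ always, the conditions $\langle\gamma_k,\mb q\rangle\in\Z$ for $k\notin I$ force $a,b\in\Z$. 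The diagram~\eqref{diaghpq} then produces a holomorphic principal bundle
\[
  \mathcal Z_{\sK_+}\;\longrightarrow\;\mathcal Z_{\sK_+}/\widetilde T_{L_+}
  \cong U_\C(\sK_+)/(\C^\times_{L_+}\times_{\widetilde R_+}\widetilde V_+)
\]
with fibre the one-complex-dimensional holomorphic torus $\widetilde T_{L_+}=\C^\times_{L_+}/\widetilde R_+$.

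To finish I identify the base with $\mathcal N$. Decompose $\C^\times_{L_+}=\C^\times_L\times\C^\times_+$ along the basis $\{\mb v_1,\mb v_+\}$ of $L_+$, and use the complex splitting $\widetilde V_+=\widetilde R_+\oplus\widetilde V'$. The pairings $\langle\gamma_k,\mb v_1\rangle=1$ for $k\leq m$, $\langle\gamma_{m+1},\mb v_+\rangle=1$, with all other cross-pairings zero, imply that on $U_\C(\sK)\times\C^\times$ the subgroup $\C^\times_+$ acts only on the last factor by multiplication, while $\C^\times_L\times\widetilde V'$ acts only on $U_\C(\sK)$ as the diagonal circle times the LVMB holomorphic exponential action. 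Quotienting by $\C^\times_+$ collapses the trailing $\C^\times$, leaving $U_\C(\sK)/(\C^\times_L\times\widetilde V')=\mathcal N$ by Proposition~\ref{LVMBfan}(3). The main obstacle is the compatibility bookkeeping for $\mathcal J_+$: it must simultaneously make $\widetilde R_+$ a complex subspace, induce the prescribed $\widetilde V'$ on the quotient, and present the $\widetilde V'$-action after the complex splitting in the form used in the definition of~$\mathcal N$. Once these are lined up, the bundle structure and the identification of the base follow formally from the partial-quotient machinery.
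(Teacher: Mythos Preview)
Your proof is correct and follows essentially the same route as the paper: add a ghost vertex with $\mb a_{m+1}=\mathbf 0$, take the two-dimensional rational subspace $Q=R\oplus\R\langle\mb v_+\rangle$ with a complex structure, extend to $\widetilde V_+=\widetilde Q\oplus\widetilde V'$, and read off the holomorphic principal bundle from diagram~\eqref{diaghpq}. Your version is in fact more complete than the paper's, which omits the nonsingularity check (your verification that $L_{+,I}=L_+$ via Proposition~\ref{nsdual}) and the explicit identification of the base with $\mathcal N$; both of these you supply correctly.
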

\begin{proof}
Let $\{\sK,\mathrm A\}$ be the triangulated configuration corresponding to~$\mathcal N$. We add a ghost vertex to $\sK$. The triangulated configuration $\{\sK_+,\mathrm A_+\}$, where $\mathrm A_+=\{\mb a_1,\ldots,\mb a_m,\bf 0\}$, defines the same fan,  see Construction~\ref{addgv}. We have $V_+=V\oplus\R\langle\mb v_+\rangle=V'\oplus R\oplus\R\langle\mb v_+\rangle$. Then $Q=R\oplus\R\langle\mb v_+\rangle$ is a two-dimensional rational subspace of~$V_+$. It has a complex structure, which together with $\mathcal J'\colon V'\to V'$ defines a complex structure on~$V_+$. Denote the resulting complex spaces by~$\widetilde V_+=\widetilde V'\oplus\widetilde Q$. Then we have a diagram of holomorphic quotients~\eqref{diaghpq}:
\[
\begin{tikzcd}
  U_\C(\sK_+) \ar{r}{/\C^\times_L=\C^\times \times\C^\times} 
  \ar{d}{/\widetilde V_+=\widetilde V'\oplus\widetilde Q} 
   & X_{\widehat\Sigma}=P(\mathcal E) \ar{d}{/\widetilde V'}\\
  \mathcal Z_{\sK_+} \ar{r}{/\widetilde T_L} &  
  \mathcal Z_{\sK_+}/\widetilde T_L=P(\mathcal E)/\widetilde V',
\end{tikzcd}
\]
which describes the LVMB manifold $\mathcal N=P(\mathcal E)/\widetilde V'$ as the holomorphic quotient of the moment-angle manifold $\mathcal Z_{\sK_+}$ by a one-dimensional complex torus~$\widetilde T_L$.
\end{proof}

An \emph{LVM manifold} is an LVMB manifold whose corresponding data 
$\{\sK,\mathrm A\}$ define a normal fan. The original construction~\cite{meer00} is as follows.

\begin{construction}[LVM manifold]
Let $\Gamma'=\{\gamma'_1,\ldots,\gamma'_m\}$ be a point configuration in a complex vector space $(\widetilde V')^*$ of real dimension~$k-1$ satisfying the two conditions:
\[
\begin{tabular}{l p{0.65\textwidth}}
[\emph{Siegel condition}]& $\mathbf{0}\in\conv\Gamma'$;\\[2pt]

[\emph{weak hyperbolicity}]& if $\mathbf{0}\in\conv\Gamma'_J$, then $|J|\ge k$.
\end{tabular}
\]
Let
\[
  \mathcal E=\{J\subset[m]\colon \mathbf{0}\in\conv\Gamma'_J,\; |J|=k\}.
\]
It is shown in~\cite{meer00} that the holomorphic exponential action $\widetilde V'\times P(\mathcal E)\to P(\mathcal E)$ given by~\eqref{bact} is free and the quotient $P(\mathcal E)/\widetilde V'$ is identified with the nondegenerate intersection of projective quadrics
\begin{equation}\label{LVM}
  \mathcal N=\bigl\{[z_1\colon\cdots\colon z_m]\in\C P^{m-1}\colon
  \gamma'_1|z_1|^2+\cdots+\gamma'_m|z_m|^2=0\bigr\}.
\end{equation}
More precisely $\mathcal N\in P(\mathcal E)$, and each orbit of the $\widetilde V'$-action on $P(\mathcal E)$ intersects $\mathcal N$ transversely at a single point. Therefore, the compact manifold $\mathcal N$ acquires a complex structure as the holomorphic quotient $P(\mathcal E)/\widetilde V'$. It is called the \emph{LVM manifold} corresponding to the point configuration $\Gamma'=\{\gamma'_1,\ldots,\gamma'_m\}$ satisfying the Siegel and weak hyperbolicity conditions.
\end{construction}

Now let $V=V'\oplus\R$, $\gamma_i=\begin{pmatrix}\gamma'_i\\1\end{pmatrix}$, let $\mathrm A=\{\mb a_1,\ldots,\mb a_m\}$ be the Gale dual configuration to $\Gamma=\{\gamma_1,\ldots,\gamma_m\}$, and let $\sK$ be the simplicial complex given by~\eqref{sclvmb}. By the holomorphic versions of Theorems~\ref{quadrics1} and~\ref{quadrics2}, the data $\{\mathcal K,\mathrm A\}$ define the normal fan of a simple polytope $P$ given by~\eqref{ptope}, and and the quotient $U_\C(\sK)/(\R_>\times\widetilde V')$ is diffeomorphic to the nondegenerate intersection of Hermitian quadrics
\begin{equation}\label{Hlink}
  \left\{\begin{array}{lrcl}
  (z_1,\ldots,z_m)\in\C^m\colon&
  \gamma'_1|z_1|^2+\cdots+\gamma'_m|z_m|^2&=&\mathbf{0},\\[1mm]
  &|z_1|^2+\cdots+|z_m|^2&=&1.
  \end{array}\right\}
\end{equation}
The latter is known as the \emph{link of a system of special Hermitian quadrics}. The manifold~\eqref{LVM} is obtained by taking further quotient by the diagonal circle. It follows that an LVM manifold is an LVMB manifold whose defining fan data $\{\sK,\mathrm A\}$ is that of a normal fan.

LVM manifolds can also be characterised within LVMB manifolds in terms of the canonical foliation $\mathcal F$ on an LVMB manifold, introduced by Cupit-Foutou and Zaffran in~\cite{cu-za07}. The foliation~$\mathcal F$ turns into a holomorphic fibration over a toric orbifold with fibres compact tori in the rational case. Ishida~\cite{ishi17} proves that the canonical foliation~$\mathcal F$ is transversely K\"ahler if and only if the LVMB manifold is LVM.

Complex geometry of moment-angle manifolds and their partial quotients, including LVM- and LVMB-manifolds, is very rich and far from being completely understood. Analytic subsets and invariants of the complex structure of these manifolds, including the Hodge numbers and Dolbeault cohomology, holomorphic automorphism groups and moduli spaces of complex structures are currently the subject of active research. We refer to~\cite{me-ve04,bo-me06,pa-us12,tamb12,batt13,cai15,p-u-v16,kr-pa21,i-k-p22} for the aspects of complex geometry of these manifolds.

\section{Toric varieties and irrational deformations}\label{sectoric}

Complete nonsingular toric varieties are defined by nonsingular rational fans and can be obtained as partial quotients $\zk/T_V$; this corresponds to the case when the entire space $V$ is rational with respect to the vector configuration~$\Gamma$. By deforming the configuration $\Gamma$ we obtain holomorphic foliated manifolds $(\zk,\mathcal F)$ which model irrational deformations of simplicial toric varieties.

\subsection{Toric varieties as partial quotients}
Assume that the whole space $V$ is rational with respect to a spanning vector configuration $\Gamma=\{\gamma_1,\ldots,\gamma_m\}$, see Construction~\ref{coras}. Then 
\begin{equation}\label{Llatice}
   L=
  \{\mb v\in V\colon\langle\gamma_k,\mb v\rangle\in\Z\;
  \text{ for }k=1,\ldots,m\}.
\end{equation}
is a full rank lattice in $V$, and $L^*=\Z\langle\gamma_1,\ldots,\gamma_m\rangle$ is the dual lattice. The lattice $L$ defines the algebraic torus 
$\C^\times_L=L\otimes_\Z\C^\times$ and the compact torus $T_L=L\otimes_\Z S^1$ of dimension $k=\dim V$ with a product decomposition $\C^\times_L=V\times T_L$ as Lie groups.

The Gale dual rational vector configuration $\mathrm A=\{\mb a_1,\ldots,\mb a_m\}$ in $W^*$ spans a full rank lattice $M^*$, which we denote by~$N$.

The torus $\C^\times_L$ embeds in~$(\C^\times)^m$ as the closed holomorphic subgroup
\begin{equation}\label{cvsub}
 G= (\varGamma^*|_L\otimes_\Z\C^\times)
  (\C^\times_L)=\exp\varGamma^*_\C(V_\C)\\
  =\exp(\Ker A_\C\colon\C^m\to W^*_\C).
\end{equation}
This subgroup is in fact algebraic, as it can be written as
\begin{equation}\label{Ggrou}
  G=\Bigl\{(t_1,\ldots,t_m)\in(\C^\times)^m\colon
  \prod_{i=1}^m t_i^{\langle\mb a_i,\mb w\rangle}=1,\quad
  \mb w \in M\Bigr\}.
\end{equation}

The torus-exponential action~\eqref{teact} becomes the algebraic action of 
$\C^\times_L$ on $\C^m$. Given a simplicial complex $\sK$ on $[m]$, we obtain an algebraic action of $\C^\times_L$ on $U_\C(\sK)$ by restriction. By Theorem~\ref{teproper}, this action is proper if and only if $\{\sK,\mathrm A\}$ is a triangulated configuration, in which case the action is almost free with finite stabilisers.

When the vectors of the configuration $\mathrm A=\{\mb a_1,\ldots,\mb a_m\}$ are primitive in the lattice $N$ spanned by~$\mathrm A$, the (geometric) quotient $U_\C(\sK)/\C^\times_L$ by a proper algebraic action of 
$\C^\times_L$ is the \emph{toric variety} $X_{\Sigma}$ corresponding to the rational simplicial fan~$\Sigma$ defined by 
$\{\sK,\mathrm A\}$~\cite[\S5.1]{c-l-s11}. The algebraic torus $(\C^\times)^m/\C^\times_L=N\otimes_\Z\C^\times\cong(\C^\times)^n$ acts on $X_\Sigma=U_\C(\sK)/\C^\times_L$ with a Zariski open orbit. The toric variety $X_\Sigma$ is nonsingular if $\{\sK,\mathrm A\}$ is a nonsingular triangulated configuration (this implies that the action of $\C^\times_L$ is free and the vectors 
$\mb a_i$ are primitive in the lattice~$N$).

More generally, when the vectors $\mb a_i$ are not necessarily primitive in the lattice~$N$, the quotient $U_\C(\sK)/\C^\times_L$ is still the toric variety $X_{\Sigma}$ corresponding to the fan~$\Sigma$ defined by $\{\sK,\mathrm A\}$, yet with the non-standard orbifold structure. This phenomenon is treated in~\cite[\S4]{me-ve04}, see also~\cite{ba-za17}. We illustrate it on a simple example.

\begin{example}
Consider the two-vector configuration $\mathrm A=\{2,-1\}$ in one-dimensional space $W^*=\R$ and $\mathcal K=\{\varnothing,\{1\},\{2\}\}$. Then the triangulated configuration $\{\mathcal K,\mathrm A\}$ defines a complete nonsingular fan in $\R$, but the vector $\mb a_1=2$ is non-primitive in the lattice $N=\Z$ spanned by~$\mathrm A$. The Gale dual configuration is $\Gamma=\{1,2\}$ and the group~\eqref{cvsub} is given by
\[
  G=\{(e^v,e^{2v})=(t,t^2)\in (\C^\times)^2\}.
\]
The $G$-action on $U_\C(\mathcal K)=\C^2\setminus\{0\}$ has a nontrivial stabiliser of order $2$ at the point $(0,1)$.
The quotient $\C^2\setminus\{0\}/G$ is the projective line $\C P^1$ as a variety, but 
but comes equipped with an orbifold structure with one singular point of index~$2$.
\end{example}

When $V$ is entirely rational with respect to $\Gamma$, the commutative diagram of holomorphic quotients~\eqref{diaghpq} takes the form
\[
\begin{tikzcd}
  & U_\C(\sK) \ar{dd}{/\C^\times_L} \ar{ld}[swap]{/\widetilde V}\\
  \zk
  \ar{rd}{/\widetilde T_L}\\
   & X_\Sigma
 \end{tikzcd}
\]
Here $\zk$ is the moment-angle manifold with the complex structure defined by the holomorphic quotient $U_\C(\sK)/\widetilde V$, and $\zk\to X_\Sigma$ is a holomorphic (Seifert) principal bundle with fibre a holomorphic compact torus $\widetilde T_L=\C^\times_L/\widetilde V$.

A complex structure on $\zk$ is defined by a triangulated configuration $\{\sK,\mathrm A\}$ and a choice of complex structure on~$V$, see \S\ref{compzk}. When the fan $\Sigma$ defined by $\{\sK,\mathrm A\}$ is rational, there is a holomorphic torus fibre bundle $\zk\to X_\Sigma$ over the corresponding toric variety. A perturbation of the vector configuration $\mathrm A$ destroys the rationality, the subgroup~\eqref{cvsub} ceases to be closed in  $(\C^\times)^m$, the closed holomorphic tori in the fibres of the bundle $\zk\to X_\Sigma$ ``open up'', and the fibre bundle turns into a \emph{holomorphic foliation} $\mathcal F$ of $\zk$ with noncompact leaves $G/\widetilde V$. Then $\mathcal F$ is an example of the \emph{canonical foliation} on a complex manifold, considered in~\cite{is-ka19} (see also~\cite{i-k-p22}). The holomorphic foliated manifolds $(\zk,\mathcal F)$ therefore model \emph{irrational deformations} of toric varieties, which have recently been studied from several different perspectives in 
the works~\cite{ba-pr01, ba-za15, ra-zu19, k-l-m-v21, i-k-p22, ba-pr23}.

On the other hand, the construction of toric varieties as quotients takes as input a lattice $N$ and a rational fan $\Sigma$ in the space $N\otimes_\Z\R$, which is~$W^*$ in our notation. The vectors $\mb a_1,\ldots,\mb a_m$ are the primitive lattice vectors generating the one-dimensional cones of~$\Sigma$. If the fan $\Sigma$ is simplicial, the toric variety $X_\Sigma$ can be defined as the geometric quotient $U_\C(\sK)/G$, where $G$ is the algebraic subgroup of $(\C^\times)^m$ given by~\eqref{Ggrou}. It may turn out that the vectors $\mb a_1,\ldots,\mb a_m$ do not span the lattice $N$; in particular, $\Z\langle \mb a_1,\ldots,\mb a_m\rangle$ may be a sublattice of finite index in~$N$. In the latter case the group~\eqref{Ggrou} is a product of an algebraic torus and a finite group $N/\Z\langle \mb a_1,\ldots,\mb a_m\rangle$. When we take as input a pair of Gale dual rational vector configurations $\Gamma$ and $\mathrm A$, the only natural choice for a lattice $N$ is $\Z\langle \mb a_1,\ldots,\mb a_m\rangle$, so the group~\eqref{Ggrou} does not have finite factors and is isomorphic to the algebraic torus~$\C^\times_L$. To take account of all simplicial toric varieties, one can follow the approach of Prato~\cite{prat01} to \emph{toric quasifolds} and include a finitely generated abelian subgroup $N$ of $W^*$ (a \emph{quasilattice}) containing the vectors $\mb a_1,\ldots,\mb a_m$ as part of the input data. In this way, all complete simplicial toric varieties (orbifolds), as well as toric quasifolds, can be obtained as quotients of LVMB manifolds~\cite[Theorem~3.4]{ba-za17}.

\subsection{The nef and ample cone of a toric variety}
A toric variety $X_\Sigma$ is defined by a lattice $N\cong\Z^n$ and a fan $\Sigma$ in $N\otimes_\Z\R=W^*$ which is rational with respect to~$N$. Let $\{\mathcal C,\mathrm A\}$ be the fan data of~$\Sigma$, where 
$\mathrm A=\{\mb a_1,\ldots,\mb a_m\}$ is the set of primitive generators of one-dimensional cones of~$\Sigma$ and $\mathcal C$ is an $\mathrm A$-closed collection of subsets of~$[m]$, see Section~\ref{secfan}. We continue assuming that $\mathrm A$ spans~$W^*$. The quotient construction~\cite[\S5.1]{c-l-s11} identifies $X_\Sigma$ with the good categorical quotient~$U_\C(\mathcal K)/\!\!/G$, where $\sK$ is the smallest simplicial complex containing~$\mathcal C$ and $G$ is given by~\eqref{Ggrou}. The categorical quotient~$U_\C(\mathcal K)/\!\!/G$ is geometric if and only if the fan $\Sigma$ is simplicial (so that $\mathcal C=\mathcal K$).

Let $M=N^*$ be the dual lattice in $W$. The map $A^*\colon W\to\R^m$ takes 
$\mb w\in W$ to $(\langle\mb a_1,\mb w\rangle,\ldots,\langle\mb a_m,\mb w\rangle)$. It can be interpreted in terms of torus-invariant Weil divisors on~$X_\Sigma$. Namely, we identify the $i$th standard basis vector $\mb e_i$ of $\Z^m$ with the irreducible torus-invariant divisor $D_i$ corresponding the the $i$th ray of $\Sigma$ (with primitive generator~$\mb a_i$). Then $A^*$ takes $\mb w\in M$ to the principal divisor $\sum_{i=1}^m\langle\mb a_i,\mb w\rangle D_i$ of the character $\chi^{\mb w}$ of the torus $\C^\times_N=N\otimes_\Z\C^\times$ corresponding to $\mb w\in M=\Hom(\C^\times_N,\C^\times)$. The quotient $\Z^m/A^*(M)$ is the \emph{class group} $\mathop\mathrm{Cl}(X_\Sigma)$, and we have an exact sequence of abelian groups
\[
  0\longrightarrow M \stackrel{A^*}\longrightarrow \Z^m 
  \longrightarrow \mathop\mathrm{Cl}(X_\Sigma) \longrightarrow 0.
\]
If $N=\Z\langle\mb a_1,\ldots,\mb a_m\rangle$, then the map $A^*\colon M\to\Z^m$ is split injective and $\mathop\mathrm{Cl}(X_\Sigma)$ is identified with the lattice $L^*$ spanned by the Gale dual rational configuration $\Gamma=\{\gamma_1,\ldots,\gamma_m\}$ in~$V^*$. In general, $\mathop\mathrm{Cl}(X_\Sigma)$ is isomorphic to the sum of $L^*$ and the finite group $N/\Z\langle \mb a_1,\ldots,\mb a_m\rangle$. In any case, we can identify $\gamma_i$ with the divisor class $[D_i]$ in $\mathop\mathrm{Cl}(X_\Sigma)\otimes_\Z\R=V^*$. Tensoring the exact sequence above with $\R$ we obtain
\begin{equation}\label{Rclas}
  0\longrightarrow W \stackrel{A^*}\longrightarrow \R^m 
  \stackrel{\varGamma}\longrightarrow V^* \longrightarrow 0.
\end{equation}


In what follows we assume that the support $|\Sigma|=\bigcup_{\sigma\in\Sigma}\sigma\subset W^*$ of the fan $\Sigma$ is convex of full dimension. Torus-invariant Cartier divisors on $X_\Sigma$ correspond to piecewise linear functions $\varphi$ on $|\Sigma|$ which are linear on the cones of $\Sigma$ and take integer values on the lattice~$N$. The restriction of such a function $\varphi$ to a cone 
$\sigma\in\Sigma$ is given by $\varphi(\mb a)=\langle \mb a,\mb w_\sigma\rangle$ for $\mb a\in\sigma$ and some $\mb w_\sigma\in M$. A piecewise linear function $\varphi\colon|\Sigma|\to\R$ corresponds to the Cartier divisor $D=-\sum_{i=1}^m\varphi(\mb a_i)D_i$. Conversely, a torus-invariant Cartier divisor $D=\sum_{i=1}^m b_iD_i$ which is given by a collection of characters $\chi^{\mb w_\sigma}$ on the affine pieces $X_\sigma\subset X_\Sigma$ (the \emph{Cartier data}) corresponds to the piecewise linear function $\varphi_D \colon|\Sigma|\to\R$ given by 
$\varphi_D(\mb a)=\langle \mb a,\mb w_\sigma\rangle$ for $\mb a\in\sigma$.

The \emph{Picard group} $\mathop{\mathrm{Pic}}(X_\Sigma)\subset\mathop{\mathrm{Cl}}(X_\Sigma)$ consists of classes of Cartier divisors. It has a finite index in $\mathop{\mathrm{Cl}}(X_\Sigma)$ if and only if the fan $\Sigma$ is simplicial~\cite[Proposition~4.2.7]{c-l-s11}. When $\Sigma$ simplicial, we have
$\mathop\mathrm{Pic}(X_\Sigma)\otimes_\Z\R=\mathop\mathrm{Cl}(X_\Sigma)\otimes_\Z\R=V^*$, while in general $\mathop\mathrm{Pic}(X_\Sigma)\otimes_\Z\R$ is a proper subspace in~$V^*$.

A piecewise linear function $\varphi\colon|\Sigma|\to\R$ given by $\varphi(\mb a)=\langle \mb a,\mb w_\sigma\rangle$ for $\mb a\in\sigma$ is convex if and only if $\varphi(\mb a)\le\langle \mb a,\mb w_\sigma\rangle$ for all $\mb a\in|\Sigma|$, and $\varphi$ is called \emph{strictly convex} if $\varphi(\mb a)<\langle \mb a,\mb w_\sigma\rangle$ for all $\mb a\in|\Sigma|\setminus\sigma$. A fan $\Sigma$ with convex support admits a strictly convex piecewise linear function $\varphi\colon|\Sigma|\to\R$ if and only if $\Sigma=\Sigma_P$ is the normal fan of a convex polyhedron~$P$. Namely, for a strictly convex $\varphi\colon|\Sigma|\to\R$ one has that $\Sigma$ is the normal fan of the polyhedron
\begin{align*}
  P&=\{\mb w\in W\colon \langle\mb a,\mb w\rangle\ge\varphi(\mb a)
  \text{ for }\mb a\in|\Sigma|\}\\
  &=
  \{\mb w\in W\colon \langle\mb a_i,\mb w\rangle-\varphi(\mb a_i)\ge0,\quad 
  i=1,\ldots,m\}.
\end{align*}
Conversely, if $\Sigma=\Sigma_P$, then the \emph{support function} of $P$ given by $\varphi_P(\mb a)=\min_{\mb w\in P}\langle\mb a,\mb w\rangle$ is defined on 
$|\Sigma_P|$, linear on the cones of $\Sigma_P$ and strictly convex. 

A torus-invariant Cartier divisor $D$ on $X_\Sigma$ is basepoint free if and only if the corresponding function $\varphi_D\colon|\Sigma|\to\R$ is convex~\cite[Theorem~6.1.7]{c-l-s11}. When the fan $\Sigma$ is complete, a torus-invariant Cartier divisor $D$ on $X_\Sigma$ is ample if and only if the corresponding function $\varphi_D\colon|\Sigma|\to\R$ is strictly convex~\cite[Theorem~6.1.14]{c-l-s11}. In particular, a toric variety $X_\Sigma$ is projective if and only if $\Sigma$ is the normal fan of a convex polytope~$P$.

The classes of basepoint free divisors on $X_\Sigma$ span a cone in $V^*=\mathop\mathrm{Cl}(X_\Sigma)\otimes_\Z\R$, called the \emph{nef cone} and denoted by $\mathop\mathrm{Nef}(X_\Sigma)$ (a Cartier divisor on a toric variety $X_\Sigma$ is numerically effective if and only if it is basepoint free). The set of positive multiples of ample divisor classes on a complete toric variety $X_\Sigma$ is called the \emph{ample cone} and denoted by $\mathop\mathrm{Ample}(X_\Sigma)$. The variety $X_\Sigma$ is projective if and only if $\mathop\mathrm{Ample}(X_\Sigma)\ne\varnothing$. (Note that $\mathop\mathrm{Ample}(X_\Sigma)$ is not precisely a cone; it is a subset in $V^*$ whose closure is a polyhedral cone or empty.)  

The nef cone and the ample cone can be described effectively in terms of the Gale dual cones of~$\Sigma$. The proposition below is related to the criterion for a fan to be a normal fan in Theorem~\ref{galefannormgen} (see also~\cite[Proposition~2.4.2.6]{a-d-h-l15}):

\begin{proposition}\label{nefcone}
Let $\Sigma=\{\cone\mathrm A_I\colon I\in\mathcal C\}$ be a rational fan in $W^*$ with full-dimensional convex support, and let $\Gamma=\{\gamma_1,\ldots,\gamma_m\}$ be the Gale dual rational configuration in~$V^*$. 
\begin{itemize}
\item[(1)] The nef cone of the toric variety $X_\Sigma$ is given by
\[
  \mathop\mathrm{Nef}(X_\Sigma)=\bigcap_{I\in\mathcal C}\cone\Gamma_{\widehat I}.
\]

\item[(2)] If $\Sigma$ is complete, then the ample cone of $X_\Sigma$ is given by
\[
  \mathop\mathrm{Ample}(X_\Sigma)=\bigcap_{I\in\mathcal C}
  \relint\cone\Gamma_{\widehat I}
\]
and is nonempty if and only if $\Sigma$ is the normal fan of a polytope.

\smallskip

\item[(3)] If $\Sigma$ is complete and simplicial, then $\mathop\mathrm{Ample}(X_\Sigma)$ is the interior of~$\mathop\mathrm{Nef}(X_\Sigma)$, and $X_\Sigma$ is projective if and only if $\mathop\mathrm{Nef}(X_\Sigma)$ has full dimension in~$V^*$.
\end{itemize}
\end{proposition}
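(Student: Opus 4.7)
The plan is to reduce all three parts to Lemma~\ref{normal1} via an explicit representation of each divisor class. Since $\Gamma$ spans $V^*$, every $\delta\in V^*$ admits an expression $\delta=\sum_{i=1}^m b_i\gamma_i$, defining the $\R$-divisor $D=\sum b_iD_i$ with class $[D]=\delta$. Associated to $D$ are the polyhedron
\[
  P_D=\{\mb w\in W\colon\langle\mb a_i,\mb w\rangle+b_i\ge0,\ i=1,\ldots,m\}
\]
and the support function $\varphi_D(\mb a)=\min_{\mb w\in P_D}\langle\mb a,\mb w\rangle$, which is strictly convex and piecewise linear on $|\Sigma_{P_D}|$ and satisfies $\varphi_D(\mb a_i)=-b_i$. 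By Lemma~\ref{normal1}, the cones of $\Sigma_{P_D}$ are exactly those $\cone\mathrm A_J$ with $\delta\in\relint\cone\Gamma_{\widehat J}$, a description which depends only on $\delta$ and not on the chosen representation.

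For part~(1), I would invoke the $\R$-version of the standard dictionary: $[D]\in\mathop\mathrm{Nef}(X_\Sigma)$ if and only if $\varphi_D$ extends to a convex piecewise linear function on $|\Sigma|$ that is linear on each cone of~$\Sigma$, equivalently if and only if $\Sigma$ refines $\Sigma_{P_D}$. It then suffices to prove $\delta\in\bigcap_{I\in\mathcal C}\cone\Gamma_{\widehat I}\Leftrightarrow\Sigma$ refines $\Sigma_{P_D}$. For the forward direction, fix $I\in\mathcal C$, and via Proposition~\ref{positive} write $\delta=\sum_{i\in K}r_i\gamma_i$ with $K\subset\widehat I$ and $r_i>0$; then $\delta\in\relint\cone\Gamma_K=\relint\cone\Gamma_{\widehat J}$ for $J=[m]\setminus K\supset I$, so $\cone\mathrm A_J\in\Sigma_{P_D}$ contains $\cone\mathrm A_I$. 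For the converse, if $\cone\mathrm A_I\subset\cone\mathrm A_J\in\Sigma_{P_D}$ with $J$ chosen maximal among defining subsets, then $I\subset J$, whence $\delta\in\relint\cone\Gamma_{\widehat J}\subset\cone\Gamma_{\widehat I}$.

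For part~(2), when $\Sigma$ is complete, $D$ is ample iff $\varphi_D$ is strictly convex on $|\Sigma|=W^*$, iff $\Sigma=\Sigma_{P_D}$, iff by Lemma~\ref{normal1} each $I\in\mathcal C$ satisfies $\delta\in\relint\cone\Gamma_{\widehat I}$; this gives the ample-cone formula, and the nonemptiness criterion is then immediate from Theorem~\ref{galefannormgen}. For part~(3), under the simplicial complete hypothesis every maximal $I\in\mathcal C$ has $\mathrm A_I$ linearly independent, whence by Proposition~\ref{galeindspan} the set $\Gamma_{\widehat I}$ spans $V^*$ and $\cone\Gamma_{\widehat I}$ is a full-dimensional polyhedral cone in $V^*$. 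A standard convexity argument then yields
\[
  \int\bigl(\bigcap_{I\in\mathcal C}\cone\Gamma_{\widehat I}\bigr)=
  \bigcap_{I\in\mathcal C}\relint\cone\Gamma_{\widehat I}
\]
whenever the right-hand side is nonempty (and both are empty otherwise), producing $\mathop\mathrm{Ample}(X_\Sigma)=\int\mathop\mathrm{Nef}(X_\Sigma)$ and the projectivity characterisation.

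The hard part will be the rigorous justification of the $\R$-divisor version of the basepoint-free $\Leftrightarrow$ convex piecewise linear dictionary, especially in the non-simplicial case, where $\mathop\mathrm{Pic}(X_\Sigma)\otimes\R$ is a proper subspace of $V^*=\mathop\mathrm{Cl}(X_\Sigma)\otimes\R$ and ``nef'' must be interpreted at the level of $\R$-classes (necessitating a careful choice of representative $D$, for instance with $b_i\ge0$ so that $P_D$ is nonempty). One must also verify the maximality-based deduction $I\subset J$ used in the converse direction, which relies on the $\mathrm A$-closedness of $\mathcal C$ together with the correspondence between faces of $P_D$ and cones of $\Sigma_{P_D}$.
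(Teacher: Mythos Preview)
Your route through $P_D$ and its normal fan has a genuine gap: the intermediate condition ``$\Sigma$ refines $\Sigma_{P_D}$'' is strictly weaker than either ``$[D]$ is nef'' or ``$\delta\in\bigcap_{I}\cone\Gamma_{\widehat I}$'', so it cannot serve as a bridge. The problem is that the support function $\varphi_D(\mb a)=\min_{\mb w\in P_D}\langle\mb a,\mb w\rangle$ need not satisfy $\varphi_D(\mb a_i)=-b_i$ when the $i$th inequality is strictly redundant for $P_D$; in that case $a_i$ can lie in a normal cone $\sigma_Q=\cone\mathrm A_J$ without $i\in J$, so your ``maximality-based deduction $I\subset J$'' is not merely hard but false. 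Concretely, take $\mathrm A=\{(1,0),(0,1),(-1,-1),(1,1)\}$ with the complete simplicial fan whose maximal cones are $\{1,4\},\{2,4\},\{2,3\},\{1,3\}$, and $b=(0,0,0,1)$ so $\delta=\gamma_4$. Then $P_D=\{0\}$ and $\Sigma_{P_D}$ consists of the single cone $W^*$, hence $\Sigma$ refines $\Sigma_{P_D}$ trivially; yet the Cartier function is not convex (it equals $-1$ at $\mb a_4$ but the linear extension from cone $\{1,3\}$ gives $0$ at $\mb a_1$ while the true value from $\{1,4\}$ would force $-y$, giving $-1$ at $\mb a_2\notin\{1,4\}$), and one checks directly that $\gamma_4\notin\bigcap_I\cone\Gamma_{\widehat I}$. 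So both equivalences you assert fail in the $\Leftarrow$ direction simultaneously.

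The underlying issue is a conflation of two functions: the support function of $P_D$ (always convex, but may forget some $b_i$) and the Cartier piecewise-linear function $\psi_D$ determined by $\psi_D(\mb a_i)=-b_i$ on each cone (whose convexity is the actual nef criterion). The paper avoids $P_D$ entirely and works directly with $\psi_D$: for $[D]$ nef it subtracts the linear piece $\langle\cdot,\mb w_I\rangle$ on each cone to exhibit $[D]=\sum_{i\in\widehat I}(\langle\mb a_i,\mb w_I\rangle-\psi_D(\mb a_i))\gamma_i\in\cone\Gamma_{\widehat I}$; conversely, from $\delta\in\cone\Gamma_{\widehat I}$ it \emph{constructs} the local data $\mb w_I$ by Gale duality and verifies the resulting $\psi$ is convex. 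This direct construction of $\psi$ is exactly what your approach cannot recover from the refinement condition alone.
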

\begin{proof}
Recall that $\gamma_i\in V^*$ is identified with the divisor class $[D_i]$.

(1) Let $[D]=\sum_{i=1}^m b_i[D_i]\in V^*$ be a class of base free divisor, so that $b_i=-\varphi(\mb a_i)$ for a convex piecewise linear function $\varphi\colon|\Sigma|\to\R$. Since $\varphi$ is linear on any $\cone\mathrm A_I$ for $I\in\mathcal C$, we get $\varphi(\mb a_i)=\langle\mb a_i,\mb w_I\rangle$ for some $\mb w_I\in W$ and $i\in I$. Then
\[
  [D]=-\sum_{i=1}^m\varphi(\mb a_i)[D_i]+
  \sum_{i=1}^m\langle\mb a_i,\mb w_I\rangle[D_i]=
  \sum_{i\in\widehat I}(\langle\mb a_i,\mb w_I\rangle-\varphi(\mb a_i))\gamma_i
  \in\cone\Gamma_{\widehat I}
\]
for any $I\in\mathcal C$. Here the first identity above holds since $\sum_{i=1}^m\langle\mb a_i,\mb w_I\rangle D_i$ is a principal divisor (and $\sum_{i=1}^m\langle\mb a_i,\mb w_I\rangle\gamma_i=0$ by Gale duality), and the inclusion holds because $\langle\mb a_i,\mb w_I\rangle-\varphi(\mb a_i)\ge0$ for a convex~$\varphi$.  It follows that $[D]\in\bigcap_{I\in\mathcal C}\cone\Gamma_{\widehat I}$.

To prove the opposite inclusion, it is enough to show that any rational vector 
$\gamma\in\bigcap_{I\in\mathcal C}\cone\Gamma_{\widehat I}$ is a positive multiple of a basepoint free divisor class. We write $\gamma=\sum_{i=1}^m b_i[D_i]$ with rational~$b_i$. Take $I\in\mathcal C$. Since $\gamma\in\cone\Gamma_{\widehat I}$, we can also write 
$\gamma=\sum_{i\in\widehat I}b'_i[D_i]$ with rational $b'_i\ge0$. Now 
$\sum_{i\in\widehat I}b'_iD_i-\sum_{i=1}^m b_iD_i\in\Ker\varGamma$ (see~\eqref{Rclas}), so that there is a rational $\mb w_I\in W$ such that
\begin{equation}\label{bb'}
  \sum_{i=1}^m\langle\mb a_i,\mb w_I\rangle D_i=\sum_{i\in\widehat I}b'_iD_i-\sum_{i=1}^m b_iD_i.
\end{equation}
The data $\{\mb w_I\colon I\in\mathcal C\}$ define a piecewise linear function $\varphi\colon|\Sigma|\to\R$ given by $\varphi(\mb a)=\langle\mb a,\mb w_I\rangle$ for $\mb a\in\cone\mathrm A_I$. From~\eqref{bb'} we obtain $\varphi(\mb a_i)=-b_i$ for $i\in I$. Since each $\mb a_i$ belongs to a cone, we obtain $\varphi(\mb a_i)=-b_i$ for any~$i$. Identity~\eqref{bb'} also implies that
\[
  \langle\mb a_i,\mb w_I\rangle-\varphi(\mb a_i)=
  \langle\mb a_i,\mb w_I\rangle+b_i=b'_i\ge0
  \quad\text{for }i\in\widehat I,
\]
so that $\varphi$ is convex. Since $\varphi(\mb a_i)$ is rational for any~$i$, multiplying by a common denominator we achieve that $\varphi$ is integer on~$N$. It follows that $\gamma=\sum_{i=1}^m b_i[D_i]=-\sum_{i=1}^m\varphi(\mb a_i)[D_i]$ is a basepoint free divisor class up to a positive multiple.

\smallskip

The proof of (2) is similar: non-strict inequalities like $\langle\mb a_i,\mb w_I\rangle-\varphi(\mb a_i)\ge0$ and $b'_i\ge0$ are replaced by strict ones and the function $\varphi$ is now strictly convex. The fact that $\bigcap_{I\in\mathcal C}
  \relint\cone\Gamma_{\widehat I}$ is nonempty if and only if $\Sigma$ is the normal fan of a polytope is proved in Theorem~\ref{galefannormgen}.
  
\smallskip

(3) When $\Sigma$ is simplicial, each $\cone\Gamma_{\widehat I}$ is a full-dimensional cone in~$V^*$ for $I\in\mathcal C$, and its relative interior is the interior. Then
\[
  \mathop\mathrm{Ample}(X_\Sigma)=\bigcap_{I\in\mathcal C}
  \mathop\mathrm{int}\cone\Gamma_{\widehat I}=\mathop\mathrm{int}  
  \bigcap_{I\in\mathcal C}\cone\Gamma_{\widehat I}=
  \mathop\mathrm{int}\mathop\mathrm{Nef}(X_\Sigma),
\]
and this is nonempty if and only if $\mathop\mathrm{Nef}(X_\Sigma)$ has full dimension.
\end{proof}

When $X_\Sigma$ is not simplicial, $\mathop\mathrm{Nef}(X_\Sigma)$ cannot have full dimension in $V^*$, as $\mathop\mathrm{Nef}(X_\Sigma)\subset\mathop\mathrm{Pic}(X_\Sigma)\otimes_\Z\R$ and $\mathop\mathrm{Pic}(X_\Sigma)\otimes_\Z\R$ is a proper subspace in~$V^*$. However, $\mathop\mathrm{Nef}(X_\Sigma)$ can have full dimension in $\mathop\mathrm{Pic}(X_\Sigma)\otimes_\Z\R$. When $X_\Sigma$ is projective, $\mathop\mathrm{Ample}(X_\Sigma)$ is the relative interior of $\mathop\mathrm{Nef}(X_\Sigma)$ (or the interior of $\mathop\mathrm{Nef}(X_\Sigma)$ in $\mathop\mathrm{Pic}(X_\Sigma)\otimes_\Z\R$). On the other hand, $\mathop\mathrm{Nef}(X_\Sigma)$ may have full dimension in $\mathop\mathrm{Pic}(X_\Sigma)\otimes_\Z\R$ with $X_\Sigma$ still not being projective. To illustrate this, we elaborate on an example due to Fujino~\cite{fuji05}, see also~\cite[Example 6.3.26]{c-l-s11}.

\begin{example}
Consider the same vector configurations $\mathrm A$ and $\Gamma$ as in Example~\ref{prism1}, given by the columns of the matrices
\[
  A=\begin{pmatrix}1&0&-1&1&0&-1\\0&1&-1&0&1&-1\\
  1&1&1&-1&-1&-1\end{pmatrix},\quad
    \varGamma=\begin{pmatrix}1&-1&0&-1&1&0\\0&1&-1&0&-1&1\\
  1&1&1&1&1&1\end{pmatrix}.
\]   
We subdivide the boundary of the triangular prism $\conv(\mb a_1,\ldots,\mb a_6)$
as shown in Fig.~\ref{figFujino}, left, and consider the complete fan 
$\Sigma$ with apex at $\bf 0$ over the faces of this subdivision. It has $6$ three-dimensional cones $\cone\mathrm A_I$, $I\in\mathcal C$, of which two are not simplicial, namely $\cone(\mb a_1,\mb a_3,\mb a_4,\mb a_6)$ and $\cone(\mb a_2,\mb a_3,\mb a_5,\mb a_6)$. The vectors
$\gamma_i=\begin{pmatrix}\gamma'_i\\1\end{pmatrix}$ lie in the affine plane~$x_3=1$, and the corresponding $2$-dimensional point configuration 
$\Gamma'=(\gamma'_1,\ldots,\gamma'_6)$ is shown in Fig.~\ref{figFujino}, right.

The four triangles $\conv\Gamma'_{\widehat I}$ corresponding to three-dimensional simplicial cones of $\Sigma$ are shown in Fig.~\ref{figFujino}, right, together with the two segments $\conv(\gamma'_2,\gamma'_5)$ and $\conv(\gamma'_1,\gamma'_4)$ corresponding to non-simplicial cones. Their intersection is a single point, while the intersection of the relative interiors is empty. Therefore, $\mathop\mathrm{Nef}(X_\Sigma)$ is a one-dimensional cone with generator $\gamma_1+\gamma_4$, while $\mathop\mathrm{Ample}(X_\Sigma)$ is empty, so $\Sigma$ is not a normal fan and $X_\Sigma$ is not projective.
\begin{figure}[h]
\begin{tikzpicture}[scale=0.8]
  \draw[thick] (0,2)--(0,5)--(6,5)--(6,2)--(1.5,0.5)--cycle;
  \draw[thick] (1.5,3.5)--(0,5);
  \draw[thick] (1.5,0.5)--(1.5,3.5)--(6,5)--cycle;
  \draw[dashed] (0,2)--(6,2);
  \draw[fill] (1.5,3.5) circle (1.6pt) node[anchor=south]{$\ \mb a_1$};
  \draw[fill] (6,5) circle (1.6pt) node[anchor=west]{$\mb a_2$};
  \draw[fill] (0,5) circle (1.6pt) node[anchor=east]{$\ \mb a_3$};
  \draw[fill] (1.5,0.5) circle (1.6pt) node[anchor=north]{$\mb a_4$};
  \draw[fill] (6,2) circle (1.6pt) node[anchor=west]{$\mb a_5$};
  \draw[fill] (0,2) circle (1.6pt) node[anchor=east]{$\mb a_6$};
  \draw[fill] (2.5,3) circle (1.6pt) node[anchor=north]{$\bf0$};
  \draw[thick] (8,3)--(8,6)--(11,6)--(14,3)--(14,0)--(11,0)--cycle;
  \draw[fill=yellow!40, opacity=0.5] (11,0)--(14,0)--(11,6)--cycle;
  \draw[fill=blue!40, opacity=0.5] (14,3)--(11,0)--(11,6)--cycle;
  \draw[fill=red!20, opacity=0.5] (8,3)--(14,0)--(11,6)--cycle;
  \draw[fill=cyan!20, opacity=0.5] (14,3)--(8,6)--(11,0)--cycle;
  \draw (8,6)--(14,0);
  \draw (14,3)--(8,3);  
  \draw[fill] (11,3) circle (1.6pt);  
  \draw[fill] (14,3) circle (1.6pt) node[anchor=west]{$\gamma_1$};
  \draw[fill] (8,6) circle (1.6pt) node[anchor=south east]{$\gamma_2$};
  \draw[fill] (11,0) circle (1.6pt) node[anchor=north]{$\gamma_3$};
  \draw[fill] (8,3) circle (1.6pt) node[anchor=east]{$\gamma_4$};
  \draw[fill] (14,0) circle (1.6pt) node[anchor=north west]{$\gamma_5$};
  \draw[fill] (11,6) circle (1.6pt) node[anchor=south]{$\gamma_6$};  
\end{tikzpicture}
\caption{A non-projective complete toric variety with the nef cone of full dimension in the Picard group}
\label{figFujino}
\end{figure}

It can be shown that $\mathop\mathrm{Pic}(X_\Sigma)=\Z\langle3(\gamma_1+\gamma_4)\rangle$, so $\mathop\mathrm{Nef}(X_\Sigma)$ has full dimension in $\mathop\mathrm{Pic}(X_\Sigma)\otimes_\Z\R$, yet $3(\gamma_1+\gamma_4)$ is a Cartier divisor class in the interior of $\mathop\mathrm{Nef}(X_\Sigma)$ which is not ample.
\end{example}

\subsection{The K\"ahler cone and symplectic reduction}\label{parsr}
A smooth projective toric variety $X_\Sigma$ is a K\"ahler manifold, so in particular it has a symplectic structure, obtained by restricting the Fubini--Study form via an embedding into a projective space. The algebraic torus action on $X_\Sigma$ restricts to the action of the compact torus $T_N=N\otimes_\Z S^1\cong T^n$ which is  Hamiltonian with respect to the symplectic structure on~$X_\Sigma$. On the other hand, by the Delzant theorem~\cite{delz88}, any $2n$-dimensional compact connected symplectic manifold $M$ with an effective Hamiltonian action of $T^n$ (a \emph{Hamiltonian toric manifold}) is determined up to an equivariant symplectomorphism by the image of its moment map $\mu\colon M\to\mathop\mathrm{Lie}(T^n)^*=W$. This image is a convex polytope $P$ with nonsingular rational normal fan~$\Sigma_P$. Therefore, any Hamiltonian toric manifold is the underlying symplectic manifold of a smooth projective toric variety. It follows that a smooth complete toric variety is projective if and only if it has a symplectic structure.

When $X_\Sigma$ is complete and smooth, the interior of $\mathop\mathrm{Nef}(X_\Sigma)$ consists of K\"ahler classes in $V^*=H^2(X_\Sigma;\R)$ and is known as the \emph{K\"ahler cone}. It is empty when $X_\Sigma$ is nonprojective. The space $V^*$ contains the lattice $L^*=\Z\langle\gamma_1,\ldots,\gamma_m\rangle=\mathop\mathrm{Pic}(X_\Sigma)=H^2(X_\Sigma;\Z)$, and lattice points in the interior of $\mathop\mathrm{Nef}(X_\Sigma)$ correspond to K\"ahler classes coming from ample divisors on~$X_\Sigma$.

A Hamiltonian toric manifold $M$ can be reconstructed from its moment polytope $P$ using \emph{symplectic reduction}. We outline the construction below; the details can be found in~\cite[Chapter~6]{audi91} and~\cite[Chapter~1]{guil94}.

\begin{construction}[Hamiltonian toric manifolds as symplectic quotients]
The symplectic structure on $\C^m$ with coordinates $(z_1,\ldots,z_m)$, $z_k=x_k+iy_k$, is given by the $2$-form
\[
  \omega=-i\sum_{i=1}^m dz_k\wedge d\overline z_k=-2\sum_{i=1}^m dx_k\wedge dy_k.
\]  
The coordinatewise action of the torus $T^m$ on $\C^m$ is Hamiltonian with the moment map given by
\[
  \mu\colon\C^m\to\mathop\mathrm{Lie}(T^m)^*=\R^m,\quad
  (z_1,\ldots,z_m)\mapsto(|z_1|^2,\ldots,|z_m|^2).
\]

A rational vector configuration $\Gamma=\{\gamma_1,\ldots,\gamma_m\}$ in $V^*\cong\R^k$ with $L^*=\Z\langle\gamma_1,\ldots,\gamma_m\rangle\cong\Z^k$ and the dual lattice~\eqref{Llatice} defines a torus $T_L=L\otimes_\Z S^1$ and an inclusion of tori $\varGamma^*\otimes_\Z S^1\colon T_L\to T^m$, so that
\begin{equation}\label{Ktorus}
  K=(\varGamma^*\otimes_\Z S^1)(T_L)=
  \bigl\{\bigl(e^{2\pi i\langle\gamma_1,\mb v\rangle},\ldots,
  e^{2\pi i\langle\gamma_m,\mb v\rangle}\bigr)\colon \mb v\in L  \bigr\}
\end{equation}
is a $k$-dimensional subtorus in $T^m$. 

The moment map for the Hamiltonian action of $K$ on $\C^m$ is obtained by restriction and is given by the composite $\C^m\stackrel\mu{\longrightarrow}\mathop\mathrm{Lie}(T^m)^*\to\mathop\mathrm{Lie}(T_L)^*$. The second map here is identified with $\varGamma\colon\R^m\to V^*$, so we obtain
\[
  \mu_\Gamma\colon\C^m\to V^*, \quad (z_1,\ldots,z_m)\mapsto     
  |z_1|^2\gamma_1+ \cdots+|z_m|^2\gamma_m.
\]
The real version of this map was considered in Subsection~\ref{secqua} and its regular values are described in terms of $\Gamma$ in Theorems~\ref{quadrics} and~\ref{quadrics1}.

If $\delta\in V^*$ is a regular value of the moment map $\mu_\Gamma$, then
the intersection of Hermitian quadrics
\[
  \mu_\Gamma^{-1}(\delta)=\{(z_1,\ldots,z_m)\in\C^m\colon
  \gamma_1 |z_1|^2+\cdots+\gamma_m|z_m|^2=\delta\}
\]
is nondegenerate, so it is a smooth submanifold of dimension $2m-k=m+n$. The action of $K\cong T_L$ on $\mu_\Gamma^{-1}(\delta)$ is almost free. The stabiliser of a point $\mb z\in\mu_\Gamma^{-1}(\delta)$ is a finite group $L_I/L$ (see Proposition~\ref{stabt}), where $I=\{i\in[m]\colon z_i=0\}$ and
\[
  L_I
  =\{\mb v\in V\colon\langle\gamma_k,\mb v\rangle\in\Z\;\text{ for }k\notin I\}.
\]

The restriction of the symplectic form $\omega$ to $\mu_\Gamma^{-1}(\delta)$ degenerates precisely along the orbits of the $K$-action.
If the action of $K$ on $\mu_\Gamma^{-1}(\delta)$ is free, then the quotient $\mu_\Gamma^{-1}(\delta)/K$ is a manifold, and there is a unique symplectic form $\omega'$ on $\mu_\Gamma^{-1}(\delta)/K$ satisfying $p^*\omega'=i^*\omega$, where $i\colon\mu_\Gamma^{-1}(\delta)\hookrightarrow\C^m$ is the inclusion and $p\colon \mu_\Gamma^{-1}(\delta)\to\mu_\Gamma^{-1}(\delta)/K$ is the projection. The symplectic manifold $M_{\Gamma,\delta}=\mu_\Gamma^{-1}(\delta)/K$ is referred to as the \emph{symplectic quotient} of $\C^m$ by~$K$.  

The $2n$-dimensional manifold $M_{\Gamma,\delta}=\mu_\Gamma^{-1}(\delta)/K$ has a residual Hamiltonian action of the quotient $n$-torus $T_N=T^m/K$. The image of the corresponding moment map $\mu_N\colon M_{\Gamma,\delta}\to\mathop\mathrm{Lie}(T_N)^*=W$ is the convex polytope given by
\begin{equation}\label{ptope3}
  P=\{\mb w\in W\colon\langle \mb a_i,\mb w\rangle+b_i\ge0,\quad i=1,\ldots,m\},
\end{equation}
where $\mathrm A=\{\mb a_1,\ldots,\mb a_m\}$ is the Gale dual configuration of~$\Gamma$ and $\delta=\sum_{i=1}^mb_i\gamma_i$, see Theorem~\ref{quadrics}. (Note that $P$ is defined by $(\Gamma,\delta)$ up to a translation in~$W$.) The normal fan $\Sigma_P$ of $P$ is defined by the triangulated configuration $(\mathcal K_P,\mathrm A)$, and it is nonsingular if and only if the action of $K$ on $\mu_\Gamma^{-1}(\delta)$ is free, see Proposition~\ref{nsdual}. A polytope $P$ with a nonsingular normal fan is called \emph{Delzant}. (Note that the vertices of a Delzant polytope do not necessarily lie in the lattice $M=N^*\subset W$.) 

Conversely, given a Delzant polytope $P$, the Gale dual configuration $\Gamma$ defines a $k$-dimensional subtorus~\eqref{Ktorus} in~$T^m$, and the regular value $\delta=\sum_{i=1}^mb_i\gamma_i$ of the moment map $\mu_\Gamma$, as in Theorem~\ref{quadrics}. The level set $\mu_\Gamma^{-1}(\delta)$ is identified with the moment-angle manifold $\mathcal Z_{\mathcal K_P}$ (Theorem~\ref{hquadrics}).

By a theorem of Delzant~\cite{delz88}, every Hamiltonian toric manifold $M$ can be obtained as a symplectic quotient $M_{\Gamma,\delta}$ of $\C^m$ by a torus, so $M$ is determined up to an equivariant symplectomorphism by its moment image~$P$.
\end{construction}

The symplectic and algebraic quotient constructions are related as follows.

\begin{theorem}\label{algsym}
Let $P$ be a polytope~\eqref{ptope3} with nonsingular normal fan $\Sigma_P$. The inclusion $\mu_\Gamma^{-1}(\delta)\subset U_\C(\mathcal K_P)$ induces a $T_N$-equivariant diffeomorphism
\[
  M_{\Gamma,\delta}=\mu_\Gamma^{-1}(\delta)/K\stackrel\cong\longrightarrow
  U_\C(\mathcal K_P)/G=X_{\Sigma_P}
\]
between the corresponding Hamiltonian toric manifold and smooth projective toric variety.

Furthermore, if the polytope $P$ has vertices in the lattice $M\subset W$, then the symplectic structure on $X_{\Sigma_P}$ defined by the ample divisor corresponding to the support function $\varphi_P$ coincides with that of $M_{\Gamma,\delta}$.
\end{theorem}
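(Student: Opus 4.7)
The plan is to realise both $M_{\Gamma,\delta}$ and $X_{\Sigma_P}$ as two-stage quotients of $U_\C(\mathcal K_P)$ via the polar decomposition $G = \C^\times_L \cong V \times T_L$ as real Lie groups. Here $V$ acts on $U_\C(\mathcal K_P) \subset \C^m$ by the real-exponential action $(\mb v, \mb z) \mapsto (e^{\langle\gamma_i, \mb v\rangle}z_i)$, which preserves phases and only rescales moduli, while $T_L = K$ acts by the phase rotation~\eqref{Ktorus}. Both actions are free and proper (the $V$-action by Theorem~\ref{proper} applied to moduli, since $\{\mathcal K_P, \mathrm A\}$ is a nonsingular triangulated configuration), and since they commute and together generate~$G$,
\[
X_{\Sigma_P} = U_\C(\mathcal K_P)/G = \bigl(U_\C(\mathcal K_P)/V\bigr)/K.
\]

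I would identify the intermediate quotient $U_\C(\mathcal K_P)/V$ with $\mu_\Gamma^{-1}(\delta)$ via the inclusion $\mu_\Gamma^{-1}(\delta) \hookrightarrow U_\C(\mathcal K_P)$: because the real $V$-action affects only the moduli $|z_i|^2$, the Legendre-transform argument of Lemma~\ref{image} used to prove Theorem~\ref{quadrics}(4), and its Hermitian analogue Theorem~\ref{hquadrics}, shows that $\mu_\Gamma$ restricts to a diffeomorphism of each $V$-orbit onto $\relint\cone(\gamma_i\colon z_i\ne 0)$, so that each $V$-orbit meets $\mu_\Gamma^{-1}(\delta)$ transversely at a single point. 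This yields a $T^m$-equivariant diffeomorphism $\mu_\Gamma^{-1}(\delta)\cong U_\C(\mathcal K_P)/V$, and combining with the display above gives
\[
M_{\Gamma,\delta} = \mu_\Gamma^{-1}(\delta)/K \stackrel{\cong}{\longrightarrow} X_{\Sigma_P}.
\]
Since the coordinatewise $T^m$-action on $U_\C(\mathcal K_P)$ commutes with~$G$, this diffeomorphism is $T^m$-equivariant and factors through the residual action of $T_N = T^m/K$.

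For the second statement, the plan is to invoke K\"ahler reduction: the standard $K$-invariant K\"ahler form on $\C^m$ restricts to $\mu_\Gamma^{-1}(\delta)$ and descends to a $T_N$-invariant K\"ahler form $\omega_{\mathrm{red}}$ on $M_{\Gamma,\delta}\cong X_{\Sigma_P}$, whose underlying symplectic form is the reduced form and whose complex structure is that of the GIT quotient~$X_{\Sigma_P}$. Under the identification $H^2(X_{\Sigma_P};\R) \cong V^*$ from~\eqref{Rclas}, the K\"ahler class $[\omega_{\mathrm{red}}]$ equals $\delta = \sum_{i=1}^m b_i\gamma_i$, which is precisely the class of the ample divisor $\sum b_i D_i$ with support function~$\varphi_P$. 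The main obstacle will be upgrading this cohomological identification to an equality of symplectic forms; I would resolve it via Delzant's uniqueness theorem, which guarantees that a $T_N$-invariant K\"ahler form on a Hamiltonian toric manifold is determined up to $T_N$-equivariant symplectomorphism by its moment polytope. Both $\omega_{\mathrm{red}}$ and the K\"ahler form pulled back from the projective embedding defined by $\sum b_i D_i$ are $T_N$-invariant K\"ahler forms on $X_{\Sigma_P}$ with moment image~$P$, so they must coincide.
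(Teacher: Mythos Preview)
Your proposal is correct and follows essentially the same route as the paper: use the decomposition $G=\C^\times_L\cong V\times K$, invoke Theorem~\ref{quadrics}(4) (or its Hermitian analogue) to show each $V$-orbit meets $\mu_\Gamma^{-1}(\delta)$ in a single point, and then appeal to Delzant's uniqueness theorem for the second statement. The paper's proof is terser but identical in substance; your additional remarks on the cohomology class $[\omega_{\mathrm{red}}]=\delta$ are correct but not needed once you invoke Delzant directly.
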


\begin{proof}
To prove the first statement we need to check that each $G$-orbit of $U_\C(\mathcal K_P)$ intersects $\mu_\Gamma^{-1}(\delta)$ at a single $K$-orbit. Using the product decomposition $G=\C^\times_L=V\times T_L=V\times K$, we need to the check that each $V$-orbit of $U(\mathcal K_P)$ intersects $\mu_\Gamma^{-1}(\delta)$ at a single point. This is proved in Theorem~\ref{quadrics}~(4). The second statement follows from the Delzant theorem, as a $T_N$-equivariant symplectic structure on $X_{\Sigma_P}$ is determined by its moment polytope~$P$ uniquely.
\end{proof}

The complex structure on the algebraic quotient $U_\C(\mathcal K_P)/G$ is compatible with the symplectic structure on the symplectic quotient 
$\mu_\Gamma^{-1}(\delta)/K$, so that the symplectic form $\omega'$ on 
$\mu_\Gamma^{-1}(\delta)/K$ is K\"ahler. See~\cite[Appendix~1, Theorem~1.5]{guil94}.

There is a generalisation of Theorem~\ref{algsym} to irrational simple polytopes $P$~\cite[Theorem~3.2]{ba-pr01}, which relates the complex and symplectic \emph{quasifolds} associated with $P$ and a quasilattice containing the normal configuration~$\mathrm A$.

\end{document}